\numberwithin{equation}{section}
\numberwithin{figure}{section}
\theoremstyle{plain}
\newtheorem{rem}{Remark}[section]
\newtheorem{thm}{Theorem}[section]
\newtheorem{lem}{Lemma}[section]
\newcommand{\setting}{In the setting of Lemma \ref{bootstrap}, the following  estimates hold  for $t\in [0,T]$}
\newcommand{\settingos}{In the setting of Lemma \ref{osboot}, the following estimates hold for $t\in [0,T]$}
\newcommand{\dzeta}{\nabla\tilde{\zeta}}
\newcommand{\zzeta}{\tilde{\zeta}}
\newcommand{\eps}{\varepsilon}
\newcommand{\tts}{\tilde{\sigma}}
\newcommand{\qbb}{\tilde{Q}_{b}}
\newcommand{\JJJ}{\mathcal{J}}
\newcommand{\RRR}{\mathbb{R}}
\newcommand{\tl}{\tilde{\lambda}}
\newcommand{\enj}{\int |\nabla \eps^{j}|^{2}}
\newcommand{\lmj}{\int |\eps^{j}|^{2}e^{-|y|}}
\newcommand{\qbj}{\tilde{Q}_{b_{j}}}
\newcommand{\epj}{\eps^{j}}
\newcommand{\epjo}{\eps^{j}_{1}}
\newcommand{\epjt}{\eps^{j}_{2}}
\newcommand{\xs}{\frac{dx_{j}}{ds_{j}}}
\newcommand{\ls}{\frac{d\lambda_{j}}{ds_{j}}}
\newcommand{\tailj}{\int |\nabla \tilde{\eps}^{j}|^{2}+\int |\tilde{\eps}^{j}|^{2}e^{-|y|}}
\newcommand{\tzj}{\tilde{\zeta}_{b_{j}}}
\newcommand{\du}{\nabla^{\nu}}
\newcommand{\tkj}{[\tau_{k}^{j},\tau_{k}^{j+1}]}
\newcommand{\DDD}{\mathcal{D}}
\newcommand{\fd}{u_{0,\boldsymbol{\lambda},\boldsymbol{x}}}
\newcommand{\llll}{\boldsymbol{\lambda}}
\newcommand{\xxxx}{\boldsymbol{x}}
\newcommand{\bbb}{\boldsymbol{\beta}}
\newcommand{\AAA}{\mathcal{A}}
\newcommand{\FFF}{\mathcal{F}}
\begin{document}

\title{log-log blow up solutions blow up at exactly m points}
\begin{center}
\author{Chenjie Fan $^{\ddagger} $ $^\dagger$}
\end{center}
\thanks{\quad \\$^{\ddagger} $Department of Mathematics,  Massachusetts Institute of Technology,  77 Massachusetts Ave,  Cambridge,  MA 02139-4307 USA. email:
cjfan@math.mit.edu.\\
$^\dagger$ The author is partially supported by NSF Grant DMS 1069225, DMS 1362509 and DMS 1462401.}
\maketitle
\begin{abstract}
We study the focusing mass-critical nonlinear Schr\"odinger equation, and construct certain solutions which blow up at exactly $m$ points according to the log-log law. 
\end{abstract}

\section{Introduction}
We consider the Cauchy Problem for the   mass-critical focusing nonlinear Schr\"odinger equation (NLS) on $\RRR^{d}$ for $d=1,2$:
\begin{equation}\label{nls}
(NLS)
\begin{cases}
iu_{t}=-\Delta u-|u|^{\frac{4}{d}}u,\\
u(0)=u_{0}\in H^{1}(\RRR^{d}).
\end{cases}
\end{equation}
Problem \eqref{nls} has three  conservation laws:
\begin{itemize}
\item Mass:
\begin{equation}
M(u(t,x)):=\int |u(t,x)|^{2}dx=M(u_{0}),
\end{equation}
\item Energy:
\begin{equation}
E(u(t,x):=\frac{1}{2}\int |\triangledown u(t,x)|^{2}dx-\frac{1}{2+\frac{4}{d}}\int|u(t,x)|^{2+\frac{4}{d}}dx=E(u_{0}),
\end{equation}
\item Momentum:
\begin{equation}
P(u(t,x)):=\Im(\int \triangledown u(t,x) \overline{u(t,x)})dx=P(u_{0}),
\end{equation}
\end{itemize}
and  the following symmetry:
\begin{enumerate}
\item Space-time translation: If $u(t,x)$ solves \eqref{nls}, then $\forall t_{0}\in \RRR, x_{0}\in \RRR^{d}$, we have  $u(t-t_{0}, x-x_{0})$ solves \eqref{nls}.
\item Phase transformation: If $u$ solves \eqref{nls}, then $\forall \theta_{0}\in \RRR$, we have $e^{i\theta_{0}}u$ solves \eqref{nls}. 
\item Galilean transformation: If $u(t,x)$ solves \eqref{nls}, then $\forall \beta\in \RRR^{d}$, we have  $u(t,x-\beta t)e^{i\frac{\beta}{2}(x-\frac{\beta}{2}t)}$ solves \eqref{nls}.
\item Scaling: If $u(t,x)$ solves \eqref{nls},  then $\forall \lambda \in \RRR_{+}$,  we have $u_{\lambda}(t,x):=\frac{1}{\lambda^{\frac{d}{2}}}u(\frac{t}{\lambda}, \frac{x}{\lambda})$ solves \eqref{nls}.
\item Pseudo-conformal transformation: If $u(t,x)$ solves \eqref{nls}, then $\frac{1}{t^{\frac{d}{2}}}\bar{u}(\frac{1}{t},\frac{x}{t})e^{i\frac{|x|^{2}}{4t}}$ solves \eqref{nls}.
\end{enumerate}
\subsection{Setting of the problem and statement of the main result}
The equation \eqref{nls} is mass-critical since the conserved quantity given by the mass is invariant under scaling symmetry. It is called focusing since the conserved quantity given by the energy is not coercive.\par
The NLS \eqref{nls} was proved to be  locally  well-posed (LWP) in $H^{1}$ by  Ginibre and Velo in \cite{ginibre1979class}. This means that  for any initial data in $H^{1}$, gives rise  to  a unique solution  $u\in C([0,T];H^{1})$, for some time $T$ and the solution depends continuously on the initial data. Since this equation is mass-critical, and $H^{1}$ is a norm subcritical with respect to the  $L^{2}$
 norm, one can take  $T=T(\|u_{0}\|_{H^{1}}) >0$.
By LWP  it is not hard to see that  if the solution is defined in $[0,T_{0})$ and cannot be extend beyond $T_{0}$, then it has to be that  $\lim_{t\rightarrow T_{0}}\|u(t)\|_{H^{1}}=\infty$.  In this case we say that $ u$  blows up in finite time in $T_{0}$. A solution that does not blow up in finite time is called global.\par
 It turns  out that not all solutions of  \eqref{nls} are global.  The classical virial identity (\cite{glassey1977blowing}) indicates the existence of solutions which blow up in finite time. By direct calculation one has:
\begin{equation}\label{classicalvirial}
\partial_{t}^{2}\int |x|^{2}|u|^{2}=4\partial_{t}\Im(\int x\nabla u \bar{u})=16E(u_{0}).
\end{equation}
and \eqref{classicalvirial} immediately indicates that if  $xu_{0}\in L^{2}$ and $E(u_{0})<0$, then the solution $u$ must blow up in finite time. \par
Some questions are then natural: If a solution blows up in finite time, what is the mechanism for  singularity formation, i.e. how does one describes the reason for a solution to blow up and how does one  describe the behavior of the solution when it approaches the blow up time?\par
Virial identity  \eqref{classicalvirial} on its own  does not give the answer to these questions, and up to the best of our knowledge, blow up solutions to \eqref{nls}  blow up way faster than the blow up time  predicted by \eqref{classicalvirial}.\par
Before we continue, let's first introduce the ground state, which is one typical object appearing in the study of {focusing} equations.  
The ground state $Q=Q(x)$ is the unique positive solution in $L^{2}(\RRR^d)$ that solves
\begin{equation}\label{ground}
-\Delta W+W=|W|^{\frac{4}{d}}W.
\end{equation}
 $Q$ is an explicit function when d=1, $Q(x)=\left(\frac{3}{ch2x}\right)^{\frac{1}{4}}$, and $Q$ is smooth and decays exponentially for $d=1,2$.  
It is a direct calculation to check that $Q(x)e^{it}$ is a solution of \eqref{nls}. In general we call solutions  to \eqref{nls}  of the  form $W(x)e^{it}$ {soliton solutions}. We have several quick remarks:
\begin{itemize}
\item $W(x)e^{it}$ solves \eqref{nls} if and only if $W(x)$ solves \eqref{ground}.
\item A pure variational argument (\cite{weinstein1983nonlinear}) shows that any $H^{1}$ solutions to \eqref{nls} which initial data $u_{0}$ has  mass below the mass of the ground state,  does not blow up. 
\item To compare with \eqref{classicalvirial}, one can use variational arguments (\cite{weinstein1983nonlinear}) to show that for any function $f\in H^{1}$ such that  $\|f\|_{2}\leq \|Q\|_{2}$, one has $E(f)\geq 0$, and in particular one can derive from \eqref{ground} that $E(Q)=0$.
\end{itemize}
\par
By applying the pseudo-conformal transformation to the global soliton solution $Q(x)e^{it}$, one obtains the explicit blow up solution  
\begin{equation}\label{stlike}
S(t,x):=\frac{1}{|t|^{\frac{d}{2}}}Q(\frac{x}{t})e^{-\frac{|x|^{2}}{4t}+\frac{i}{t}}.
\end{equation}
We remark that $\|S(t,x)\|_{2}=\|Q\|_{2}$, and  this is (up to symmetries)  the minimal mass blow up solution, \cite{merle1993determination}, (recall no $H^{1}$ solution with mass below $\|Q\|_{2}^{2}$ can blow up in finite time). We also remark by an  easy scaling argument, that if a solution $u$ to \eqref{nls} blows up in finite time T, then its blow up rate for $\|\nabla u(t)\|_{2}$ has a lower bound:
\begin{equation}
\|\nabla u(t)\|_{2}\gtrsim \frac{1}{\sqrt{T-t}},
\end{equation}  
and the explicit blow up solution $S(t,x)$ has blow up rate $\frac{1}{|t|}$.\par
Despite the fact that the  Schr\"{o}dinger equation has the infinite speed of propagation, $S(t,x)$ blows up locally in the physical space, and more precisely  by looking at its explicit formula \eqref{stlike}, we can say that the solution blows up at $x_{0}:=0\in \RRR^{d}$.
Relying on the fact that the blow up behavior of $S(t,x)$ is local, by a certain  compactness argument and using $S(t,x)$ as basic building blocks, Merle (\cite{merle1990construction}) constructed a solution $u$ to \eqref{nls} which blows up at $k$ points in finite time T,  and near blow up time $T$ it  has the following asymptotic:
\begin{equation}
u(t,x)\sim \sum_{i=1}^{k}S(\frac{t-T}{\lambda^{2}_{i}},\frac{x-x_{i}}{\lambda_{i}}), \lambda_{i}>0, x_{i}\neq x_{j}, \forall i\neq j.
\end{equation}
\par
The goal of this paper is to use the log-log blow up solutions, that we recall below, as basic building blocks instead of $S(t,x)$ in\eqref{stlike} to construct blow up solutions to \eqref{nls}.
The so-called log-log blow up solutions are solutions to \eqref{nls} which blow up in finite time $T$ with blow up rate  $\|\nabla u(t)\|_{2}\sim \left(\frac{\ln|\ln |T-t||}{T-t}\right)^{\frac{1}{2}}$. Such solutions had been suggested numerically by Landman, Papanocolaou, Sulem, Sulem , \cite{landman1988rate} and first constructed by Perelman, \cite{perelman2001blow}, and later intensively studied by Merle and Rapha\"el, \cite{merle2005blow}, \cite{merle2003sharp} , \cite{merle2006sharp}, \cite{merle2004universality}, \cite{raphael2005stability}, \cite{merle2005profiles}. 
Merle and Rapha\"el consider the solutions to \eqref{nls}  with initial data $u_{0}$ such that
\begin{equation}\label{supercritical}
\|Q\|_{2}<\|u_{0}\|_{2}<\|Q\|_{2}+\alpha,
\end{equation}
where $\alpha$ is a universal small positive constant.\\
Let's give a quick  summary of the Merle and Rapha\"el's results relevant for this paper.\\
When \eqref{supercritical} holds, they show that
all solution with strict negative energy blow up in finite time $ T $ according to the log-log law: $\|\nabla u(t)\|_{2}\sim \left(\frac{\ln|\ln |T-t||}{T-t}\right)^{\frac{1}{2}}$. And more precisely, near blow up time $T$, the solution $u$ can be decomposed as:
\begin{equation}\label{loglogblowup}
\begin{aligned}
&u(t,x)=\frac{1}{\lambda(t)^{\frac{d}{2}}}Q(t,\frac{x-x(t)}{\lambda(t)})+\Xi(t,x),\\
&\lambda(t)\in \RRR^{+}, \frac{1}{\lambda(t)}\sim \sqrt{\frac{\ln|\ln T-t |}{T-t}},\\
&\lim_{t\rightarrow T}\lambda(t)\|\Xi(t)\|_{H^{1}}=0, 
\end{aligned}
\end{equation}
They also show that the log-log blow up solution is stable in the sense that the initial data, which generate log-log blow up solutions, is an open set in $H^{1}$.
\begin{rem}
Merle and Rapha\"el's results are more general than what has been recalled above. For example the strict negative condition  can be relaxed.
\end{rem}
As mentioned above, the log-log blow up behavior is stable under $H^{1}$ perturbation. What's more, this log-log blow up behavior is local (in physical space) in a certain sense,  in spite of  infinity speed of propagation of the NLS equation. In \cite{planchon2007existence}, Planchon and Rapha\"el constructed a log-log blow up solutions on a bounded domain $\Omega$ of $\RRR^{d}$ with $u(t)\in H_{0}^{1}(\Omega)$. In particular, if one looks at solutions of the  form \eqref{loglogblowup}, they prove that  $x(t)$ has a limit as $t$ approaches the blow up time, hence it  makes sense to say that such solutions blow up at a certain point\footnote{Note that we already remarked this property for explicit blow up solutions $S(t,x)$.} $x_{\infty}:=\lim_{t\rightarrow T} x(t)$ in $\RRR^{d}$. The key element in their proof is a very robust bootstrap argument. It turns out that the analysis of log-log blow up solutions has a very suitable bootstrap structure.\\
In this work, as an analogue  of \cite{merle1990construction},  we  use log-log blow up solutions as building blocks to construct solutions which blow up at (exactly) $m$ points. 
To be precise, we  show:
\begin{thm}\label{thmmain}
For $d=1,2$, for each positive integer $m$,  and given any $m$ different points $x_{1,\infty},...x_{m,\infty}$ in $\RRR^{d}$, there exists a solution $u$ to \eqref{nls} such that $ u $ blows up in finite time $T$, and for $t$ close enough to $T$,
\begin{equation}\label{doubleloglog}
u(t,x)=\sum_{j=1}^{m}\frac{1}{\lambda_{j}^{\frac{d}{2}}(t)}Q(\frac{x-x_{j,\infty}}{\lambda_{j}(t)})e^{-i\gamma_{j}(t)}+\Xi(t,x).
\end{equation}
where, for $ j=1,\dots,m, $
\begin{equation}
 \frac{1}{\lambda_{j}(t)}\sim\sqrt{\frac{\ln|\ln T-t |}{T-t}}, \quad 
\text{ and  } \quad  \lambda_{j}\|\Xi(t)\|_{H^{1}} \xrightarrow{t\rightarrow T} 0, 
\end{equation}
i.e $\Xi $ can be viewed as an error term.  In particular, since the $m$ given points are arbitrary, the solutions do not necessarily have any symmetry restriction. 
\end{thm}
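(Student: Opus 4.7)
The plan is to follow the compactness scheme of Merle's multi-point construction \cite{merle1990construction}, but with log-log blow-up profiles playing the role that the explicit pseudo-conformal solution $S(t,x)$ plays there. Since log-log profiles have no closed form, the compactness step must be supported throughout by a multi-bubble version of the Merle-Rapha\"el modulation/bootstrap machinery (as in \cite{planchon2007existence}), exploiting that the single-bubble analysis is already known to be stable in $H^{1}$.

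First I would build an approximating sequence $(u_{n})$ by prescribing terminal data. Choose $T_{n}\nearrow T$ and set
\begin{equation*}
u_{n}(T_{n},x)=\sum_{j=1}^{m}\frac{1}{\lambda_{j,n}^{d/2}}\,\tilde{Q}_{b_{j,n}}\!\left(\frac{x-x_{j,\infty}}{\lambda_{j,n}}\right)e^{-i\gamma_{j,n}},
\end{equation*}
where $\tilde{Q}_{b}$ is the Merle-Rapha\"el modified profile and the parameters $(\lambda_{j,n},b_{j,n},\gamma_{j,n})$ are tuned so that each bubble, considered in isolation, would sit exactly on the log-log track blowing up at time $T$. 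I then solve \eqref{nls} backward from $T_{n}$ down to some fixed $t_{0}<T$. Because the $x_{j,\infty}$ are distinct and fixed while $\lambda_{j,n}\to 0$, the bubbles are widely separated on the scale of their own size, so at $t=T_{n}$ the overlap between distinct profiles is exponentially small in $1/\lambda_{j,n}$ thanks to the exponential decay of $Q$.

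The core analytic step is an $m$-bubble bootstrap on an interval $[t_{0},T_{n}]$ independent of $n$. I would introduce cutoffs $\chi_{j}$ supported near $x_{j,\infty}$ to localize the solution and write
\begin{equation*}
u_{n}(t,x)=\sum_{j=1}^{m}\frac{1}{\lambda_{j,n}^{d/2}(t)}\,\tilde{Q}_{b_{j,n}(t)}\!\left(\frac{x-x_{j,n}(t)}{\lambda_{j,n}(t)}\right)e^{-i\gamma_{j,n}(t)}+\eps_{n}(t,x),
\end{equation*}
with modulation parameters determined by standard orthogonality conditions on $\eps_{n}$. On each localization patch, the contribution of the other bubbles is exponentially small in $1/\lambda_{k,n}$, so the usual log-log ODE system for $(\lambda_{j,n},b_{j,n},x_{j,n},\gamma_{j,n})$ holds up to tiny interaction errors and the monotonicity/virial/Lyapunov estimates of Merle-Rapha\"el go through per bubble. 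Running the bootstrap backward in time, each center $x_{j,n}(t)$ stays trapped near $x_{j,\infty}$ (this uses that $\dot{x}_{j,n}$ is integrable along the log-log time), each $\lambda_{j,n}(t)$ follows the $\sqrt{(T-t)/\ln|\ln(T-t)|}$ law, and $\eps_{n}$ remains small in the adapted norm.

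Finally, the bootstrap bounds give $\|u_{n}(t_{0})\|_{H^{1}}\le C$ uniformly, so after extraction $u_{n}(t_{0})\rightharpoonup u_{\infty}(t_{0})$ in $H^{1}$ and strongly on compact sets. The solution $u$ of \eqref{nls} with initial data $u_{\infty}(t_{0})$ inherits, via continuous dependence and the uniform multi-bubble decomposition, exactly the structure \eqref{doubleloglog} with blow-up time $T$ and blow-up points $x_{1,\infty},\dots,x_{m,\infty}$. The main obstacle I foresee is closing the multi-bubble bootstrap uniformly in $n$ down to a time $t_{0}$ independent of $n$: the backward log-log time $s$ spans an interval of length comparable to $|\ln(T-t_{0})|$ which grows with $n$, and one must make sure the accumulated interaction and modulation errors across this long window stay within the bootstrap regime — this is where the exponential separation between bubbles is essential, and where a careful book-keeping of the (small) coupling terms in the modulation ODE, together with the Merle-Rapha\"el-Planchon Lyapunov functional, has to be performed.
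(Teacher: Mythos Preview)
Your proposal follows Merle's backward-compactness scheme from \cite{merle1990construction}, but the paper explicitly discards this route (Remark~1.4) and takes a genuinely different path: it evolves well-prepared multi-bubble data \emph{forward} in time and uses a topological (degree-type) argument to select the initial parameters $(\lambda_{j,0},x_{j,0})$ so that all $m$ bubbles blow up simultaneously and exactly at the prescribed points. Two substantive differences matter.

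\textbf{Decoupling mechanism.} You rely on the exponential decay of $Q$ (or compact support of $\tilde Q_b$) to argue that distinct bubbles interact only weakly. This controls the profile--profile overlap, but it does \emph{not} control the remainder $\Xi$ (equivalently $\eps_n$): the error generated by the flow is a global $H^1$ object with no a~priori localization, so the interaction you must estimate is not profile--profile but profile--$\Xi$ and $\Xi$--$\Xi$ across regions. The paper's decoupling device is instead Theorem~1.2: the solution propagates $H^{N_1}$ regularity outside small neighborhoods of the $x_{j,0}$, which keeps the local energy and momentum bounded and hence makes the per-bubble Merle--Rapha\"el estimates close. Your scheme has no substitute for this, and without it the ``tiny interaction errors'' in your modulation ODE are not tiny.

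\textbf{Direction of the bootstrap and the compactness step.} The entire log-log machinery (local virial $b_s\ge -\Gamma_b^{1-C\eta}$, Lyapunov monotonicity $\partial_s\mathcal J\le -Cb\Gamma_b$, almost-monotonicity of $\lambda$) is one-directional in $s$. Running it backward from $T_n$ to a fixed $t_0$ you lose the Lyapunov control that pins down $\int|\nabla\eps|^2+\int|\eps|^2e^{-|y|}\le\Gamma_b^{4/5}$, which is exactly what you need to keep each bubble in the regime and to prove uniform $H^1$ bounds at $t_0$. This is the obstruction the paper alludes to when it says the backward approach ``does not seem to directly work because of the remainder term $\Xi$''. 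Moreover, even granting uniform bounds, passing the full log-log structure (sharp rate, simultaneity, and exact blow-up locations) through a weak $H^1$ limit is delicate; the paper bypasses this entirely by producing a single explicit solution via Lemma~6.4 and the index-type Lemma~6.5, adjusting $(\lambda_{2,0},\dots,\lambda_{m,0},x_{1,0},\dots,x_{m,0})$ simultaneously. Prescribing the blow-up points is in fact the subtler half of the argument (Section~6.4), and your outline does not address it: placing the terminal profiles at $x_{j,\infty}$ does not by itself force the limiting solution to blow up there, because $x_{j,n}(t)$ drifts under the flow and one must show the drift can be corrected --- which is precisely what the topological argument accomplishes.
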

\begin{rem}
Same construction works on torus, $\mathbb{T}^{d}, d=1,2$.
\end{rem}
\begin{rem}
The result of \cite{planchon2007existence} already implies the existence of {symmetric}  solutions which  blow up at two points according to log-log law.  In fact one first constructs a solution to NLS on the half line/plane $H:=\RRR^{d}_{+}$, such that the solution blows up at one point according to log-log law and satisfies the Dirichlet condition $u\equiv 0$ on $\partial \RRR^{d+}$, then, by extending this solution symmetrically,  one easily derives the solution which blows up at two points in the whole line/plane.  Similarly, one can construct solutions that blow up at a even number of points  according to log-log law, but they will have a very strong symmetry. See Corollary 1 in \cite{planchon2007existence}.
\end{rem}
\begin{rem}
The work of \cite{merle1990construction} uses idea of "integrate from infinity", which means one needs to evolve the data backward.  This does not seem to direct work in this setting because of the remainder term $\Xi(t,x)$. In this work, we will evolve the data forward. 
\end{rem}
\begin{rem}
We point out two applications of Theorem \ref{thmmain}. First, it implies the existence of large mass log-log blow up solutions. More general results on this direction have been obtained in \cite{merle2005one}. Second, for those who are familiar with standing ring blow up solutions, \cite{raphael2006existence},\cite{raphael2009standing}, our construction in 1 D case implies the existence of multiple-standing- ring blow up solutions for quintic NLS on dimension $N\geq 2$. To be precise, one can construct a radial solution $u$ to the following Cauchy Problem:
\begin{equation}
\begin{cases}
iu_{t}+\Delta u=-|u|^{4}u,\\
u_{0}\in H^{N_{2}(N)}(\RRR^{N}).
\end{cases}
\end{equation}
such that $u$ blows up in finite time with  log-log blow up rate and near blow up time $T$, $u$ has the following asymptotic
\begin{eqnarray}
u(t,x)=u(t,r)=\sum_{j=1}^{m}\frac{1}{\lambda_{j}(t)^{1/2}}P(\frac{r-r_{j}(t)}{\lambda_{j}(t)})e^{-i\gamma_{j}(t)}+\Xi(t,x),\\
 \frac{1}{\lambda_{j}(t)}\sim\sqrt{\frac{\ln|\ln T-t |}{T-t}}, \quad 
\text{ and  } \quad  \lambda_{j}\|\Xi(t)\|_{H^{1}} \xrightarrow{t\rightarrow T} 0, \\
\lim_{t\rightarrow T}r_{j}(t)=r_{j,\infty}>0.
\end{eqnarray}
where $P$ is the unique positive $L^{2}$ solution which solves \eqref{ground} for $d=1$.
\end{rem}
From this point on, we fix $m\in \mathbb{N}$ as the number of blow up points. We will do proof only for $d=2$, the case $d=1$ follows similarly.

To construct a solution satisfying Theorem \ref{thmmain} without relying on certain symmetry property of the initial data, the  most intuitive process  to follow  is that 
one first prepares  $m$ log-log blow  solutions $$u^{j}(t,x)=\frac{1}{\lambda^{\frac{d}{2}}_{j}}Q(\frac{x-x_{j}(t)}{\lambda_{j}})e^{-i\gamma_{j}(t)}+\Xi_{j}, j=1,\dots,m$$
satisfying \eqref{loglogblowup}. Then one shows that the solution to \eqref{nls} with initial data $\sum_{j=1}^{m}u^{j}(0)$ evolves  approximately as $\sum_{j=1}^{m}u^{j}(t)$, if one  assumes that at the initial time all solutions are very close to blowing  up,  i.e. $\lambda_{j}(0), j=1,\dots,m$ is very small, and they are physically separated, i.e. $\min_{j\neq j'}|x_{j}(0)-x_{j'}(0)|$ is very large.
To achieve this, one needs some mechanism to decouple  $u^{1}$ , $u^{2},\dots u^{m}$. Our choice is to require extra smoothness outside the (potential) singular points. Roughly speaking, we  find neighborhoods $U_{j}$ of $x_{j,0}\equiv x_{i}(0)$, $j=1,\dots,m$, and show that the solutions  keep very high regularity outside these $m$ neighborhoods. 
This approach is motivated by the work of Rapha\"el and Szeftel, \cite{raphael2009standing}. They consider the focusing quintic NLS on $\RRR^{d}$ and they require their data to be radial and in $H^{d}$. By the  radial symmetry assumption, the problem can be understood in polar coordinates as  a perturbation of the 1D-focusing-quintic NLS.  The goal of Rapha\"el and Szeftel is to construct solutions that  blow up at a sphere (or a ring). The crucial point in their paper is to understand the propagation of singularity/regularity. They show that all the singularities  are   kept around the sphere where the solution is supposed to blow up, and the solution is kept bounded in $H^{\frac{d-1}{2}}$
outside the sphere. Note that, thanks to the radial assumption, the authors  are using  the  1D NLS to model their solutions  \cite{raphael2009standing}.  See also \cite{raphael2006existence}, which indeed has the same spirit as \cite{raphael2009standing}, but in the setting \cite{raphael2006existence}, one does not need to pursue high regularity.\par
We  show the following theorem.
\begin{thm}[propagation of regularity]\label{regularity}
For any given $K_{1}>1,K_{2}>2 $, {not necessarily integers}, if  $K_{1}<\frac{K_{2}}{2}$, then we construct a solution $u$ in $H^{K_{2}}$ 
to \eqref{nls} that blows up according to the log-log law as \eqref{loglogblowup} at finite time $T$ and such that
\begin{itemize}
\item  $sup_{t\in (0,T)}|x(t)-x(0)|<\frac{1}{1000}$.
\item $u(t,x)$ is bounded in $H^{K_{1}}$ when restricted in $|x-x_{0}|\geq 1/2$, 
\end{itemize}
\end{thm}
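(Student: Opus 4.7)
The plan is to construct highly concentrated, highly regular initial data near $x_{0}$ and then run two coupled bootstraps: the standard Merle--Rapha\"el log--log bootstrap for the singular part of $u$ in $H^{1}$, and an outer bootstrap that propagates $H^{K_{1}}$ control of $u$ away from $x_{0}$. Concretely, take
\[
u_{0}(x)=\frac{1}{\lambda_{0}}\tilde{Q}_{b_{0}}\!\left(\frac{x-x_{0}}{\lambda_{0}}\right)e^{-i\gamma_{0}}
\]
with $\lambda_{0}$ and $b_{0}$ chosen very small. Since $\tilde{Q}_{b_{0}}$ is exponentially decaying and $\lambda_{0}\ll 1$, this rescaled profile is $O(\lambda_{0}^{N})$-small in every Sobolev norm on $\{|x-x_{0}|\geq 1/4\}$, so $u_{0}\in H^{K_{2}}(\mathbb{R}^{2})$ automatically, even though $Q$ itself only sits in $H^{1}$.

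Applying the Merle--Rapha\"el theory, one obtains on a maximal interval $[0,T)$ the decomposition \eqref{loglogblowup} with $\lambda(t)\sim\sqrt{(T-t)/\ln|\ln(T-t)|}$. The first bullet of the theorem follows from the modulation equations for $(\lambda,x,\gamma,b)$: as in Planchon--Rapha\"el \cite{planchon2007existence}, the blow-up center $x(t)$ has a limit as $t\to T$, and its total displacement can be made arbitrarily small by taking $\lambda_{0}$ small. Thus one secures $\sup_{t\in[0,T)}|x(t)-x(0)|<1/1000$, which in turn guarantees that the singular center stays at distance $\geq 1/2-1/1000$ from every point in $\{|x-x_{0}|\geq 1/2\}$.

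For the second bullet I would pick a smooth cut-off $\chi$ with $\chi\equiv 0$ on $B(x_{0},1/4)$ and $\chi\equiv 1$ outside $B(x_{0},1/2)$, set $v=\chi u$, and work with the equation
\[
i\partial_{t}v+\Delta v+|v|^{2}v=2\nabla\chi\cdot\nabla u+(\Delta\chi)u+(\chi^{3}-\chi)|u|^{2}u,
\]
whose right-hand side is supported in the annulus $A=\{1/4\leq|x-x_{0}|\leq 1/2\}$. On $A$, the profile part $\lambda^{-1}Q(\tfrac{\cdot-x(t)}{\lambda})$ is $O(\lambda^{N})$ in every Sobolev norm because its argument has size $\gtrsim 1/\lambda$, so the only substantial contribution to the source is the $H^{1}$-tail $\Xi$ restricted to $A$. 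Using Strichartz estimates together with Kato--Ponce for the fractional derivative $|\nabla|^{K_{1}}$ applied to the cubic nonlinearity (note $K_{1}>1=d/2$ so $H^{K_{1}}(\mathbb{R}^{2})$ is an algebra), one can then close a bootstrap of the form $\|v(t)\|_{H^{K_{1}}}\leq M$ on $[0,T)$.

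The main obstacle is reconciling the two bootstraps. The interior log--log analysis only controls $\Xi$ through $H^{1}$-type quantities at the scale $\lambda$, whereas the exterior bootstrap demands that $\chi_{A}\Xi$ remains bounded in $H^{K_{1}}$ uniformly as $t\to T$. This is precisely where the condition $K_{1}<K_{2}/2$ enters: it encodes the half-derivative loss that appears when $|\nabla|^{K_{1}}$ hits the nonlinear self-interaction of the concentrating profile with its tail, in the spirit of the regularity-persistence scheme of Rapha\"el--Szeftel \cite{raphael2009standing}. Following their strategy, I would iterate Duhamel for $v$ in suitable Strichartz spaces, separately treat the region $A$ (where $u$ is essentially $\Xi$), and track the regularity budget carefully to verify that $K_{2}-2K_{1}>0$ is enough to absorb all such losses and close the outer bootstrap.
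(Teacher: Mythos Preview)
Your overall architecture---prepare concentrated $H^{K_2}$ data, run the Merle--Rapha\"el log--log bootstrap inside, and run a separate regularity argument outside---matches the paper, and the first bullet is fine. But the outer bootstrap as you describe it has a real gap. The source term $2\nabla\chi\cdot\nabla u$ in the equation for $v=\chi u$ requires $H^{K_1+1}$ control of $u$ on the annulus $A$, and the log--log bootstrap gives no such pointwise-in-time control: from $\int|\nabla\epsilon|^2\lesssim\Gamma_b^{3/4}$ one only gets $\|\nabla u\|_{L^2(A)}^2\lesssim\lambda^{-2}\Gamma_b^{3/4}$, and since $\Gamma_b\gg\lambda^N$ for every $N$ this blows up as $t\to T$. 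What the paper exploits instead is a \emph{space-time} bound coming from the Merle--Rapha\"el Lyapounov functional (see \eqref{origingain}):
\[
\int_0^T\|\nabla(\chi_0 u)(t)\|_{L^2}^2\,dt\lesssim 1,
\]
uniformly up to blow-up. This dispersion estimate is the indispensable seed for the outer argument, and it does not appear in your proposal.

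From this seed the paper proceeds by (i) establishing a rough global bound $\|u(t)\|_{H^{K_2}}\lesssim_\sigma\lambda(t)^{-K_2-\sigma}$ via the upside-down $I$-method (Lemma~\ref{lemosroughcontrol}), and (ii) iterating through a \emph{nested sequence} of cut-offs $\chi_0,\chi_1,\dots,\chi_L$ retreating from $x_0$: at each step one interpolates between the current $H^r$ bound on $\chi_i u$ and the rough $H^{K_2}$ bound, then uses $\int_0^T\lambda^{-\mu}\,dt<\infty$ for $\mu<2$ (from \eqref{scalefromblowupspeed}) to integrate the Duhamel source. This yields $\|\chi_{i+1}u\|_{H^{\tilde r}}\lesssim 1$ for every $\tilde r<\frac{2(K_2-r)}{K_2}-1+r$ (Lemma~\ref{lemositeration}), whose fixed point is $r=K_2/2$---that is where the condition $K_1<K_2/2$ actually comes from, not from a ``half-derivative loss'' in the cubic interaction. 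A single cut-off cannot work because the commutator term $\nabla\chi\cdot\nabla u$ always costs one more derivative than you currently control on $A$; the nested retreat is what makes the iteration close. Invoking \cite{raphael2009standing} does not rescue this either: their pseudo-energy scheme is precisely what the paper is improving upon, and as remarked after Theorem~\ref{regularity} it does not directly work in $d=2$ and in any case only reaches $K_1\le(K_2-1)/2$.

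(Minor corrections: $Q$ is smooth and exponentially decaying, hence lies in every $H^s$; and $\tilde Q_b$ is compactly supported in $|y|\lesssim 1/|b|$, not merely exponentially small.)
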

\begin{rem}
The choice of  special numbers $\frac{1}{2}$, $1/1000$ is (of course) just for concreteness and simplicity.
\end{rem}
\begin{rem}
When d=1, and $K_{2}$ is an integer, and one can prove Theorem \ref{regularity} for $K_{1}\leq \frac{K_{2}-1}{2}$ by sightly modifying the language  of \cite{raphael2009standing}. Their method is based on a  bootstrap  argument and  a certain pesudo-energy.. When d=2, Rapha\"el and Szeftel's method does not seem to directly work, and one should be able to use the argument in \cite{zwiers2010standing} to prove Theorem \ref{regularity} for $K_{1}\leq \frac{K_{2}-1}{2}$ when $K_{2}$ is integers.
 Our proof improves the previous results in the two aspects. We can take $K_{1}<\frac{K_{2}}{2}$ and we do not require $K_{2}$ to be integer.  Our proof is written  more in a  harmonic analysis style, relying on the  (upside-down) I-method, \cite{colliander2002almost},\cite{sohinger2010bounds}, interpolation and  Strichartz estimate. 
\end{rem}
\begin{rem}
When $K_{2}\geq K_{1}=1$, Theorem \ref{regularity} is implied by the work of  Holmer and Roudenko in \cite{holmer2012blow}.
\end{rem}
One should understand Theorem \ref{regularity} as a  proof for the fact that the log-log blow up behavior is local, in the sense that  it does not propagate  singularity outside the blow up point. This will help us to decouple the $m$"solitons" , (we sometimes also call them bubbles), in our construction of blow up solution. 

\begin{rem}
Because of the good localized property of log-log blow up, one can even work on manifolds, see \cite{godet2012blow} for work in this direction.
\end{rem}

\begin{rem}
It is not always true  in this kinds of problems, (if one doesn't put some restriction on the data), that the $m$ "solitons" or $m$ bubbles will be decoupled.  Different bubbles may interact with each other in a strong way.  See very recent work \cite{martel2015strongly} for this direction.
\end{rem}

Once one can somehow decouple the $m$ soliton $u^{1},...u^{m}$, then we will use some topological argument to construct initial data,  and  balance those $m$ bubbles and make them blow up in the same time. And, prescription of the blow up points is actually more subtle than making $m$ bubbles blow up at the same time. Fortunately, by taking advantage of the sharp dynamic of log-log blow up, it can still be achieved by certain topological argument. We remark, it is typical that one may rely on soft topological argument rather than pure analysis to prove things like this, see \cite{merle1992solution}, \cite{planchon2007existence},\cite{cote2011construction}, through one needs to find different topological argument in different settings.

\subsection{Notation}
Throughout this article, $\alpha$ is used to denote a universal small number, $\delta(\alpha)$ is  a small number depending on $\alpha$ such that $\lim_{\alpha\rightarrow 0}\delta(\alpha)=0$.
We use $\delta_{0}, \delta_{1},\dots$ to denote universal constant (they are usually small, but don't depend on $\alpha$). We use $C$ to denote a large constant, it usually changes line by line.
We also use $c$, $\eta$ and  $a$ to denote small constants. For any constant $r$, we use $r\pm$ to mean   $r\pm\delta$ where $\delta$ is a small positive constant.

We write  $A\lesssim B$ when 
 $A\leq CB$, for some universal constant $C$,  we write $A\gtrsim B$ if $B\lesssim A$. We write $A\sim B$ if $A\lesssim B$ and $B\lesssim A$. As usual, $A\lesssim_{\sigma} B$ means that $A\leq C_{\sigma} B$, where $C_\sigma$ is a constant depending on $\sigma$..

We use $\Lambda$ to denote the operator $\frac{d}{2}+y\nabla$ on $H^{1}(\RRR^{d})$. 
We  use the notation
\begin{equation}
\epsilon_{1}:=\Re \epsilon, \epsilon_{2}:=\Im \epsilon, \text{ i.e. } \epsilon=\epsilon_{1}+i\epsilon_{2},
\end{equation}
where $\Re$ is the real part and $\Im$ is the imaginary part.
 
We  use usual functional spaces $L^{p}$, $C^{1},...,C^{k}$ and $C^{\infty}$, we will also use Sobolev space $H^{s}$, $s\in \RRR$.\\
If not explicitly pointed out, $L^{p}$ means $L^{p}(\RRR^{d})$, so for the  other spaces.  We also use $L_{t}^{q}L_{x}^{p}$ to denote $L^{q}(\RRR; L^{p}(\RRR^{d}))$.  When a certain function is only defined on $I\times \RRR^{d}$, we  also use the notation $L^{q}(I;L^{p}(\RRR^{d}))$. Sometimes we use $\|f\|_{p}$ to denote $\|f\|_{L^{p}}$.

We use $(,)$ to denote the usual $ L^{2}$ (complex) inner product.

Finally, for a solution $u(t,x)$, we use $(T^{-}(u),T^{+}(u))$ to denote its lifespan.

\subsection{A quick review of Merle and Rapha\"el's work and heuristics for the localization of log-log blow up}\label{mrreview}
Let us quickly review the work of Merle and Rapha\"el and highlight the bootstrap structure related to it.
At the starting point of their series of work, in \cite{merle2005blow}, they consider a solution $u$ to \eqref{nls} with initial data $u_{0}\in H^{1}$ satisfies \eqref{supercritical}, with zero momentum and strictly negative energy. They rely on the following variational argument:
\begin{lem}[Lemma 1 in \cite{merle2005blow}]\label{lemvar}
For an arbitrary function $f\in H^{1}$, with energy  $E(f)\leq 0$, if also $f$ satisfies \eqref{supercritical}, then  one can find parameters $\lambda_{0}\in \RRR^{+},x_{0}\in \RRR^{d},\gamma_{0}\in \RRR$ and $\epsilon \in H^{1}$, such that
\begin{equation}
e^{i\gamma_{0}}\lambda_{0}^{\frac{d}{2}}f(\lambda_{0} x+x_{0})=Q+\epsilon,
\end{equation}
and
\begin{equation}
\|\epsilon\|_{H^{1}}\leq \delta(\alpha).
\end{equation}
\end{lem}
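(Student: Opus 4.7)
The statement is a standard variational consequence of the sharp Gagliardo--Nirenberg inequality, combined with the fact that its extremizers are exactly the rescaled translated ground states. I would prove it by contradiction, extracting a sequence and using concentration compactness to force strong $H^{1}$ convergence to a ground state modulo symmetries.

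The plan is as follows. First, I would negate the conclusion: suppose there exist $\delta_{0}>0$ and a sequence $f_{n}\in H^{1}$ with $E(f_{n})\leq 0$, $\|Q\|_{2}<\|f_{n}\|_{2}<\|Q\|_{2}+\tfrac{1}{n}$, and
\[
\inf_{\lambda_{0},x_{0},\gamma_{0}}\bigl\|e^{i\gamma_{0}}\lambda_{0}^{d/2}f_{n}(\lambda_{0}\cdot+x_{0})-Q\bigr\|_{H^{1}}\geq \delta_{0}.
\]
Since the norms of the candidate decomposition are invariant under the mass-critical rescaling $f\mapsto\lambda^{d/2}f(\lambda\cdot)$, I can choose $\lambda_{n}>0$ so that the rescaled sequence $\tilde f_{n}(x):=\lambda_{n}^{d/2}f_{n}(\lambda_{n}x)$ satisfies $\|\nabla\tilde f_{n}\|_{2}=\|\nabla Q\|_{2}$; one checks that such $\lambda_{n}$ exists (or, degenerately, $\|\nabla f_{n}\|_{2}=0$, easily excluded by the energy sign plus GN). This gives a sequence uniformly bounded in $H^{1}$ with the same mass and the same sign of energy as $f_{n}$.

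Next, I would invoke the sharp Gagliardo--Nirenberg inequality of Weinstein,
\[
\int|f|^{2+\frac{4}{d}}\leq\frac{2+\frac{4}{d}}{2}\,\|\nabla f\|_{2}^{2}\left(\frac{\|f\|_{2}}{\|Q\|_{2}}\right)^{\frac{4}{d}},
\]
with equality precisely on the orbit $\{e^{i\gamma}\lambda^{d/2}Q(\lambda\cdot+x)\}$. Combining this with $E(\tilde f_{n})\leq 0$ and $\|\tilde f_{n}\|_{2}\to\|Q\|_{2}$ yields
\[
0\geq E(\tilde f_{n})\geq \tfrac{1}{2}\|\nabla \tilde f_{n}\|_{2}^{2}\bigl(1-(\|\tilde f_{n}\|_{2}/\|Q\|_{2})^{4/d}\bigr)=o(1),
\]
so the Gagliardo--Nirenberg inequality is saturated by $\tilde f_{n}$ in the limit. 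A profile decomposition (in the spirit of Lions, or the Hmidi--Keraani bubble decomposition) applied to the bounded $H^{1}$ sequence $\tilde f_{n}$ then yields a subsequence and translations $y_{n}\in\RRR^{d}$ such that $\tilde f_{n}(\cdot-y_{n})\rightharpoonup g$ in $H^{1}$; the near-saturation of sharp GN forces $g$ to be a nonzero extremizer and forces the other profiles and the remainder to vanish in $L^{2+4/d}$ and in $\dot H^{1}$, hence $\tilde f_{n}(\cdot-y_{n})\to g$ strongly in $H^{1}$. Since extremizers are $g=e^{-i\gamma_{\infty}}Q$ (after a further rescaling, absorbed in the already-fixed $\lambda_{n}$), we get $\|e^{i\gamma_{\infty}}\tilde f_{n}(\cdot-y_{n})-Q\|_{H^{1}}\to 0$, contradicting the lower bound $\delta_{0}$ above.

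The principal obstacle is the last step, i.e., ruling out vanishing and dichotomy of the bounded sequence $\tilde f_{n}$, because the $H^{1}$ embedding into $L^{2+4/d}$ is not compact on $\RRR^{d}$ and translation invariance must be broken by the sequence $y_{n}$. This is precisely what the sharp Gagliardo--Nirenberg inequality and its rigidity achieve: any vanishing of $\tilde f_{n}$ forces $\int|\tilde f_{n}|^{2+4/d}\to 0$, contradicting $E(\tilde f_{n})\leq 0$ and $\|\nabla\tilde f_{n}\|_{2}=\|\nabla Q\|_{2}$, while dichotomy contradicts the strict (as $n\to\infty$) saturation of sharp GN, whose extremizers are nondegenerate. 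Once strong convergence is established, the parameters $\lambda_{0},x_{0},\gamma_{0}$ in the conclusion are read off as $\lambda_{0}=\lambda_{n}$, $x_{0}=\lambda_{n}y_{n}$, $\gamma_{0}=\gamma_{\infty}$, and the error $\eps$ satisfies $\|\eps\|_{H^{1}}=o(1)=\delta(\alpha)$, as claimed.
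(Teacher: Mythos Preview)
The paper does not give its own proof of this lemma; it is quoted as Lemma~1 of \cite{merle2005blow} and used as a black box in the subsequent review of the Merle--Rapha\"el theory. Your contradiction argument via the sharp Gagliardo--Nirenberg inequality and concentration compactness is the standard route to this result and is essentially the argument found in the cited reference.
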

This lemma implies that  for the special solution $u(t)$ to \eqref{nls} considered by Merle and Rapha\"el,  one has the geometric decomposition
\begin{equation}\label{var1}
u(t)=\frac{1}{\lambda(t)^{\frac{d}{2}}}\left(Q+\epsilon(t)\right)\left(\frac{x-x(t)}{\lambda(t)}\right)e^{-i\gamma(t)},
\end{equation}
\begin{equation}\label{var2}
\|\epsilon(t)\|_{H^{1}}\leq \delta(\alpha).
\end{equation}
Note that one has some freedom in choosing  the three parameters $\lambda(t), x(t)$ and $\gamma(t)$. Because of this freedom   one can further use  the modulation theory to derive the next lemma.
\begin{lem}[Lemma 2 in \cite{merle2005blow}]\label{firstapprox}
Let $u(t)$ be the solution to \eqref{nls} with initial data $u_{0}$, which has zero momentum, strictly negative energy and let $u_{0}$ satisfy \eqref{supercritical}. Then within the lifespan of $u(t)$, there are three unique parameters $\lambda(t), x(t), \gamma(t)$ and $\epsilon=\epsilon_{1}+i\epsilon_{2}\in H^{1}$, such that
\begin{eqnarray}
u(t)=\frac{1}{\lambda(t)^{\frac{d}{2}}}(Q+\epsilon(t))\left(\frac{x-x(t)}{\lambda(t)}\right)e^{-i\gamma(t)},\\
\|\epsilon(t)\|_{H^{1}}\leq \delta(\alpha),\\
(\epsilon_{1},\Lambda Q)=(\epsilon_{1},yQ)=(\epsilon_{2},\Lambda^{2} Q)=0.
\end{eqnarray}
\end{lem}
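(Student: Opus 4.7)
My plan is to obtain Lemma \ref{firstapprox} from Lemma \ref{lemvar} by applying the implicit function theorem. Lemma \ref{lemvar} already furnishes, at each time $t$ in the lifespan, some parameters $(\lambda_0, x_0, \gamma_0)$ and a remainder $\epsilon_0\in H^1$ with $\|\epsilon_0\|_{H^1}\leq \delta(\alpha)$, but with no orthogonality built in. The three extra orthogonality conditions match precisely the three scalar/vector parameter degrees of freedom (one for $\lambda$, $d$ for $x$, one for $\gamma$), so I expect to adjust $(\lambda_0, x_0, \gamma_0)$ slightly to $(\lambda, x, \gamma)$ so that the corresponding $\epsilon$ satisfies the three orthogonalities.

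Concretely, write $v_{\lambda,x,\gamma}(y) := e^{i\gamma}\lambda^{d/2} u(t,\lambda y + x)$ and $\epsilon = v_{\lambda,x,\gamma}-Q$. Define the map
\begin{equation*}
F(\lambda,x,\gamma;u) := \bigl((\epsilon_1,\Lambda Q),\,(\epsilon_1,yQ),\,(\epsilon_2,\Lambda^2 Q)\bigr)\in \RRR\times\RRR^d\times\RRR.
\end{equation*}
When $v_{\lambda,x,\gamma} = Q$ exactly, one computes the derivatives of $\epsilon$ as
\begin{equation*}
\partial_\gamma \epsilon = iQ, \qquad \partial_\lambda \epsilon = \tfrac{1}{\lambda}\Lambda Q, \qquad \partial_{x_k}\epsilon = \tfrac{1}{\lambda}\partial_{y_k}Q.
\end{equation*}
Since $\Lambda Q$ and $\partial_{y_k}Q$ are real and $iQ$ is purely imaginary, the Jacobian of $F$ decouples into two blocks: one for the pair $(f_1,f_2)$ depending only on $(\lambda,x)$, and one for $f_3$ depending only on $\gamma$. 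Using the radiality of $Q$ (parity), the cross-terms $(\partial_{y_k}Q,\Lambda Q)$ and $(\Lambda Q, y_kQ)$ vanish, leaving a diagonal block with entries $\|\Lambda Q\|_2^2/\lambda$ on the $\lambda$-line and $-\|Q\|_2^2/(2\lambda)$ on each $x_k$-line (from $(\partial_{y_k}Q, y_k Q) = -\tfrac{1}{2}\|Q\|_2^2$). For the third block, using the skew-adjointness $\Lambda^* = -\Lambda$ (which follows directly from integration by parts), I get $\partial_\gamma f_3 = (Q,\Lambda^2 Q) = -\|\Lambda Q\|_2^2\neq 0$. All diagonal entries are nonzero, so the Jacobian is invertible at any point where $\epsilon = 0$.

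Next I apply the implicit function theorem in an $H^1$-neighborhood of the orbit of $Q$ under the symmetry group: for $u$ with $\inf_{\lambda_0,x_0,\gamma_0}\|e^{i\gamma_0}\lambda_0^{d/2} u(\lambda_0\cdot + x_0) - Q\|_{H^1}\leq \delta(\alpha)$, the map $F(\cdot;u)$ has a (locally) unique zero $(\lambda,x,\gamma)$ close to $(\lambda_0,x_0,\gamma_0)$ and the resulting $\epsilon$ inherits $\|\epsilon\|_{H^1}\leq \delta(\alpha)$ from the variational estimate of Lemma \ref{lemvar} (up to a constant). By Lemma \ref{lemvar} the hypothesis on $u$ is satisfied at every time $t$ in the lifespan, so the decomposition exists pointwise in $t$. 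Uniqueness of the local IFT solution, combined with the continuity of $t\mapsto u(t)$ in $H^1$ guaranteed by the LWP theory, lets me patch the local choices together into continuous (in fact $C^1$) functions $\lambda(t),x(t),\gamma(t)$ on $[0,T^+(u))$.

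The main obstacle I anticipate is not the algebraic non-degeneracy check above (which is routine once one knows the skew-adjointness of $\Lambda$ and the parity of $Q$) but rather the bookkeeping needed to promote the local IFT result into a \emph{global-in-time} decomposition with the stated quantitative bound $\|\epsilon(t)\|_{H^1}\leq \delta(\alpha)$. This requires verifying that the parameters produced by IFT do not drift out of the neighborhood where IFT applies; continuity of $u(t)$, continuity of the adjustment from $(\lambda_0,x_0,\gamma_0)$ to $(\lambda,x,\gamma)$, and the uniform variational bound from Lemma \ref{lemvar} together resolve this, and uniqueness over the whole lifespan follows from a standard connectedness argument once the set of $t$ for which the decomposition holds is shown to be open, closed and nonempty.
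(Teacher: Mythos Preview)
Your argument is correct and is precisely the standard modulation-theory proof via the implicit function theorem; the Jacobian computation (block-diagonalization using the radiality of $Q$ and the skew-adjointness of $\Lambda$) is accurate. Note that the paper does not give its own proof of this lemma---it is quoted as Lemma~2 of \cite{merle2005blow}---and your proposal reproduces the argument one finds in that reference.
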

Now the study of \eqref{nls} is transferred to the study of the evolution of system $\{\epsilon(t), x(t),\lambda(t),\gamma(t)\}$. We remark here that the blow up rate is determined by the parameter $\lambda(t)$.

 In this setting, Merle and Rapha\"el are using the ground state $Q$ to approximate the solution $u(t)$ (up to space translation, scaling and  phase transformation). It turns out that sharper results can be obtained by using $\qbb$, a modification of $Q$, \cite{merle2003sharp}, \cite{merle2006sharp}.
  Let's give a brief description of $\qbb$, see Proposition 1 in \cite{merle2006sharp}  for details.
  
   Let $b\in \RRR,\eta\in \RRR_{+}$ be small enough, $\eta$ is fixed. Let us define 
\begin{equation}\label{tempdefn}   
   R_{b}:=\frac{2}{|b|}\sqrt{1-\eta}, \quad R_{b}^{-}:=\sqrt{1-\eta}R_{b},
\end{equation}   
   
   and  let $\phi_{b}$ be a smooth cut-off function which equals 1 on  $|x|\leq R_{b}^{-}$ and vanishes for $|x|\geq R_{b}$. Then the modified profile $\qbb:=Q_{b}\phi_{b}$, where $Q_{b}$ solves the  equation
\begin{equation}
\begin{cases}
\Delta Q_{b}-Q_{b}+ib\Lambda Q_{b}+|Q_{b}|^{4/d}Q_{b}=0,\\
P_{b}\equiv Q_{b}e^{i\frac{b|y|^{2}}{4}}>0 \text{ in } B_{R_{b}},
Q_{b}(0)\in (Q(0)-\epsilon^{*}(\eta),Q(0)+\epsilon^{*}(\eta)), Q_{b}(R_{b})=0.
\end{cases}
\end{equation} 
Here we also define $\Psi_{b}$ 
\begin{equation}\label{qbbalmostselsimliar}
\Psi_{b}=-\Delta \qbb+\qbb-ib\Lambda\qbb-\qbb|\qbb|^{4/d},
\end{equation}
that will be used later.

We now  list  some useful estimates for $\qbb$:
\begin{enumerate}
\item $\qbb$ is uniformly close to $Q$ in the sense:
\begin{equation}\label{convergence}
\|e^{(1-\eta)\frac{\theta(|b||y|)}{|b|}}(\qbb-Q)\|_{C^{3}}+\|e^{(1-\eta)\frac{\theta(|b||y|)}{|b|}}(\frac{\partial}{\partial_{b}}\qbb+i\frac{|y|^{2}}{4}Q)\|\xrightarrow{b\rightarrow 0} 0,
\end{equation}
where 
\begin{equation}
\theta(r)=1_{0\leq r\leq 2}\int_{0}^{r}\sqrt{1-\frac{z^{2}}{4}}dz+1_{r>2}\frac{\theta(2)}{2}r.
\end{equation}
\item $\qbb$ is supported in $|y|\lesssim \frac{1}{|b|}$.
\item $\qbb$ has strictly super critical mass:
\begin{equation}
0<\frac{d}{d|b|^{2}}\|Q_{b}\|^{2}<\infty,
\end{equation}
i.e. $\|Q_{b}\|^{2}-\|Q\|_{2}^{2}\sim b^{2}$.
\item $\qbb$ is uniformly bound in $H^{s}, s\in R$ for all $b$ small enough. (Recall we only consider d=1,2)
\begin{equation}\label{uniform}
\|\qbb\|_{H^{s}}\lesssim_{s} 1.
\end{equation}
\end{enumerate}
\begin{rem}
$\theta(2)=\frac{\pi}{2}.$
\end{rem}
\begin{rem}
Estimate \eqref{uniform} is implied by estimate \eqref{convergence} when $s\leq 3$. However,  Merle and Rapha\"el  consider the $C^{3}$ rather than the general $C^{k}$ convergence in \eqref{convergence}  only due to the fact the nonlinearity $|Q|^{\frac{4}{d}}Q$ itself is not smooth enough when $d\geq 3$.  Thus for d=1,2, since the nonlinearity is algebraic, \eqref{uniform} holds for all $s$. And indeed it is not hard to directly use standard elliptic estimates to prove  \eqref{uniform} for $s\geq 3$ once we know this holds for $s\leq 3$. This fact is already implicitly used in \cite{raphael2009standing} for $d=1$.
\end{rem}
\begin{rem}
Later in this work many terms will involve $C\eta$ but we  will be able to fix $\eta$, such that $C\eta$ is as small as we want.
\end{rem}
\begin{rem}
Note that since $Q$ decays exponentially and $\qbb$ is uniformly close to Q, it is standard that  for given $N\in \mathbb N$, terms of the form $(f,\qbb), (\nabla^{N} f, \qbb),(y^{N}f, \qbb) $ are  controlled by $\int (|\nabla f|^{2}+|f|^{2}e^{-|y|})^{\frac{1}{2}}.$ This is widely used in \cite{merle2003sharp},\cite{merle2006sharp}.
\end{rem}
With  $\qbb$, Merle and Rapha\"el modify the lemma \ref{firstapprox} to the following:
\begin{lem}[Lemma 2 in \cite{merle2003sharp}]\label{secondapprox}
Let u(t) be the solution to \eqref{nls} with initial data $u_{0}$, which has zero momentum, strictly negative energy and  satisfies  \eqref{supercritical}. Then within the lifespan of the $ u(t)$, there are unique parameter $\{b(t), \lambda(t), x(t), \gamma(t)\}\in \RRR\times \RRR^{+}\times \RRR^{d} \times \RRR$ such that
\begin{eqnarray}
u(t,x)=\frac{1}{\lambda(t)^{\frac{1}{2}}}(\qbb+\epsilon)\left(\frac{x-x(t)}{\lambda(t)}\right)e^{-i\gamma(t)},\\  \label{geometric}
\|\epsilon\|_{H^{1}}+|b|\leq \delta(\alpha), \label{var3}
\end{eqnarray}
\begin{equation}\label{sharpo1}
(\epsilon_{1},|y|^{2}\Sigma_{b})+(\epsilon_{2}),|y|^{2}\Theta_{b(t)})=0,
\end{equation}
\begin{equation}\label{sharpo2}
(\epsilon_{1},y\Sigma_{b})+(\epsilon_{2}, y\Theta_{b})=0,
\end{equation}
\begin{equation}\label{sharpo3}
-(\epsilon_{1},\Lambda^{2}\Theta_{b})+(\epsilon_{2},\Lambda^{2}\Sigma_{b})=0,
\end{equation}
\begin{equation}\label{sharpo4}
-(\epsilon_{1},\Lambda\Theta_{b})+(\epsilon_{2}, \Lambda\Sigma_{b})=0.
\end{equation}
Here $\qbb:=\Sigma_{b}+i\Theta_{b}$.
\end{lem}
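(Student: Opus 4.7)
The plan is to bootstrap from Lemma \ref{firstapprox} via the implicit function theorem, adding the new modulation parameter $b$ and replacing $Q$ by $\qbb$. Writing the decomposition of Lemma \ref{firstapprox} as
\[
u(t,\cdot) = \lambda_0^{-d/2}(Q+\epsilon^{(0)})\bigl((\cdot-x_0)/\lambda_0\bigr)e^{-i\gamma_0},\qquad \|\epsilon^{(0)}\|_{H^1}\le\delta(\alpha),
\]
I look for small corrections $(\mu,\xi,\theta,b)\in\RRR\times\RRR^d\times\RRR\times\RRR$ and set $\lambda=\lambda_0(1+\mu)$, $x=x_0+\lambda_0\xi$, $\gamma=\gamma_0+\theta$, so that the induced profile
\[
\epsilon(y):=(1+\mu)^{d/2}e^{i\theta}(Q+\epsilon^{(0)})\bigl((1+\mu)y+\xi\bigr)-\qbb(y)
\]
satisfies the four orthogonality conditions \eqref{sharpo1}--\eqref{sharpo4}.

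I would then define $\Phi(\mu,\xi,\theta,b)\in\RRR^{d+3}$ to be the vector assembling the left-hand sides of \eqref{sharpo1}--\eqref{sharpo4} (note that \eqref{sharpo2} accounts for $d$ scalar conditions). At the base point $(\mu,\xi,\theta,b)=0$ with $\epsilon^{(0)}=0$ we have $\epsilon=0$ and $\Phi=0$. Since $\Sigma_b|_{b=0}=Q$, $\Theta_b|_{b=0}=0$, and \eqref{convergence} gives $\partial_b\qbb|_{b=0}=-i|y|^2 Q/4$, one computes at the base point
\[
\partial_\mu\epsilon|_0=\Lambda Q,\qquad \partial_\xi\epsilon|_0=\nabla Q,\qquad \partial_\theta\epsilon|_0=iQ,\qquad \partial_b\epsilon|_0=i\tfrac{|y|^2}{4}Q.
\]
Splitting into real/imaginary parts and invoking radiality of $Q$ to kill odd pairings, the matrix $D\Phi|_0$ becomes block upper-triangular under the grouping $\{F_1\}\oplus\{F_2\}\oplus\{F_3,F_4\}$ versus $\{\mu\}\oplus\{\xi\}\oplus\{\theta,b\}$, with diagonal blocks
\[
(\Lambda Q,|y|^2 Q),\qquad -\tfrac12\|Q\|_2^2\,I_d,\qquad \begin{pmatrix}-\|\Lambda Q\|_2^2 & \ast\\ 0 & \tfrac14(|y|^2 Q,\Lambda Q)\end{pmatrix},
\]
and integration by parts (using $\Lambda^{\ast}=-\Lambda$) evaluates $(\Lambda Q,|y|^2 Q)=-\int|y|^2 Q^2$, $(\partial_i Q,y_j Q)=-\tfrac12\delta_{ij}\|Q\|_2^2$, $(Q,\Lambda^2 Q)=-\|\Lambda Q\|_2^2$, and $(|y|^2 Q,\Lambda Q)=-\int|y|^2 Q^2$. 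All three diagonal blocks are invertible for $d=1,2$.

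Granted invertibility, the implicit function theorem produces, for $\|\epsilon^{(0)}\|_{H^1}$ (equivalently $\alpha$) small enough, a unique small $(\mu,\xi,\theta,b)$ solving $\Phi=0$, with smooth dependence on $\epsilon^{(0)}\in H^1$. Pulling back to $(b,\lambda,x,\gamma)$ yields \eqref{geometric} together with \eqref{sharpo1}--\eqref{sharpo4}, and continuity of the implicit map delivers $\|\epsilon\|_{H^1}+|b|\lesssim\|\epsilon^{(0)}\|_{H^1}\le\delta(\alpha)$. Uniqueness throughout the lifespan of $u$ follows from local uniqueness in the IFT combined with a standard connectedness argument in $t$.

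The main obstacle is the algebraic verification that the $(\theta,b)$ block is nondegenerate. Both $\partial_\theta\epsilon$ and $\partial_b\epsilon$ are purely imaginary, so they only test the imaginary sector of $\Phi$ (namely $F_3$ and $F_4$), and a priori one might worry that $iQ$ and $i|y|^2 Q/4$ are not independent with respect to the weights $\Lambda\Sigma_b$ and $\Lambda^2\Sigma_b$. The above computation shows this is not the case: the $2\times 2$ block is triangular with diagonal entries $-\|\Lambda Q\|_2^2$ and $-\tfrac{1}{4}\int|y|^2 Q^2$, both strictly negative, so the whole Jacobian is invertible uniformly in $d=1,2$.
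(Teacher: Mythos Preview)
The paper does not give its own proof of this lemma; it is simply quoted as Lemma~2 of \cite{merle2003sharp}. Your argument is the standard modulation construction used there: start from the three-parameter decomposition of Lemma~\ref{firstapprox}, add the parameter $b$, and solve the four orthogonality constraints by the implicit function theorem, the Jacobian at $(Q,0)$ being block-triangular with nonzero diagonal entries $(\Lambda Q,|y|^2Q)$, $-\tfrac12\|Q\|_2^2 I_d$, $-\|\Lambda Q\|_2^2$, $-\tfrac14\int|y|^2Q^2$. This is correct and is exactly the approach of the cited reference, so there is nothing to add.
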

We note that \eqref{var3} a priori assures that the whole analysis is of perturbative  nature.
Again, the study of \eqref{nls} is transferred to the study of the system $\{\epsilon(t), \lambda(t),\gamma(t),b(t),x(t)\}$. To analyze  this system  it is essential that one considers the slowly varying time variable $s$ rather than the $t$:
\begin{equation}
\frac{dt}{ds}=\frac{1}{\lambda^{2}},
\end{equation}
Note that this change of variable changes the lifespan of u (in $t$ variable) to  the whole $\RRR$, (in $s$ variable), no matter if the original solution $u$ blows up in finite time or not.

Now $u$ satisfying \eqref{nls} is equivalent to $\epsilon_{1}, \epsilon_{2}, b(s),\lambda(s), x(s),\gamma(s)$ satisfying the system\footnote{Here we slightly abuse  notation.  For example, $x(s)$ actually means $x(t(s))$.}:
 \begin{equation}\label{sys1}
 \begin{aligned}
 &b_{s}\frac{\partial \Sigma}{\partial b}+\partial_{s}\epsilon_{1}-M_{-}(\epsilon)+b\Lambda\epsilon_{1}=\\
 &(\frac{\lambda_{s}}{\lambda}+b)\Lambda\Sigma+\tilde{\gamma}_{s}\Theta+\frac{x_{s}}{\lambda}\nabla\Sigma+(\frac{\lambda_{s}}{\lambda}+b)(\Lambda \epsilon_{1})
 +\tilde{\gamma_{s}}\epsilon_{2}+\frac{x_{s}}{\lambda}(\nabla \epsilon_{1})+\Im \Psi_{b}-R_{2}(\epsilon)
 \end{aligned}
 \end{equation}
 \begin{equation}\label{sys2}
 \begin{aligned}
 &b_{s}\frac{\partial{\Theta}}{\partial b}+\partial_{s}\epsilon_{2}+M_{+}(\epsilon)+b\Lambda\epsilon_{2}=\\
 &(\frac{\lambda_{s}}{\lambda}+b)\Lambda\Theta-\tilde{\gamma}_{s}\Sigma+\frac{x_{s}}{\lambda}\nabla\Theta+(\frac{\lambda_{s}}{\lambda}+b)\Lambda \epsilon_{2}
 -\tilde{\gamma}_{s}\epsilon_{1}+\frac{x_{s}}{\lambda}(\nabla \epsilon_{2})-\Re \Psi_{b}+R_{1}(\epsilon).
 \end{aligned}
 \end{equation}
Here \footnote{The evolution of $\gamma$ is of course equivalent to the evolution of  $\tilde{\gamma}$.} we have $\tilde{\gamma}(s)=-s-\gamma(s)$, $\qbb=\Sigma_{b}+i\Theta_{b}$ and $M=(M_{+},M_{-})$ is the linearized operator near the profile $\qbb$ and $R_{1}$, $R_{2}$ are  nonlinear terms. Interested readers may consult (2.31), (2.32) in \cite{merle2006sharp} for more details. 
 
Now if one plugs in the four orthogonality condition \eqref{sharpo1},\eqref{sharpo2},\eqref{sharpo3}, \eqref{sharpo4}, one can obtain a system for the parameters   $\{\lambda,x,\tilde{\gamma},b\}$, i.e. four ordinary equations involving  $\{\lambda(s),x(s),\tilde{\gamma}(s),b(s)\}$:\footnote{ Note $\frac{d}{ds}=\lambda^{2}\frac{d}{dt}$.}
\begin{eqnarray}\label{odesharpo1}
&&\frac{d}{ds}\{(\epsilon_{1}(t),|y|^{2}\Sigma_{b(t)})+(\epsilon_{2}(t),|y|^{2}\Theta_{b(t)})\}=0,\\
\label{odesharpo2}
&&\frac{d}{ds}\{(\epsilon_{1}(t),y\Sigma_{b(t)})+(\epsilon_{2}(t), y\Theta_{b(t)})\}=0,\\
\label{odesharpo3}
&&\frac{d}{ds}\{-(\epsilon_{1}(t),\Lambda^{2}\Theta_{b(t)})+(\epsilon_{2}(t),\Lambda^{2}\Sigma_{b(t)})\}=0,\\
\label{odesharpo4}
&&\frac{d}{ds}\{-(\epsilon_{1}(t),\Lambda\Theta_{b(t)})+(\epsilon_{2}(t), \Lambda\Sigma_{b(t)})\}=0.
\end{eqnarray} 
To write down the above ODE system explicitly it requires elementary but involved  algebraic computation (see (71), (72),(73), 
(74) in \cite{raphael2005stability}). A more compact way of writing   $\eqref{odesharpo1}, \eqref{odesharpo2}, \eqref{odesharpo3}, \eqref{odesharpo4}$ is 
\begin{equation}\label{modu}
(b_{s},\lambda_{s},x_{s}, \gamma_{s})=F(b_{s},\lambda_{s},x_{s},\gamma_{s},\epsilon_{1},\epsilon_{2}),
\end{equation}
which justifies the name of ODE system. We call \eqref{odesharpo1},\eqref{odesharpo2},\eqref{odesharpo3}, \eqref{odesharpo4} modulational ODE.

Now assume that  all conclusions in  Lemma \ref{secondapprox} hold. Then by applying them into the modulational ODE, one obtains the so-called modulational estimates in the following lemma.\\

\begin{lem}[Lemma 5 in \cite{merle2006sharp}]\label{lemmoduestimateinitial}
Let the assumption of Lemma \ref{secondapprox} hold, and let \eqref{odesharpo1},\eqref{odesharpo2},\eqref{odesharpo3},\eqref{odesharpo4} hold, then
\begin{equation}\label{modu1}
|\frac{\lambda_{s}}{\lambda}+b|+|b_{s}|\leq C\left(\int |\nabla\epsilon|^{2}+\int |\epsilon|^{2}e^{-|y|}\right)+\Gamma_{b}^{1-C\eta}+C\lambda^{2}|E_{0}|,\\
\end{equation}
\begin{equation}\label{modu2}
\begin{aligned}
|\tilde{\gamma}_{s}-\frac{1}{\|\Lambda Q\|_{2}^{2}}(\epsilon_{1},L_{+}\Lambda^{2}Q)|+|\frac{x_{s}}{\lambda}|\leq
&\delta(\alpha)(\int |\nabla \epsilon|^{2}e^{-2(1-\eta)\frac{\theta(b|y|)}{|b|}}+\int|\epsilon|^{2}e^{-|y|})^{\frac{1}{2}}\\
&+C\int |\nabla\epsilon|^{2}+\Gamma_{b}^{1-C\eta}+C\lambda^{2}|E_{0}|.
\end{aligned}
\end{equation}
\end{lem}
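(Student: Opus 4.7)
The plan is to differentiate the four orthogonality conditions \eqref{sharpo1}--\eqref{sharpo4} in the rescaled time variable $s$, producing equations of the form
\[
(\partial_{s}\epsilon_{1},f)+(\partial_{s}\epsilon_{2},g)+(\epsilon_{1},\partial_{s}f)+(\epsilon_{2},\partial_{s}g)=0,
\]
with test functions $f,g$ drawn from $\{|y|^{2}\Sigma_{b},|y|^{2}\Theta_{b},y\Sigma_{b},y\Theta_{b},\Lambda^{2}\Sigma_{b},\Lambda^{2}\Theta_{b},\Lambda\Sigma_{b},\Lambda\Theta_{b}\}$. Then I would substitute the evolution equations \eqref{sys1}--\eqref{sys2} in for $\partial_{s}\epsilon_{1},\partial_{s}\epsilon_{2}$, which replaces the $\partial_{s}\epsilon$ terms by a linear combination of the four parameter derivatives $\bigl(b_{s},\tfrac{\lambda_{s}}{\lambda}+b,\tfrac{x_{s}}{\lambda},\tilde{\gamma}_{s}\bigr)$ multiplied by pieces of $\qbb$, together with the profile error $\Psi_{b}$, the nonlinear remainders $R_{1}(\epsilon),R_{2}(\epsilon)$, and lower-order linear-in-$\epsilon$ terms arising from $M_{\pm}(\epsilon)$ and $b\Lambda\epsilon$.

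After this substitution one obtains a $4\times 4$ linear system for the parameter derivatives. The crucial algebraic fact is that at $b=0$ the coefficient matrix is block-diagonal with nonzero diagonal entries $-\tfrac{1}{4}\|yQ\|_{2}^{2}$, $-\tfrac{d}{2}\|Q\|_{2}^{2}$, $\|\Lambda Q\|_{2}^{2}$, and a term proportional to $\tfrac{d}{d(b^{2})}\|Q_{b}\|_{2}^{2}|_{b=0}$, by the structure of the generalized kernel of the linearized operator $L=(L_{+},L_{-})$ at $Q$ and the facts $L_{+}\Lambda Q=-2Q$, $L_{-}Q=0$. By \eqref{convergence}, the matrix at parameter $b$ is an $O(|b|)$ perturbation of the $b=0$ matrix, hence invertible for $\alpha$ small. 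Inverting it bounds each parameter derivative by (i) the $\epsilon$-quadratic contributions $(\epsilon,R_{j}(\epsilon))$, which are dominated by $\int|\nabla\epsilon|^{2}+\int|\epsilon|^{2}e^{-|y|}$ using Sobolev embedding and the smallness \eqref{var3}; (ii) inner products of $\Psi_{b}$ with test functions, which by \eqref{convergence} and the definition of $\Psi_{b}$ in \eqref{qbbalmostselsimliar} are controlled by $\Gamma_{b}^{1-C\eta}$; and (iii) residual linear-in-$\epsilon$ terms that must be removed by an independent identity.

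The linear-in-$\epsilon$ contribution is the source of the $C\lambda^{2}|E_{0}|$ term. The plan is to invoke energy conservation: $\lambda^{2}E(u)=\lambda^{2}E_{0}$, and expand $E(u)$ in the decomposition \eqref{geometric}. The constant part $E(\qbb)$ is $O(\Gamma_{b}^{1-C\eta})$ by the construction of $\qbb$, the quadratic part in $\epsilon$ is $O(\int|\nabla\epsilon|^{2}+\int|\epsilon|^{2}e^{-|y|})$, and the linear-in-$\epsilon$ part
\[
(\epsilon_{1},-\Delta\Sigma_{b}-|\qbb|^{4/d}\Sigma_{b})+(\epsilon_{2},-\Delta\Theta_{b}-|\qbb|^{4/d}\Theta_{b})
\]
is, up to terms already controlled, exactly the combination of inner products that appears as the troublesome linear contribution in the modulation ODE. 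Substituting this identity removes those terms at the price of $\lambda^{2}|E_{0}|$, giving \eqref{modu1}. For \eqref{modu2}, the improved $\delta(\alpha)$ gain in front of the weighted $\epsilon$ norm comes from the observation that the test functions $y\Sigma_{b}$ and $\Lambda\Theta_{b}$ pair against $\epsilon$ through odd/even symmetry of $Q$, so the pairing $(\epsilon,f)$ vanishes up to $O(\delta(\alpha))$ contributions by the orthogonality conditions satisfied by $\epsilon$ and the uniform smallness \eqref{var3}.

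The main obstacle is bookkeeping of the weights $e^{-(1-\eta)\theta(b|y|)/|b|}$: the test functions $|y|^{2}\Sigma_{b},\Lambda\Theta_{b}$ are only supported on $|y|\lesssim 1/|b|$, and every integration by parts near $|y|\sim 1/|b|$ must produce an error at worst of size $\Gamma_{b}^{1-C\eta}$, which is exactly the threshold appearing on the right-hand side. The explicit computations, analogous to (71)--(74) of \cite{raphael2005stability}, then yield the two inequalities.
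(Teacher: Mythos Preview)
The paper does not actually prove this lemma; it is quoted verbatim as Lemma~5 of \cite{merle2006sharp} and the reader is pointed to the explicit algebra in formulas (71)--(74) of \cite{raphael2005stability}. Your sketch is precisely the argument carried out in those references: differentiate the four orthogonality conditions, substitute \eqref{sys1}--\eqref{sys2}, obtain a $4\times 4$ system whose coefficient matrix is a perturbation of the invertible $b=0$ matrix built from the generalized kernel relations $L_{+}\Lambda Q=-2Q$, $L_{-}|y|^{2}Q=-4\Lambda Q$, and then estimate the right-hand side term by term. So your approach coincides with the one the paper defers to.

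One small correction to your bookkeeping: the $\lambda^{2}|E_{0}|$ term does not arise solely from ``removing'' a linear-in-$\epsilon$ contribution via the energy identity. In the actual computation, the energy identity \eqref{energy1} is a \emph{separate} consequence of conservation (this is why the paper states it as a distinct estimate in Lemma~\ref{lemenergyinitial}), and in the modulation ODE the $\lambda^{2}E_{0}$ appears because one of the nonlinear remainder terms $R_{j}(\epsilon)$, when paired against the test functions, produces $E(\qbb+\epsilon)=\lambda^{2}E_{0}$ after using the equation satisfied by $\qbb$. The linear-in-$\epsilon$ residuals that survive on the right-hand side of \eqref{modu2} are genuinely of size $(\int|\nabla\epsilon|^{2}+\int|\epsilon|^{2}e^{-|y|})^{1/2}$ with a $\delta(\alpha)$ prefactor, and this smallness comes from the fact that the relevant pairings involve $\partial_{b}\qbb$ or $\Theta_{b}$, both of which are $O(|b|)=O(\delta(\alpha))$ pointwise by \eqref{convergence}, not from odd/even symmetry per se. This does not affect the validity of your outline, only the attribution of where each term originates.
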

The $\Gamma_{b}$ term will naturally appear in  the definition of the linear radiation term $\tilde{\zeta_{b}}$, which we will discuss later. However, most of the time one only needs to know that
\begin{equation}\label{gamma}
e^{-(1+C\eta)\frac{\pi}{|b|}}\leq \Gamma_{b}\leq e^{-(1-C\eta)\frac{\pi}{|b|}}.
\end{equation}
We point out that \eqref{gamma} is (2.17) in \cite{merle2006sharp}.\par
By applying the conservation laws (Energy and Momentum), one can obtain two more crucial estimates in the following lemma.
\begin{lem}[Lemma 5 in \cite{merle2006sharp}]\label{lemenergyinitial}
The following two estimates hold:
\begin{eqnarray}\label{energy1}
|2(\epsilon_{1},\Sigma)+2(\epsilon_{2},\Theta)|\leq C(\int |\nabla\epsilon|^{2}+\int |\epsilon|^{2}e^{-|y|})+\Gamma_{b}^{1-C\eta}+C\lambda^{2}|E_{0}|,\\
\label{energy2}
|(\epsilon_{2},\nabla \Sigma)|\leq C\delta(\alpha)(\int |\nabla\epsilon|^{2}+\int |\epsilon|^{2}e^{-|y|})^{\frac{1}{2}}.
\end{eqnarray}
\end{lem}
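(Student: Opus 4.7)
The plan is to derive \eqref{energy1} from conservation of energy $E(u(t))=E_0$ and \eqref{energy2} from conservation of momentum $P(u(t))=P(u_0)=0$, both applied to the geometric decomposition \eqref{geometric}. In each case, I expand the conserved quantity as a power series in $\epsilon$ around $\qbb$, then use the near-self-similar equation \eqref{qbbalmostselsimliar} and the orthogonality conditions of Lemma \ref{secondapprox} to isolate the target bilinear form $(\epsilon,\qbb)$ or $(\epsilon_2,\nabla\Sigma)$ and absorb the rest into the advertised right-hand sides.

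For \eqref{energy1}, scaling symmetry gives $\lambda^{2}E(u)=E(\qbb+\epsilon)$, so multiplying by $2$ yields
\begin{equation*}
2\lambda^{2}E_{0}=2E(\qbb)+2\Re\int(-\Delta\qbb-|\qbb|^{4/d}\qbb)\bar\epsilon+Q_{2}(\epsilon)+\text{higher},
\end{equation*}
where $Q_{2}$ denotes the quadratic part. Substituting \eqref{qbbalmostselsimliar} for $-\Delta\qbb-|\qbb|^{4/d}\qbb=\Psi_{b}-\qbb+ib\Lambda\qbb$ converts the linear term into
\begin{equation*}
-2\bigl((\epsilon_{1},\Sigma)+(\epsilon_{2},\Theta)\bigr)+2b\bigl[(\epsilon_{2},\Lambda\Sigma)-(\epsilon_{1},\Lambda\Theta)\bigr]+2\Re\int\Psi_{b}\bar\epsilon.
\end{equation*}
The middle bracket \emph{vanishes identically} by the orthogonality condition \eqref{sharpo4}; the $\Psi_{b}$-integral is $O(\Gamma_{b}^{1-C\eta})$ using the sub-exponential pointwise bounds on $\Psi_{b}$ together with $\|\epsilon\|_{H^{1}}\le\delta(\alpha)$. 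The zeroth-order term $E(\qbb)$ is itself $O(\Gamma_{b}^{1-C\eta})$: because $Q_{b}\to Q$ in the sense of \eqref{convergence} and $E(Q)=0$, the only non-negligible contribution comes from truncation by $\phi_{b}$ on $|y|\sim 1/|b|$, where every derivative of $Q$ is sub-exponentially small. Finally $Q_{2}(\epsilon)=\|\nabla\epsilon\|_{2}^{2}-(2+\tfrac{4}{d})\int|\qbb|^{4/d}|\epsilon|^{2}+\cdots$ is controlled by $C(\int|\nabla\epsilon|^{2}+\int|\epsilon|^{2}e^{-|y|})$, since $|\qbb|^{4/d}$ inherits the exponential decay of $Q^{4/d}$ from \eqref{convergence}; the cubic and quartic remainders are absorbed into the same quantity using $\|\epsilon\|_{H^{1}}\le\delta(\alpha)$ and Sobolev embedding.

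For \eqref{energy2}, the same scaling trick gives $\lambda P(u)=\Im\int\nabla(\qbb+\epsilon)\overline{(\qbb+\epsilon)}=0$. The pure $\qbb$ term vanishes because $|\qbb|^{2}=P_{b}^{2}\phi_{b}^{2}$ is radial (the only surviving piece after integration by parts is a multiple of $\int y\,|\qbb|^{2}$, which is zero by symmetry). The two cross terms combine, after integrating by parts one of them, into $2\Im\int\nabla\qbb\,\bar\epsilon=2(\epsilon_{1},\nabla\Theta)-2(\epsilon_{2},\nabla\Sigma)$, and the quadratic piece satisfies $|\Im\int\nabla\epsilon\,\bar\epsilon|\le\|\nabla\epsilon\|_{2}\|\epsilon\|_{2}\le C\delta(\alpha)\|\nabla\epsilon\|_{2}$. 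Solving for $(\epsilon_{2},\nabla\Sigma)$, what remains is to control $(\epsilon_{1},\nabla\Theta)$: from $\qbb\to Q$ real as $b\to 0$ one has $\Theta=\Im\qbb=O(|b|)$ pointwise with exponential decay, so $|(\epsilon_{1},\nabla\Theta)|\le C|b|(\int|\epsilon|^{2}e^{-|y|})^{1/2}\le C\delta(\alpha)(\int|\epsilon|^{2}e^{-|y|})^{1/2}$, which closes \eqref{energy2}.

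I expect the main obstacle to be the sharp cancellation of the $b\,\Lambda\qbb$ linear contribution in the energy expansion: without the orthogonality \eqref{sharpo4} one would retain an $O(b\,\|\epsilon\|_{H^{1}})$ cross term that is linear in $\epsilon$ and therefore \emph{not} absorbable into the quadratic right-hand side of \eqref{energy1}. Explaining why this particular orthogonality direction is forced on us amounts precisely to the choice of modulation parameters in Lemma \ref{secondapprox}. A secondary technical point is to verify that $E(\qbb)$ is $O(\Gamma_{b}^{1-C\eta})$ rather than merely $O(b^{2})$; this needs the explicit construction of $\qbb$ from \cite{merle2006sharp} and the control \eqref{gamma} of $\Gamma_{b}$, but is by now standard.
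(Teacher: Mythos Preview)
Your proposal is correct and follows essentially the same route as the paper, which does not give a self-contained proof here but defers to \cite{merle2006sharp} and reproduces the argument only in the multi-soliton analogue (Lemma \ref{estiamtebyconservationlaw}): expand $\lambda^{2}E_{0}=E(\qbb+\epsilon)$ and $0=\lambda P_{0}=\Im\int\nabla(\qbb+\epsilon)\overline{(\qbb+\epsilon)}$, use \eqref{qbbalmostselsimliar} to identify the linear-in-$\epsilon$ piece, kill the $b\Lambda\qbb$ contribution via \eqref{sharpo4}, and bound $(\epsilon_{1},\nabla\Theta)$ by the pointwise exponential decay of $\nabla\Theta$. The only cosmetic difference is that the paper's sketch invokes $|\nabla\Theta_{b}|\lesssim e^{-K|y|}$ directly rather than your sharper $\Theta=O(|b|)$ observation, but either suffices since $|b|\le\delta(\alpha)$.
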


To derive  the blow up rate for the blow up solution $u$, one performs  the following three steps:
\begin{enumerate}
\item Based on \eqref{modu1},  explore the fact that $b\sim -\frac{\lambda_{s}}{\lambda}$, and then transfer the evolution of $\lambda$, (which determines the blow up rate) to the evolution of $b$.
\item Obtain a  lower bound for $b_{s}$, which gives the upper bound for the blow up rate, \cite{merle2003sharp}.
\item Obtain  an upper bound for  $b_{s}$, which gives the lower bound for the blow up rate, \cite{merle2006sharp}.
\end{enumerate}
The lower bound of $b_{s}$ is given by the following lemma:
\begin{lem}[Proposition 2 in \cite{merle2006sharp}]
Let the results of Lemma \ref{secondapprox} hold, let \eqref{odesharpo1}, \eqref{odesharpo2}, \eqref{odesharpo3}, \eqref{odesharpo4}  hold, let \eqref{modu1}, \eqref{modu2}, \eqref{energy1},\eqref{energy2} hold, then one has the estimate
\begin{equation}\label{lv1}
b_{s}\geq \delta_{0}(\int |\nabla \epsilon|^{2}+\int |\epsilon|^{2}e^{-|y|})-C\lambda^{2}E_{0}-\Gamma_{b}^{1-C\eta}.
\end{equation}
\end{lem}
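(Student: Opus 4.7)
The plan is to follow the Lyapunov/pseudo-energy strategy pioneered by Merle and Rapha\"el in \cite{merle2006sharp}. The starting point is the ODE \eqref{odesharpo1} obtained from differentiating the orthogonality condition $(\epsilon_{1},|y|^{2}\Sigma_{b})+(\epsilon_{2},|y|^{2}\Theta_{b})=0$ in $s$. After expanding via the evolution equations \eqref{sys1}--\eqref{sys2} and using the modulation bounds of Lemma \ref{lemmoduestimateinitial} together with the orthogonality conditions \eqref{sharpo1}--\eqref{sharpo4} to eliminate $\lambda_s/\lambda+b$, $\tilde\gamma_s$ and $x_s/\lambda$, one obtains an identity expressing $b_s$ as a fixed multiple of a certain quadratic expression in $\epsilon$ plus controlled errors. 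A direct estimate of this identity gives only an upper bound on $b_s$; the point of the lemma is to convert this into a coercive \emph{lower} bound.

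The main new ingredient is a renormalized energy functional $\mathcal{F}(s)$ built from $\tfrac12\int|\nabla\epsilon|^{2}+\tfrac12\int|\epsilon|^{2}-\tfrac{1}{p+1}\int(|\qbb+\epsilon|^{p+1}-|\qbb|^{p+1}-(p+1)|\qbb|^{p-1}\Re(\bar\qbb\epsilon))$ with $p=1+4/d$, plus specific cross terms with $\partial_{b}\qbb$, $\Lambda\qbb$ and the linear radiation $\tilde\zeta_{b}$. Differentiating $\mathcal{F}$ in $s$, substituting \eqref{sys1}--\eqref{sys2}, and integrating by parts, one arrives, after lengthy but routine algebra, at an identity of the schematic form
\begin{equation}
\frac{d\mathcal{F}}{ds}\;=\;b_s\,\mathcal{G}_{1}(b)+\mathcal{G}_{2},
\end{equation}
where $\mathcal{G}_{1}(b)$ converges to a strictly positive universal constant as $b\to 0$ (essentially computable via \eqref{convergence} from inner products of $\partial_b\qbb$ against $Q$ and its derivatives), and $\mathcal{G}_2$ is a sum of terms that are either coercive of order $\int|\nabla\epsilon|^2+\int|\epsilon|^2 e^{-|y|}$, controlled by $\lambda^{2}|E_0|$ via energy conservation \eqref{energy1}--\eqref{energy2}, or of order $\Gamma_b^{1-C\eta}$ coming from the inhomogeneous term $\Psi_{b}$ in \eqref{qbbalmostselsimliar}.

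The next step is the coercivity input. On the codimension-four subspace cut out by \eqref{sharpo1}--\eqref{sharpo4}, the linearized Hamiltonian about $Q$ satisfies the classical Weinstein-type bound $H(\epsilon,\epsilon)\geq \delta_{0}(\int|\nabla\epsilon|^{2}+\int|\epsilon|^{2}e^{-|y|})$, and this persists with a slightly smaller $\delta_{0}$ when $Q$ is replaced by $\qbb$ thanks to \eqref{convergence}. Combining this coercivity with the identity above, multiplying by the sign of $\mathcal{G}_{1}(b)^{-1}$, and rearranging produces precisely the inequality \eqref{lv1}.

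The main obstacle is the treatment of the linear radiation $\tilde\zeta_{b}$. Since $\qbb$ is only an approximate self-similar profile, the source $\Psi_{b}$ drives a nonzero tail that is naively too large to absorb in the coercive term; one must subtract a specific solution $\tilde\zeta_{b}$ of the associated linear equation whose size is of order $\Gamma_{b}^{1/2}\sim e^{-\pi/(2|b|)}$ by \eqref{gamma}, and show that its interaction with $\epsilon$ in the Lyapunov computation contributes no worse than $\Gamma_{b}^{1-C\eta}$. This requires delicate pointwise decay estimates on $\tilde\zeta_{b}$ in the region $|y|\gtrsim 1/|b|$ and is where most of the technical work in \cite{merle2006sharp} is concentrated; the rest of the argument is essentially algebraic bookkeeping.
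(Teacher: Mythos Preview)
Your proposal conflates two distinct steps of the Merle--Rapha\"el machinery. The inequality \eqref{lv1} is the \emph{basic} local virial estimate, and it does \emph{not} require the linear radiation $\tilde{\zeta}_{b}$ at all. As the paper explains immediately after stating \eqref{lv1}, the tail term $\tilde{\zeta}_{b}$ is introduced \emph{afterwards}, precisely in order to \emph{improve} \eqref{lv1}: it replaces the bad error $-\Gamma_{b}^{1-C\eta}$ by a favorable $+c\Gamma_{b}$ in the refined virial inequality for $f_{1}$, which in turn feeds into the Lyapounov functional $\mathcal{J}$. So your ``main obstacle'' paragraph is addressing the wrong lemma.

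Relatedly, the ``renormalized energy functional $\mathcal{F}(s)$'' you describe---with cross terms in $\tilde{\zeta}_{b}$ and the schematic identity $d\mathcal{F}/ds = b_{s}\mathcal{G}_{1}(b)+\mathcal{G}_{2}$---is not how \eqref{lv1} is derived. That machinery belongs to the construction of $\mathcal{J}$ in \eqref{lyp}--\eqref{criticalmono}, which gives the \emph{upper} bound on $b_{s}$. The actual proof of \eqref{lv1} is more direct: one pairs the evolution equations \eqref{sys1}--\eqref{sys2} with $\Lambda\Sigma_{b}$ and $-\Lambda\Theta_{b}$ (equivalently, one expands the ODE \eqref{odesharpo4}, not \eqref{odesharpo1}). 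This produces an algebraic identity of the form $b_{s}\cdot c_{0}(b) = H(\epsilon,\epsilon) + (\text{errors})$, where $H$ is the linearized Hamiltonian around $\qbb$ and $c_{0}(b)\to \tfrac{1}{4}\|yQ\|_{2}^{2}>0$ as $b\to 0$. The coercivity input you mention is correct and is the heart of the matter: on the subspace cut out by \eqref{sharpo1}--\eqref{sharpo4}, $H(\epsilon,\epsilon)\geq \delta_{0}(\int|\nabla\epsilon|^{2}+\int|\epsilon|^{2}e^{-|y|})$. The $\Gamma_{b}^{1-C\eta}$ error then comes directly from pairing $\Psi_{b}$ with $\Lambda\qbb$ (no radiation subtraction needed), and the $\lambda^{2}E_{0}$ from the energy relation \eqref{energy1}. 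The paper itself simply cites \cite{merle2006sharp} for this computation; see also the analogous statement \eqref{tslocalvirial} in Section~\ref{proofofts}.
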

The inequality \eqref{lv1} is called {local virial estimate}, it is one of the key  estimates in the work of Merle and Rapha\"el.
The lower bound of $b_{s}$ involves the construction of a certain Lypounouv functional. For this construction one needs to introduce a certain tail term $\zeta_{b}$ or more precisely its cut-off version $\tilde{\zeta}_{b}$, \cite{merle2004universality}, \cite{merle2006sharp}.

Let us  first quickly describe $\zeta_{b}$, one may refer to Lemma 2 in \cite{merle2006sharp} for more details.
Let $b, \eta, R_{b}, R_{b}^{-}, \phi_{b}$ be  as in  \eqref{tempdefn}.  Let $\zeta_{b}$ be the unique radial solution to
\begin{equation}
\begin{cases}
\Delta \zeta_{b}-\zeta_{b}+ib\Lambda\zeta_{b}=\Psi_{b},\\
\int |\nabla \zeta_{b}|^{2}<\infty.
\end{cases}
\end{equation}
Here $\Psi_{b}$ is defined in \eqref{qbbalmostselsimliar}.
Note, as mentioned previously, that the $\Gamma_{b}$ term appears naturally when one construct $\tilde{\zeta}_{b}$, see (2.17) in \cite{merle2006sharp}.\\
What Merle and Rapha\"el actually use in \cite{merle2006sharp} is the cut-off version\footnote{The main reason for the introduction of this cut-off is that  $\zeta_{b}$ itself is not in $L^{2}$.} of $\zeta_{b}$, that here we denote  by $\tilde{\zeta_{b}}$. See their discussion before formula (3.4) in \cite{merle2006sharp}. 
Since later we will use  $\tilde{\zeta}_{b}$ in several places, we write here the precise definition.
Let $A=A(b)=e^{a\frac{\pi}{|b|}}$, where $a$ is some universal small constant, we  let $\chi_{A}$ be a smooth cut-off function that vanishes outside $|x|\geq A$. Then  one defines $\tilde{\zeta}_{b}:=\zeta_{b}\chi_{A}.$ Clearly $\tilde{\zeta}_{b}$ is supported in $\{|y|\lesssim A\}$. In the rest of this paper, the notation $A$ means $A(b)=e^{a\frac{\pi}{|b|}}$ and  note that $A\gg \frac{1}{|b|}$.

  The tail term $\tilde{\zeta}_{b}$ is introduced to improve the local virial inequality  \eqref{lv1}. Essentially, one wants to change the term $-\Gamma_{b}^{1-C\eta}$ in \eqref{lv1} to $c\Gamma_{b}$. Let
\begin{equation}
f_{1}(s):=\frac{b}{4}\|y\qbb\|_{2}^{2}+\frac{1}{2}\Im (\int y\dzeta\bar{\zzeta})+(\epsilon_{2},\Lambda(\zzeta_{re}))-(\epsilon_{1},\Lambda\zzeta_{im}).
\end{equation}
Then  (this is highly nontrivial, and is one of the key point in \cite{merle2006sharp})
\begin{equation}
\{f_{1}(s)\}_{s}\geq \delta_{1}(\int |\nabla \tilde{\epsilon}|^{2}+\int |\tilde{\epsilon}|^{2}e^{-|y|}+C\Gamma_{b}-C\lambda^{2}E_{0}-\frac{1}{\delta_{1}}\int_{A\leq |x|\leq 2A}|\epsilon|^{2}.
\end{equation}
Here $\tilde\epsilon=\epsilon-\tilde{\zeta}_{b}$.

With this, one constructs the Lyapounov functional $\JJJ$,  \cite{merle2006sharp} that we write explicitly as
\begin{equation}\label{lyp}
\begin{aligned}
\JJJ(s)=(\int |\qbb|^{2}-\int |Q|^{2})+2(\epsilon_{1},\Sigma)+2(\epsilon_{2},\Theta)+\int (1-\phi_{A})|\epsilon|^{2}\\-\frac{\delta_{1}}{800}\left(
b\tilde{f}_{1}(b)-\int_{0}^{b}\tilde{f_{1}}(v)dv+b\{(\epsilon_{2},(\zzeta_{re})_{1})-(\epsilon_{1},(\zzeta_{im})_{1})\}
\right),
\end{aligned}
\end{equation}
where
\begin{equation}
\tilde{f}_{1}(b)=\frac{b}{4}\|y\qbb\|_{2}^{2}+\frac{1}{2}\Im (\int y\dzeta \bar{\zzeta}).
\end{equation}
One has the following inequality:
\begin{equation}\label{criticalmono}
\frac{d}{ds}J(s)\leq -Cb\left(\Gamma_{b}+\int |\nabla \tilde{{\epsilon}}|^{2}+\int |\tilde{\epsilon}|^{2}e^{-|y|}+\int_{A}^{2A}|\epsilon|^{2}-\lambda^{2}E_{0}\right)+C\frac{\lambda^{2}}{|b|^{2}}E_{0}.
\end{equation}
Inequality \eqref{criticalmono}  finally leads to the lower bound of $b_{s}$.\\
  In   \cite{planchon2007existence} and other related works, one can see that the analysis of the log-log blow up solutions can be decomposed into two stages:
\begin{enumerate}
\item At a certain time $t_{0}$, the initial data evolves into some well prepared data.
\item The well-prepared data admits a  suitable bootstrap structure, and analysis can be significantly simplified.
\end{enumerate}
One can show for the solution $u(t)$ considered by Merle and Rapha\"el, that  there exists some $t_{0}$ such that $u(t_{0})$ satisfies the following:\\
\begin{equation}\label{firstdescrbitionofdata}
u(t_{0},x)=\frac{1}{\lambda_{0}^{\frac{d}{2}}}(\tilde{Q}_{b_{0}}+\epsilon_{0})(\frac{x-x_{0}}{\lambda_{0}})e^{-i\gamma_{0}}.
\end{equation}
Also $u(t_{0},x)$ satisfies the following:
\begin{enumerate}
\item orthogonality conditions:  \eqref{sharpo1}, \eqref{sharpo2}, \eqref{sharpo3}, \eqref{sharpo4},
\item the sign condition of $b$:
\begin{equation}\label{signimp}
b_{0}:=b(t_{0})>0,
\end{equation}
\item closeness to $Q$
\begin{equation}\label{oriclosetoq}
\|\epsilon_{0}\|_{H^{1}}+b_{0}<\alpha,
\end{equation}
\item smallness condition of the error $\epsilon_{0}$:
\begin{equation}\label{orismallcondition}
\int \lvert\nabla \epsilon_{0}\rvert^{2}+\lvert \epsilon_{0}\rvert^{2}e^{-|y|}<\Gamma_{b_{0}}^{\frac{6}{7}},
\end{equation}
\item renormalized energy/momentum control\footnote{In this case, this condition is actually implied by the log-log regime condition below, we still keep it to make the notation consistent.}:
\begin{equation}\label{orirenorm}
\lambda_{0}^{2}|E_{0}|+\lambda_{0}|P_{0}|<\Gamma_{b_{0}}^{100},
\end{equation}
\item log-log regime
\begin{equation}\label{oriloglog}
e^{-e^{\frac{2\pi}{b_{0}}}}<\lambda_{0}<e^{-e^{\frac{\pi}{2b_{0}}}}.
\end{equation}
\end{enumerate}
Without loss of generality (by translation in time), we can assume $t_{0}=0$.
Now let us focus on the initial data of the  form $u(t_{0})$, which from now on we denote with $u_0$.
It turns out that  the evolution of the data after $t_{0} $($=0$) is described by the following lemma.
\begin{lem}\label{originalboot}
Assume $u$ solves \eqref{nls} with initial data $u_{0}$ as \eqref{firstdescrbitionofdata}. For all $T<T^{+}(u), $ the following bootstrap argument holds:\\
Let the rescaled time $s$ be defines as $s=\int_{0}^{t} \frac{d\tau}{\lambda^{2}(\tau)}+s_{0}, s_{0}=e^{\frac{3\pi}{4b_{0}}}$, if one assumes the  bootstrap hypothesis for $t\in [0,T]$,
\begin{equation}\label{bhfirst}
\begin{aligned}
&b(t)>0, \quad b(t)+\|\epsilon(t)\|_{H^{1}}\leq 10\alpha^{1/2},  \quad e^{-e^{\frac{10\pi}{b(t)}}}\leq \lambda(t)\leq e^{-e^{\frac{\pi}{10b(t)}}},\\
&\frac{\pi}{10 \ln s}\leq b(t(s))\leq \frac{10\pi}{\ln s}, \quad \int \|\nabla \epsilon(t)\|^{2}+|\epsilon(t)|^{2}e^{-|y|}\leq \Gamma_{b(t)}^{\frac{3}{4}},\\
&\lambda(t_{2})\leq 3\lambda(t_{1}), \quad \forall T>t_{2}>t_{1}>0, \text{ (almost monotonicity)}, \\
&|x(t)|\leq 1/1000,
\end{aligned}
\end{equation}
then one has the  bootstrap estimate for $t\in [0,T]$:
\begin{equation}
\begin{aligned}\label{befirst}
&b(t)>0, \quad b(t)+\|\epsilon(t)\|_{H^{1}}\leq 5\alpha^{1/2},  \quad e^{-e^{\frac{5\pi}{b(t)}}}\leq \lambda(t)\leq e^{-e^{\frac{\pi}{5b(t)}}},\\
&\frac{\pi}{5 \ln s}\leq b(t(s))\leq \frac{5\pi}{\ln s}, \quad \int \|\nabla \epsilon(t)\|^{2}+|\epsilon(t)|^{2}e^{-|y|}\leq \Gamma_{b(t)}^{\frac{4}{5}},\\
&\lambda(t_{2})\leq 2\lambda(t_{1}), \quad \forall T>t_{2}>t_{1}>0,\text{ (almost monotonicity)},\\
&|x(t)|\leq  1/2000.
\end{aligned}
\end{equation}
\end{lem}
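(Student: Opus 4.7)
The plan is to close the bootstrap by improving each of the six hypotheses in \eqref{bhfirst} to the stronger conclusions in \eqref{befirst}, using as ingredients the modulation estimates (Lemma \ref{lemmoduestimateinitial}), the energy/momentum controls (Lemma \ref{lemenergyinitial}), the local virial inequality \eqref{lv1}, and the Lyapunov monotonicity \eqref{criticalmono}. Since the initial data \eqref{firstdescrbitionofdata} already satisfies much stronger bounds (e.g.\ \eqref{orismallcondition} gives $\Gamma_{b_0}^{6/7}$ control, beating the target $\Gamma_b^{4/5}$), the bootstrap has headroom to close.

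First I would control the dispersive error $\int |\nabla \epsilon|^2 + |\epsilon|^2 e^{-|y|}$ via the Lyapunov functional $\JJJ$ from \eqref{lyp}. Integrating \eqref{criticalmono} from $s_0$ to $s$, dropping the nonnegative dissipation, and using \eqref{orirenorm} together with the bootstrap bound on $\lambda$ from \eqref{bhfirst} shows that the forcing term $C\lambda^2 E_0 / b^2$ is negligible compared to $\Gamma_b$. The initial size $\JJJ(s_0)$ is $O(\Gamma_{b_0})$ by the well-preparedness \eqref{orismallcondition}, the smallness of $b_0$, and the fact that the cross term $2(\epsilon_1,\Sigma)+2(\epsilon_2,\Theta)$ in \eqref{lyp} is absorbed via the energy identity \eqref{energy1}. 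Extracting the quadratic piece in $\epsilon$ from $\JJJ$ (after the coercivity lemma that lies behind \eqref{criticalmono}) yields
\begin{equation*}
\int |\nabla \epsilon(t)|^2 + |\epsilon(t)|^2 e^{-|y|} \leq \Gamma_{b(t)}^{4/5},
\end{equation*}
which, combined with \eqref{energy1} and the bootstrap smallness of $b$, also gives $b(t)+\|\epsilon(t)\|_{H^1}\leq 5\alpha^{1/2}$.

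Next I would improve the monotonicity and the log-log regime. The local virial estimate \eqref{lv1}, combined with the dispersive bound just derived and \eqref{orirenorm}, gives $b_s \geq -\Gamma_b^{1-C\eta}$, so $b$ is essentially non-decreasing and in particular stays strictly positive. Then \eqref{modu1} reads $\lambda_s/\lambda = -b + O(\Gamma_b^{4/5})$, so $-\ln\lambda$ is essentially nondecreasing in $s$; an ODE comparison yields the almost monotonicity $\lambda(t_2)\leq 2\lambda(t_1)$ for $t_2 > t_1$. Integrating $b_s$ against the exponential asymptotics of $\Gamma_b$ in \eqref{gamma}, using $s_0 = e^{3\pi/(4b_0)}$ and the initial log-log regime \eqref{oriloglog} to fix the constants, produces
\begin{equation*}
\tfrac{\pi}{5\ln s} \leq b(t(s)) \leq \tfrac{5\pi}{\ln s}, \qquad e^{-e^{5\pi/b(t)}} \leq \lambda(t) \leq e^{-e^{\pi/(5b(t))}}.
\end{equation*}
Finally, the translation parameter is handled by \eqref{modu2}: $|x_s/\lambda| \lesssim \delta(\alpha) \Gamma_b^{2/5} + O(\Gamma_b^{1-C\eta})$, and since $dx/ds = \lambda \cdot (x_s/\lambda)$ with $\lambda$ doubly-exponentially small, the integral $\int_{s_0}^\infty \lambda (x_s/\lambda) ds$ is summable and overwhelmingly small; combined with $|x(0)|\ll 1$ this yields $|x(t)|\leq 1/2000$.

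The main obstacle is step one, i.e.\ extracting the sharp bound $\Gamma_b^{4/5}$ on the dispersive error. This rests on the Lyapunov monotonicity \eqref{criticalmono}, whose proof requires the delicate weighted local-virial computation built around the tail $\tzeta_b$ that cancels the boundary contributions at $|y|\sim 1/|b|$, together with the coercivity of the quadratic part of $\JJJ$ modulo the four orthogonality directions \eqref{sharpo1}--\eqref{sharpo4}. The other subtle point is that the factor $1/b^2$ multiplying $E_0$ on the right of \eqref{criticalmono} must be compensated by the bootstrap bound $\lambda \leq e^{-e^{\pi/(10 b)}}$ in \eqref{bhfirst}, so the log-log regime hypothesis is structurally necessary to close the estimate; this is why \eqref{oriloglog} is imposed on the initial data.
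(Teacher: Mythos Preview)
Your overall strategy and the treatment of the log-log regime, monotonicity, and translation parameter in the last two paragraphs are essentially the same as the paper's (which refers to Proposition~1 in \cite{raphael2009standing} and carries out the analogous argument in Subsection~\ref{boot}). The serious problem is your first step, the pointwise dispersive bound $\int|\nabla\epsilon|^2+|\epsilon|^2 e^{-|y|}\leq\Gamma_b^{4/5}$.

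You claim $\JJJ(s_0)=O(\Gamma_{b_0})$ and then propose to read off the $\Gamma_b^{4/5}$ bound on $\epsilon$ by ``extracting the quadratic piece in $\epsilon$ from $\JJJ$.'' This cannot work: looking at \eqref{lyp}, the leading term of $\JJJ$ is $\int|\qbb|^2-\int Q^2\sim b^2$, which dominates everything else; indeed Lemma~\ref{b2} states $\JJJ/b^2=C^*(1+O(\delta(\alpha)))$. Since $b^2\gg\Gamma_b$ by \eqref{feelingofscale}, integrating \eqref{criticalmono} only tells you $\JJJ(s)\leq\JJJ(s_0)\sim b_0^2$, and any coercivity-type extraction from $\JJJ$ yields at best an $O(b^2)$ bound on the local norm of $\epsilon$, not $\Gamma_b^{4/5}$. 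The cross term and the energy identity \eqref{energy1} do not remove the $b^2$ contribution, which is built into the profile $\qbb$ itself.

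The correct mechanism (Step~3 of Proposition~5 in \cite{merle2006sharp}, as the paper invokes in Subsection~\ref{boot}) does not read $\epsilon$ off from $\JJJ$; it combines the local virial \eqref{lv1} with a separate upper bound on $b_s$. The Lyapunov functional is used to get the \emph{upper} bound for $b$ (hence the upper half of the log-log window), while the \emph{pointwise} smallness of $\epsilon$ comes from \eqref{lv1} in the form $\delta_0\bigl(\int|\nabla\epsilon|^2+|\epsilon|^2 e^{-|y|}\bigr)\leq b_s+\Gamma_b^{1-C\eta}$, after one has independently shown $b_s\lesssim\Gamma_b^{1-C\eta}$ (this last input uses the refined virial/Lyapunov machinery, not just the size of $\JJJ$). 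A secondary gap: your bound on $\|\epsilon\|_{H^1}$ omits the $L^2$ part; the paper closes $\|\epsilon\|_{L^2}$ using mass conservation (see the proof of \eqref{estimateperturbation}), which you should invoke explicitly.
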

\begin{rem}\label{sharpen}
The special numbers $5$, $4/5$... appearing above, are of course only for technical reason, by sharpening  the initial conditions at $t_{0}$, one can push the bootstrap estimates to 
$$\int |\nabla \epsilon|^{2}+|\epsilon|^{2}e^{-|y|}\leq \Gamma_{b}^{1-c_{1}}, \quad e^{-e^{\frac{(1+c_{1})\pi}{b}}}\leq \lambda(t)\leq e^{-e^{\frac{(1-c_{1})\pi}{b}}}$$ for arbitrary $c_{1}>0$.
\end{rem}
We refer to \cite{raphael2006existence}, \cite{raphael2009standing}, \cite{planchon2007existence} for a proof.  See in particular Proposition 1 in \cite{raphael2009standing}.
 Under the bootstrap regime, the analysis is made easier   since one can simplify \eqref{modu1},\eqref{modu2}, \eqref{energy1},\eqref{energy2},\eqref{lv1},\eqref{criticalmono} following  the observation below:
\begin{equation}\label{feelingofscale}
\text{For any polynomial } P,\quad b>>P(\Gamma_{b}), \Gamma_{b}>>P(\lambda).
\end{equation}
Now,  the first step of the analysis leading to the log-log blow up solution  listed above, that is  $b\sim -\frac{\lambda_{s}}{\lambda}$,  is quite clear since by \eqref{modu1}, one has:
\begin{equation}\label{simple0}
|\frac{\lambda_{s}}{\lambda}+b|\lesssim \Gamma_{b}^{\frac{1}{10}}.
\end{equation}
The local virial inequality \eqref{lv1}, which is used to show the lower bound of $b_{s}$, is simplified further  to 
\begin{equation}\label{simple1}
b_{s}\geq -\Gamma_{b}^{1-C\eta},
\end{equation}
and the Lypounov functional \eqref{criticalmono}, which is used to show the upper bound of $b_{s}$, is simplified to
\begin{equation}\label{simple2}
\frac{d}{ds}\JJJ\leq -Cb\Gamma_{b}.
\end{equation}
Basically, all those terms involving   $\lambda^{2}E_{0}$ can be neglected since $\lambda$ is much smaller than $b$. This is actually one of the key observation in  \cite{planchon2007existence}, \cite{colliander2009rough}.

Recall from the geometric decomposition \eqref{geometric}
$$u(t,x)=\frac{1}{\lambda(t)^{\frac{d}{2}}}(\qbb+\epsilon)\left(\frac{x-x(t)}{\lambda(t)}\right)e^{-i\gamma(t)}.$$
Since $\qbb$ is supported in $|y|\lesssim \frac{2}{|b|}$, and the tail radiation term $\tilde{\zeta_{b}}$ is supported in $|y|\lesssim A\equiv \Gamma_{b}^{-a}$, then   the  analysis in work of Merle and Rapha\"el  is taking place  in $|x-x(t)|\lesssim \lambda(t)\Gamma_{b}^{-a}\ll 1$. The only part which connects the local dynamics and the information outside the potential blow up point $x(t)$ is given by  the conservation laws (energy and momentum).  Thus, if one considers the local momentum and local energy, one could localize the dynamics,  paying the prize that the local energy and momentum are no longer conserved. However, since in the analysis any term appearing with the energy or the momentum is multiplied  by a power of $\lambda$, and  $\lambda$ is so small as mentioned above, ultimately we do not  really need that the energy and momentum are conserved, but rather that they are bounded.
This is the reason we need to investigate whether  the  log-log blow up regime may keep the solution $u(t)$  very smooth away from the   blow up point. This would of course imply that the local momentum and local energy are varying in a bounded (not necessarily small) way, thus finally totally localizing the dynamics and decoupling  what happens in the region near the blow up point and away from it.

\subsection{Strategy and structure of the paper}
All arguments and results are valid for both dimensions $d=1$ and 2. For simplicity, from now on, we work on $d=2.$
The main result in Theorem \ref{thmmain} is proven by describing the dynamics for well-prepared initial data of the form 
\begin{equation}\label{datatemp}u_0(x)=\sum_{j=1}^{m}\frac{1}{\lambda_{j,0}}\tilde{Q}_{b_{j,0}}(\frac{x-x_{j,0}}{\lambda_{j,0}})e^{-\gamma_{j,0}}+\Xi.
\end{equation} We call this data a  ''multi-solitions model''. We sometimes call each ''soliton'' as bubble.

We  show that, under certain conditions,  $m$ bubbles $\frac{1}{\lambda_{j,0}}\tilde{Q}_{b_{j,0}}(\frac{x-x_{j,0}}{\lambda_{j,0}})+\Xi, j=1,\dots, m$ evolve as if they do not  feel the existence of each other, then we use a topological argument to show the existence of a blow up solution which blows up at $m$ prescribed points according to log-log law.  

As mentioned previously, one key element in the proof is to show that solutions generated by the well prepared data in 
\eqref{datatemp} will keep high regularity outside a small neighborhood of $x_{j,0}, \, j=1,\dots,m$.\\
To make the proof more accessible, we will first discuss an easier model (we call it "one soliton model" ), i.e. we show that
 $\frac{1}{\lambda_{0}}\tilde{Q}_{b_{0}}(\frac{x}{\lambda_{0}})+\Xi$   blows up according to the log-log law without propagating  singularity outside  a small neighborhood of the origin, which is essentially a restatement of Theorem \ref{regularity}.
 
We organize the rest  of this paper  as follows:

In Section \ref{os}, we  introduce   the well prepared data for the  "one soliton model" and describe its dynamics in  Lemma \ref{osboot}. 
In Section \ref{ts}, We introduce  the well prepared data for the  "multi-solitons model" and describe its dynamic in  Lemma \ref{bootstrap}.
In Section \ref{proofofos}, we prove Lemma \ref{osboot} for the "one soliton model".
In Section \ref{proofofts}, we prove  Lemma \ref{bootstrap} for the "multi-solitons model".
In Section \ref{proofofmain}, we use topological argument to prove the main theorem \ref{thmmain}.

\section{Description of initial data and dynamic/modification of system: one soliton model }\label{os}
Let us recall  that throughout the paper $\alpha$ will be used to denote a universal small number, though its exact value will be chosen at the very end of work. Also $\delta(\alpha)$ is used to denote small constant which depend on $\alpha$ and $\lim_{\alpha\rightarrow 0}\delta (\alpha)=0$.

\subsection{Description of initial data}\label{osdata}
We start with the "one soliton model". We define initial data $u_0$ in the following form:
\begin{equation}\label{onesolitondata}
u_{0}=\frac{1}{\lambda_{0}}(\tilde{Q}_{b_{0}}+\epsilon_{0})(\frac{x}{\lambda_{0}}),
\end{equation}
which satisfies  all the property of  data described in \eqref{firstdescrbitionofdata}, i.e. orthogonality condition \eqref{sharpo1},\eqref{sharpo2},\eqref{sharpo3},\eqref{sharpo4}, and the bounds \eqref{signimp}, \eqref{oriclosetoq},\eqref{orismallcondition},\eqref{orirenorm},\eqref{oriloglog}. Moreover   we assume that outside the origin  the data is smooth in the sense that for some $K_{2}>1$, we have
\begin{equation}\label{onesolitonoutsidesmooth}
\|u_{0}\|_{H^{K_{2}}(|x|\geq \frac{1}{3})}\lesssim 1,\\
\end{equation}
and $u_{0}$ in $H^{K_{2}}$.
\begin{rem}
For non-integer values $K_{2}$, formula \eqref{onesolitonoutsidesmooth} means that,  there exists a smooth cut-off function  $\chi$ equals $1$ in $|x|\geq \frac{1}{3}$ such that $\|\chi u_{0}\|_{H^{K_{2}}}\lesssim 1$.
\end{rem}
Now let us  restate Theorem \ref{regularity}.
\begin{lem}\label{oslem}
Let $u$ solve \eqref{nls},  with initial data  prepared as in \eqref{onesolitondata}. 
Then 
$$\forall K_{1}<\frac{K_{2}}{2}, \quad \|u(t)\|_{H^{K_{1}}(|x|\geq \frac{1}{2})}\lesssim 1$$  within the lifespan 
of $u$.
\end{lem}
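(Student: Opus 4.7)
My plan is to propagate the exterior regularity of the initial data outward via a spatial cutoff, Strichartz estimates, and an upside-down $I$-method. Let $\chi$ be a smooth cutoff equal to $0$ on $\{|x|\leq 1/3\}$ and $1$ on $\{|x|\geq 1/2\}$, and set $w=\chi u$, which solves
\begin{equation*}
i w_t + \Delta w + \chi|u|^{4/d}u = 2\nabla\chi\cdot\nabla u + (\Delta\chi)u,
\end{equation*}
with $w(0)=\chi u_0$ bounded in $H^{K_2}$ by \eqref{onesolitonoutsidesmooth}. Applying Duhamel's formula together with Strichartz estimates, I would try to bound $\|w\|_{L^\infty_t H^{K_1}}$ uniformly on the lifespan, treating the commutator terms and the localized nonlinearity as forcing.

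The commutator source terms $\nabla\chi\cdot\nabla u$ and $(\Delta\chi)u$ are supported on the annulus $\{1/3\leq |x|\leq 1/2\}$ and require control of $u$ there at regularity close to $K_1$, apparently circular. I would resolve this by introducing a nested family of cutoffs $\chi_0\prec\chi_1\prec\cdots\prec\chi_N$ supported in progressively larger sets reaching down to slightly outside the blow-up core, and running a finite bootstrap in which the bound on $\chi_k u$ at a given regularity closes using bounds on $\chi_{k+1} u$ at a slightly lower regularity. The bootstrap terminates at the innermost cutoff using only the globally conserved $L^2$ mass and the $H^1$ control from the log-log decomposition of Lemma \ref{originalboot}.

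To reach the threshold $K_1 < K_2/2$ without requiring $K_2$ to be an integer, I would invoke an upside-down $I$-operator: a smooth Fourier multiplier $m_N(D)$ that is the identity on $\{|\xi|\leq N\}$ and grows polynomially on high frequencies. Tracking the evolution of $\|m_N(D) w\|_{L^2}$ via commutator and bilinear Strichartz estimates on the cubic ($d=2$) nonlinearity, one can trade derivatives against powers of $N^{-1}$; interpolation between the resulting $I$-modified bound and the standard $H^1$ Strichartz bound then yields an $H^{K_1}$ estimate for any $K_1 < K_2/2$.

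The main obstacle will be estimating the nonlinear source $\chi|u|^{4/d}u$ at fractional regularity $K_1$. This requires a fractional Leibniz rule combined with a Littlewood--Paley decomposition so that the derivatives and the commutators with $m_N(D)$ land on a single factor of $u$, while the remaining factors are estimated via Strichartz norms using only the already-established $H^1$ bound. The loss of a factor of $1/2$ in the regularity exponent (the $K_2/2$ rather than $K_2$ threshold) reflects precisely this interpolation between the $I$-modified norm and the baseline $L^2$ norm, and is the key mechanism that allows us to bypass the integer restriction and the $K_1\leq (K_2-1)/2$ threshold of the earlier bootstrap/pseudo-energy methods of \cite{raphael2009standing} and \cite{zwiers2010standing}.
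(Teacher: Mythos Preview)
Your outline has the right ingredients (nested cutoffs, Strichartz, upside-down $I$-method) but misses the single indispensable analytic input and mislocates the role of the $I$-method.

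\textbf{The missing key estimate.} You write that the bootstrap ``terminates at the innermost cutoff using only the globally conserved $L^2$ mass and the $H^1$ control from the log-log decomposition''. This is not enough. On the innermost annulus the profile $\tilde Q_b$ vanishes, so $\chi_0 u$ equals $\lambda^{-1}\epsilon(\cdot/\lambda)$ there; the pointwise $\dot H^1$ norm is $\lambda^{-1}\|\nabla\epsilon\|_2\sim \lambda^{-1}\Gamma_b^{1/2}$, which blows up. What the paper actually uses to launch the iteration is the \emph{space-time} bound
\[
\int_0^{T}\|\nabla(\chi_0 u)(t)\|_2^2\,dt \lesssim 1,
\]
which is a deep consequence of the monotonicity of the Merle--Rapha\"el Lyapounov functional $\mathcal J$ (formula (63) in \cite{merle2005profiles}; see \eqref{origingain} here). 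This $L^2_t\dot H^1_x$ control, interpolated against the trivial $L^4_tL^4_x$ bound coming from $\|u\|_{H^{1/2}}\lesssim\lambda^{-1/2}$, is what lets you put the nonlinearity into a dual Strichartz space with only a logarithmic loss; without it the first step of your bootstrap does not close.

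\textbf{The role of the $I$-method and the origin of $K_2/2$.} In the paper the upside-down $I$-method is used for one purpose only: to prove the rough global bound $\|u(t)\|_{H^{K_2}}\lesssim_\sigma \lambda(t)^{-K_2-\sigma}$ on LWP intervals. It is \emph{not} applied to the cut-off function $w=\chi u$. The $K_2/2$ threshold then emerges from the cutoff iteration: if $\|v_i\|_{H^r}\lesssim 1$, interpolation with the rough $H^{K_2}$ bound gives $\|v_i\|_{H^{\tilde r+1}}\lesssim \lambda^{-K_2(\tilde r+1-r)/(K_2-r)-\sigma}$, and the constraint $\int_0^T\lambda^{-\mu}\,dt<\infty$ for $\mu<2$ (a direct consequence of the log-log rate, see \eqref{scalefromblowupspeed}) forces $\tilde r<\frac{2(K_2-r)}{K_2}-1+r$; the fixed point of this recursion is $r=K_2/2$. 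Your explanation of the $1/2$ loss as ``interpolation between the $I$-modified norm and the baseline $L^2$ norm'' is not the mechanism.
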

\begin{rem}
One should understand  Theorem \ref{regularity} and Lemma \ref{oslem} in  the following way: if the initial data is smooth, then partial smoothness will be kept during the log-log blow up process.
\end{rem}
\subsection{Modification of system}\label{onesolitionmodisys}
Let us  point out here that Lemma \ref{originalboot} cannot directly be applied to $u$ that solves \eqref{nls} with  the prepared data $u_0$. In fact we cannot even say that  $u$  satisfies the geometric decomposition  in \eqref{geometric} for $t\neq 0$.

Recall that  previously a  geometric decomposition was obtained via  a variational argument, Lemma \ref{lemvar}, (and  a further modulation argument), all relying on a negative energy condition and  on \eqref{supercritical}.
In our  case, for data described as \eqref{onesolitondata} we don't even know if the have the negative energy condition.

This does not really matter in the "one soliton model", since one may modify Lemma \ref{var1} relying on the so-called orbital stability of the soliton and re-establish Lemma \ref{secondapprox} without negative energy condition.

We do not  use this approach here, since we will later deal with  "multi-solitons model" and general orbital stability for multi-solitons is a very hard open problem.

 Let us  consider now a system for  $\{u(t),b(t),\lambda(t),x(t),\gamma(t)\}$. We  define $\epsilon(t)=\epsilon_{1}+i\epsilon_{2}:=u(t)-\frac{1}{\lambda(t)}\tilde{Q}_{b(t)}(\frac{x-x(t)}{\lambda(t)})e^{i-\gamma(t)}$, and we  consider the system as
\begin{equation}\label{modifiedsystem}
\begin{cases}
iu_{t}=-\Delta u-|u|^{2}u,\\
\frac{d}{dt}\{(\epsilon_{1}(t),|y|^{2}\Sigma_{b(t)})+(\epsilon_{2}(t),|y|^{2}\Theta_{b(t)})\}=0,\\
\frac{d}{dt}\{(\epsilon_{1}(t),y\Sigma_{b(t)})+(\epsilon_{2}(t), y\Theta_{b(t)})\}=0,\\
\frac{d}{dt}\{-(\epsilon_{1}(t),\Lambda^{2}\Theta_{b(t)})+(\epsilon_{2}(t),\Lambda^{2}\Sigma_{b(t)})\}=0,\\
\frac{d}{dt}\{-(\epsilon_{1}(t),\Lambda\Theta_{b(t)})+(\epsilon_{2}(t), \Lambda\Sigma_{b(t)})\}=0.\\
\{u(t),b(t),\lambda(t),x(t),\gamma(t)\}\arrowvert_{t=0}=\{u_{0},b_{0},\lambda_{0},0,0\}.
\end{cases}
\end{equation}
The local well posedness of \eqref{modifiedsystem} is straightforward, see Appendix \ref{odelwp} for more details.
Note now the following geometric decomposition automatically hold due to the definition of $\epsilon$.
\begin{equation}\label{tempogeo}
u(t,x)=\frac{1}{\lambda(t)}(\qbb+\epsilon)\left(\frac{x-x(t)}{\lambda(t)}\right)e^{-i\gamma(t)}.
\end{equation}

As pointed out by Merle and Rapha\"el, such system should be studied in rescaled  time variable rather than in  its original time variable.
So we define the re-scaled time $s$ as $\frac{ds}{dt}=\frac{1}{\lambda^{2}(t)}$, and if one rewrites \eqref{modifiedsystem} in rescaled  time variable, $\epsilon$  automatically solves \eqref{sys1}, \eqref{sys2}.   Since the orthogonality condition  \eqref{sharpo1}, \eqref{sharpo2}, \eqref{sharpo3}, \eqref{sharpo4}  hold at $t=0$, by \eqref{modifiedsystem}, they  hold for all $t\in [0,T]$, the life span of \eqref{modifiedsystem}.

What's more, the life span of \eqref{modifiedsystem} is exactly the lifespan of the problem  \eqref{nls} with initial data $u_{0}$. This is not a trivial fact. Indeed \eqref{modifiedsystem} is an NLS coupled with  four ODEs involving  $\{b,\lambda,x,\gamma\}$ and  there is the  possibility that $b$ may  become large, and then this system no longer  makes sense since $\qbb$ is only defined for small $b$. However, as long as \eqref{modifiedsystem} holds, then the bootstrap lemma \ref{originalboot} works, and this will ensure  that $b$ stays bounded and small, and thus the lifespan of \eqref{modifiedsystem} coincides with that of the NLS  problem with initial data $u_{0}$.  There is of course another possibility, that  the coupled ODE breakdown, i.e. $\lambda$ becomes $0$ and  this of course, means  that the NLS  equation blows up.

To summarize, during the study of the NLS \eqref{nls} with initial data $u_{0}$  as in \eqref{onesolitondata}, for any $[0,T]$ in the lifespan of $u(t)$, $u(t)$ satisfies  geometric decomposition:
$$u(t)=\frac{1}{\lambda(t)}(\tilde{Q}_{b(t)+\epsilon(t)})\left(\frac{x-x(t)}{\lambda(t)}\right)e^{-i\gamma(t)},$$ and orthogonality conditions \eqref{sharpo1},\eqref{sharpo2}, \eqref{sharpo3},\eqref{sharpo4}. In particular the bootstrap lemma \ref{originalboot} works for these kinds of data.

\subsection{Description of the dynamic}
Now, we can (equivalently) restate Lemma \ref{oslem} in the  following way.
\begin{lem}\label{osboot}
Consider the system \eqref{modifiedsystem} with initial data $\{u_{0},b_{0},\lambda_{0},0,0\}$ described in subsection \ref{osdata}, then for  any $T<T^{+}(u_{0})$, we have the following bootstrap argument:\\
 Let the rescaled time $s$ be defined as $s=\int_{0}^{t} \frac{d\tau}{\lambda^{2}(\tau)}+s_{0}, s_{0}=e^{\frac{3\pi}{4b_{0}}}$, if one assumes bootstrap hypothesis  \eqref{bhfirst} for $t\in [0,T]$,
then one has bootstrap estimate \eqref{befirst} for $t\in [0,T]$, and  following regularity estimate holds for any fixed $K_{1}<\frac{K_{2}}{2}$,
\begin{equation}\label{K1smooth}
\|u(t)\|_{H^{K_{1}}(|x|\geq \frac{1}{2})}\lesssim 1, t\in [0,T].
\end{equation}
\end{lem}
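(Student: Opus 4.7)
My plan is as follows. The bootstrap conclusion \eqref{befirst} is Lemma \ref{originalboot} re-run on our modified system \eqref{modifiedsystem}, and I intend to verify that the full Merle--Rapha\"el modulation analysis survives. By construction the geometric decomposition \eqref{tempogeo} holds for every $t\in[0,T]$, and the four orthogonality relations \eqref{sharpo1}--\eqref{sharpo4} are propagated because the last four equations of \eqref{modifiedsystem} are precisely their time derivatives. Consequently the modulation estimates \eqref{modu1}--\eqref{modu2}, the energy/momentum controls \eqref{energy1}--\eqref{energy2}, the local virial inequality \eqref{lv1}, and the Lyapunov monotonicity \eqref{criticalmono} remain valid. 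Under \eqref{bhfirst} they simplify to \eqref{simple0}--\eqref{simple2}, and the standard closing of the log-log bootstrap (with constant-sharpening as in Remark \ref{sharpen}) yields \eqref{befirst}. The spatial control $|x(t)|\leq 1/2000$ follows at once from integrating the (very small) bound on $|x_{s}/\lambda|\lesssim \Gamma_{b}^{1/10}$ against $dt=\lambda^{2}\,ds$.

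To prove the regularity statement \eqref{K1smooth} I would fix three nested smooth cutoffs $\chi,\chi_{1},\chi_{2}\in C^{\infty}_{c}(\RRR^{2})$, each equal to $1$ on $\{|x|\geq 1/2\}$ and supported in $\{|x|\geq 1/3\}$, with $\chi_{j+1}\equiv 1$ on $\mathrm{supp}(\chi_{j})$. From \eqref{befirst} one has $|x(t)|\leq 1/2000$ and $\lambda(t)$ doubly exponentially small; combined with the facts that $\tilde{Q}_{b(t)}$ is supported in a ball of radius $\lesssim \lambda(t)/|b(t)|$ around $x(t)$ and that the radiation tail $\tilde{\zeta}_{b(t)}$ is supported in one of radius $\lesssim \lambda(t)A(b(t))$, every singular contribution to $u$ vanishes identically on each $\mathrm{supp}(\chi_{j})$. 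Therefore $v:=\chi u$ satisfies
\begin{equation*}
iv_{t}+\Delta v+|\chi_{1}u|^{2}v=F,\qquad F:=2\nabla\chi\cdot\nabla u+u\,\Delta\chi,
\end{equation*}
with $F$ supported in $\{1/3\leq|x|\leq 1/2\}\subset\{\chi_{1}\equiv 1\}$. This is a localized cubic NLS whose potential and inhomogeneity depend only on $\chi_{1}u$, whose regularity is in turn controlled by $\chi_{2}u$, a nested structure to be iterated.

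The heart of the argument is then to propagate a high Sobolev bound on $\chi_{j}u$ on $[0,T^{+}(u))$. I would combine Strichartz estimates in $L^{4}_{t,x}$ with the Kato--Ponce fractional Leibniz rule to handle the cubic term $|\chi_{1}u|^{2}v$, and then apply an upside-down $I$-operator in the spirit of \cite{colliander2002almost, sohinger2010bounds} so that the modified energy grows only as a controlled power of the frequency parameter $N$. Once $\chi u$ is bounded in $H^{K_{2}-\delta}$ on the lifespan, interpolation against the conserved $L^{2}$ mass gives, for any $K_{1}<K_{2}/2$ and $\delta$ sufficiently small,
\begin{equation*}
\|\chi u(t)\|_{H^{K_{1}}}\lesssim \|\chi u(t)\|_{L^{2}}^{1-\theta}\|\chi u(t)\|_{H^{K_{2}-\delta}}^{\theta},\qquad \theta=\tfrac{K_{1}}{K_{2}-\delta}<\tfrac{1}{2},
\end{equation*}
which is \eqref{K1smooth}. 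The main obstacle I anticipate is the upside-down $I$-estimate itself: the commutators between the Fourier multiplier $I_{N}$ and the spatial cutoffs $\chi_{j}$ must be kept harmless, and the cubic commutator terms have to close within the $I$-modified Strichartz framework. The saving feature is that $\nabla\chi_{j}$ is supported strictly outside the bubble, so the singular, large-amplitude portion of $u$ never enters the commutator; only the already-controlled $\chi_{j+1}u$ does, and the nested structure of the cutoffs closes the iteration. The factor $1/2$ in the threshold $K_{1}<K_{2}/2$ appears precisely at this interpolation step, reflecting the unavoidable derivative loss of the $I$-method scheme.
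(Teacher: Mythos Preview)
Your treatment of part one (closing the bootstrap \eqref{befirst}) is correct and matches the paper: once the geometric decomposition and orthogonality conditions are in place, Lemma \ref{originalboot} applies verbatim.

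For the regularity estimate \eqref{K1smooth}, however, your scheme has a genuine gap. The upside-down $I$-method, as you describe it and as the paper uses it, does \emph{not} yield a uniform-in-time $H^{K_{2}-\delta}$ bound for $\chi u$; it only gives the rough control $\|u(t)\|_{H^{K_{2}}}\lesssim_{\sigma}\lambda(t)^{-(K_{2}+\sigma)}$ (Lemma \ref{lemosroughcontrol}), which blows up. Your sentence ``Once $\chi u$ is bounded in $H^{K_{2}-\delta}$ on the lifespan'' is therefore unjustified, and your final interpolation against $L^{2}$ is a non-sequitur: if $\|\chi u\|_{H^{K_{2}-\delta}}$ were already bounded, $\|\chi u\|_{H^{K_{1}}}$ would follow trivially for every $K_{1}<K_{2}-\delta$, with no factor of $1/2$. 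The threshold $K_{1}<K_{2}/2$ does not come from an $L^{2}$--$H^{K_{2}}$ interpolation.

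What you are missing is the one genuinely dynamical input from the log-log regime outside the bubble, namely the space-time control
\[
\int_{0}^{T}\|\nabla(\chi_{0}u)(t)\|_{2}^{2}\,dt\lesssim 1,
\]
which is \eqref{origingain} in the paper and is inherited from the Lyapunov monotonicity. This is the seed of all regularity away from $x(t)$. The paper then runs an outward iteration on a chain of cutoffs $\chi_{0},\dots,\chi_{L}$ (supports moving \emph{away} from the origin, opposite to your nesting): first Strichartz plus a delicate interpolation between \eqref{origingain} and $\|v_{1}\|_{L^{4}_{t}H^{1/2}}$ upgrades $v_{2}$ to $H^{\nu}$ for all $\nu<1$ with only $\lambda^{-\sigma}$ loss (Lemma \ref{lemosfirstgain}); an $L^{\infty}$ improvement (Lemma \ref{lemosimprovelinfty}) then removes the loss (Lemma \ref{lemosfirstgainimpr}); finally Lemma \ref{lemositeration} gives the recursion $\tilde r<\frac{2(K_{2}-r)}{K_{2}}-1+r$, whose fixed point is exactly $r=K_{2}/2$. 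That fixed-point computation is where the $1/2$ actually comes from. Your nested-cutoff picture also iterates in the wrong direction: controlling $\chi u$ in terms of $\chi_{1}u$, then $\chi_{1}u$ in terms of $\chi_{2}u$, pushes you \emph{toward} the singularity and never closes.
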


\section{Description of initial data and dynamic: multi-soliton model}\label{ts}
Now we turn to "multi-solitons model".

We introduce some notations.
Let $\chi_{0,loc}$, $\chi_{1,loc}$ be two smooth cut-off functions such that
\begin{equation}\label{chi0}
\chi_{0,loc}(x)=
\begin{cases}
1, |x|\leq 3/4,\\
0, |x|\geq 1,
\end{cases}
\end{equation}
\begin{equation}
\chi_{1,loc}(x)=
\begin{cases}
1, |x|\leq 2/3,\\
0, |x|\geq 3/4.
\end{cases}
\end{equation}
Let's define the local version of energy and momentum.  For any $f\in H^{1}(\RRR^{2}), x_{0}\in \RRR^{2}$, let
\begin{eqnarray}
&&E_{loc}(f,x_{0}):=\int_{\RRR^{2}}\chi_{0,loc}(x-x_{0})\left(\frac{1}{2}|\nabla f|^{2}-\frac{1}{4}|f|^{4}\right),\\
&&P_{loc}(f,x_{0}):=\int_{\RRR^{2}}\nabla f \bar{f}\chi_{0,loc}(x-x_{0}).
\end{eqnarray}
\subsection{Description of the initial data}\label{datats}
We define an initial data $u_{0}$ of the following form:
\begin{equation}\label{setup}
u_{0}:=\sum_{j=1}^{m}u_{j,0}+\Xi_{0},
\end{equation}
where
\begin{eqnarray}
u_{j,0}=\frac{1}{\lambda_{j,0}}\tilde{Q}_{b_{j,0}}\left(\frac{x-x_{j,0}}{\lambda_{j,0}}\right)e^{-i\gamma_{j,0}}, j=1,\dots, m\\
\Xi_{0}\in H^{1}(\RRR^{2}),\end{eqnarray}
and the properties of $\Xi_{0}$ are implicitly encoded below.

We let 
\begin{eqnarray}
\begin{aligned}
&\epsilon^{j}_{0}:=\lambda_{j,0}\Xi_{0}(\lambda_{j,0}x+x_{j,0})e^{i\gamma_{j,0}},\\\label{notation1}
&\epsilon^{j}_{0}\equiv \epsilon^{j}_{1,0}+i\epsilon^{j}_{2,0}, j=1,\dots, m.
\end{aligned}
\end{eqnarray}
We require the orthogonality condition:
\begin{eqnarray}\label{initialo1}
&&(\epsilon_{1,0}^{j}, |y|^{2}\Sigma_{b_{j,0}})+(\epsilon^{j}_{2,0},|y|^{2}\Theta_{b_{j,0}})=0, j=1,\dots, m\\\label{initialo2}
&&(\epsilon^{j}_{1,0},y\Sigma_{b_{1,0}})+(\epsilon^{j}_{2,0},y\Theta_{b_{j,0}})=0, j=1,\dots, m\\\label{initialo3}
&&-(\epsilon_{1,0},\Lambda^{2}\Theta_{b_{j,0}})+(\epsilon^{j}_{2,0},\Lambda^{2}\Sigma_{b_{j,0}})=0, j=1,\dots, m\\\label{initialo4}
&&-(\epsilon_{1,0}^{j}, \Lambda \Theta_{b_{j,0}})+(\epsilon_{2,0}^{j}, \Lambda \Sigma_{b_{j,0}})=0, j=1,\dots, m,
\end{eqnarray}
where $\qbb\equiv\Sigma_{b}+i\Theta_{b}$.\\
We further require the following:
\begin{enumerate}
\item Non-interaction of any  two different solitons:
\begin{equation}\label{initalseparate}
 |x_{j,0}-x_{j',0}|\geq 10, \forall j\neq j', 
\end{equation}
\item Sign condition and smallness  condition of $b_{j,0}$:
\begin{equation}\label{initailsigh}
\alpha>b_{j,0}>0, j=1,\dots, m.
\end{equation}
\item Smallness condition:
\begin{equation}\label{initalsmallness}
\sum_{j=1}^{m}\lambda_{j,0}+\sum_{j=1}^{m}\|\epsilon^{j}\|_{H^{1}}\leq \alpha,
\end{equation}
\item Log-log regime condition:
\begin{equation}\label{initalloglog}
e^{-e^{\frac{2\pi}{b_{j,0}}}}<\lambda_{j,0}<e^{-e^{\frac{\pi}{2b_{j,0}}}}, j=1,\dots, m.
\end{equation}
\item Smallness of the local error
\begin{equation}\label{initalsmallnessoflocalerror}
\int_{|x|\leq \frac{10}{\lambda_{j,0}}}|\nabla \epsilon^{j}_{0}|^{2}+|\epsilon_{0}^{j}|^{2}e^{-|y|}\leq \Gamma_{b_{j,0}}^{\frac{6}{7}}, j=1,\dots, m.
\end{equation}
\item Tameness of local energy and local momentum
\begin{equation}\label{initialtamenessofenergy}
\lambda_{j}^{2}|E_{loc}(u_{0},x_{j,0})|\leq \Gamma_{b_{j}}^{1000}, j=1, \dots, m.
\end{equation}
\begin{equation}\label{initaltamenessofmomentum}
\lambda_{j}|E_{loc}(u_{0},x_{j,0})|\leq \Gamma_{b_{j}}^{1000}, j=1,\dots, m.
\end{equation}
\item Smoothness outside the singularity, here we fix a large $N_{2}$, which would be chosen later.
\begin{equation}\label{initialoutsmooth1}
|u_{0}|_{H^{N_{2}}(min_{1\leq j\leq m}\{|x-x_{j,0}|\}\geq \frac{1}{3})}\leq \alpha.
\end{equation}
\item $u_{0}$ in $H^{N_{2}}$.
\end{enumerate}
\begin{rem}
Such data can be constructed similarly as those  constructed in \cite{raphael2009standing}.
\end{rem}
\subsection{Modification of system}
Now let $u$ be the solution to \eqref{nls} with initial data $u_{0}$ described as  in Subsection \ref{datats}. We are expecting that throughout the lifespan of the evolution, we can find parameters $\{b_{j}(t),\lambda_{j}(t),x_{j}(t),\gamma_{j}(t)\}_{j=1}^{m}$ and a function $\Xi(t,x)$ such that the following  geometric decomposition holds:
\begin{eqnarray}\label{basicsettinggeo}
&&u(t,x)=\sum_{j=1}^{m}\frac{1}{\lambda_{j}(t)}\qbj(\frac{x-x_{j}(t)}{\lambda_{j}(t)})e^{-i\gamma_{j}(t)}+\Xi(t,x),\\
&&|b_{j}(t)|\leq \delta(\alpha), 
\|\epsilon^{j}\|_{H^{1}}\leq \delta(\alpha), 1\leq j\leq m,  \epsilon^{j}=\lambda_{j}\xi(\lambda_{j}x+x_{j})e^{i\gamma_{j}},
\end{eqnarray}
as well as the orthogonality condition:
\begin{eqnarray}\label{sharpoth1}
&&(\epsilon_{1}^{j}, |y|^{2}\Sigma_{b_{j}})+(\epsilon^{j}_{2},|y|^{2}\Theta_{b_{j}})=0, j=1,\dots, m,\\\label{sharpoth2}
&&(\epsilon^{j}_{1},y\Sigma_{b_{j}})+(\epsilon^{j}_{2},y\Theta_{b_{j}})=0, j=1,\dots, m,\\\label{sharpoth3}
&&-(\epsilon_{1},\Lambda^{2}\Theta_{b_{j}})+(\epsilon^{j}_{2},\Lambda^{2}\Sigma_{b_{j}})=0,j=1,\dots, m,\\\label{sharpoth4}
&&-(\epsilon_{1}^{j}, \Lambda \Theta_{b_{j}})+(\epsilon_{2}^{j}, \Lambda \Sigma_{b_{j}})=0, j=1,\dots, m.
\end{eqnarray}
Again $\qbb=\Sigma_{b}+i\Theta_{b}$, $\epsilon^{j}=\epsilon^{j}_{1}+i\epsilon_{2}^{j}$.

Since at this point we do not know that the general multi-solitons orbital stability holds,  we consider  the system for $\{u(t,x),\{b_{j}(t),\lambda_{j}(t),x_{j}(t), \gamma_{j}(t)\}_{j=1}^{m}\}$ as in Subsection \ref{onesolitionmodisys}:
\begin{equation}\label{tsnls}
\begin{cases}
iu_{t}+\Delta u=-|u|^{2}u,\\
\frac{d}{dt}\{(\epsilon_{1}^{j}, |y|^{2}\Sigma_{b_{j}})+(\epsilon^{j}_{2},|y|^{2}\Theta_{b_{j}})\}=0, j=1,\dots, m,\\
\frac{d}{dt}\{(\epsilon^{j}_{1},y\Sigma_{b_{j}})+(\epsilon^{j}_{2},y\Theta_{b_{j}})\}=0, j=1,\dots, m,\\
\frac{d}{dt}\{-(\epsilon_{1},\Lambda^{2}\Theta_{b_{j}})+(\epsilon^{j}_{2},\Lambda^{2}\Sigma_{b_{j}})\}=0, j=1,\dots, m,\\
\frac{d}{dt}\{-(\epsilon_{1}^{j}, \Lambda \Theta_{b_{j}})+(\epsilon_{2}^{j}, \Lambda \Sigma_{b_{j}})\}=0, j=1,\dots, m,\\
u(0,x)=u_{0}, \lambda_{j}(0)=\lambda_{j,0}, b_{j}(0)=b_{j,0}, x_{j}(0)=x_{j,0}, \gamma_{j}(0)=\gamma_{j,0}, j=1,\dots, m,
\end{cases}
\end{equation}
where $\epsilon^{j}, \epsilon_{1}^{j}, \epsilon_{2}^{j}$ is defined by $\Xi$, as previously noted, and $\Xi$ is defined by 
\begin{equation}\label{geodef}
\Xi(t,x)\equiv u(t,x)-\sum_{j=1}^{m}\frac{1}{\lambda_{j}}\qbj(\frac{x-x_{j}}{\lambda_{j}})e^{-i\gamma_{j}(t)}.
\end{equation}
By the orthogonality conditions  \eqref{sharpoth1},\eqref{sharpoth2},\eqref{sharpoth3}, \eqref{sharpoth4} of the initial data,   one has the orthogonality condition within the lifespan of \eqref{tsnls}:
\begin{eqnarray}\label{ot1}
&&(\epsilon_{1}^{j}, |y|^{2}\Sigma_{b_{j}})+(\epsilon^{j}_{2},|y|^{2}\Theta_{b_{j}})=0, j=1,\dots, m,\\\label{ot2}
&&(\epsilon^{j}_{1},y\Sigma_{b_{1}})+(\epsilon^{j}_{2},y\Theta_{b_{j}})=0, j=1,\dots, m,\\\label{ot3}
&&-(\epsilon_{1},\Lambda^{2}\Theta_{b_{j}})+(\epsilon^{j}_{2},\Lambda^{2}\Sigma_{b_{j}})=0, j=1,\dots, m,\\\label{ot4}
&&-(\epsilon_{1}^{j}, \Lambda \Theta_{b_{j}})+(\epsilon_{2}^{j}, \Lambda \Sigma_{b_{j}})=0, j=1,\dots, m.
\end{eqnarray}
 We  discussed previously that the lifespan of  \eqref{modifiedsystem} is the same as the lifespan of $u$. With the same argument one also has that  the lifespan of \eqref{tsnls} is the same as  lifespan of $u$.\\
We perform  one final simplification. Let us define\footnote{Please note $\eps$ and $\epsilon$ are different notations.}:
\begin{equation}
\begin{aligned}
&\Xi^{j}(x):=\Xi(x)\chi_{1,loc}(x-x_{j}), \eps^{j}(t,y):=\lambda_{j}\Xi^{j}(\lambda_{j}y+x_{j})e^{-i\gamma_{j}},\\
&\eps^{j}(t,y)\equiv \eps^{j}_{1}+i\eps^{j}_{2}, j=1,\dots, m.\\
\end{aligned}
\end{equation}
we point out that our analysis will be performed  under the condition $\lambda_{j}\ll b_{j}$, (see Lemma \ref{bootstrap} below). Now, by definition, one has $\eps^{j}=\epsilon^{j}\chi_{1,loc}(\frac{x}{\lambda_{j}})$, and recall $\qbb$ is supported in $|y|\lesssim \frac{1}{b}$. It is not hard to see the following orthogonality condition for $\epj$.
\begin{eqnarray}\label{oth1}
&&(\eps_{1}^{j}, |y|^{2}\Sigma_{b_{j}})+(\eps^{j}_{2},|y|^{2}\Theta_{b_{j}})=0, j=1,\dots, m,\\\label{oth2}
&&(\eps^{j}_{1},y\Sigma_{b_{1}})+(\eps^{j}_{2},y\Theta_{b_{j}})=0, j=1,\dots, m\\\label{oth3}
&&-(\eps_{1},\Lambda^{2}\Theta_{b_{j}})+(\eps^{j}_{2},\Lambda^{2}\Sigma_{b_{j}})=0, j=1,\dots, m,\\\label{oth4}
&&-(\eps_{1}^{j}, \Lambda \Theta_{b_{j}})+(\eps_{2}^{j}, \Lambda \Sigma_{b_{j}})=0, j=1, \dots, m.
\end{eqnarray}
Under the  hypothesis $\lambda_{j}\ll b_{j}$, the system \eqref{tsnls} is exactly:
\begin{equation}\label{tsnls2}
\begin{cases}
iu_{t}+\Delta u=-|u|^{2}u,\\
\frac{d}{dt}\{(\eps_{1}^{j}, |y|^{2}\Sigma_{b_{j}})+(\eps^{j}_{2},|y|^{2}\Theta_{b_{j}})\}=0, j=1,\dots, m,\\
\frac{d}{dt}\{(\eps^{j}_{1},y\Sigma_{b_{1}})+(\eps^{j}_{2},y\Theta_{b_{j}})\}=0, j=1,2, \dots, m,\\
\frac{d}{dt}\{-(\eps_{1}^{j},\Lambda^{2}\Theta_{b_{j}})+(\epsilon^{j}_{2},\Lambda^{2}\Sigma_{b_{j}})\}=0, j=1,\dots, m,\\
\frac{d}{dt}\{-(\eps_{1}^{j}, \Lambda \Theta_{b_{j}})+(\eps_{2}^{j}, \Lambda \Sigma_{b_{j}})\}=0, j=1,\dots, m\\
u(0,x)=u_{0}, \lambda_{j}(0)=\lambda_{j,0}, b_{j}(0)=b_{j,0}, x_{j}(0)=x_{j,0}, \gamma_{j}(0)=\gamma_{j,0}, j=1,\dots, m.
\end{cases}
\end{equation}
\subsection{Description of the dynamic}
We are now ready to present the main lemma which contains a  bootstrap argument used to describe the dynamics of \eqref{tsnls2}, where the initial data 
$\{u_{0},\{b_{j,0},\lambda_{j,0},x_{j,0},\gamma_{j,0}\}_{j=1}^{m}\}$ is described as in Subsection \ref{datats}.

\begin{lem}\label{bootstrap}
Consider the  system \eqref{tsnls2} with initial data described as in Subsection \ref{datats}, (with  the universal constant $\alpha$ is small enough), then
$\forall T<T^{+}(u)$, the following bootstrap argument holds (and all the estimates are independent of $T$):\\
Let $s_{j,0}:=e^{\frac{3\pi}{4b_{j,0}}}$, and define the re-scaled time  $s_{j}:=\int_{0}^{t}\frac{1}{\lambda_{j}^{2}}+s_{j,0}$, $j=1,\dots, m$.
If for  $t\in[0,T)$  the bootstrap hypothesis hold:
\begin{itemize}
\item smallness of $b_{j}$ and $\eps$:
\begin{equation}\label{assumuepertbation}
\sum_{j=1}^{m}|b_{j}|+\sum_{j=1}^{m}\|\eps^{j}\|_{H^{1}}\leq 10\alpha^{1/2},
\end{equation}
\item log-log regime, part I:
\begin{equation}\label{assumeloglog1}
e^{-e^{\frac{10\pi}{b_{j}(t)}}}\leq \lambda_{j}(t)\leq e^{-e^{\frac{\pi}{10b_{j}(t)}}}, j=1,\dots, m,
\end{equation}
\item log-log regime, part II:\\
\begin{equation}\label{assumeloglog2}
\frac{\pi}{10\ln s_{j}}\leq b_{j}\leq \frac{10\pi}{\ln s_{j}}, j=1,\dots, m,
\end{equation}
\item smallness of the local error:
\begin{equation}\label{assumesmallnessoflocalerror}
\int|\nabla \eps^{j}|^{2}+
|\eps^{j}|^{2}e^{-|y|}\leq \Gamma_{b_{j}}^{\frac{3}{4}}, j=1,\dots, m,
\end{equation}
\item almost monotonicity:
\begin{equation}\label{assumealmostmono}
\forall T>t_{2}\geq t_{1}\geq 0, \lambda_{j}(t_{2})\leq 3\lambda_{j}(t_{1}), j=1,\dots, m,
\end{equation}
\item control of translation parameters:
\begin{equation}\label{assumesmalltranslation}
\forall 0<t<T, |x_{j}(t)-x_{j,0}|\leq \frac{1}{1000}, j=1,\dots, m,
\end{equation}
\item local control of conserved quantity:
for all $0<t<T$, $j=1,\dots, m$:
\begin{eqnarray}\label{assumelocalcontrolofconserveredquantity}
&&|E_{loc}(x_{j}(t),u(t))-E_{loc}(x_{j,0}, u_{0})|\leq 1000,\\\label{assumelocalcontrolofmomentum}
&&|P_{loc}(x_{j}(t),u(t))-P_{loc}(x_{j,0}, u_{0})|\leq 1000,
\end{eqnarray}
\item outside-smoothness:
\begin{equation}\label{assumeoutsidesmoothness}
\|u\|_{H^{N_{1}}(\min_{j}\{|x-x_{j,0}|\geq \frac{1}{2}\})}\leq \frac{1}{\max_{j}\{\lambda_{j,0}\}},
\end{equation}
where  $N_{1}$ is some fixed large constant, $N_{1}<\frac{N_{2}}{2}.$
\end{itemize} 
Then for $t\in [0,T)$, the bootstrap estimates  hold:
\begin{itemize}
\item smallness of $b_{j}$ and $\epsilon$:
\begin{equation}\label{estimateperturbation}
\sum_{j=1}^{m} |b_{j}|+\sum_{j=1}^{m}\|\epsilon^{j}\|_{H^{1}}\leq 5\alpha^{1/2},
\end{equation}
\item log-log regime, part I:
\begin{equation}\label{estimateloglog1}
e^{-e^{\frac{5\pi}{b_{j}(t)}}}\leq \lambda(t)\leq e^{-e^{\frac{\pi}{5b_{j}(t)}}}, j=1,\dots, m, 
\end{equation}
\item log-log regime, part II:
\begin{equation}\label{estimateloglog2}
\frac{\pi}{5\ln s_{j}}\leq b_{j}\leq \frac{5\pi}{\ln s_{j}}, j=1,\dots, m,
\end{equation}
\item smallness of the local error:
\begin{equation}\label{estimateofsmalllocalerror}
\int|\nabla \eps^{j}|^{2}+
|\eps^{j}|^{2}e^{-|y|}\leq \Gamma_{b_{j}}^{\frac{4}{5}}, j=1,\dots, m,
\end{equation}
\item almost monotonicity: 
\begin{equation}\label{estimatealmostmono}
\forall T>t_{2}\geq t_{1}\geq 0, \lambda_{j}(t_{2})\leq 2\lambda_{j}(t_{1}), j=1,\dots, m,
\end{equation}
\item control of translation parameter:
\begin{equation}\label{estimatesmalltranslation}
\forall 0<t<T, |x_{j,t}-x_{j,0}|\leq \frac{1}{2000}, j=1,\dots, m,
\end{equation}
\item local control of conserved quantity:
for all $0<t<T$, $j=1,\dots, m$:
\begin{eqnarray}\label{estimatelocalcontrolofconservedquantity}
&&|E_{loc}(x_{j}(t),u(t))-E_{loc}(x_{j,0},u_{0})|\leq 500,\\\label{estimatelocalcontrolofmomentum}
&&|P_{loc}((x_{j}(t), u(t))-P_{loc}(x_{j,0}, u_{0})|\leq 500,\\
\end{eqnarray}
\item outside smoothness:
\begin{equation}\label{estimateoutsidesmoothness}
\|u\|_{H^{N_{1}}(\min_{j}\{|x-x_{j,0}|\geq \frac{1}{2}\})}\lesssim 1,
\end{equation}
\end{itemize}
\end{lem}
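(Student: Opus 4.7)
The plan is to adapt the Merle--Rapha\"el one-soliton bootstrap (Lemma \ref{osboot}) to each of the $m$ bubbles independently, exploiting the macroscopic separation of the blow-up centers together with the outside smoothness to decouple the $m$ local dynamics. The cutoff $\chi_{1,loc}(\cdot-x_j)$ appearing in the definition of $\eps^{j}$ has support in $\{|x-x_j|\leq 3/4\}$; by \eqref{initalseparate} and \eqref{assumesmalltranslation} this is at distance $\geq 9$ from every other center, while each companion profile $\tilde Q_{b_{j'}}((\cdot-x_{j'})/\lambda_{j'})/\lambda_{j'}$ is supported in a ball of radius $\lambda_{j'}/b_{j'}\ll 1$ about $x_{j'}$. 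Consequently, on the support of the $j$-th cutoff only the $j$-th soliton profile and a piece of the error $\Xi$ are felt, and the orthogonality conditions \eqref{oth1}--\eqref{oth4} for $\eps^{j}$ take exactly the same algebraic form as the single-bubble orthogonalities in Lemma \ref{secondapprox}.

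\textbf{Bubble-by-bubble modulation, local virial and Lyapunov.} For each $j$, inserting \eqref{oth1}--\eqref{oth4} into the evolution equations \eqref{sys1}--\eqref{sys2} written for $\eps^{j}$ produces modulation estimates identical to Lemma \ref{lemmoduestimateinitial}, except that the global energy $E_{0}$ and momentum $P_{0}$ are replaced by the local versions $E_{loc}(x_{j}(t),u(t))$ and $P_{loc}(x_{j}(t),u(t))$. Under the bootstrap hypothesis \eqref{assumelocalcontrolofconserveredquantity}--\eqref{assumelocalcontrolofmomentum} these local quantities are $O(1)$, and the log-log hypothesis \eqref{assumeloglog1} implies $\lambda_{j}^{2}\cdot O(1)\ll\Gamma_{b_{j}}^{100}$, so every term $\lambda_{j}^{2}|E_{0}|$ or $\lambda_{j}|P_{0}|$ appearing in the single-bubble estimates becomes negligible. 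The local virial inequality \eqref{lv1} and the Lyapunov monotonicity \eqref{criticalmono} therefore carry over verbatim to $\eps^{j}$, and running the classical blow-up analysis of \cite{merle2003sharp}, \cite{merle2006sharp} separately for each $j$ yields the improved constants in \eqref{estimateperturbation}, \eqref{estimateloglog1}, \eqref{estimateloglog2}, \eqref{estimateofsmalllocalerror}, \eqref{estimatealmostmono}. Integrating the modulation estimate for $\dot x_{j}/\lambda_{j}$ over the (very short) lifespan then yields \eqref{estimatesmalltranslation}.

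\textbf{Upgrade of outside smoothness.} This is the step that carries the main technical burden, and I expect it to be the main obstacle. The plan is to propagate regularity by the upside-down $I$-method of Theorem \ref{regularity}, applied simultaneously on the $m$ disjoint exterior regions $\{|x-x_{j,0}|\geq 1/2\}$. After multiplying $u$ by a smooth cutoff vanishing in a neighborhood of every $x_{j,0}$, the cut-off piece solves an inhomogeneous NLS whose source consists of commutators of the cutoff with the Laplacian and with the nonlinearity. A Strichartz bootstrap using an $I$-multiplier that is the identity at low frequency and decays like a negative power at high frequency, interpolated against the very weak hypothesis \eqref{assumeoutsidesmoothness} and mass conservation, should upgrade that hypothesis to $\|u(t)\|_{H^{N_{1}}(\min_{j}|x-x_{j,0}|\geq 1/2)}\lesssim 1$, i.e.\ \eqref{estimateoutsidesmoothness}. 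The $m$ exterior cutoffs decouple because the bubbles sit at macroscopic mutual distance, so the step reduces to $m$ essentially independent applications of the one-bubble result of Lemma \ref{oslem}, with at most routine bookkeeping for the cross terms.

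\textbf{Local energy and momentum.} Once \eqref{estimateoutsidesmoothness} is in hand, the estimates \eqref{estimatelocalcontrolofconservedquantity}--\eqref{estimatelocalcontrolofmomentum} close from the standard flux identity. Differentiating in time, $\partial_{t}E_{loc}(x_{j}(t),u(t))$ is the sum of a transport contribution proportional to $\dot x_{j}$ and a boundary flux supported in the annulus $3/4\leq|x-x_{j}(t)|\leq 1$; both are bounded by $O(1)$ since $u$ is $H^{N_{1}}$-bounded on the annulus by \eqref{estimateoutsidesmoothness} and $|\dot x_{j}|$ is small by the modulation step. The lifespan $T\leq T^{+}(u)$ is itself extraordinarily small, bounded by a small power of $\max_{j}\lambda_{j,0}$ by the initial smallness \eqref{initalsmallness} and the log-log size of $\lambda_{j}$; integrating therefore gives $|E_{loc}(x_{j}(t),u(t))-E_{loc}(x_{j,0},u_{0})|\ll 1$, which is far below the target $500$. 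The local momentum is handled identically.
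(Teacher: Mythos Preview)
Your overall architecture matches the paper's proof closely: bubble-by-bubble modulation and virial/Lyapunov analysis (the paper's Lemmas \ref{lemmoduationestimate}--\ref{lyplem}), then propagation of regularity outside the bubbles (Section \ref{propagation}), then the flux identity for local energy/momentum (proof of \eqref{estimatelocalcontrolofconservedquantity}). Two points deserve more care.

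\textbf{The Lyapunov step is not quite ``verbatim.''} In the single-bubble setting of \cite{merle2006sharp}, the combination $\int|\tilde Q_{b}|^{2}+2(\epsilon_{1},\Sigma)+2(\epsilon_{2},\Theta)+\int|\epsilon|^{2}$ that enters $\mathcal J$ is exactly the conserved $L^{2}$ mass, so its $s$-derivative vanishes for free. Here the analogous quantity for $\eps^{j}$ equals $\|\chi_{1,loc}(\cdot-x_{j})u\|_{2}^{2}$, which is \emph{not} conserved. The paper isolates this as a separate lemma (Lemma \ref{localmassslowvarying}) and shows that its $s_{j}$-derivative is $\leq\Gamma_{b_{j}}^{10}$, using \eqref{assumeoutsidesmoothness} to bound the mass flux across the annulus $\frac{2}{3}\leq|x-x_{j}|\leq\frac{3}{4}$ and \eqref{bootmodu2re} to bound the $\dot x_{j}$ term. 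Without this, the Lyapunov monotonicity \eqref{bootlyp} does not follow from the Merle--Rapha\"el computation alone.

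\textbf{The mechanism for outside smoothness.} Your description of the regularity step (``$I$-multiplier \ldots\ interpolated against \eqref{assumeoutsidesmoothness} and mass conservation'') understates the key analytic input. The paper does use the upside-down $I$-method, but only for the rough global bound $\|u(t)\|_{H^{N_{2}}}\lesssim (\sum_{j}\lambda_{j}^{-1})^{N_{2}+\sigma}$; the actual gain of regularity outside the bubbles hinges on the space-time estimate
\[
\int_{0}^{T}\|\nabla(\chi_{0}u)(t)\|_{2}^{2}\,dt\lesssim 1,
\]
which in the multi-bubble setting is \emph{derived} from the Lyapunov functional (specifically from the $\int|\nabla\tilde\eps^{j}|^{2}$ term on the right of \eqref{bootlyp} integrated in $s_{j}$; see the proof of \eqref{tsinitialgain}). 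This bound, not the $I$-method, is what feeds the Strichartz/interpolation iteration of Section \ref{proofofos}. So the logical order is: Lyapunov $\Rightarrow$ space-time $L^{2}_{t}\dot H^{1}$ bound outside the bubbles $\Rightarrow$ iterated cutoffs and Strichartz $\Rightarrow$ \eqref{estimateoutsidesmoothness}.

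A minor ordering remark: in the paper, the local energy/momentum estimates \eqref{estimatelocalcontrolofconservedquantity}--\eqref{estimatelocalcontrolofmomentum} are closed using only the bootstrap \emph{hypothesis} \eqref{assumeoutsidesmoothness} (combined with almost monotonicity to absorb the power of $1/\max_{j}\lambda_{j,0}$), not the improved \eqref{estimateoutsidesmoothness}. Your order (prove \eqref{estimateoutsidesmoothness} first, then use it) also works and is slightly cleaner, but is not what the paper does.
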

\begin{rem} 
According to our notation, \eqref{estimateoutsidesmoothness} means that 
$$\|u\|_{H^{N_{1}}(\min_{j}\{|x-x_{j,0}|\geq \frac{1}{2}\})}\leq C$$ for some universal constant C.
Also by \eqref{initalsmallness} we have  $lim_{\alpha\rightarrow 0}\frac{1}{\max_{j}\{\lambda_{j,0}\}}=\infty$. Thus, when $\alpha$ is small enough, \eqref{estimateoutsidesmoothness} is stronger than \eqref{assumeoutsidesmoothness}, i.e. this is a bootstrap estimate.\
\end{rem}
\begin{rem}\label{finite}
The  Lemma \ref{bootstrap} itself implies that  $T^{+}(u)\leq \delta_{1}(\alpha)$. Indeed, Lemma \ref{bootstrap} implies that the solution blows up in finite time and according to the log-log law. Similarly for the solutions in  Lemma \ref{originalboot} and  Lemma \ref{osboot}. This is a  standard argument, for details see for example \cite{planchon2007existence}. One may also directly look at Subsection \ref{sectiones}, see Remark \ref{finite222}.
\end{rem}
\begin{rem}
Note that Lemma  \ref{bootstrap}  implies that $\min_{j\neq j'}|x_{j}(t)-x_{j'}(t)|>5$ for all $t<T^{+}(u)$.
\end{rem}

\subsection{Further remarks on Lemma \ref{bootstrap}}\label{keyremarks}
Lemma \ref{bootstrap} means that all the bootstrap estimates hold within the lifespan of $u$ generated by the initial data described in Subsection \ref{datats}. In particular for initial data of the form 
$$u(t,x)=\sum_{j=1}^{m}\frac{1}{\lambda_{j}}\tilde{Q}_{b_{j,0}}(\frac{x-x_{j,0}}{\lambda_{j}})e^{-i\gamma_{j,0}}+\Xi_{0}(t,x)$$ with orthogonality condition \eqref{initialo1},\eqref{initialo2},\eqref{initialo3},\eqref{initialo4} and with bounds \eqref{initalseparate},\eqref{initailsigh}...\eqref{initialoutsmooth1},  the associated solution is smooth in the area $\min_{j}{|x-x_{j,0}|}\geq \frac{1}{2}$ (i.e. estimate \eqref{estimateoutsidesmoothness} holds). 

Further more, if one looks\footnote{We note that by the definition of $\chi_{1,loc}$ and the  bootstrap estimate $|x_{j}(t)-x_{j,0}|\leq \frac{1}{1000}$, then $u(t,x)=\chi_{1,loc}(x-x_{j}(t))u(t,x)$ in the region $|x-x_{j,0}|\leq \frac{1}{2}$.} at $\chi_{1,loc}(x-x_{j}(t))u(t,x), j=1,\dots, m$,
then 
\begin{equation}
\chi_{1,loc}(x-x_{j}(t))u(t,x):=\frac{1}{\lambda_{j}}(\tilde{Q}_{b_{j}}+\epj) (\frac{x-x_{j}}{\lambda_{j}})e^{-i\gamma_{j}}, j=1,\dots, m
\end{equation}
and  the bootstrap estimates stated in Lemma \ref{bootstrap} simply means that $\chi(x-x_{j}(t))u(t,x)$ evolves according to the log-log law described in the series of work of Merle and Rapahel  (until at least one soliton blows up.)

\section{Proof of Lemma \ref{osboot}: One Soliton Model}\label{proofofos}
This section is devoted to the proof of Lemma \ref{onesolitonoutsidesmooth}, and we need only to show \eqref{K1smooth} since we already have \eqref{originalboot}.  Fix  $[0,T]$, all the estimates below are independent of the choice of $T$.

\subsection{Setting up}\label{onesolitonsetting up}
Recall that $u$ is the solution to \eqref{nls} with initial data $u_{0}$ as described in  \eqref{onesolitondata}. Recall as discussed in Subsection \ref{onesolitionmodisys}, that we consider the system \eqref{modifiedsystem} for  $\{u(t),b(t),\lambda(t),x(t),\gamma(t)\}$, and the geometric decomposition   \eqref{tempogeo} holds. Also  Lemma \ref{originalboot} holds. We finally point out that  the bootstrap hypothesis \eqref{bhfirst} itself implies
\begin{equation}\label{scalefromblowupspeed}
\int_{0}^{t}\frac{1}{\lambda(\tau)^{\mu}}d\tau\leq 
\begin{cases}
C(\mu) \text{ for } \mu<2,\\
\frac{|\ln \lambda(t)|^{101}}{\lambda(t)^{\mu-2}} \text{ for } \mu\geq 2,
\end{cases}
\end{equation}
for any $t<T$.
See (51) in \cite{raphael2009standing}.

\begin{rem}
Note here that we may also use the  bootstrap estimate \eqref{befirst} to show \eqref{scalefromblowupspeed}, since we know that \eqref{bhfirst} implies \eqref{befirst}. However,  all the arguments in this section only rely on the bootstrap hypothesis \eqref{bhfirst}, this will become important when we work on multi-solitons model.\\
\end{rem}
\subsection{An overview  of the proof}
Recall that for all $t\in [0,T^{+}(u))$, the geometric decomposition holds
$$u(t)=\frac{1}{\lambda(t)}(\qbb+\epsilon)(\frac{x-x(t)}{\lambda(t)})e^{-i\gamma(t)}, $$ and by the bootstrap hypothesis \eqref{bhfirst}, we know $|x(t)|\leq \frac{1}{1000}$ for $t\in[0,T]$.
Now we  fix $\chi_{0}$, such
\begin{equation}
\chi_{0}(x):=
\begin{cases}
0, |x|\leq a_{0}\equiv\frac{1}{500},\\
1, |x|\geq d_{0}\equiv\frac{1}{250}. 
\end{cases}
\end{equation}
 The key to proving that  a log-log blow up solution can propagate some regularity outside its potential blow up point $x(t)$ is the following control:
\begin{equation}\label{origingain}
\forall T<T^{+}(u), \int_{0}^{T} \|\nabla( \chi_{0} u(t))\|_{2}^{2}\lesssim 1.
\end{equation}
This is pointed out in \cite{raphael2009standing}, see formula (63) in \cite{merle2005profiles} for a proof.

We  first use the  I-method to show the rough control,
\begin{lem}\label{lemosroughcontrol}
\settingos, for any $\sigma>0$,
\begin{equation}\label{ineqroughcontrol}
\|u(t)\|_{H^{K_{2}}}\lesssim_{\sigma} (\frac{1}{\lambda(t)})^{K_{2}+\sigma}.
\end{equation}   
\end{lem}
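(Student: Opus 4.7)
The target bound essentially matches the natural $H^{K_{2}}$ scaling of the main profile $\tfrac{1}{\lambda}\tilde{Q}_{b}(\cdot/\lambda)$, up to an arbitrarily small loss $\sigma$. The plan is to set up a modified-energy estimate (an upside-down I-method in the spirit of \cite{sohinger2010bounds}) for a smoothed version of $\|u\|_{H^{K_{2}}}$ whose time derivative is controlled by Strichartz-type norms that are integrable on $[0,T]$ uniformly in $T$, thanks to \eqref{scalefromblowupspeed} at any $\mu<2$.

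Concretely, I would introduce a Fourier multiplier $D=m_{N}(\nabla)$ with symbol $m_{N}(\xi)\sim\langle\xi\rangle^{K_{2}}$ for $|\xi|\gtrsim N$ and $m_{N}(\xi)\sim N^{K_{2}}$ for $|\xi|\lesssim N$, where $N$ is a large parameter to be chosen at the end. The computation
\begin{equation*}
\frac{d}{dt}\|Du\|_{L^{2}}^{2}=2\,\mathrm{Im}\int D(|u|^{2}u)\,\overline{Du}\,dx
\end{equation*}
is then reduced, via a paraproduct decomposition $D(|u|^{2}u)=|u|^{2}Du+[D,|u|^{2}]u$, to a commutator estimate on $[D,|u|^{2}]u$. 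The key point is that the truncation in $D$ combined with the cubic structure produces a gain $N^{-\sigma_{0}}$ for some $\sigma_{0}=\sigma_{0}(\sigma)>0$, so that pairing in $L^{4/3}_{t}L^{4/3}_{x}$ and using Strichartz yields an effective multiplicative growth $1+CN^{-\sigma_{0}}$ of $\|Du\|_{L^{2}}$ per interval of bounded mass-critical Strichartz norm.

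Next I would partition $[0,T]$ into subintervals $I_{k}$ on which $\|u\|_{L^{4}_{t,x}(I_{k})}$ is a small absolute constant. Since $\|u(t)\|_{L^{4}}^{4}\sim\lambda(t)^{-2}$ from the geometric decomposition \eqref{tempogeo}, the bootstrap bound \eqref{scalefromblowupspeed} with $\mu=2$ forces the number of subintervals to be at most $|\ln\lambda(T)|^{O(1)}$. Iterating the per-interval estimate produces a total multiplicative factor of order $\exp(CN^{-\sigma_{0}}|\ln\lambda(T)|^{O(1)})$, and choosing $N$ as a small fractional power of $1/\lambda(T)$ makes this factor uniformly bounded. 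Translating $\|Du\|_{L^{2}}$ back to $\|u\|_{H^{K_{2}}}$ costs a factor $N^{K_{2}}$, which is absorbed into the $(1/\lambda)^{\sigma}$ loss, while the low-frequency contribution is controlled by $\|u\|_{H^{1}}\lesssim 1/\lambda$ from the bootstrap hypothesis \eqref{bhfirst}.

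The main obstacle is the sharp commutator estimate for the smoothed multiplier $D$ against the cubic nonlinearity at non-integer $K_{2}$; this requires a careful paraproduct/Littlewood--Paley decomposition, and extracting a genuine gain $N^{-\sigma_{0}}$ (rather than a mere $O(1)$ bound) is what upgrades the naive, super-polynomial Strichartz iteration to the polynomial bound \eqref{ineqroughcontrol}. All other steps are essentially bookkeeping: Kato--Ponce for the remaining product terms, and optimization of $N$ as a power of $1/\lambda(T)$ at the end.
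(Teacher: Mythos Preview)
Your proposal is correct and follows the same overall strategy as the paper—both use the upside-down I-method of \cite{sohinger2010bounds}. The implementation differs, however. The paper rescales the solution on each LWP interval to unit $H^{1}$ scale via $u_{j,k}(t,x):=\lambda(t_{k+1})\,u(\lambda(t_{k+1})^{2}(\tau_{k}^{j}+t),\lambda(t_{k+1})x)$, then invokes Sohinger's modified-energy estimate (Lemma~\ref{upsidedown}) as a black box, transferring the $N^{-1/2}$ gain back through the scaling identity $\lambda(\mathcal{D}_{N}f)(\lambda x)=\mathcal{D}_{N\lambda}(\lambda f(\lambda x))$. Your direct commutator computation avoids the rescaling but obliges you to redo the multilinear/paraproduct analysis yourself; this is legitimate because the nonlinearity is $L^{2}$-critical and the relevant Strichartz norms are scale-invariant, but the paper's rescaling trick sidesteps the work and lets one quote the existing gain. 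Your interval decomposition by $L^{4}_{t,x}$ budget and the paper's by dyadic $\lambda$ values (Lemma~\ref{intevallength}, \eqref{intevaldecompose}) are equivalent in effect: both yield $|\ln\lambda(T)|^{O(1)}$ intervals, yours via \eqref{scalefromblowupspeed} at $\mu=2$, the paper's via the explicit count \eqref{numberofiteration}. The final optimization of $N$ as a power of $1/\lambda(T)$ is identical.
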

For a proof see Subsection \ref{roughcontrol}.

Next, we introduce a sequence of cut off functions $\{\chi_{l}\}_{l=0}^{L}$, where $L$ is a large but fixed number which will be chosen later:
\begin{equation}
\chi_{l}=
\begin{cases}
0, |x|\leq a_{l},\\
1,|x|\geq b_{l}.
\end{cases}
\end{equation}
Here we require $a_{l}<b_{l}<a_{l-1}$, $b_{L}\leq \frac{1}{2}$.\\
The idea is  to retreat from $\chi_{l-1}u$ to $\chi_{l}u$ for each $l$, showing that $\chi_{l}u$ has higher regularity than $\chi_{l-1}u$. For convenience of notation we set
$
v_{i}:=\chi_{i}u.
$
We  use the crucial control \eqref{origingain}, Strichartz estimates and interpolation techniques to show:
\begin{lem}\label{lemosfirstgain}
For all $0<\nu<1$ and  for $t\in [0,T]$, we have
\begin{equation}
\forall \sigma>0, \|\nabla^{\nu}v_{2}(t)\|_{L^{2}}\lesssim_{\sigma} \frac{1}{\lambda(t)^{\sigma}}.
\end{equation}
\end{lem}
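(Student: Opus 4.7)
The plan is to derive a Duhamel representation for $v_2 = \chi_2 u$ and combine inhomogeneous Strichartz estimates with an interpolation between the crucial local gain \eqref{origingain} and the rough polynomial control of Lemma \ref{lemosroughcontrol}. Since the supports of $\chi_2$, $\nabla\chi_2$, and $\Delta \chi_2$ all lie in the region where $\chi_0 \equiv 1$, a direct computation from the NLS equation gives
\begin{equation*}
i(v_2)_t + \Delta v_2 = 2\nabla\chi_2 \cdot \nabla v_0 + v_0\,\Delta \chi_2 - \chi_2 |v_0|^2 v_0,
\end{equation*}
where $v_0 = \chi_0 u$. The point of this rewriting is that the entire right-hand side is now controlled by the quantity $v_0$, for which the estimate $\nabla v_0 \in L^2_t L^2_x([0,T])$ from \eqref{origingain} is available.

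I would then apply $\nabla^\nu$ to the Duhamel formula and use the energy-endpoint inhomogeneous Strichartz estimate from $L^1_t L^2_x$ into $L^\infty_t L^2_x$, reducing the task to controlling
\begin{equation*}
\|\nabla^\nu v_2(0)\|_{L^2_x} \quad \text{and} \quad \|\nabla^\nu G\|_{L^1_t L^2_x([0,T])},
\end{equation*}
where $G$ denotes the above right-hand side. The initial data contribution is harmless because, by the support properties of $\chi_2$, $v_2(0)$ only sees $u_0$ in $\{|x|\ge 1/3\}$, where $u_0\in H^{K_2}$ by \eqref{onesolitonoutsidesmooth}. For the two linear pieces of $G$, the fractional Leibniz rule reduces matters to estimating $\|v_0\|_{H^{1+\nu}_x}$ in $L^1_t$; using the Gagliardo--Nirenberg interpolation
\begin{equation*}
\|v_0\|_{H^{1+\nu}_x} \lesssim \|\nabla v_0\|_{L^2_x}^{1-\theta}\,\|v_0\|_{H^{K_2}_x}^{\theta},\qquad \theta=\tfrac{\nu}{K_2-1},
\end{equation*}
together with Lemma \ref{lemosroughcontrol}, H\"older in time, and \eqref{origingain}, one obtains
\begin{equation*}
\int_0^T \|v_0\|_{H^{1+\nu}_x}\,dt \lesssim_\sigma \Bigl(\int_0^T \lambda(t)^{-\mu}\,dt\Bigr)^{\frac{1+\theta}{2}},\qquad \mu=\tfrac{2(K_2+\sigma)\theta}{1+\theta}.
\end{equation*}
For $\nu<1$ and $K_2$ large enough (which is consistent with the hypothesis $K_1<K_2/2$ of Lemma \ref{oslem}), one checks that $\mu<2$, so \eqref{scalefromblowupspeed} makes the last integral finite.

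The nonlinear forcing $\chi_2 |v_0|^2 v_0$ is then handled similarly, using $v_0\in L^4_{t,x}([0,T])$ (a consequence of mass conservation, \eqref{origingain}, and Gagliardo--Nirenberg in dimension two) together with the fractional chain rule, and reducing to the estimate of $\|\nabla^\nu v_0\|_{L^4_{t,x}}$ via interpolation between Strichartz norms and Lemma \ref{lemosroughcontrol}. The main obstacle is precisely this delicate balance in the placement of the fractional derivative: one must arrange the interpolation so that the resulting exponent of $\lambda^{-1}$ stays strictly below the critical threshold imposed by \eqref{scalefromblowupspeed}. The restriction $\nu<1$ in the statement is exactly what makes $\mu<2$ possible and lets the $\sigma$-loss be absorbed. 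Combining all contributions and using $\lambda(t)\leq \lambda_0\ll 1$ yields the desired bound $\|\nabla^\nu v_2(t)\|_{L^2_x}\lesssim_\sigma \lambda(t)^{-\sigma}$.
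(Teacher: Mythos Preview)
Your treatment of the initial data and the linear inhomogeneities $\nabla\chi_2\cdot\nabla v_0$, $v_0\,\Delta\chi_2$ is correct and matches the paper's argument (cf.\ \eqref{controloftroubleterm}): interpolating $H^{1+\nu}$ between $H^1$ and $H^{K_2}$, together with \eqref{origingain} and \eqref{scalefromblowupspeed}, does give an $L^1_t$ bound since the resulting exponent stays below $2$ when $\nu<1$.

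The gap is in the nonlinear term. You propose to treat $\chi_2|v_0|^2v_0$ as a pure forcing and bound $\|\nabla^\nu(\chi_2|v_0|^2v_0)\|$ in a dual Strichartz space using only the a priori information on $v_0$ (mass, $\nabla v_0\in L^2_tL^2_x$ from \eqref{origingain}, and the rough $H^{K_2}$ bound). This cannot produce $\lambda^{-\sigma}$: any such estimate loses a \emph{fixed} power $\lambda^{-\nu}$. For instance, placing the nonlinearity in $L^{4/3}_{t,x}$ reduces to $\|\nabla^\nu v_0\|_{L^4_{t,x}}$; whether one interpolates $\dot H^{\nu+1/2}$ between $L^2$ and $\dot H^1$ or between $\dot H^1$ and $H^{K_2}$, the outcome is $\|\nabla^\nu v_0\|_{L^4_{t,x}}\lesssim \lambda(T)^{-\nu}$, not $\lambda(T)^{-\sigma}$. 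The obstruction is scaling: the bounds $v_0\in L^\infty_tL^2_x\cap L^2_t\dot H^1_x$ sit exactly at the mass-critical Strichartz scaling of the cubic nonlinearity in $2$D, so there is no room to close without putting the unknown back on the left-hand side.

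The paper's proof exploits precisely this structure. Writing the nonlinearity as $-|v_1|^2v_2$, after the fractional Leibniz rule the term $|v_1|^2\nabla^\nu v_2$ carries the unknown $\nabla^\nu v_2$ linearly. The crucial step (Lemmas \ref{tech1}--\ref{tech2}) is an interpolation between $\|v_1\|_{L^2_t\dot H^1_x}\lesssim 1$ from \eqref{origingain} and $\|v_1\|_{L^4_tH^{1/2}_x}\lesssim|\ln\lambda(T)|^{26}$, which yields an admissible-pair norm $\|v_1\|_{L^{2q_0'}_tL^{2p_0}_x}$ bounded by $|\ln\lambda(T)|^{1/2}$ rather than by any power of $\lambda^{-1}$. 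One then partitions $[0,T]$ into $\sim|\ln\lambda(T)|^{1/2}$ subintervals on each of which this norm is $\leq\epsilon$, runs a Gronwall step on each to get growth by a factor $2$, and iterates: the total growth is $2^{C|\ln\lambda(T)|^{1/2}}\lesssim_\sigma\lambda(T)^{-\sigma}$. This iteration over logarithmically many intervals is not a technicality you can bypass---it is the mechanism that converts the sub-polynomial control into the claimed estimate.
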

For a proof see  Subsubsection \ref{proofosfirstgain}.

This lemma  gives a better $L^{\infty}$ estimate of $v_{2}$, which of course implies a better $L^{\infty}$ estimate of $v_{i}, i\geq 2$.

We  also show:
\begin{lem}\label{lemosimprovelinfty}
For any $0<\nu<1$ and for $t\in [0,T],$ we have 
\begin{equation}
\forall \sigma>0, \|v_{2}\|_{L^{\infty}}\lesssim_{\sigma} \frac{1}{\lambda(t)^{\sigma}},
\end{equation}
\end{lem}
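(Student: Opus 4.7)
The plan is to interpolate the low-regularity gain from Lemma~\ref{lemosfirstgain} against the crude high-regularity bound from Lemma~\ref{lemosroughcontrol}, and then invoke the two-dimensional Sobolev embedding $H^{s}(\RRR^{2})\hookrightarrow L^{\infty}(\RRR^{2})$, which holds for every $s>1$. The philosophy is that Lemma~\ref{lemosfirstgain} gives ``essentially no $\lambda$-loss'' below the Sobolev threshold, while the $H^{K_{2}}$ bound carries a fixed polynomial loss $\lambda^{-K_{2}}$; interpolation then yields ``essentially no loss'' at an index slightly above $1$, at the price of an arbitrarily small power of $\lambda^{-1}$.

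Concretely, I would first assemble the two bounds in a comparable form. Mass conservation gives $\|v_{2}\|_{L^{2}}\leq \|u\|_{L^{2}}=\|u_{0}\|_{L^{2}}\lesssim 1$, so Lemma~\ref{lemosfirstgain} together with the identity $\|f\|_{H^{\nu}}\sim \|f\|_{L^{2}}+\|\nabla^{\nu}f\|_{L^{2}}$ yields, for every $\nu\in(0,1)$ and every $\sigma_{1}>0$,
\[
\|v_{2}(t)\|_{H^{\nu}}\lesssim_{\sigma_{1}} \lambda(t)^{-\sigma_{1}}.
\]
On the other hand, since $\chi_{2}$ is smooth with all derivatives bounded, multiplication by $\chi_{2}$ is continuous on $H^{K_{2}}$, so Lemma~\ref{lemosroughcontrol} gives, for every $\sigma_{2}>0$,
\[
\|v_{2}(t)\|_{H^{K_{2}}}\lesssim_{\sigma_{2}} \lambda(t)^{-K_{2}-\sigma_{2}}.
\]

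Next, I would complex-interpolate between these two bounds. For an intermediate index $s\in(\nu,K_{2})$, writing $s=(1-\theta)\nu+\theta K_{2}$ with $\theta=(s-\nu)/(K_{2}-\nu)$, the standard Sobolev interpolation gives
\[
\|v_{2}\|_{H^{s}}\lesssim \|v_{2}\|_{H^{\nu}}^{1-\theta}\|v_{2}\|_{H^{K_{2}}}^{\theta}\lesssim \lambda^{-(1-\theta)\sigma_{1}-\theta(K_{2}+\sigma_{2})}.
\]
Given any target $\sigma>0$, I would first fix $s=1+\delta$ for a small $\delta>0$ and then choose $\nu\in(0,1)$ close enough to $1$ so that $\theta=(s-\nu)/(K_{2}-\nu)$ satisfies $\theta K_{2}<\sigma/3$; finally choose $\sigma_{1},\sigma_{2}<\sigma/3$. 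The total exponent of $\lambda^{-1}$ in the interpolated bound is then less than $\sigma$, giving $\|v_{2}(t)\|_{H^{s}}\lesssim_{\sigma}\lambda(t)^{-\sigma}$ for some $s>1$.

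Since $s>1$ and we are in dimension two, the Sobolev embedding $H^{s}(\RRR^{2})\hookrightarrow L^{\infty}(\RRR^{2})$ immediately yields $\|v_{2}(t)\|_{L^{\infty}}\lesssim_{\sigma}\lambda(t)^{-\sigma}$, as claimed. There is no genuine analytic obstacle here: the whole content is a careful quantitative balancing of two estimates we already possess, with the freedom to take the low-end index $\nu$ as close to $1$ as we please being precisely what lets the Sobolev loss $\lambda^{-K_{2}}$ be absorbed into an arbitrarily small power of $\lambda^{-1}$.
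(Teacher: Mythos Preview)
Your proposal is correct and follows essentially the same route as the paper: interpolate the $H^{\nu}$ bound from Lemma~\ref{lemosfirstgain} against the $H^{K_{2}}$ bound from Lemma~\ref{lemosroughcontrol}, then apply the Sobolev embedding $H^{1+}(\RRR^{2})\hookrightarrow L^{\infty}$. One small imprecision: once $s=1+\delta$ is fixed, sending $\nu\to 1^{-}$ only drives $\theta K_{2}$ down to $\delta K_{2}/(K_{2}-1)$, not to zero, so $\delta$ itself must be chosen small depending on $\sigma$ before picking $\nu$; but this is implicit in your phrase ``for a small $\delta>0$'' and does not affect the argument.
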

For a proof 
see Subsubsection \ref{proofimprovelinfty}.

\begin{rem}
This lemma should be understood as an improvement of  the $L^{\infty}$ estimate. Indeed, from the viewpoint of Sobolev embedding, $H^{1+}(\RRR^{2})\hookrightarrow L^{\infty}$, thus the trivial $L^{\infty}$ estimate is $\|v_{2}\|_{L^\infty}\lesssim \|u(t)\|_{L^{\infty}}\lesssim \frac{1}{\lambda(t)^{1+}}$. 
\end{rem}
 Lemma \ref{lemosimprovelinfty} improves Lemma \ref{lemosfirstgain} immediately:
\begin{lem}\label{lemosfirstgainimpr}
For all $0<\nu<1$ and for $t\in [0,T]$, we have 
\begin{equation}
\|\nabla^{\nu}v_{3}(t)\|_{L^{2}}\lesssim 1.
\end{equation}
\end{lem}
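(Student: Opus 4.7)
The plan is to repeat the argument leading to Lemma \ref{lemosfirstgain} at the next cut-off level, now using the improved $L^{\infty}$ bound of Lemma \ref{lemosimprovelinfty} to convert the $\lambda^{-\sigma}$ loss into a genuine $O(1)$ bound. First I would compute the equation satisfied by $v_3 = \chi_3 u$:
\[
i\partial_t v_3 + \Delta v_3 = -\chi_3 |u|^2 u + [\Delta, \chi_3] u.
\]
By construction the cut-offs are nested so that $\chi_2 \equiv 1$ on the supports of both $\chi_3$ and $\nabla \chi_3$, and in particular $u$ may be replaced by $v_2 = \chi_2 u$ in each term of the right-hand side. Applying $\nabla^{\nu}$ and running the inhomogeneous Strichartz estimate along the energy pair, one arrives at
\[
\|\nabla^{\nu} v_3\|_{L^{\infty}_t L^2_x([0,T])} \lesssim \|\nabla^{\nu} v_3(0)\|_{L^2} + \|\nabla^{\nu}(\chi_3 |v_2|^2 v_2)\|_{L^1_t L^2_x} + \|\nabla^{\nu}([\Delta, \chi_3] v_2)\|_{L^1_t L^2_x}.
\]

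The initial term is harmless: since $u_0 \in H^{K_2}$ with $K_2 > 2 > \nu$, $\|\nabla^{\nu} v_3(0)\|_{L^2} \lesssim \|u_0\|_{H^{K_2}} \lesssim 1$. For the nonlinear term, fractional Leibniz (Kato--Ponce) gives
\[
\|\nabla^{\nu}(\chi_3 |v_2|^2 v_2)\|_{L^2} \lesssim \|v_2\|_{L^{\infty}}^2 \|\nabla^{\nu} v_2\|_{L^2} + \text{lower-order terms},
\]
and the two main inputs are controlled by Lemma \ref{lemosimprovelinfty} ($\|v_2\|_{L^{\infty}} \lesssim_{\sigma} \lambda^{-\sigma}$) and Lemma \ref{lemosfirstgain} ($\|\nabla^{\nu} v_2\|_{L^2} \lesssim_{\sigma} \lambda^{-\sigma}$). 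The combined pointwise-in-time estimate is $\lambda^{-3\sigma}$. Choosing $\sigma$ small enough that $3\sigma < 2$, the time-integration bound \eqref{scalefromblowupspeed} produces $\int_0^T \lambda^{-3\sigma}\,dt \lesssim 1$, uniformly in $T < T^{+}(u)$.

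The commutator term is handled via the key estimate \eqref{origingain}. On the support of $\nabla \chi_3$ one has $\chi_0 \equiv 1$, so $v_2 = u = v_0$ there, and $[\Delta, \chi_3] v_2 = 2 \nabla \chi_3 \cdot \nabla v_0 + (\Delta \chi_3) v_0$. Its $L^1_t H^{\nu}_x$ norm is controlled, modulo the smooth cut-off $\chi_3$, by an interpolation between the space-time $L^2_t H^1_x$ estimate \eqref{origingain} and the rough $L^{\infty}_t H^{K_2}_x$ bound of Lemma \ref{lemosroughcontrol}. The latter brings only a $\lambda^{-\sigma}$ loss, which is again absorbed by \eqref{scalefromblowupspeed}.

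The main subtlety I anticipate is the handling of the fractional derivative $\nabla^{\nu}$ in the commutator term, since a direct Leibniz split naively requires an $H^{1+\nu}$ control of $v_2$ that is not available at this stage of the bootstrap; the device of interpolating the good $L^2_t H^1_x$ bound \eqref{origingain} against the rough Lemma \ref{lemosroughcontrol} estimate, with the resulting $\lambda^{-\sigma}$ loss absorbed by time integration, is what makes the argument close.
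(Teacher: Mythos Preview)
Your approach is correct and matches the paper's proof: Duhamel with the $L^1_t L^2_x$ Strichartz pairing, the nonlinear term controlled by combining Lemma~\ref{lemosimprovelinfty} and Lemma~\ref{lemosfirstgain} to get a $\lambda^{-C\sigma}$ loss that integrates via \eqref{scalefromblowupspeed}, and the commutator term controlled by $\int_0^T\|v_2\|_{H^{1+\nu}}\,dt\lesssim 1$.

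The one place you overcomplicate things is the commutator. You write that an $H^{1+\nu}$ control of $v_2$ ``is not available at this stage of the bootstrap'' and therefore bring in \eqref{origingain}. In fact the needed bound \emph{is} available and was already established as \eqref{controloftroubleterm} during the proof of Lemma~\ref{lemosfirstgain}: pointwise interpolation between $\|u\|_{H^1}\lesssim\lambda^{-1}$ and the rough control $\|u\|_{H^{K_2}}\lesssim_\sigma\lambda^{-(K_2+\sigma)}$ gives $\|u(t)\|_{H^{1+\nu}}\lesssim_\sigma\lambda(t)^{-(1+\nu+C\sigma)}$, and since $\nu<1$ one chooses $\sigma$ small enough that the exponent is below $2$, so \eqref{scalefromblowupspeed} yields $\int_0^T\|u\|_{H^{1+\nu}}\,dt\lesssim 1$. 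The key estimate \eqref{origingain} plays no direct role in this step (it enters only indirectly, through Lemmas~\ref{lemosfirstgain} and~\ref{lemosimprovelinfty}); the paper's route is simpler than the space-time interpolation you sketch, though yours would also close.
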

For  a proof see 
Subsubsection \ref{prooffirstgainimpr}.

Finally, we have the following lemma (which is analogue to Lemma 4 in \cite{raphael2009standing}) to iterate the gain of regularity for large $K_{2}$.
\begin{lem}\label{lemositeration}
 If the following estimate holds for some $r>0$ and some $i$, $1\leq i\leq L-1$,
\begin{equation}
\|v_{i}(t)\|_{H^{r}}\lesssim 1, t\in [0,T].
\end{equation} 
then there is a gain of regularity on $v_{i+1}$, that is 
\begin{equation}\label{gain}
\forall \tilde{r}<\frac{2(K_{2}-r)}{K_{2}}-1+r, \|v_{i+1}\|_{H^{\tilde{r}}}\lesssim_{\tilde{r}} 1, t\in [0,T].
\end{equation}
\end{lem}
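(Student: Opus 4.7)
The plan is to derive an NLS-type equation for $v_{i+1}$ whose cubic term only involves the (assumed smoother) $v_{i}$, apply a Duhamel--Strichartz analysis, and interpolate against the rough Sobolev control of Lemma \ref{lemosroughcontrol}. The time integrability threshold encoded in \eqref{scalefromblowupspeed} (where one can integrate $\lambda^{-\mu}$ up to $\mu<2$) will determine the precise regularity gain, and will produce the exponent $\tilde r<\frac{2(K_{2}-r)}{K_{2}}-1+r$.

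\emph{Localized equation and Strichartz.} By the nesting of the cut-offs, $\chi_{i}\equiv 1$ on $\mathrm{supp}(\chi_{i+1})$, so on this set $u=v_{i}$. A direct computation then gives
\begin{equation*}
i\partial_{t}v_{i+1}+\Delta v_{i+1}+|v_{i}|^{2}v_{i+1}=[\Delta,\chi_{i+1}]u=2\nabla\chi_{i+1}\cdot\nabla v_{i}+v_{i}\Delta\chi_{i+1}=:g_{i},
\end{equation*}
where $g_{i}$ is supported in a fixed annulus and, by the hypothesis $\|v_{i}\|_{H^{r}}\lesssim 1$, is uniformly bounded in any Sobolev norm up to $H^{r-1}$. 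Writing Duhamel's formula and applying the standard $L^{q}_{t}L^{p}_{x}$ Strichartz estimates on $\RRR^{2}$ will yield a bound on $\|v_{i+1}\|_{L^{\infty}_{t}H^{\tilde r}_{x}([0,T]\times\RRR^{2})}$ in terms of the initial data $\|v_{i+1}(0)\|_{H^{\tilde r}}$ (controlled by the $H^{K_{2}}$-smoothness of $u_{0}$ away from the origin, since $\chi_{i+1}$ is supported off the origin), the commutator $\|g_{i}\|_{L^{1}_{t}H^{\tilde r}_{x}}$ (harmless for $\tilde r\le r+1$), and a Strichartz-dual norm of the nonlinearity $|v_{i}|^{2}v_{i+1}$.

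\emph{Nonlinear term via interpolation.} Applying the fractional Leibniz rule and 2D Sobolev embedding to $|v_{i}|^{2}v_{i+1}$ will reduce matters to controlling $\|v_{i}\|_{H^{\tilde r}}$ pointwise in time, with the factor of $v_{i+1}$ absorbed by a Strichartz norm on the left. Interpolating the hypothesis $\|v_{i}\|_{H^{r}}\lesssim 1$ against Lemma \ref{lemosroughcontrol} gives
\begin{equation*}
\|v_{i}\|_{H^{\tilde r}}\lesssim_{\sigma} \lambda^{-(K_{2}+\sigma)\theta},\qquad \theta=\frac{\tilde r-r}{K_{2}-r}.
\end{equation*}
Substituting back into the Strichartz bound and integrating in time via \eqref{scalefromblowupspeed} forces the exponent of $\lambda^{-1}$ in the integrand to be strictly less than $2$; after accounting for the single derivative absorbed by the inhomogeneous Strichartz estimate, this is precisely the condition $\tilde r<\frac{2(K_{2}-r)}{K_{2}}-1+r$ (with $\sigma$ taken small enough).

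\emph{Main obstacle.} The delicate part will be the derivative book-keeping at a 2D Strichartz endpoint: to close the estimate one must place all the $\lambda$-growing factors on $v_{i}$, where Lemma \ref{lemosroughcontrol} applies, rather than on $v_{i+1}$ itself (which would be a circular bound). The fractional Leibniz split must match both the interpolation exponent $\theta$ and the Strichartz scaling so that the final time-integrand sits at the borderline $\lambda^{-2-}$ permitted by \eqref{scalefromblowupspeed}, with no room to spare; getting this balance right is what makes the assumption $\tilde r<\frac{2(K_{2}-r)}{K_{2}}-1+r$ sharp in the iteration.
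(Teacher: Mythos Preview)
Your overall strategy---localized equation, Duhamel, interpolation against Lemma \ref{lemosroughcontrol}, time-integrability from \eqref{scalefromblowupspeed}---is the right one, but you have misidentified which term is responsible for the threshold $\tilde r<\frac{2(K_{2}-r)}{K_{2}}-1+r$, and this leads to a genuine gap.

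You call the commutator $g_{i}=2\nabla\chi_{i+1}\cdot\nabla v_{i}+v_{i}\Delta\chi_{i+1}$ ``harmless for $\tilde r\le r+1$''. This is not correct: to put $g_{i}$ in $L^{1}_{t}H^{\tilde r}_{x}$ you need $\|v_{i}\|_{H^{\tilde r+1}}$, and the hypothesis only gives $\|v_{i}\|_{H^{r}}\lesssim 1$. Since the iteration is designed precisely so that $\tilde r>r$, you cannot bound $\|v_{i}\|_{H^{\tilde r+1}}$ uniformly. The correct move (and this is what the paper does) is to interpolate
\[
\|v_{i}\|_{H^{\tilde r+1}}\lesssim \|v_{i}\|_{H^{r}}^{\frac{K_{2}-\tilde r-1}{K_{2}-r}}\|v_{i}\|_{H^{K_{2}}}^{\frac{\tilde r+1-r}{K_{2}-r}}\lesssim_{\sigma}\lambda(t)^{-\frac{K_{2}(\tilde r+1-r)}{K_{2}-r}-C\sigma},
\]
and then require this exponent to be $<2$ for \eqref{scalefromblowupspeed} to give $\int_{0}^{T}\|v_{i}\|_{H^{\tilde r+1}}\lesssim 1$. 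The condition $\frac{K_{2}(\tilde r+1-r)}{K_{2}-r}<2$ is \emph{exactly} $\tilde r<\frac{2(K_{2}-r)}{K_{2}}-1+r$. So the threshold comes from the commutator, not the nonlinearity.

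Conversely, you attribute the threshold to the cubic term via a vague ``single derivative absorbed by the inhomogeneous Strichartz estimate''---but the 2D Strichartz estimates do not gain derivatives, so this accounting does not work as stated. In the paper the nonlinear term is actually the \emph{easy} one: invoking Lemma \ref{lemosimprovelinfty}, one has $\|v_{i}\|_{L^{\infty}}\lesssim_{\sigma}\lambda^{-\sigma}$, so
\[
\|\,|v_{i}|^{2}v_{i+1}\,\|_{H^{\tilde r}}\lesssim \|v_{i}\|_{L^{\infty}}^{2}\|v_{i}\|_{H^{\tilde r}}\lesssim_{\sigma}\lambda^{-1-C\sigma},
\]
which is integrable by \eqref{scalefromblowupspeed} with room to spare. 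No Strichartz dual-norm splitting is needed here; a plain $L^{1}_{t}H^{\tilde r}_{x}\to L^{\infty}_{t}H^{\tilde r}_{x}$ Duhamel suffices. Your proof sketch will close once you swap the roles of the two terms and invoke Lemma \ref{lemosimprovelinfty} for the $L^{\infty}$ control.
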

For a proof see 
Subsubsection \ref{proofositeration}.

Now we are ready to end the proof of Lemma \ref{osboot}, i.e. to prove \eqref{K1smooth}.
\begin{proof}[Proof of \eqref{K1smooth} in Lemma \ref{osboot}]
Lemma \ref{lemositeration} is enough for us to obtain the regularity estimate \eqref{K1smooth} for $K_{1}<\frac{K_{2}}{2}$. To see this consider the end point case in \eqref{gain}, i.e. if one consider $r=\tilde{r}=\frac{2(K_{2}-r)}{K_{2}}-1+r$, then one obtains $r=\frac{K_{2}}{2}$. Thus, when $K_{1}<\frac{K_{2}}{2}$, by choosing the iteration time $L$ large, the desired estimate \eqref{K1smooth} follows.
\end{proof}

\subsection{Rough control}\label{roughcontrol}
This subsection is devoted to the proof of Lemma \ref{lemosroughcontrol}, similar type of estimates are derived in \cite{raphael2009standing} by considering some pseudo energy. Here, we rely on I-method. Without loss of generality, we only show:
\begin{equation}\label{goalroughonesoliton}
\|u(T)\|_{H^{K_{2}}}\lesssim_{\sigma} (\frac{1}{\lambda(T)})^{K_{2}+\sigma}.
\end{equation}

\subsubsection{LWP interval}
Let's introduce the so-called LWP interval as in \cite{colliander2009rough}.
To make the argument easier,  we observe  that, under the bootstrap hypothesis \eqref{bhfirst},  we can  show that $\lambda(t)$ is actually strictly decreasing. Indeed,   by \eqref{bhfirst}, $b>0$, $ \lambda\ll b$ in the sense of \eqref{feelingofscale}. Thus by \eqref{simple1} and \eqref{feelingofscale}, we obtain $-\frac{\lambda_{s}}{\lambda}\geq \frac{1}{2}b>0$, thus $\lambda(t)$ is strictly decreasing.\\
We  define $k_{0}, k_{T}$ as:
\begin{equation}
\frac{1}{2^{k_{0}}}\leq \lambda(0) < \frac{1}{2^{k_{0}-1}}, \frac{1}{2^{k_{T}}}\leq \lambda(T)<\frac{1}{2^{k_{T}-1}},
\end{equation}
and for $k_{0}\leq k\leq k_{T}$, let $t_{k}$ be the (unique) time such that
\begin{equation}
\lambda(t_{k})=\frac{1}{2^{k}}.
\end{equation}
Then as in \cite{colliander2009rough}, we can perform a  bootstrap argument:
\begin{lem}\label{intevallength}
In $[0,T]$,  assuming the bootstrap hypothesis
\begin{equation}\label{bhinteval}
t_{k+1}-t_{k}\leq k\lambda^{2}(t_{k}),
\end{equation}
we obtain the  bootstrap estimate
\begin{equation}\label{beinteval}
t_{k+1}-t_{k}\leq \sqrt{k}\lambda^{2}(t_{k}).
\end{equation}
\end{lem}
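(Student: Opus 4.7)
The idea is to transfer the estimate to the rescaled time variable $s$ (where the log-log dynamics is cleanest), use $-\lambda_s/\lambda = b + O(\Gamma_b^{\text{small}})$ from \eqref{simple0} to compute the rescaled interval length, then convert back to physical time via $dt=\lambda^2 ds$. The bootstrap hypothesis is used only to keep the change of $b$ under control over the interval.

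Concretely, first I would set $s_k := s(t_k)$ and note that, since $\lambda$ is strictly decreasing (as observed right above the statement of the lemma), we have $\lambda(\tau)\in[\lambda(t_{k+1}),\lambda(t_k)]=[\lambda(t_k)/2,\lambda(t_k)]$ for every $\tau\in[t_k,t_{k+1}]$. In particular
\[
s_{k+1}-s_k=\int_{t_k}^{t_{k+1}}\frac{d\tau}{\lambda^2(\tau)}\leq \frac{4}{\lambda^2(t_k)}(t_{k+1}-t_k)\leq 4k
\]
by the bootstrap hypothesis \eqref{bhinteval}. This is the only place \eqref{bhinteval} will be used.

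Next, from \eqref{simple0} together with the bootstrap regime \eqref{bhfirst}, we have $|b_s|\lesssim \Gamma_b^{1-C\eta}$, so $b$ varies by at most $O(\Gamma_{b(t_k)}^{1-C\eta}\cdot k)$ on $[s_k,s_{k+1}]$, which is negligible compared with $b(t_k)$ itself (since $\Gamma_b\lesssim e^{-\pi/b}$ while $b(t_k)\sim 1/\ln k$, making $\Gamma_b^{1-C\eta}\cdot k = o(b(t_k))$). Integrating $-(\ln\lambda)_s=b+O(\Gamma_b^{1/10})$ from $s_k$ to $s_{k+1}$ and using $\lambda(s_{k+1})=\lambda(s_k)/2$ gives
\[
\ln 2=\int_{s_k}^{s_{k+1}}\bigl(b(\sigma)+O(\Gamma_b^{1/10})\bigr)\,d\sigma = b(t_k)(s_{k+1}-s_k)\bigl(1+o(1)\bigr),
\]
so
\[
s_{k+1}-s_k=\frac{\ln 2}{b(t_k)}\bigl(1+o(1)\bigr).
\]

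Now I would use the log-log regime part of the bootstrap hypothesis \eqref{bhfirst}, namely $\lambda(t_k)\geq e^{-e^{10\pi/b(t_k)}}$, which together with $\lambda(t_k)=2^{-k}$ yields $b(t_k)\geq \pi/(10\ln k)$ (up to constants in $k\ln 2$ vs.\ $\ln k$). Consequently
\[
s_{k+1}-s_k\leq C\ln k.
\]
Converting back to physical time, and using that $\lambda(\tau)\leq \lambda(t_k)$ on $[t_k,t_{k+1}]$,
\[
t_{k+1}-t_k=\int_{s_k}^{s_{k+1}}\lambda^2(\sigma)\,d\sigma\leq \lambda^2(t_k)(s_{k+1}-s_k)\leq C\,\ln k\,\lambda^2(t_k).
\]
Since the initial data \eqref{oriloglog} forces $k_0\gg 1$ (roughly $k_0\sim e^{\pi/(2b_0)}$, hence $C\ln k\leq\sqrt k$ for every $k\geq k_0$), we conclude $t_{k+1}-t_k\leq \sqrt k\,\lambda^2(t_k)$ as required.

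The only real obstacle is the bookkeeping between $b$-values at $s_k$ and along the interval $[s_k,s_{k+1}]$, which is why the weak bootstrap hypothesis (a factor $k$) is needed as a hand to control $s_{k+1}-s_k$ before the sharper estimate becomes available; every other ingredient (strict monotonicity of $\lambda$, the ODE $-\lambda_s/\lambda\approx b$, log-log regime bounds on $b$) has already been recorded earlier in the paper.
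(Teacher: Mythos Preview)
Your overall strategy---pass to the rescaled time $s$, use $-\lambda_s/\lambda\approx b$ together with the log-log lower bound on $b$ to control $s_{k+1}-s_k$, then convert back via $dt=\lambda^2\,ds$---is correct and is the standard argument the paper has in mind when it cites \cite{colliander2009rough}. There is, however, a genuine gap in your control of the variation of $b$.

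You claim that $\Gamma_{b(t_k)}^{1-C\eta}\cdot k = o(b(t_k))$, arguing ``$b(t_k)\sim 1/\ln k$, so $\Gamma_b\lesssim e^{-\pi\ln k}=k^{-\pi}$.'' But the outer bootstrap \eqref{bhfirst} only pins down $b(t_k)$ up to a factor of $100$: from $e^{-e^{10\pi/b}}\leq\lambda(t_k)=2^{-k}\leq e^{-e^{\pi/(10b)}}$ one gets $b(t_k)\in[\pi/(10\ln(k\ln 2)),\,10\pi/\ln(k\ln 2)]$. At the upper end of this range, $\pi/b(t_k)\approx \ln k/10$, so $\Gamma_{b(t_k)}$ can be as large as $k^{-(1-C\eta)/10}$, and then $\Gamma_{b(t_k)}^{1-C\eta}\cdot k\gtrsim k^{9/10}$, which diverges rather than being $o(1/\ln k)$. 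So your variation bound on $b$ via $|b_s|$ is not justified.

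The repair is to bypass that step entirely: you do not need $b$ nearly constant, only a lower bound. On $[t_k,t_{k+1}]$ one has $\lambda\in[\lambda(t_k)/2,\lambda(t_k)]$, so the log-log hypothesis $\lambda\leq e^{-e^{\pi/(10b)}}$ from \eqref{bhfirst} directly gives $b(\sigma)\geq \pi/(10\ln((k+1)\ln 2))\geq c/\ln k$ for every $\sigma\in[s_k,s_{k+1}]$. Integrating $-(\ln\lambda)_s\geq b/2\geq c/(2\ln k)$ then yields $\ln 2\geq c(s_{k+1}-s_k)/(2\ln k)$, i.e.\ $s_{k+1}-s_k\leq C\ln k$, and your final two paragraphs complete the proof unchanged. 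Note that with this fix the interval bootstrap hypothesis \eqref{bhinteval} is never actually invoked; the estimate follows from \eqref{bhfirst} alone.
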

This estimate is  not sharp, indeed, morally one should have $t_{k+1}-t_{k}\sim (\ln \ln k ) \lambda(t_{k})^{2}$ according to the log-log law. We refer to (2.39) in Lemma 2.4 of \cite{colliander2009rough}.\par

As in \cite{colliander2009rough}, we divide all intervals $[t_{k},t_{k+1}]$ into disjoint intervals $\cup_{j=0}^{J_{k}-1}[\tau_{k}^{j},\tau_{k}^{j+1}]$, where
\begin{equation}\label{intevaldecompose}
\begin{aligned}
t_{k}=\tau_{k}^{0}<\tau_{k}^{1}\cdots <\tau_{k}^{J_{k}-1}<\tau_{k}^{J_{k}}=t_{k+1},\\
\tau_{k}^{j+1}-\tau_{k}^{j}=\frac{\delta_{1}}{4}\lambda(t_{k+1})^{2}, \forall j\leq J_{k}-2\\
\tau_{k}^{J_{k}}-\tau_{k}^{J_{k}-1}\leq \frac{\delta_{1}}{4}\lambda(t_{k+1})^{2}, \forall j\leq J_{k}-2.
\end{aligned}
\end{equation}
Here $\delta_{1}$ is a fixed constant which will be chosen later.
Now, by Lemma \ref{intevallength}, using the bootstrap estimate \eqref{beinteval} (indeed, we   only use the bootstrap hypothesis \eqref{bhinteval}, which is weaker),
we have 
\begin{equation}\label{numberofiteration}
J_{k}\leq 10k^{2}, \sum_{k} J_{k}\leq 10k_{T}^{3}\lesssim |\ln \lambda(T)|^{3}.
\end{equation}

\subsubsection{A quick introduction of upside-down I-method}
We point out without proof the following classical fact on any LWP interval:
\begin{equation}\label{trivial}
\forall  \tkj, \sup_{t\in \tkj}\|u(t)-u(\tau_{k}^{j})\|_{H^{K_{2}}}\lesssim \|u(\tau_{k}^{j})\|_{H^{K_{2}}},
\end{equation}
and directly iterate this over all the LWP intervals (recall we have about $|\ln \lambda(T)|^{3}$ such intervals), to obtain the estimate $\|u(T)\|_{H^{K_{2}}}\lesssim C^{|\ln \lambda(T)|^{3}}$ for some $C>1$, which is clearly weaker than \eqref{goalroughonesoliton} but actually quite close.\par
The idea of the original I-method \cite{colliander2002almost} or up-side down I-method \cite{sohinger2010bounds} are both to  improve the estimate \eqref{trivial} on a LWP interval by working on  certain a slowly varying/almost conserved quantity.\par 
We  introduce the upside-down I-operator $\DDD_{N}$:
\begin{equation}
\widehat{\DDD_{N}f}(\xi):={\mathcal{M}}(\frac{\xi}{N})\hat{f}(\xi),
\end{equation}
where $\mathcal{M}(\xi)$ is a smooth function such that
\begin{equation}
\mathcal{M}(\xi):=
\begin{cases}
(|\xi|)^{K_{2}},  |\xi|\geq 2,\\
1, |\xi|\leq 1.
\end{cases}
\end{equation}
It is easy  to see that  for any $f\in H^{K_{2}}$ we have
\begin{equation}\label{basicrelation}
\|\DDD_{N}f\|_{L^{2}}\lesssim \|f\|_{H^{K_{2}}}\lesssim  N^{K_{2}}\|\DDD_{N}f\|_{l^{2}}.
\end{equation}
Now for some $v$ that solves NLS,
the idea of  the upside-down I-method is to use $\|\DDD_{N}v\|_{L^{2}}^{2}$ to model $\|v\|_{H^{K}}^{2}$, and show that $E^{1}(v):=\|\DDD_{N}v\|_{2}^{2}$ is slow varying, see  \cite{sohinger2010bounds}.

\begin{lem}\label{upsidedown}$[$Proposition 3.4,Lemma 4.5 in \cite{sohinger2010bounds}$]$
There exists a  higher modified energy $E^{2}$ (or to be more precise, $E^{2}_{N}$), such that for any $f$ in $H^{K_{2}}$,
\begin{equation}\label{i1}
\lvert E^{2}(f)- \|\DDD_{N}f\|_{2}^{2}\rvert\lesssim  \frac{1}{N^{1-}}\|\DDD_{N}f\|_{2}^{2},  
\end{equation} 
$\forall M>0$, there exists $\delta_{0}=\delta_{0}(M)>0$ such that if $v$ solves \eqref{nls} with initial data $v_{0}$ and  $\|v_{0}\|_{H^{1}}\leq M,$, then $[0,\delta_{0}]$ is in the lifespan of $v$ and $E^{2}(v)$ is slow varying in the following sense:
\begin{equation}\label{i2}
\sup_{0\leq \tau\leq \delta_{0}} \lvert E^{2}(v(\tau))-E^{2}(v_{0})\rvert\lesssim \frac{1}{N^{\frac{7}{4}-}}E^{2}(v_{0}).
\end{equation}
and 
\begin{equation}\label{iii}
\sup_{0\leq \tau\leq \delta_{0}}\|\DDD_{N}v(t)\|_{L^{2}}^{2}\leq (1+C\frac{1}{N^{\frac{7}{4}-}})(1+C\frac{1}{N^{1-}})\|\DDD_{N}v(t_{0})\|_{L^{2}(\RRR^{2})}^{2},
\end{equation}
where $C$ is some universal constant
\end{lem}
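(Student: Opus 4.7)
The plan is to follow the standard upside-down I-method / multilinear correction strategy. Set $E^1(v) := \|\DDD_N v\|_{L^2}^2$. Differentiating along \eqref{nls}, the Laplacian contribution cancels by self-adjointness and Plancherel converts the $|v|^{2}v$ term into a quadrilinear form
\begin{equation}
\frac{d}{dt}E^1(v) = \int_{\xi_1-\xi_2+\xi_3-\xi_4=0} M_4(\xi_1,\xi_2,\xi_3,\xi_4)\,\hat v(\xi_1)\overline{\hat v(\xi_2)}\hat v(\xi_3)\overline{\hat v(\xi_4)}\,d\xi,
\end{equation}
with multiplier $M_4 = -i\bigl(\mathcal{M}^2(\xi_1/N)-\mathcal{M}^2(\xi_2/N)+\mathcal{M}^2(\xi_3/N)-\mathcal{M}^2(\xi_4/N)\bigr)$. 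Introduce the resonance function $\Omega_4 = |\xi_1|^2-|\xi_2|^2+|\xi_3|^2-|\xi_4|^2$, and set $\sigma_4 := M_4/(i\Omega_4)$, mollified near $\{\Omega_4=0\}$ (on the resonant set $\xi_1=\xi_2,\,\xi_3=\xi_4$ or $\xi_1=\xi_4,\,\xi_3=\xi_2$ the numerator $M_4$ vanishes, so $\sigma_4$ can be defined smoothly). Define
\begin{equation}
E^2(v) := E^1(v) + \int_{\xi_1-\xi_2+\xi_3-\xi_4=0} \sigma_4\,\hat v(\xi_1)\overline{\hat v(\xi_2)}\hat v(\xi_3)\overline{\hat v(\xi_4)}\,d\xi.
\end{equation}
By construction, the linear-flow contribution to $\frac{d}{dt}$ of the quadrilinear correction exactly cancels $\frac{d}{dt}E^1$, so that $\frac{d}{dt}E^2$ reduces to a sextic form arising when one of the four $v$-factors in the correction is replaced by $i|v|^{2}v$.

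To prove \eqref{i1}, I would establish the pointwise bound $|\sigma_4| \lesssim N^{-1+}\,\mathcal{M}^2(\xi_{\mathrm{max}}/N)$ via a dyadic case analysis on the ordered frequencies, combining the mean-value theorem for $\mathcal{M}^2$ with the non-resonant gain $|\Omega_4| \gtrsim |\xi_{\mathrm{max}}||\xi_{\mathrm{second}}|$. Distributing the resulting multiplier between the $\DDD_N$-norms of two of the factors and using the $L^4_{t,x}$ Strichartz estimate (admissible in 2D) on the remaining two then yields $|E^2(v)-E^1(v)| \lesssim N^{-1+}\|\DDD_N v\|_{L^2}^2$ uniformly in time.

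For \eqref{i2}, invoke the $H^1$ local well-posedness of the 2D cubic NLS: given $\|v_0\|_{H^1}\leq M$ one has a time $\delta_0 = \delta_0(M)>0$ and a Strichartz bound $\|v\|_{L^4_t L^4_x([0,\delta_0]\times\RRR^2)} \leq C(M)$. The sextic expression for $\frac{d}{dt}E^2$ is controlled by splitting its six frequencies dyadically and distributing them among two $L^\infty_t L^2_x$-factors (carrying the $\DDD_N$ weights) and four Strichartz-norm factors; the gain improves from $N^{-1+}$ to $N^{-7/4+}$ because the correction symbol $\sigma_4$ is now paired with two \emph{additional} nonlinearity factors that each give a half-derivative off the top frequency through Sobolev embedding/Strichartz refinement. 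Integrating over $[0,\delta_0]$ yields \eqref{i2}, and finally \eqref{iii} follows by combining \eqref{i1} and \eqref{i2} with a continuity-in-$N$ argument. The principal technical obstacle is the sharp $N^{-7/4+}$ sextic multiplier estimate: it requires a careful ordered-frequency case analysis exploiting the concavity of $\mathcal{M}^2$ together with a bilinear Strichartz refinement rather than a brute-force Sobolev embedding, and this is the place where the exponent $7/4$ (rather than a weaker $1$) is locked in.
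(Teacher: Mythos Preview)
The paper does not give its own proof of this lemma: it is quoted directly from Sohinger, with the remark that \eqref{iii} is an immediate algebraic consequence of \eqref{i1} and \eqref{i2}.  Your sketch is essentially the construction in Sohinger's paper (define $E^{1}$, compute $\partial_{t}E^{1}$ as a $4$-linear form, add the $\sigma_{4}=M_{4}/(i\Omega_{4})$ correction to produce $E^{2}$, and bound the resulting $6$-linear $\partial_{t}E^{2}$), so you are following the same approach.

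Two small corrections are worth making.  First, \eqref{i1} is a \emph{fixed-time} statement for an arbitrary $f\in H^{K_{2}}$, so invoking the spacetime $L^{4}_{t,x}$ Strichartz estimate to bound the quadrilinear correction is out of place; what is actually used is the static Sobolev embedding $\dot H^{1/2}(\RRR^{2})\hookrightarrow L^{4}_{x}$ (together with the pointwise multiplier bound on $\sigma_{4}$) to control the $4$-linear form at each time.  Second, \eqref{iii} does not require any ``continuity-in-$N$'' argument: once \eqref{i1} and \eqref{i2} are in hand, one simply sandwiches
\[
(1-CN^{-1+})\|\DDD_{N}v(t)\|_{2}^{2}\;\leq\;E^{2}(v(t))\;\leq\;(1+CN^{-7/4+})E^{2}(v_{0})\;\leq\;(1+CN^{-7/4+})(1+CN^{-1+})\|\DDD_{N}v_{0}\|_{2}^{2},
\]
and absorbs the factor $(1-CN^{-1+})^{-1}$ into the constants on the right, which is exactly the observation the paper makes after the statement.
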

 Actually in  \cite{sohinger2010bounds} one finds only  \eqref{i1} and \eqref{i2}, but   \eqref{iii} is directly implied. Formula \eqref{i1} can be found in the last formula of the proof of the Proposition 3.4 in \cite{sohinger2010bounds}.

\begin{rem}
In \cite{sohinger2010bounds}, the equation is  defocusing, while here we are working with a  focusing equation. However, when one restricts analysis locally, the  two problems actually have no real difference. In \cite{sohinger2010bounds}, the defocusing condition is only used to ensure that any $H^{s}, s>1$ solution  considered is global. Note \cite{sohinger2010bounds} deal with both Euclidean case and Torus case.
\end{rem}

\subsubsection{Proof of Lemma \ref{lemosroughcontrol}}
Now we are ready  to finish the proof of Lemma \ref{lemosroughcontrol}.
\begin{proof}
 Recall we need only to show \eqref{goalroughonesoliton}. Let
\begin{equation}
N=(\frac{1}{\lambda(T)})^{1+\sigma_{1}}.
\end{equation} 
Here $\sigma_{1}=\sigma_{1}(\sigma)$ is a small positive constant chosen later.\\
We have the following estimate on LWP interval:
\begin{equation}\label{keyiteration}
\sup_{t\in\tkj}\|\DDD_{N}u(t)\|_{L^{2}}^{2}\leq (1+C\frac{1}{(N\lambda(T))^{\frac{1}{2}}})\|\DDD_{N}u(\tau_{k}^{j})\|_{L^{2}}^{2}.
\end{equation}
The constant $C$ is independent of $j,k$.
We now assume \eqref{keyiteration} temporarily and we finish the proof of \eqref{goalroughonesoliton}.
By our choice of N, we immediately obtain  from \eqref{keyiteration} that 
\begin{equation}
\sup_{t\in\tkj}\|\DDD_{N}u(t)\|_{L^{2}}^{2}\leq (1+C\frac{1}{(\lambda(T)^{-\sigma_{1}})^{\frac{1}{2}}})\|\DDD_{N}u(\tau_{k}^{j})\|_{L^{2}}^{2}.
\end{equation}
Then we iterate this estimate and we recall that  the total number of LWP intervals is controlled by
\eqref{numberofiteration}, thus we obtain 
\begin{equation}
\|\DDD_{N}u(T)\|_{2}^{2}\lesssim (1+C{\lambda(T)^{\frac{\sigma_{1}}{2}}})^{C(|\ln \lambda(T)|)^{3}}\|u_{0}\|_{H^{K_{2}}}\lesssim 1.
\end{equation}
Thus, we arrive to the  estimate
\begin{equation}
\|u\|_{H^{K_{2}}}\lesssim N^{K_{2}}\|\DDD_{N}u\|_{2}\lesssim \frac{1}{\lambda(T)^{K_{2}(1+\sigma_{1})}}.
\end{equation}
The desired  estimate \eqref{ineqroughcontrol} clearly follows if we choose $\sigma_{1}\leq\sigma/K_{2}$.\\
What is now left  is to prove \eqref{keyiteration}.
We indeed show that 
\begin{equation}\label{strkeyiter}
\sup_{t\in\tkj}\|\DDD_{N}u(t)\|_{L^{2}}^{2}\leq (1+C\frac{1}{(N\lambda(t_{k}^{j}))^{\frac{1}{2}}}\|\DDD_{N}u(\tau_{k}^{j})\|_{L^{2}}^{2}.
\end{equation}
Since, $\lambda(T)\leq \lambda(t_{k+1})$, clearly \eqref{keyiteration} follows from \eqref{strkeyiter}.\\
We now prove \eqref{strkeyiter}.
Indeed, by scaling or direct computation, let $$u_{j,k}(t,x):=\lambda(t_{k+1}) u(\lambda(t_{k+1})^{2}(\tau_{k}^{j}+t),\lambda(t_{k+1}x)),$$ then
$u_{j,k}$ solves \eqref{nls} with initial data $\lambda(t_{k+1})u(\tau_{k}^{j},\lambda(t_{k+1})x)$.
A direct computation leads to $$\|\lambda(t_{k+1})u(\tau_{k}^{j},\lambda(t_{k+1})x)\|_{H^{1}}=\|Q+\epsilon\|_{H^{1}}\leq 2\|Q\|_{H^{1}}.$$
To apply the upside-down I-method in  Lemma \ref{upsidedown}, we first choose $\delta_{0}$ in Lemma \ref{upsidedown} as $\delta_{0}(2\|	Q\|_{H^{1}}^{2})$ and  then we choose $\delta_{1}$ in \eqref{intevaldecompose} as $\delta_{1}=\frac{\delta_{0}}{2}$, and as a consequence for all $j,k$, $$\frac{\tau_{k}^{j}-\tau_{k}^{j+1}}{\lambda(t_{k+1})^{2}}<\delta_{0}.$$
By the upside-down I-method \eqref{iii}, we have:
\begin{equation}
\begin{aligned}
&sup_{t\in [0,\frac{\tau_{k}^{j}-\tau_{k}^{j+1}}{\lambda(t_{k+1})^{2}}]}\|\DDD_{N\lambda(t_{k})}u_{j,k}(t)\|^{2}\\\leq& (1+C\frac{1}{(N\lambda(t_{k}^{j}))^{\frac{7}{4}-}}(1+C\frac{1}{(N\lambda(t_{k}^{j}))^{1-}} \|\DDD_{N\lambda(t_{k}^{j})}u_{j,k}(t)\|_{L^{2}}^{2}\\
 \leq&  (1+C\frac{1}{(N\lambda(t_{k}^{j}))^{\frac{1}{2}}}))\|\DDD_{N\lambda(t_{k}^{j})}u_{j,k}(t)\|_{L^{2}}^{2}.
\end{aligned}
\end{equation}
We now  observe that  for any $\lambda>0$ and for any function $f$ we have 
\begin{equation}
\begin{aligned}
&\lambda (\DDD_{N}f)(\lambda x)=\DDD_{N\lambda}\left(\lambda f(\lambda x)\right),\\
&\|f\|_{2}=\|\lambda f(\lambda x)\|_{2}.
\end{aligned}
\end{equation}
\eqref{strkeyiter} clearly follows.
\end{proof}

\begin{rem}\label{remrough}
It is not hard to see that the proof only relies  on the fact that one  can divide $[0,T]$ into disjoint intervals $\cup_{j,k}I_{k,j}$ such that
\begin{enumerate}
\item $\|u(t)\|_{H^{1}}\sim 2^{-k}$ for $t\in I_{k,j}$(this is equivalent to $\lambda(t)\sim 2^{-k}$ for $t\in I_{k,j}$.)
\item $| I_{k,j}|\sim \frac{1}{2^{2k}}$, $\forall k,j$.
\item $\sharp \{I_{k,j}\}\lesssim |\ln \lambda(T)|^{3}$
\item $\lambda(T)\lesssim \lambda(t), t\in [0,T]$.

\end{enumerate}
and $u(0)\in H^{K_{2}}$.\par
Note that  condition 3 follows from the bootstrap Lemma \ref{intevallength}.

We finally remark that  by further dividing  $I_{k,j}$, it is very easy to improve condition 2 to $|I_{k,j}|\leq \frac{\delta}{{2^{2k}}}$ for any fixed $\delta>0$.\\
\end{rem}

\subsection{ Propagation of regularity}
In this subsection, we give the proof of Lemma \ref{lemosfirstgain}, Lemma \ref{lemosimprovelinfty}, Lemma \ref{lemosfirstgainimpr} and Lemma \ref{lemositeration}. Since our proof relies  on the Strichartz estimates, for completeness we recall them below.
\subsubsection{Strichartz estimates}
 Consider the equation:
\begin{equation}
\begin{cases}
iU_{t}+\Delta U=F,\\
U(0,x)=U_{0}.
\end{cases}
\end{equation}
We write it in the integral form using the Duhamel Formula:
\begin{equation}\label{formuladuhamel}
U(t)= e^{it\Delta}U_{0}-i\int_{0}^{t}e^{i(t-s)\Delta}F(s)ds,
\end{equation}
where $e^{it\Delta}$ is the propagator of linear Schr\"odinger equation.
For notation convenience, let:
$$\Gamma F:=-i\int_{0}^{t}e^{i(t-s)\Delta}F(s)ds.$$
Then one has the following Strichartz estimates:
\begin{equation}
\begin{aligned}
&\forall \frac{2}{q}+\frac{2}{r}=\frac{2}{2},2<q\leq \infty,\\
&\|e^{it\Delta}U_{0}\|_{L^{q}([0,\infty],L^{r}(\RRR^{2}))}\lesssim_{q,r} \|U_{0}\|_{2},\\
&\|\Gamma F\|_{L^{q}([0,t], L^{r}(\RRR^{2}))}\lesssim_{q,r} \|F\|_{L^{q'}([0,t], L^{r'}(\RRR^{2}))},
\end{aligned}
\end{equation}
where $(q', r')$ is the dual of $(q, r)$, i.e. $\frac{1}{q}+\frac{1}{q'}=1, \frac{1}{r}+\frac{1}{r'}=1$.
We  call $(q, r)$ an admissible pair  if and only if $\frac{2}{q}+\frac{2}{r}=\frac{2}{2}, \, 2<q\leq \infty$.
We refer to \cite{tao2006nonlinear}, \cite{keel1998endpoint} and the reference within for a proof.
\subsubsection{Proof of Lemma \ref{lemosfirstgain}}\label{proofosfirstgain}
We fix $\nu<1$, and we estimate $\|\nabla^{\nu}v_{2}\|_{2}$.
Note that $v_{2}$   satisfies:
\begin{equation}\label{modinls}
\begin{cases}
i\partial_{t}v_{2}+\Delta v_{2}=\Delta \chi_{2}v_{1}+2\nabla\chi_{2}\nabla v_{1}-v_{2}|v_{1}|^{2}.
\end{cases}
\end{equation}

As in previous section, we  only need to show:
\begin{equation}\label{goal}
\|\nabla^{\nu}v_{2}(T)\|_{2}\lesssim (\frac{1}{\lambda(T)})^{\sigma}.
\end{equation}
We explain some heuristics for this estimate.
We view the system as a perturbation of the linear Schr\"odinger equation, and the dominating term in the perturbation is the last term on the right side of \eqref{modinls}.  We can view \eqref{modinls} as:
\begin{equation}
i\partial_{t}v_{2}+\Delta v_{2}\approx O(|v_{1}|^{2}v_{2}).
\end{equation}
From the viewpoint of persistence of regularity one gets:
\begin{equation}\label{moralprop}
\|\nabla_{\nu}v_{2}(T)\|_{2}\lesssim \|\nabla^{\nu}v_{2}(0)\|_{2}e^{\|v_{1}(t)\|^{4}_{L^{4}([0,t],L^{4})}}.
\end{equation}
By estimating
$$\|v_{1}(t)\|_{L^{4}(\RRR^{2})}\lesssim \|u(t)\|_{H^{\frac{1}{2}}}\lesssim \frac{1}{\lambda^{\frac{1}{2}}(t)},$$ one obtains by \eqref{scalefromblowupspeed}
\begin{equation}
\|\nabla^{\nu} v_{2}(T)\|_{2}\lesssim e^{\int_{0}^{T}\frac{1}{\lambda^{2}(t)}}\lesssim e^{|\ln \lambda(T)|^{101}}.
\end{equation}
This estimate though is too week, of course. On the other hand we  didn't even  use  the key estimate \eqref{origingain} in the log-log regime. To obtain the   stronger estimate \eqref{goal} instead of the  $L^{4}_{t,x}$ norm in \eqref{moralprop} we use a more flexible 
$L^{q}_{t}L^r_{x}$. More precisely 
we replace $\|v_{1}(t)\|^{4}_{L^{4}([0,t],L^{4})}$ by some $\|v_{1}(t)\|^{q}_{L^{q}([0,T], L^{r}(\RRR^{2}))}$ such that $(q, r)$ is an admissible pair. 
Now, we make the key observation that $L^{2}\dot{H}^{1}$ and $L_{t}^{4}L_{x}^{4}$ have the same scaling as $L^{q}_{t}L_{x}^{r}$ whenever $(q, r)$ is an admissible pair. Thus, by interpolating  the two estimates
\begin{equation}\label{preinter}
\begin{aligned}
&\int_{0}^{T}\|v_{1}(t)\|_{4}^{4}\lesssim |\ln \lambda(t)|^{101},\\
&\int_{0}^{T}\|\nabla v_{1}(t)\|_{2}^{2} \lesssim 1,
\end{aligned}
\end{equation}
we  derive 
\begin{equation}
\int_{0}^{T}\|u(t)\|_{L^{r}}^{q}\lesssim (|\ln \lambda(t)|)^{1/100},
\end{equation}
for an admissible $(q, r)$ carefully picked. Then we obtain 
\begin{equation}
\|\nabla^{\nu}v_{2}(T)\|_{2}\lesssim \|\nabla^{\nu}v_{2}(0)\|_{2}e^{\|v_{1}(t)\|^{q}_{L^{q}([0,T],L^{r})}}\lesssim e^{|\ln (\lambda(T)|)^{\frac{1}{100}}},
\end{equation}
which implies the desired estimate \eqref{goal}.\\
We now go to the details.
We  need the following technical lemmata.
\begin{lem}\label{tech1}
Let $\frac{2}{q_{0}}+\frac{2}{r_{0}}=1$, let $q_{0}'$, $r_{0}'$ be their dual,
let $p_{0}$ be defined by $\frac{1}{r_{0}'}=\frac{1}{2}+\frac{1}{p_{0}}$,
let $h_{0}$ be defined by $h_{0}=1-\frac{1}{p_{0}}$,
then we have the following estimate uniform with respect to  any time interval $I$ 
\begin{eqnarray}\label{11}
&&\||v_{1}|^{2}\nabla^{\nu} v_{2}\|_{L^{r_{0}'}}\lesssim \|\nabla^{\nu}v_{2}\|_{L^{2}}\|v_{1}\|^{2}_{L^{2p_{0}}},\\\label{12}
&&\||v_{1}|^{2}\nabla^{\nu} v_{2}\|_{L^{q_{0}'}(I;L^{r_{0}'})}\lesssim \|\nabla^{\nu} v_{2}\|_{L^{\infty}(I;L^{2})}\|v_{1}\|^{2}_{L^{2q_{0}'}(I;L^{2p_{0}})},\\\label{13}
&&\|v_{1}\|_{L^{2p_{0}}}\lesssim \|v_{1}\|_{H^{h_{0}}},\\\label{14}
&&\|v_{1}\|_{L^{2q_{0}'}(I;H^{h_{0}})}\lesssim \|v_{1}\|_{L^{2}(I;H^{1})}^{\frac{h_{0}-1/2}{1/2}}\|v_{1}\|_{L^{4}(I;H^{\frac{1}{2}})}^{\frac{1-h_{0}}{1/2}}.
\end{eqnarray}
Moreover we can choose $q_{0}$ large  enough such that $1-\frac{1}{10000}<h_{0}<1$, $q_{0}'\leq 2$.
\end{lem}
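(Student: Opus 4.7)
The four estimates in Lemma \ref{tech1} are all pure harmonic-analytic inequalities that reduce to bookkeeping of exponents; no PDE structure enters. My plan is to verify each one by matching exponents and invoking the standard tools (Hölder, Sobolev embedding in $\RRR^{2}$, complex interpolation), and at the end to check that the admissibility relation $\frac{2}{q_{0}}+\frac{2}{r_{0}}=1$ together with the definitions $\frac{1}{p_{0}}=\frac{1}{r_{0}'}-\frac{1}{2}$ and $h_{0}=1-\frac{1}{p_{0}}$ forces $q_{0}=p_{0}$, which is the consistency point that makes the whole scheme work.

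First, for \eqref{11} I would simply write $|v_{1}|^{2}\nabla^{\nu}v_{2}$ as a triple product and apply Hölder with the splitting $\frac{1}{r_{0}'}=\frac{1}{2}+\frac{1}{p_{0}}=\frac{1}{2}+\frac{1}{2p_{0}}+\frac{1}{2p_{0}}$, which is exactly the relation built into the definition of $p_{0}$. Estimate \eqref{12} is then immediate: applying \eqref{11} pointwise in time and then Hölder in $t$ on the interval $I$, with the splitting $\frac{1}{q_{0}'}=\frac{1}{\infty}+\frac{1}{2q_{0}'}+\frac{1}{2q_{0}'}$, puts the $\nabla^{\nu}v_{2}$ factor in $L^{\infty}_{t}L^{2}_{x}$ and the two $v_{1}$ factors in $L^{2q_{0}'}_{t}L^{2p_{0}}_{x}$.

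The estimate \eqref{13} is the Sobolev embedding in dimension $d=2$: $H^{s}(\RRR^{2})\hookrightarrow L^{p}(\RRR^{2})$ whenever $s=1-\frac{2}{p}$. Plugging $p=2p_{0}$ gives $s=1-\frac{1}{p_{0}}=h_{0}$, which is exactly the target. For \eqref{14} I would use complex interpolation between the spaces $L^{2}_{t}H^{1}_{x}$ and $L^{4}_{t}H^{1/2}_{x}$. With interpolation parameter $\theta=2-2h_{0}$ on the second space (so $1-\theta=2h_{0}-1$ on the first), the regularity index becomes $(2h_{0}-1)\cdot 1+(2-2h_{0})\cdot\tfrac{1}{2}=h_{0}$, and the time Lebesgue exponent is $\frac{1}{q}=(2h_{0}-1)\cdot\tfrac{1}{2}+(2-2h_{0})\cdot\tfrac{1}{4}=\tfrac{h_{0}}{2}$, i.e.\ $q=\frac{2}{h_{0}}$; since $q_{0}=p_{0}$ by the consistency check above, one has $q_{0}'=\frac{1}{h_{0}}$, so $2q_{0}'=\frac{2}{h_{0}}=q$, exactly what is claimed.

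Finally, to arrange $1-\frac{1}{10000}<h_{0}<1$ and $q_{0}'\leq 2$ simultaneously, I would simply take $q_{0}$ sufficiently large. Since $p_{0}=q_{0}$, $h_{0}=1-\frac{1}{q_{0}}\to 1$ and $q_{0}'=\frac{q_{0}}{q_{0}-1}\to 1$ as $q_{0}\to\infty$, both conditions are satisfied for all $q_{0}$ large enough. I do not anticipate any serious obstacle here: the only non-mechanical step is confirming that the admissibility condition in 2D is compatible with the Hölder splitting used in \eqref{11}, and the computation $\frac{1}{q_{0}}=\frac{1}{p_{0}}=1-h_{0}$ shows it is. Everything else is exponent bookkeeping.
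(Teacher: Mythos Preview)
Your proposal is correct and follows exactly the approach the paper indicates: H\"older for \eqref{11}--\eqref{12}, the 2D Sobolev embedding $H^{1-2/p}\hookrightarrow L^{p}$ for \eqref{13}, and interpolation between $L^{2}_{t}H^{1}_{x}$ and $L^{4}_{t}H^{1/2}_{x}$ for \eqref{14}, together with the key consistency check $p_{0}=q_{0}$ (equivalently, that $(2q_{0}',2p_{0})$ is admissible), which the paper also singles out. The paper's proof is just the one-line statement that these follow from H\"older, Sobolev, and the standard interpolation inequality; you have simply filled in the exponent bookkeeping.
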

\begin{lem}\label{tech2}
Let $\frac{2}{q_{1}}+\frac{2}{r_{1}}=1$, let $q'_{1}, \, r'_{1}$ be their dual, let $g_{1}, \tilde{g}_{1}$ be defined by $\nu-1=-\frac{2}{g_{1}}, -\nu\equiv(1-\nu)-1=-\frac{2}{\tilde{g}_{1}}$, let $p_{1}$ be defined as $\frac{1}{r_{1}'}=\frac{1}{2}+\frac{1}{p_{1}}$, let $h_{1}$ be defined as 
$h_{1}-1=-\frac{2}{p_{1}}$ and  let $w_{1}$ be defined as $\frac{1}{q_{1}'}=\frac{1}{2}+\frac{1}{w_{1}}$,
then we have the following estimates uniform with respect to  any time interval $I$ 
\begin{eqnarray}\label{21}
&&\||v_{1}||\du v_{1}|v_{2}\|_{L^{r'_{1}}}\lesssim \|\du v_{1}\|_{L^{\tilde{g}_{1}}}\|v_{1}\|_{L^{p_{1}}}\|v_{2}\|_{L^{g_{1}}},\\\label{22}
&& \|\du v_{1}\|_{L^{\tilde{g}_{1}}}\lesssim \|v_{1}\|_{H^{1}}, \|v_{2}\|_{L^{g_{1}}}\lesssim \|\nabla^{\nu}v_{2}\|_{L^{2}},\\\label{23}
&&\||v_{1}||\du v_{1}|v_{2}\|_{L^{q'_{1}}(I; L^{r'_{1}})}\lesssim \|v_{2}\|_{L^{\infty}(I;H^{\nu})}\|v_{1}\|_{L^{2}(I;H^{1})}\|v_{1}\|_{L^{w_{1}}(I;L^{p_{1}})},\\\label{24}
&&\|v_{1}\|_{L^{p_{1}}}\lesssim \|v_{1}\|_{H^{h_{1}}},\\\label{25}
& &\|v_{1}\|_{L^{w_{1}}(I;H^{h_{1}})}\lesssim \|v_{1}\|_{L^{2}(I;H^{1})}^{\frac{h_{1}-1/2}{1/2}}\|v_{1}\|_{L^{4}(I;H^{\frac{1}{2}})}^{\frac{1-h_{1}}{1/2}}.
\end{eqnarray}
Moreover we can choose $q_{1}$ large such that $1-\frac{1}{10000}<h_{1}<1$, $w_{1}\leq 4$.
\end{lem}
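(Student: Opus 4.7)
The five inequalities in Lemma \ref{tech2} are purely analytic consequences of Hölder in space/time, Sobolev embedding in $\mathbb{R}^{2}$, and real interpolation of Bessel potential spaces. The plan is therefore to check that the exponents defined in the statement match the scaling requirements of each tool, and to apply them in turn.

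First, for \eqref{21} I would apply the three-factor Hölder inequality with exponents $\tilde{g}_{1},p_{1},g_{1}$. The relation $\frac{1}{\tilde{g}_{1}}+\frac{1}{g_{1}}=\frac{\nu}{2}+\frac{1-\nu}{2}=\frac{1}{2}$ together with $\frac{1}{r'_{1}}=\frac{1}{2}+\frac{1}{p_{1}}$ gives exactly the required identity $\frac{1}{r'_{1}}=\frac{1}{\tilde{g}_{1}}+\frac{1}{p_{1}}+\frac{1}{g_{1}}$. For \eqref{22}, both estimates are Sobolev embeddings $\dot{H}^{s}(\mathbb{R}^{2})\hookrightarrow L^{p}$ with $\frac{1}{p}=\frac{1}{2}-\frac{s}{2}$: taking $s=1-\nu$ handles $\nabla^{\nu}v_{1}\in L^{\tilde{g}_{1}}=L^{2/\nu}$, and taking $s=\nu$ handles $v_{2}\in L^{g_{1}}=L^{2/(1-\nu)}$.

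Next, \eqref{23} is obtained by plugging \eqref{21} pointwise in time and then applying Hölder in time with exponents $\infty,2,w_{1}$, using the Sobolev bounds of \eqref{22} on the first and third factors. The identity $\frac{1}{q'_{1}}=\frac{1}{2}+\frac{1}{w_{1}}$ is, by construction of $w_{1}$, exactly the balance one needs (and one checks quickly, using $\frac{2}{q_{1}}+\frac{2}{r_{1}}=1$ and $\frac{1}{r'_{1}}=\frac{1}{2}+\frac{1}{p_{1}}$, that this is consistent). Estimate \eqref{24} is again a Sobolev embedding, with $h_{1}$ chosen precisely so that $\frac{1}{p_{1}}=\frac{1}{2}-\frac{h_{1}}{2}$.

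The remaining inequality \eqref{25} follows from interpolating $H^{h_{1}}$ between $H^{1}$ and $H^{1/2}$ in space, jointly with interpolating $L^{w_{1}}$ between $L^{2}$ and $L^{4}$ in time, with the single interpolation parameter $\theta=2h_{1}-1$. In space this produces the exponent $h_{1}=\theta+(1-\theta)/2$, and in time one computes $\frac{1}{w_{1}}=\frac{\theta}{2}+\frac{1-\theta}{4}=\frac{h_{1}}{2}$, which agrees with the formula $w_{1}=2/h_{1}$ read off from $\frac{1}{q'_{1}}=\frac{1}{2}+\frac{1}{w_{1}}$ and $\frac{1}{q'_{1}}=1-\frac{1}{p_{1}}$. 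Finally, sending $q_{1}\to\infty$ forces $r_{1}\to 2$, $p_{1}\to\infty$, hence $h_{1}\to 1^{-}$ and $w_{1}\to 2$; in particular one may choose $q_{1}$ so large that $1-\frac{1}{10000}<h_{1}<1$ and $w_{1}\leq 4$, as asserted.

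None of the steps is really an obstacle: the only mild subtlety is verifying that the several relations among $q_{1},r_{1},p_{1},w_{1},h_{1},g_{1},\tilde{g}_{1}$ are mutually compatible, but this is a one-line computation using that $(q_{1},r_{1})$ is admissible in dimension two (so $\frac{1}{q_{1}}=\frac{1}{p_{1}}$). Once the exponents are pinned down, the lemma reduces to a bookkeeping exercise in Hölder, Sobolev, and interpolation.
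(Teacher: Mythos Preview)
Your proposal is correct and follows exactly the approach the paper indicates: the paper itself only states that the lemma is ``a direct consequence of Sobolev and H\"older inequalities as well as standard interpolation'' and notes that the indices line up for scaling reasons (in particular that $(w_{1},p_{1})$ is admissible, which amounts to your observation $\frac{1}{q_{1}}=\frac{1}{p_{1}}$). You have simply written out the exponent bookkeeping that the paper leaves to the reader, and every relation you check is consistent with the definitions in the statement.
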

The proofs are a direct consequence of Sobolev and H\"older inequalities as well as  standard interpolation  
$$\|u\|_{H^{s}}\lesssim \|u\|_{H^{s_{1}}}^{\frac{s_{2}-s}{s-s_{1}}}\|u\|_{H^{s_{2}}}^{\frac{s-s_{1}}{s_{2}-s_{1}}}, s_{1}<s<s_{2}.$$ 
We finally remark that all the indices happen to coincide  because of   scaling reasons. Indeed, we can check  that $(2q_{0}',2p_{0})$, $(w_{1},p_{1})$ are both admissible pairs.

Now, we pick $(q_{0}, r_{0}), (q_{1},r_{1})$ as in Lemma \ref{tech1} and  Lemma \ref{tech2}, and we let all the other associated  indices  be determined as in these two lemmas.
By estimating  
$\|u(t)\|_{H^{\frac{1}{2}}}\lesssim \frac{1}{\lambda(t)^{\frac{1}{2}}},$
 we have 
 $\|v_{1}(t)\|_{H^{\frac{1}{2}}}\lesssim \frac{1}{\lambda(t)^{\frac{1}{2}}}$. Thus, by \eqref{scalefromblowupspeed} we have 
Now we have:
\begin{equation}\label{pre}
\int_{0}^{T}\|v_{1}(t)\|_{4}^{4}\lesssim |\ln \lambda(t)|^{101}.
\end{equation}
Also from \eqref{origingain}, we have
\begin{equation}\label{pre1}\int_{0}^{T}\|\nabla v_{1}(t)\|_{2}^{2} \lesssim 1.
\end{equation}
Using  \eqref{pre}, \eqref{pre1} and  \eqref{13},  \eqref{14} in Lemma \ref{tech1} we obtain 
$$\|v_{1}\|_{L^{2q_{0}'}([0,T];L^{2p_{0}})}^{2q_{0}'}\leq |\ln \lambda(T)|^{1/2},$$
and using  \eqref{pre}, \eqref{pre1} and   \eqref{24}, \eqref{25} in Lemma \ref{tech2} we obtain  $$\|v_{1}\|_{L^{w_{1}}([0,T];L^{p_{1}})}^{w_{1}}\leq |\ln\lambda(T)|^{1/2}.$$
Thus, we  are able to divide $[0,T]$ into $J_{k}$ disjoint intervals $[\tau_{k},\tau_{k+1}], k=1,...,J_{k}$, such that
\begin{equation}\label{iterationcontrol2}
\begin{aligned}
&J_{k}\sim_{\epsilon}| \ln (\lambda(T))|^{\frac{1}{2}},\\
&\|v_{1}\|_{L^{2q_{0}'}([0,T];L^{2p_{0}})}\leq \epsilon, \\
&\|v_{1}\|_{L^{w_{1}}([0,T]);L^{p_{1}}}\leq \epsilon,
\end{aligned}
\end{equation}
where $\epsilon>0$ is a fixed small  constant, to be chosen later.

Now we use the Duhamel formula \eqref{formuladuhamel} for \eqref{modinls} in $[\tau_{k},\tau_{k+1}]$,  and we obtain for any $t\in [\tau_{k},\tau_{k+1}]$,
\begin{equation}\label{propinlwpinterval}
\begin{aligned}
&\quad \|v_{2}(t)-v_{2}(\tau_{k})\|_{\dot{H}^{\nu}}\\&\leq 
C\int_{0}^{T}\| v_{1}\|_{H^{\nu}}+C\int_{0}^{T}\|v(t)\|_{H^{1+\nu}}
+\|\int_{\tau_{k}}^{t}e^{i(t-\tau)\Delta}\nabla^{\nu}(|v_{1}|^{2}v_{2})(\tau)d\tau\|_{2}\\
&\leq C\int_{0}^{T}\|u(t)\|_{H^{1+\nu}}+C\||v_{1}|^{2}\nabla^\nu v_{2}||\|_{L^{q_{0}'}([t_{k},\tau];L^{r_{0}'})}+C\|\nabla^{\nu}(|v_{1}|^{2})v_{2}\|_{L^{q'_{1}}([t_{k},\tau]);L^{r_{1}'})},
\end{aligned}
\end{equation}
where in the last step we have used Strichartz estimates and Fractional Leibniz rule (See Theorem A.8, \cite{kenig1993well}).  We remark It is not hard to see that we can choose $(q_{0},r_{0})=(q_{1},r_{1}).$
Now we  plug in the estimates in \eqref{12}, \eqref{22} into \eqref{propinlwpinterval}, to obtain
 
\begin{equation}
\begin{aligned}
&\quad \|v_{2}(t)-v_{2}(\tau_{k})\|_{\dot{H}^{\nu}}\\
&\leq C\int_{0}^{T}\|u(t)\|_{H^{1+\nu}}\\
&+C(\|v_{1}\|_{L^{2q_{0}'}([\tau_{k},\tau_{k+1}];L^{2p_{0}})}^{2}+\|v_{1}\|_{L^{2q_{0}'}([\tau_{k},\tau_{k+1}];L^{2p_{0}})}\|v_{1}\|_{L^{2}([0,T];H^{1})})\sup_{[\kappa\in [\tau_{k},t] ]}\|v_{2}\|_{H^{\kappa}},
\end{aligned}
\end{equation}
i.e.
\begin{equation}
\quad \|v_{2}(t)-v_{2}(\tau_{k})\|_{{H}^{\nu}}\leq C+C\int_{0}^{T}\|u(t)\|_{H^{1+\nu}}+C\epsilon sup_{t\in [t_{k},\tau]}\|u(t)\|_{H^{\nu}}.
\end{equation}
The term $\int_{0}^{T}\|u(t)\|_{H^{1+\nu}}$ is not hard to control. Interpolating between $\|u\|_{H^{1}}$ and $\|u\|_{H^{K^{2}}}$, we obtain
\begin{equation}
\|u(t)\|_{H^{1+\nu}}\lesssim \|u(t)\|_{H^{1}}^{\frac{K_{2}-1-\nu}{K_{2}-1}}\|u(t)\|_{H^{K_{2}}}^{\frac{\nu}{K_{2}-1}},
\end{equation}
Now we plug in the estimate $\|u(t)\|_{H^{1}}\lesssim \frac{1}{\lambda(t)}$ and  $\|u(t)\|_{H^{K_{2}}}\lesssim_{\sigma}\frac{1}{\lambda(t)^{K_{2}+\sigma}}$ from Lemma \ref{lemosroughcontrol}, and we obtain
\begin{equation}
\|u(t)\|_{H^{1+\nu}}\lesssim_{\sigma}\frac{1}{\lambda^{1+\nu+C\sigma}(t)}.
\end{equation}
By choosing $\sigma$ small enough such that $1+\nu+C\sigma<2$, and  using \eqref{scalefromblowupspeed}, we have
\begin{equation}\label{controloftroubleterm}
\int_{0}^{T}\|u(t)\|_{H^{1+\nu}}\lesssim 1.
\end{equation}
We plug in \eqref{controloftroubleterm} into \eqref{propinlwpinterval},  and we choose $\epsilon$ small enough to  obtain
\begin{equation}
sup_{t\in [t_{k},t_{k+1}]}\|u(t)\|_{H^{\nu}}\leq 2\|u(t_{k})\|_{H^{\nu}}+C.
\end{equation} 
 Iterating this over the $J_{k}\sim |\ln \lambda(T)|^{\frac{1}{2}}$ intervals,
we obtain the estimate
\begin{equation}
\|u(T)\|_{H^{\nu}}\lesssim e^{C|\ln \lambda(T)|^{\frac{3}{4}}},
\end{equation}
which  implies the desired estimate \eqref{goal}.

\subsubsection{Proof of Lemma \ref{lemosimprovelinfty}}\label{proofimprovelinfty}
We now  prove Lemma \ref{lemosimprovelinfty}. Indeed, by Lemma \ref{lemosroughcontrol} and Lemma \ref{lemosfirstgain}, we obtain for any  $\nu<1$, $\tilde{\sigma}>0$ the estimates
\begin{eqnarray}
\|v_{2}(t)\|_{H^{K_{2}}}\lesssim_{\tilde{\sigma}}\frac{1}{\lambda^{K^{2}+\tilde{\sigma}}(t)},\\
\|v_{2}(t)\|_{H^{\nu}}\lesssim_{\tilde{\sigma}}\frac{1}{\lambda^{\tts}(t)}.
\end{eqnarray}
Now, by Sobolev embedding and  interpolation, we obtain
\begin{equation}
\|v_{2}(t)\|_{L^{\infty}(\RRR^{2})}\lesssim_{\tilde{\sigma}}\|v_{2}(t)\|_{H^{1+\tilde{\sigma}}}\lesssim_{\tts} \frac{1}{\lambda^{C\tilde{\sigma}}(t)}\frac{1}{\lambda^{\frac{K_{2}-K_{2}\nu}{K_{2}-\nu}}(t)}.
\end{equation}
Since $\tts>0$ is arbitrary and  we can choose $\nu$ as close to 1 as we want,  this clearly gives Lemma \ref{lemosimprovelinfty}.

\subsubsection{Proof of Lemma \ref{lemosfirstgainimpr}}\label{prooffirstgainimpr}
Now we prove Lemma \ref{lemosfirstgainimpr}. 
We fix $\nu$ and we point out that all the constants in the proof will  depend the choice of $\nu$.
By choosing $\sigma$ small enough, Lemma \ref{lemosimprovelinfty} gives for some small $c=c_{\nu}>0$
\begin{equation}\label{techlinfty}
\|v_{2}(t)\|_{L^{\infty}}\lesssim \frac{1}{\lambda^{1-c\nu}(t)}.
\end{equation}
By choosing $\sigma$ even smaller, by Lemma \ref{lemosfirstgain}, we have
\begin{equation}\label{techhnu}
\|v_{2}(t)\|_{H^{\nu}}\lesssim \frac{1}{\lambda^{c\nu/10}(t)}.
\end{equation}
Clearly $v_{3}$ also satisfies  the estimate \eqref{techlinfty}, \eqref{techhnu}, and also it  satisfies the  equation
\begin{equation}
i\partial_{t}v_{3}+\Delta v_{3}=\Delta \chi_{3} v_{2}+2\nabla \chi_{3}\nabla v_{2}-v_{3}|v_{2}|^{2}.
\end{equation}
By the Duhamel's formula, we obtain
\begin{equation}
v_{3}(t)=e^{it\Delta}(v_{3}(0))+i\int_{0}^{t}e^{i(t-s)\Delta}(\Delta \chi_{3} v_{2}+2\nabla \chi_{3}\nabla v_{2}-v_{3}|v_{2}|^{2})(s)ds.
\end{equation}
Thus, 
\begin{equation}
\begin{aligned}
\|v_{3}(t)\|_{H^{\nu}}\lesssim \|v_{3}(0)\|_{H^{\nu}}+\int_{0}^{t}\|v_{2}\|_{H^{\nu}}+\int_{0}^{t}\|v_{2}\|_{H^{1+\nu}}+\int_{0}^{t}\|v_{3}|v_{2}|^{2}\|_{H^{\nu}}.
\end{aligned}
\end{equation}
As argued previously in \eqref{controloftroubleterm}, $\int_{0}^{t}\|v_{2}\|_{H^{1+\nu}}\lesssim 1$. 
Thus, to finish the proof of Lemma \ref{lemosfirstgainimpr}, we only need to show
\begin{equation}\label{finalgoalproofoflemfirstgain}
\int_{0}^{T}\|v_{3}|v_{2}|^{2}(t)\|_{H^{\nu}}\lesssim 1.
\end{equation}
Indeed,
\begin{equation}\label{whatever}
\|v_{3}|v_{2}|^{2}\|_{H^{\nu}}\lesssim \|v_{3}\|_{H^{\nu}}\|v_{2}\|_{L^{\infty}}^{2}+\|v_{3}\|_{L^{\infty}}\|v_{2}\|_{L^{\infty}}\|v_{2}\|_{H^{\nu}}.
\end{equation}
Now we plug  the estimates \eqref{techlinfty} and \eqref{techhnu} into \eqref{whatever} and we recall that $v_{3}$ also satisfy estimate \eqref{techlinfty} and \eqref{techhnu}, we have
\begin{equation}
\|v_{3}|v_{2}|^{2}(t)\|_{H^{\nu}}\lesssim \frac{1}{\lambda^{2-c\nu}(t)}.
\end{equation}
Thus, by \eqref{scalefromblowupspeed}, estimate \eqref{finalgoalproofoflemfirstgain} follows. 

\subsubsection{Proof of Lemma \ref{lemositeration}}\label{proofositeration}
Finally we prove Lemma \ref{lemositeration}. This is quite straightforward. First, one can directly check (again) that $v_{j+1}$ satisfies:
\begin{equation}
i\partial_{t}v_{j+1}+\Delta v_{j}=\Delta \chi_{j+1}v_{j}+2\nabla \chi_{j+1}\nabla v_{j}-v_{j+1}|v_{j}|^{2}.
\end{equation}
Since we assume that $K_{2}>2$, then  by Lemma \ref{lemosimprovelinfty}, we have (for $j\geq 2$):
\begin{equation}
\|v_{j}(t)\|_{\infty}\lesssim_{\sigma} \frac{1}{\lambda^{\sigma}(t)}.
\end{equation}
Clearly the same estimates hold for $v_{j+1}$.\\
Again by the Duhamel's formula:
\begin{equation}\label{lastduhamel}
v_{j+1}(t)=e^{it\Delta}(\chi_{j+1}u_{0})+i\int_{0}^{t}e^{i(t-\tau)\Delta}(\Delta \chi_{j+1}v_{j}+2\nabla \chi_{j+1}\nabla v_{j}-v_{j+1}|v_{j}|^{2})(\tau)d\tau,
\end{equation}
We remark here that the main term to control is  $2\nabla \chi_{j+1}\nabla v_{j}$.
Directly from \eqref{lastduhamel}, we obtain:
\begin{equation}
\|v_{j+1}(t)\|_{H^{\tilde{r}}}\lesssim \|\chi_{j}u(0)\|_{H^{\tilde{r}}}+\int_{0}^{t}\|v_{j}(\tau)\|_{H^{\tilde{r}}}+\int_{0}^{t}\|v_{j}(\tau)\|_{H^{\tilde{r}+1}}+\int_{0}^{t}\|v_{j+1}|v_{j}|^{2}(\tau)\|_{H^{\tilde{r}}}.
\end{equation}
Clearly, to finish the proof we only need to show 
\begin{equation}\label{oskey1}
\int_{0}^{t}\|v_{j}(\tau)\|_{H^{\tilde{r}+1}}\lesssim 1,
\end{equation}
and
\begin{equation}\label{oskey2}
\int_{0}^{t}\|v_{j+1}|v_{j}|^{2}\|_{H^{\tilde{r}}} \lesssim 1.
\end{equation}
We first prove \eqref{oskey2}. By interpolating  between the estimate 
$$\|v_{j}\|_{H^{r}}\lesssim 1\quad \mbox{  and } \quad \|u(\tau)\|_{H^{K_{2}}}\lesssim_{\sigma}\frac{1}{\lambda^{K_{2}+\sigma}(\tau)},$$ we obtain
\begin{equation}
\|v_{j}(\tau)\|_{H^{\tilde{r}}}\lesssim_{\sigma} \frac{1}{\lambda^{1+C\sigma}(\tau)}.
\end{equation}
Note that the same estimates hold for $v_{j+1}$.
Thus,
\begin{equation}
\|v_{j+1}|v_{j}|^{2}(\tau)\|_{H^{\tilde{r}}}\lesssim \|v_{j}\|_{H^{\tilde{r}}}\|v_{j}\|_{L^{\infty}}^{2}\lesssim_{\sigma} \frac{1}{\lambda(\tau)^{1+C\sigma}}.
\end{equation}
By choosing $\sigma$ small enough, such that $1+C\sigma<2$, \eqref{oskey2} clearly follows from \eqref{scalefromblowupspeed}.
Finally, we turn to the proof of \eqref{oskey1}.  Again by  interpolation, we obtain
\begin{equation}
\|v_{j}\|_{H^{\tilde{r}+1}}\lesssim \|v_{j}(\tau)\|_{H^{r}}^{\frac{K_{2}-\tilde{r}-1}{K_{2}-r}}\|v_{j}(\tau)\|_{H^{K^{2}}}^{\frac{\tilde{r}+1-r}{K_{2}}-r}\lesssim_{\sigma} \left(\frac{1}{\lambda(\tau)}\right)^{\frac{K_{2}(\tilde{r}+1-r)}{K_{2}-r}+C\sigma}.
\end{equation}
The key point is our choice of $\tilde{r}$ that ensures $\frac{K_{2}(\tilde{r}+1-r)}{K_{2}-r}<2$, thus by choosing $\sigma$ small enough, 
 we have $\frac{K_{2}(\tilde{r}+1-r)}{K_{2}-r}+C\sigma<2$. Finally by \eqref{scalefromblowupspeed} estimate \eqref{oskey1} follows . 

\section{Proof of Lemma \ref{bootstrap}: Multi Solitons Model}\label{proofofts}
This section is devoted to the proof of Lemma \ref{bootstrap}.

The proof is involved, hence we first  discuss some heuristics .
 Lemma \ref{bootstrap} is the consequence of the following facts.
\begin{enumerate}
\item  From previous work, \cite{merle2006sharp},\cite{planchon2007existence}, the  solution $u_{j}$ to \eqref{nls} with initial data $\chi_{1,loc}(x-x_{j,0})u_{0}$ , $j=1,\dots, m$,    blows up according to log-log law.
\item  Assume some solution $v$ to \eqref{nls}  blows up according to log-log law, and assume $t_{0}$ is close enough to the blow up time $T^{+}(v)$. Let $F=F(t,x)$ be some smooth perturbation. Then, the solution $\tilde{v}$ to the following Cauchy problem
\begin{equation}
\begin{cases}
i\tilde{v}_{t}=-\Delta  \tilde{v}-|\tilde{v}|^{2}\tilde{v}+F,\\
\tilde{v}(0)=v(t_{0}),
\end{cases}
\end{equation}
still blows up according to the log-log law.
\item By our smoothness condition \eqref{initialoutsmooth1}, we can show  that solution  to \eqref{nls} with initial data $\chi_{1,loc}(x-x_{j,0})u_{0}$  is  smooth in the region $|x-x_{j}|\geq \frac{1}{2}$ for j=1,\dots, m.\\
\item  For $ j=1,\dots, m$ the function $u_{j}:=\chi_{1}(x-x_{j}(t))$ satisfies the following equation
\begin{equation}
i\partial_{t}u_{j}=-\Delta u_{j}-|u_{j}|^{2}u_{j}+F_{j},
\end{equation}
where
\begin{equation} 
\begin{aligned}
F_{j}=&-\nabla \chi_{1,loc}(x-x_{j})\frac{dx_{j}}{dt}+\Delta \chi_{1,loc}(x-x_{j}) u(t,x)+2\nabla \chi_{1,loc}(x-x_{j})\nabla u\\
&+(\chi^{3}_{1,loc}-\chi_{1,loc})(x-x_{j})|u|^{2}u .
\end{aligned}
\end{equation}
 \end{enumerate}
 
 The idea is the following loop argument, which is false of course, but it can be made rigorous by bootstrap argument.
If we assume  $u_{j}$ evolves according to the log-log law for each $j$, we basically know that $u$ is smooth in the region $\min_{j}\{|x-x_{j}|\}\geq 1/2$. And since $\nabla \chi_{1,loc}(x-x_{j})$ and $\chi_{1,loc}(x-x_{j})$ are  supported in $|x-x_{j}|\geq \frac{2}{3}$, this implies that  $F_{j}$ above is  smooth. Thus fact 4 and  fact 2 imply that $u_{j}$ actually evolves according to the log-log law for each $j$. Thus, the assumption, that $u_{j}$ evolves according to log-log law, is right.
 
 Let us turn to the details and a rigorous mathematical proof.
\subsection{Outline of the Proof}
  %By bootstrap hypothesis \eqref{assumuepertbation}, $|b_{1}|, |b_{2}|$ are small. By bootstrap hypothesis \ref{assumeloglog2}, %$b_{1}, b_{2}$ are positive, and by bootstrap hypothesis \ref{assumeloglog1}, $\lambda_{j}\ll b_{j}, j=1,2.$
%Thus, as pointed out previously, orthogonality condition \eqref{oth1},\eqref{oth2}, \eqref{oth3}, \eqref{oth4} hold and the system \eqref{tsnls} is equivalent to \eqref{tsnls2}. Now, define rescaled variable $s_{j}$ as $\frac{dt}{ds_{j}}=\lambda_{j}^{2}$, then \eqref{tsnls2} implies
Recall that the re-scaled time $s_{j}$ satisfies $\frac{dt}{ds_{j}}=\lambda_{j}^{2}$ for $j=1,\dots, m$. The system \eqref{tsnls2} implies the following:
\begin{equation}
\begin{aligned}
&&\frac{d}{ds_{j}}\{(\eps_{1}^{j}, |y|^{2}\Sigma_{b_{j}})+(\eps^{j}_{2},|y|^{2}\Theta_{b_{j}})\}=0, j=1,\dots, m,\\
&&\frac{d}{ds_{j}}\{(\eps^{j}_{1},y\Sigma_{b_{1}})+(\eps^{j}_{2},y\Theta_{b_{j}})\}=0, j=1,\dots, m,\\
&&\frac{d}{ds_{j}}\{-(\eps_{1}^{j},\Lambda^{2}\Theta_{b_{j}})+(\epsilon^{j}_{2},\Lambda^{2}\Sigma_{b_{j}})\}=0, j=1,\dots, m,\\
&&\frac{d}{ds_{j}}\{-(\eps_{1}^{j}, \Lambda \Theta_{b_{j}})+(\eps_{2}^{j}, \Lambda \Sigma_{b_{j}})\}=0, j=1,\dots, m.
\end{aligned}
\end{equation}
Now, by pure algebraic computation  as in \cite{raphael2005stability}, one is  able to write down almost the same modulation ODE for $\{b_{j},\lambda_{j}, x_{j}, \tilde{\gamma}_{j}\}$ for  $j=1,\dots, m$ as in formula (71), (72), (73), (74) in \cite{raphael2005stability}, where $\tilde{\gamma}_{j}(s_{j}):=\gamma_{j}-s_{j}$.  Basically, the only difference is  that the $\lambda^{2}E$ term in \cite{raphael2005stability}  is  replaced\footnote{Because in \cite{raphael2005stability}, the solution $u$ has the form $u:=\frac{1}{\lambda(t)}(\tilde{Q}_{b}+\epsilon)(\frac{x-x(t)}{\lambda(t)})e^{-i\gamma}$, thus all the term $E(\tilde{Q}_{b}+\epsilon)$ is equal to $\lambda^{2}E$. In our model , we cannot make this substitution and have to keep the term of form $E(\tilde{Q}_{b_{j}}+\eps^{j})$.} by $E(\tilde{Q}_{b_{j}}+\eps^{j})$.

\subsubsection{Modulation estimates} 
Now, by exactly the same argument as the proof of Lemma  5 in \cite{merle2006sharp} 
we have the analogue of Lemma  \ref{lemmoduestimateinitial}
\begin{lem}\label{lemmoduationestimate}
In the setting of Lemma \ref{bootstrap}, the following modulation estimates hold for $t\in [0,T]$, and $j=1,\dots, m$:
\begin{equation}\label{bootmodu1}
  |\frac{1}{\lambda_{j}}\frac{d\lambda_{j}}{ds_{j}}+b_{j}|+|\frac{db_{j}}{ds_{j}}|\leq C\left(\int |\nabla \eps^{j}|^{2}+\int |\eps^{j}|^{2} e^{-|y|}\right)+\Gamma_{b_{j}}^{1-C\eta}+C|E(\qbj+\epj)|,
  \end{equation}
  \begin{equation}\label{bootmodu2}
  \begin{aligned}
&  |\frac{d\tilde{\gamma}_{j}}{ds_{j}}-\frac{1}{|Q_{1}|^{2}}(\eps^{j}_{1},L_{+}Q_{2}|+|\frac{1}{\lambda_{j}}\xs|\\
  \leq &\delta(\alpha)(\int |\nabla\eps^{j}|^{2}e^{-2(1-\eta)\frac{\theta(b_{j}|y|)}{b_{j}}}+\int |\eps^{j}|^{2}e^{-|y|}|)^{\frac{1}{2}}+C\int |\nabla \epsilon^{j}|^{2}+\Gamma_{b_{j}}^{1-C\eta}\\
  &+C|E(\qbj+\epj)|.
  \end{aligned}
  \end{equation}
\end{lem}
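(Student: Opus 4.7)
The plan is to import the derivation of Lemma 5 in \cite{merle2006sharp} bubble by bubble, replacing the global $\epsilon$ by the localized perturbation $\eps^j$ around each of the $m$ bubbles. First, I would substitute the geometric decomposition \eqref{basicsettinggeo} into \eqref{nls} and localize around the $j$-th soliton, producing for each $\eps^j$ an evolution system of the same form as \eqref{sys1}--\eqref{sys2} but with an additional error term $\mathcal{E}^j$ collecting: (i) the commutator of the cutoff $\chi_{1,loc}(x-x_j)$ with $i\partial_t+\Delta$; (ii) the cross-terms in $|u|^{2}u$ involving the other bubbles $u^{j'}$ for $j'\neq j$; and (iii) the interaction between the $j$-th profile and the remote profiles. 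By \eqref{assumesmalltranslation} and \eqref{initalseparate} we have $|x_j(t)-x_{j'}(t)|\geq 5$, while $\qbj$ is supported on a ball of radius $\lesssim \lambda_j/|b_j|\ll 1$, so $\mathcal{E}^j$ is supported well away from where the $j$-th bubble lives. When paired against any of the exponentially localized test functions $|y|^2\Sigma_{b_j}$, $y\Sigma_{b_j}$, $\Lambda^2\Theta_{b_j}$, $\Lambda\Theta_{b_j}$ appearing in \eqref{oth1}--\eqref{oth4}, the contribution of $\mathcal{E}^j$ is therefore of size $O(\Gamma_{b_j}^{10})$ and can be absorbed in the $\Gamma_{b_j}^{1-C\eta}$ term on the right side of \eqref{bootmodu1}--\eqref{bootmodu2}.

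Second, I would differentiate the four orthogonality conditions \eqref{oth1}--\eqref{oth4} with respect to $s_j$, noting that by \eqref{tsnls2} these derivatives vanish identically. Substituting the evolution equation for $\eps^j$ from Step 1, this produces a $4\times 4$ linear system for the parameter variations $\bigl(\tfrac{db_j}{ds_j},\ \tfrac{1}{\lambda_j}\tfrac{d\lambda_j}{ds_j}+b_j,\ \tfrac{1}{\lambda_j}\tfrac{dx_j}{ds_j},\ \tfrac{d\tilde\gamma_j}{ds_j}-\tfrac{1}{\|\Lambda Q\|_2^2}(\eps^j_1,L_+\Lambda^2 Q)\bigr)$. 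Exactly as in formulas (71)--(74) of \cite{raphael2005stability}, this matrix is diagonally dominant to leading order in $\delta(\alpha)$ and inverts explicitly. On the right hand side we collect: a $C(\int|\nabla\eps^j|^2+\int|\eps^j|^2 e^{-|y|})$ contribution from the quadratic remainder $R(\eps^j)$; a $\Gamma_{b_j}^{1-C\eta}$ contribution from the scalar products involving $\Psi_{b_j}$, using \eqref{convergence} and \eqref{gamma}; and the negligible contribution from $\mathcal{E}^j$ discussed above.

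The only algebraic step where the multi-bubble setting departs from \cite{merle2006sharp} is the appearance of $|E(\qbj+\eps^j)|$ in place of the $\lambda^{2}|E_0|$ term of the one-bubble case. In Merle--Rapha\"el, this term arises because certain scalar products, after integration by parts against the equation \eqref{qbbalmostselsimliar} for $\Psi_b$, reduce to the energy $E$ of the full solution, which is then replaced by the initial energy $E_0$ (conservation) and rescaled by $\lambda^2$. In our setting there is no global conservation law that rewrites $\tfrac{1}{2}\int|\nabla(\qbj+\eps^j)|^2-\tfrac{1}{4}\int|\qbj+\eps^j|^4$ in terms of any simpler quantity, so the same algebra simply produces the intrinsic energy $E(\qbj+\eps^j)$ of the $j$-th localized profile in its own rescaled frame. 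The control of this quantity --- and hence the final smallness of the right hand side of \eqref{bootmodu1}, \eqref{bootmodu2} --- will be obtained later from the bootstrap hypotheses \eqref{assumelocalcontrolofconserveredquantity}, \eqref{assumelocalcontrolofmomentum} on the local conserved quantities.

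The main obstacle is the bookkeeping of the interaction error $\mathcal{E}^j$. The point, which is standard in the soliton literature, is that $\Sigma_{b_j}$, $\Theta_{b_j}$, $\Psi_{b_j}$ and their derivatives are all supported in $|y|\lesssim 1/|b_j|$, equivalently in an $x$-ball of radius $\sim \lambda_j/|b_j|$, and under the log-log regime \eqref{assumeloglog1} this is exponentially smaller than the bubble separation, so all cross terms decay faster than any polynomial in $\Gamma_{b_j}$. Once this localization is verified, the remainder of the argument is a line-by-line transcription of the modulation calculation already carried out in \cite{merle2006sharp} and \cite{raphael2005stability}.
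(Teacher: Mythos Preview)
Your proposal is correct and follows essentially the same approach as the paper: the paper simply states that the proof follows exactly the proof of (2.36), (2.37) in Lemma~5 of \cite{merle2006sharp}, after remarking that the only difference is the replacement of $\lambda^{2}E$ by $E(\qbj+\eps^{j})$. Your bookkeeping of the extra error $\mathcal{E}^{j}$ is the point the paper spells out later (in the Lyapounov functional section), noting that since $\chi_{1,loc}(x-x_{j})\equiv 1$ on $|x-x_{j}|\leq \tfrac{2}{3}$ the extra terms are supported in $|y|\geq \tfrac{1}{2\lambda_{j}}$, well outside the support $|y|\lesssim 1/|b_{j}|$ of $\qbj$, so their pairings vanish identically---in particular the cross-bubble contributions you list as (ii) and (iii) are in fact exactly zero after the cutoff, not merely $O(\Gamma_{b_{j}}^{10})$.
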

The proof follows exactly as  the proof of (2.36), (2.37) in Lemma 5 of \cite{merle2006sharp}.

\subsubsection{Estimates by the conservation law}
Similarly, following the proof of Lemma 5 in \cite{merle2006sharp},
we have the analogue of Lemma \ref{lemenergyinitial},
\begin{lem}\label{estiamtebyconservationlaw}
In the setting of Lemma \ref{bootstrap},  the following estimates  hold for  $t\in [0,T]$, for $j=1,\dots, m$
 \begin{equation}\label{energyestimate1}
 |2(\eps^{j}_{1},\Sigma_{b_{j}})+2(\eps^{j}_{2},\Theta_{b_{j}})|\leq C\left(\int |\nabla\eps^{j}|^{2}+|\eps^{j}|^{2}e^{-|y|}\right)+\Gamma_{b_{j}}^{1-C\eta}+C|E(\qbj+\epj)|,
 \end{equation}
 \begin{equation}\label{momentumestimate1}
 |(\epsilon^{j}_{2},\nabla\Sigma_{b_{j}})|\leq C\delta(\alpha)(\int |\nabla \eps^{j}|^{2}+\int |\eps^{j}|^{2}e^{-|y|})^{\frac{1}{2}}+CP(\qbj+\epj).
 \end{equation}
\end{lem}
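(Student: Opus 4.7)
The plan is to follow the proof of Lemma 5 in \cite{merle2006sharp} essentially verbatim, with one key substitution: the role played there by the globally conserved quantity $\lambda^{2}E(u_{0})$ is now played by the (non-conserved) quantity $E(\tilde{Q}_{b_{j}}+\epsilon^{j})$ for each $j$. This substitution is natural because, after rescaling by $\lambda_{j}$ near $x_{j}(t)$, the function $\tilde{Q}_{b_{j}}+\epsilon^{j}$ is literally the $j$th bubble expressed in the adapted coordinates, and the multi-soliton energy is not available as an input — so instead of identifying a conserved quantity we simply carry $E(\tilde{Q}_{b_{j}}+\epsilon^{j})$ through the argument as a source term on the right-hand side. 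Crucially, because $\tilde{Q}_{b_{j}}$ is supported in $|y|\lesssim 1/|b_{j}|$ and $\epsilon^{j}$ comes from a cutoff $\chi_{1,\mathrm{loc}}$ at scale $1$, no interactions between distinct bubbles enter the algebra: each estimate is purely local to the $j$th bubble.

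For the energy identity \eqref{energyestimate1}, I would expand
\[
E(\tilde{Q}_{b_{j}}+\epsilon^{j})=E(\tilde{Q}_{b_{j}})+\bigl(\nabla\epsilon_{1}^{j},\nabla\Sigma_{b_{j}}\bigr)+\bigl(\nabla\epsilon_{2}^{j},\nabla\Theta_{b_{j}}\bigr)-\bigl(\epsilon_{1}^{j},\Sigma_{b_{j}}|\tilde{Q}_{b_{j}}|^{2}\bigr)-\bigl(\epsilon_{2}^{j},\Theta_{b_{j}}|\tilde{Q}_{b_{j}}|^{2}\bigr)+R_{j},
\]
where $R_{j}$ collects quadratic and higher terms in $\epsilon^{j}$. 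Integrating by parts on the linear terms and invoking the elliptic identity \eqref{qbbalmostselsimliar}, namely $-\Delta\tilde{Q}_{b_{j}}+\tilde{Q}_{b_{j}}-ib_{j}\Lambda\tilde{Q}_{b_{j}}-\tilde{Q}_{b_{j}}|\tilde{Q}_{b_{j}}|^{2}=\Psi_{b_{j}}$, converts $(\nabla\epsilon_{1}^{j},\nabla\Sigma_{b_{j}})+(\nabla\epsilon_{2}^{j},\nabla\Theta_{b_{j}})$ plus the nonlinear pieces into exactly $(\epsilon_{1}^{j},\Sigma_{b_{j}})+(\epsilon_{2}^{j},\Theta_{b_{j}})$ plus a contribution paired against $\Psi_{b_{j}}$ (whose Re/Im parts also cancel the $b_{j}\Lambda$ terms by the orthogonality condition \eqref{oth4} up to lower order). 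The $\Psi_{b_{j}}$ pairings contribute $O(\Gamma_{b_{j}}^{1-C\eta})$ by the exponential bound for $\Psi_{b_{j}}$ encoded in \eqref{gamma}, while $E(\tilde{Q}_{b_{j}})$ is itself $O(\Gamma_{b_{j}}^{1-C\eta})$ (recall $E(Q)=0$ and the $C^{3}$ convergence \eqref{convergence}). Finally, $R_{j}$ is estimated by $C(\int|\nabla\epsilon^{j}|^{2}+|\epsilon^{j}|^{2}e^{-|y|})$, using the smallness bootstrap \eqref{assumuepertbation}, Gagliardo–Nirenberg, and the exponential localization of $\tilde{Q}_{b_{j}}$ to handle the cubic cross-terms. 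Rearranging yields \eqref{energyestimate1}.

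For the momentum identity \eqref{momentumestimate1}, the computation is cleaner. Writing
\[
P(\tilde{Q}_{b_{j}}+\epsilon^{j})=P(\tilde{Q}_{b_{j}})+2(\epsilon_{2}^{j},\nabla\Sigma_{b_{j}})-2(\epsilon_{1}^{j},\nabla\Theta_{b_{j}})+\tilde{R}_{j},
\]
the radial/phase structure of $\tilde{Q}_{b_{j}}=Q_{b_{j}}\phi_{b_{j}}$ gives $P(\tilde{Q}_{b_{j}})=0$, and the orthogonality \eqref{oth2} together with the Taylor expansion of $\nabla\Theta_{b_{j}}$ around $b_{j}=0$ (using \eqref{convergence}) absorbs the $(\epsilon_{1}^{j},\nabla\Theta_{b_{j}})$ term into $\delta(\alpha)$ times the local $H^{1}$ weighted norm of $\epsilon^{j}$. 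The quadratic remainder $\tilde{R}_{j}$ is controlled as above by $\delta(\alpha)(\int|\nabla\epsilon^{j}|^{2}+|\epsilon^{j}|^{2}e^{-|y|})^{1/2}$.

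The main delicate point — really the only departure from \cite{merle2006sharp} — is verifying that no coupling between different bubbles contaminates the identities, i.e. that the cut-off $\chi_{1,\mathrm{loc}}(x-x_{j})$ used to define $\epsilon^{j}$ truly isolates the $j$th bubble. This follows from \eqref{initalseparate} and \eqref{assumesmalltranslation}: the support of $\chi_{1,\mathrm{loc}}(x-x_{j})$ stays at distance $\geq 8$ from every $x_{j'}(t)$ with $j'\neq j$, where by \eqref{estimateoutsidesmoothness} the solution is uniformly smooth and of negligible mass after rescaling by $\lambda_{j}$. Hence the analogue of the Merle–Rapha\"el expansion goes through unchanged on the rescaled level, and the two estimates follow.
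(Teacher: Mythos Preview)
Your proposal is correct and follows essentially the same route as the paper: expand $E(\tilde{Q}_{b_j}+\varepsilon^{j})$ and $P(\tilde{Q}_{b_j}+\varepsilon^{j})$ around the profile, read off the linear pairings, and carry the non-conserved local energy/momentum as a source term in place of the global conserved quantity used in \cite{merle2006sharp}. One minor remark: for \eqref{momentumestimate1} the paper does not invoke the orthogonality \eqref{oth2} --- it just uses the pointwise smallness $|\nabla\Theta_{b_j}|\lesssim b_j\,e^{-c|y|}$ (your ``Taylor expansion'' observation) together with $\|\varepsilon^{j}\|_{2}\leq\delta(\alpha)$ for the quadratic term, so your appeal to \eqref{oth2} is harmless but unnecessary.
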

\begin{proof}
The proof of \eqref{energyestimate1} is exactly the same as the proof of (2.35) in Lemma 5 in \cite{merle2006sharp}.
The proof of \eqref{momentumestimate1} is a little different, since in \cite{merle2006sharp} the authors  use the zero-momentum condition which is not used here. A direct simple algebraic computation  shows that 
$$P(\qbj+\epj)=-(\nabla \Sigma_{b_{j}},\eps^{j}_{2})+(\nabla \Theta_{b_{j}},\eps_{1}^{j})-2(\nabla \eps_{1}^{j},\eps_{2}^{j}).$$
By a point-wise control  $|\nabla\Theta_{b_{j}}|(y)\lesssim e^{-|K||y|}$ and Cauchy Schwartz, one has:
$$|(\nabla \Theta_{b_{j}},\epsilon)|\leq C(\int |\eps^{j}|^{2}e^{-K|y|})^{\frac{1}{2}}.$$
From here one has that $|(\nabla \eps_{1}^{j},\eps_{2}^{j})|\leq \|\eps\|_{2}\|\nabla\eps\|_{2}$.

Using the general functional analysis fact\footnote{This estimate holds for all $H^{1}$ functions, see (2.38) in Lemma 5 in \cite{merle2006sharp}.}$$\int |\eps^{j}|^{2}e^{-K|y|}\leq C_{K}(\int |\nabla \eps^{j}|^{2}+\int |\eps^{j}|^{2}e^{-|y|}),$$
the  bootstrap hypothesis \eqref{assumuepertbation} and \eqref{assumesmallnessoflocalerror}:
$$\|\epj\|_{L^{2}}\leq \delta(\alpha), \int |\nabla \eps^{j}|^{2}+\int |\eps^{j}|^{2}e^{-|y|}\leq \delta(\alpha).$$
we have that \eqref{momentumestimate1} follows.\\
\end{proof}

\subsubsection{Control of local quantity}
 We use the  bootstrap hypothesis (in particular, the control of local conserved quantity) to  show the following
\begin{lem}\label{control of local quantity}
In the setting of Lemma \ref{bootstrap},  for $t\in [0,T]$ and $j=1,\dots, m$ the following estimates hold
\begin{eqnarray}\label{energycontrol}
&&|E(\eps^{j}+\qbj)|\lesssim \lambda_{j}^{2-},\\\label{momentumcontrol}
&&|P(\eps^{j})+\qbj|\lesssim \lambda_{j}^{1-}.
\end{eqnarray}
\end{lem}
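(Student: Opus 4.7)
The plan is to relate the energy and momentum of the rescaled profile $\qbj+\eps^{j}$ to the local energy and momentum of $u$ near $x_{j}(t)$ via the natural scaling, and then feed in the local-conservation bootstrap together with the initial tameness.

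\textbf{Step 1 (scaling identity).} Set $v_{j}(t,x):=\chi_{1,loc}(x-x_{j}(t))\,u(t,x)$. Two facts force all the other bubbles to drop out of $v_{j}$: first, each $\tilde{Q}_{b_{k}}$ is compactly supported of radius $\lambda_{k}/b_{k}\ll 1$; second, by \eqref{initalseparate} and \eqref{assumesmalltranslation} one has $|x_{j}(t)-x_{k}(t)|>5$ for $k\neq j$. Hence in the rescaled variable $y=(x-x_{j}(t))/\lambda_{j}(t)$ the geometric decomposition \eqref{basicsettinggeo} gives pointwise $\lambda_{j}v_{j}(t,x_{j}+\lambda_{j}y)e^{i\gamma_{j}}=\qbj(y)+\eps^{j}(y)$. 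Since $E$ and $P$ are homogeneous of degrees $-2$ and $-1$ respectively under this scaling (for $d=2$), one concludes
\begin{equation*}
E(\qbj+\eps^{j})=\lambda_{j}^{2}\,E(v_{j}),\qquad P(\qbj+\eps^{j})=\lambda_{j}\,P(v_{j}).
\end{equation*}

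\textbf{Step 2 (cutoff corrections).} Expanding $\nabla(\chi_{1,loc}(\cdot-x_{j})u)$ via the product rule and using that $\chi_{0,loc}\equiv 1$ on the support of $\chi_{1,loc}$, I would rewrite
\begin{equation*}
E(v_{j})=E_{loc}(u(t),x_{j}(t))+R^{E}_{j},\qquad P(v_{j})=P_{loc}(u(t),x_{j}(t))+R^{P}_{j},
\end{equation*}
where the remainders $R^{E}_{j},R^{P}_{j}$ are supported in the annulus $\{2/3\leq|x-x_{j}(t)|\leq 3/4\}$ and involve products of $u,\nabla u$ with derivatives of $\chi_{1,loc}$ on that annulus. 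By the translation bootstrap \eqref{assumesmalltranslation} this annulus sits inside $\{|x-x_{j,0}|\geq 1/2\}$, where the outside-smoothness hypothesis \eqref{assumeoutsidesmoothness} provides $H^{N_{1}}$ (hence $H^{1}$ and $L^{4}$) control on $u$ sufficient to bound $R^{E}_{j}$ and $R^{P}_{j}$.

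\textbf{Step 3 (bootstrap application).} Combining the local-conservation bootstrap \eqref{assumelocalcontrolofconserveredquantity}, the initial tameness \eqref{initialtamenessofenergy}, and the almost-monotonicity \eqref{assumealmostmono} yields
\begin{equation*}
\lambda_{j}^{2}|E_{loc}(u(t),x_{j}(t))|\leq (\lambda_{j}/\lambda_{j,0})^{2}\,\Gamma_{b_{j,0}}^{1000}+1000\,\lambda_{j}^{2}\lesssim \Gamma_{b_{j,0}}^{1000}+\lambda_{j}^{2},
\end{equation*}
and similarly $\lambda_{j}|P_{loc}(u(t),x_{j}(t))|\lesssim \Gamma_{b_{j,0}}^{1000}+\lambda_{j}$ from \eqref{assumelocalcontrolofmomentum} and \eqref{initaltamenessofmomentum}. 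Multiplying the boundary remainders by the appropriate power of $\lambda_{j}$ and adding these to the previous display produces the announced bounds $|E(\qbj+\eps^{j})|\lesssim\lambda_{j}^{2-}$ and $|P(\qbj+\eps^{j})|\lesssim\lambda_{j}^{1-}$, after invoking \eqref{feelingofscale} to absorb $\Gamma_{b_{j,0}}^{1000}$ into the desired scale.

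\textbf{Main obstacle.} The delicate point is controlling the cutoff remainders $\lambda_{j}^{2}R^{E}_{j}$ and $\lambda_{j}R^{P}_{j}$ by genuinely polynomial-in-$\lambda_{j}$ quantities, because the hypothesis \eqref{assumeoutsidesmoothness} only provides $\|u\|_{H^{N_{1}}(\text{outside})}\lesssim 1/\max_{k}\{\lambda_{k,0}\}$, which is large. Converting this into a bound of order $\lambda_{j}^{2-}$ (resp.\ $\lambda_{j}^{1-}$) for the remainders will require carefully exploiting the almost-monotonicity $\lambda_{j}(t)\lesssim\lambda_{j,0}$ together with the conservation of mass on the annulus (so that the dangerous $\|u\|_{H^{1}}$ factor is traded against the small geometric factor $\lambda_{j}$), and is the main technical hurdle in the proof.
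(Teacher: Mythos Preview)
Your approach is essentially the paper's: relate $E(\qbj+\eps^{j})=\lambda_{j}^{2}E(\chi_{1,loc}(\cdot-x_{j})u)$ by scaling, compare with $E_{loc}$ and bound the discrepancy by $\int_{2/3\leq|x-x_{j}|\leq 1}(|\nabla u|^{2}+|u|^{4})$ on the annulus via \eqref{assumeoutsidesmoothness}, and then feed in \eqref{assumelocalcontrolofconserveredquantity} with \eqref{initialtamenessofenergy}. Two small corrections are in order.

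First, your appeal to \eqref{feelingofscale} to absorb $\Gamma_{b_{j,0}}^{1000}$ points the wrong way: that estimate says $\Gamma_{b}\gg P(\lambda)$, i.e.\ $\Gamma_{b_{j,0}}^{1000}$ is \emph{larger} than any power of $\lambda_{j,0}$, not smaller. The paper instead simply notes that $|E_{loc}(u(t),x_{j}(t))|$ is bounded by a constant depending only on the initial data (via \eqref{assumelocalcontrolofconserveredquantity} and \eqref{initialtamenessofenergy}), so $\lambda_{j}^{2}|E_{loc}|\lesssim\lambda_{j}^{2}$ with an initial-data-dependent implicit constant; the same convention handles the annulus remainder $\lambda_{j}^{2}(1/\max_{k}\lambda_{k,0})^{4}$ once you invoke almost-monotonicity $\lambda_{j}(t)\lesssim\max_{k}\lambda_{k,0}$. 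Second, the ``conservation of mass on the annulus'' you propose is not needed and does not appear in the paper --- the obstacle you flag is resolved purely by almost-monotonicity, exactly as you also suggest.
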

\begin{proof}
We only prove \eqref{energycontrol}, and the second inequality follows by a similar argument.\\
Direct computation shows that 
\begin{equation}\label{techstep}
\begin{aligned}
  &\lvert E(\epj+\qbj)(t)-\lambda_{j}^{2}(t)E_{loc}(x_{j}(t),u(t))\rvert\\
=&\lambda_{j}^{2}\lvert E(u\chi_{1,loc}(x-x_{j}))- E_{loc}(x_{j}(t),u(t))\rvert\\
\leq 
&\lambda^{2}_{j}\int_{\frac{2}{3}\leq |x-x_{j}|\leq 1}|\nabla u|^{2}+|u|^{4}.
\end{aligned}
\end{equation}
Note that by a standard Sobolev imbedding and by the bootstrap hypothesis \eqref{assumeoutsidesmoothness}, we have:
$$\int_{\frac{2}{3}\leq |x-x_{j}|\leq 1}|\nabla u|^{2}+|u|^{4}\leq \|u\|^{4}_{H^{N_{1}}(\min_{j}\{|x-x_{j,0}|\}\geq \frac{1}{2}\})}\leq (\frac{1}{\max_{j}\{\lambda_{j,0}\}})^{4}.$$
(We will choose $N_{2}$ large enough, and thus $N_{1}<\frac{N_{2}}{2}$ can be chosen large enough so  that all the desired Sobolev embedding holds.)

On the other hand, by the bootstrap hypothesis \eqref{assumelocalcontrolofconserveredquantity}, and the assumption on initial data \eqref{initialtamenessofenergy}, we obtain
$$\lambda_{j}^{2}|E_{loc}(x_{j}(t),u(t))|\lesssim \lambda_{j}^{2}.$$
We plug these two estimates into \eqref{techstep},  and we obtain
\begin{equation}\label{techstep2}
|E(\eps^{j}+\qbj)|\lesssim \lambda_{j}^{2}\left(1+(\frac{1}{\max_{j}\{\lambda_{j,0}\}})^{4}\right).
\end{equation}
Finally, by the bootstrap hypothesis \eqref{assumealmostmono}, we have  $\lambda_{j}(t)\lesssim \max_{j}\{\lambda_{j,0}\}$ for $j=1,\dots, m.$
Thus,  \eqref{energycontrol} clearly follows from \eqref{techstep2}.\\
\end{proof}

\subsubsection{Modulation estimates and estimates by conservation law, restated}
In this section we summarize what we have found above. 
\begin{lem}\label{mees}
\setting, and for $j=1,\dots, m$
\begin{equation}\label{bootmodu1re}
  |\frac{1}{\lambda_{j}}\frac{d\lambda_{j}}{ds_{j}}+b_{j}|+|\frac{db_{j}}{ds_{j}}|\leq C\left(\int |\nabla \eps^{j}|^{2}+\int |\eps^{j}|^{2} e^{-|y|}\right)+\Gamma_{b_{j}}^{1-C\eta},
  \end{equation}
  \begin{equation}\label{bootmodu2re}
  \begin{aligned}
&  |\frac{d\tilde{\gamma}_{j}}{ds_{j}}-\frac{1}{|Q_{1}|^{2}}(\eps^{j}_{1},L_{+}Q_{2}|+|\frac{1}{\lambda_{j}}\xs|\\
  \leq &\delta(\alpha)(\int |\nabla\eps^{j}|^{2}e^{-2(1-\eta)\frac{\theta(b_{j}|y|)}{b_{j}}}+\int |\eps^{j}|^{2}e^{-|y|}|)^{\frac{1}{2}}+C\int |\nabla \epsilon^{j}|^{2}+\Gamma_{b_{j}}^{1-C\eta},
  \end{aligned}
  \end{equation}
   \begin{equation}\label{energyestimate1re}
 |2(\eps^{j}_{1},\Sigma_{b_{j}})+2(\eps^{j}_{2},\Theta_{b_{j}})|\leq C\left(\int |\nabla\eps^{j}|^{2}+|\eps^{j}|^{2}e^{-|y|}\right)+\Gamma_{b_{j}}^{1-C\eta},
 \end{equation}
 \begin{equation}\label{momentumestimate1re}
 |(\epsilon^{j}_{2},\nabla\Sigma_{b_{j}})|\leq C\delta(\alpha)(\int |\nabla \eps^{j}|^{2}+\int |\eps^{j}|^{2}e^{-|y|}).
 \end{equation}
\end{lem}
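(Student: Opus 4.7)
The plan is to obtain Lemma \ref{mees} by combining the already-established estimates of Lemma \ref{lemmoduationestimate} and Lemma \ref{estiamtebyconservationlaw} with the local-quantity control of Lemma \ref{control of local quantity}, and then absorbing the resulting $\lambda_j$-powers into the other terms on the right-hand side using the log-log regime scaling \eqref{feelingofscale}. The estimates \eqref{bootmodu1re}, \eqref{bootmodu2re}, \eqref{energyestimate1re}, \eqref{momentumestimate1re} differ from \eqref{bootmodu1}, \eqref{bootmodu2}, \eqref{energyestimate1}, \eqref{momentumestimate1} only by the removal of the terms $C|E(\qbj+\epj)|$ and $CP(\qbj+\epj)$, so it is precisely this absorption that must be justified.

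First, I would apply Lemma \ref{control of local quantity} to bound
\begin{equation*}
|E(\qbj+\epj)|\lesssim \lambda_j^{2-},\qquad |P(\qbj+\epj)|\lesssim \lambda_j^{1-}.
\end{equation*}
Next, I would invoke the log-log regime bootstrap hypothesis \eqref{assumeloglog1}, which gives $\lambda_j(t)\leq e^{-e^{\pi/(10 b_j)}}$, together with the lower bound \eqref{gamma} yielding $\Gamma_{b_j}\geq e^{-(1+C\eta)\pi/|b_j|}$. Comparing the two, $\lambda_j$ is doubly exponentially small in $1/b_j$ while $\Gamma_{b_j}^{1-C\eta}$ is only singly exponentially small, so for any fixed power one has $\lambda_j^{2-}\ll \Gamma_{b_j}^{1-C\eta}$ (in the sense \eqref{feelingofscale}) as soon as $\alpha$ is small enough. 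This lets me absorb the $C|E(\qbj+\epj)|$ contribution appearing in \eqref{bootmodu1}, \eqref{bootmodu2}, \eqref{energyestimate1} directly into the existing $\Gamma_{b_j}^{1-C\eta}$ term, producing \eqref{bootmodu1re}, \eqref{bootmodu2re}, \eqref{energyestimate1re}.

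For the momentum estimate \eqref{momentumestimate1re}, I would absorb $|P(\qbj+\epj)|\lesssim \lambda_j^{1-}$ into the $\delta(\alpha)(\int |\nabla\eps^j|^2+\int |\eps^j|^2 e^{-|y|})^{1/2}$ term using the bootstrap hypothesis \eqref{assumesmallnessoflocalerror}, which controls the latter square root from below by nothing but keeps it comparable to $\Gamma_{b_j}^{3/8}$ at worst — and again $\lambda_j^{1-}$ is much smaller than any polynomial in $\Gamma_{b_j}$ under \eqref{assumeloglog1}. Thus $CP(\qbj+\epj)$ can be swallowed into the first term on the right-hand side, yielding \eqref{momentumestimate1re}.

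Since Lemma \ref{lemmoduationestimate}, Lemma \ref{estiamtebyconservationlaw} and Lemma \ref{control of local quantity} are each already established under the bootstrap hypotheses of Lemma \ref{bootstrap}, the proof of Lemma \ref{mees} reduces to the two lines of scale arithmetic sketched above; there is no genuine obstacle, only the bookkeeping of replacing $E$ and $P$ by their bounds and verifying that the resulting $\lambda_j$-powers are dominated by the $\Gamma_{b_j}$ or $\eps^j$-square-root terms already present. The utility of this restatement is that it eliminates all explicit dependence on the conserved quantities, which is exactly what is needed to run the subsequent local-virial and Lyapunov-functional analysis (cf.\ \eqref{simple1}, \eqref{simple2}) in the multi-soliton setting, where no global negative-energy condition is available.
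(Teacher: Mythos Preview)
Your proposal is correct and matches the paper's approach exactly: the paper's own proof is the single line ``We just need to combine Lemma \ref{lemmoduationestimate}, Lemma \ref{estiamtebyconservationlaw}, Lemma \ref{control of local quantity},'' and your expanded explanation of how the $\lambda_j^{2-}$ and $\lambda_j^{1-}$ contributions are absorbed via \eqref{feelingofscale} and the log-log hypothesis \eqref{assumeloglog1} is precisely the intended mechanism. The only minor wrinkle is your phrasing for \eqref{momentumestimate1re}, where the $(\ldots)^{1/2}$ term has no a priori lower bound to absorb into; in practice one simply carries $\lambda_j^{1-}$ as an additional harmless term (dominated by any $\Gamma_{b_j}$-power) into the subsequent virial and Lyapunov arguments, which is consistent with how the paper treats it.
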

\begin{proof}
We just need to  combine Lemma \ref{lemmoduationestimate}, Lemma \ref{estiamtebyconservationlaw}, Lemma \ref{control of local quantity}.
\end{proof}

\subsubsection{Local virial estimate and Lypounov functional control}
Below we combine the orthogonality conditions \eqref{oth1}, \eqref{oth2}, \eqref{oth3}, \eqref{oth4}, modulation estimates \eqref{bootmodu1re}, \eqref{bootmodu2re} and estimates induced by the (local) conservation laws \eqref{energyestimate1re}, \eqref{momentumestimate1re},  following the work of  Merle and Rapha\"el to obtain the analogue of Lemma 7 in  \cite{merle2006sharp}.
\begin{lem}[local virial]
In the setting of Lemma \ref{bootstrap}, the following estimates hold  for $t\in [0,T]$, and $j=1,\dots, m$
\begin{equation}\label{tslocalvirial}
\frac{db_{j}}{ds_{j}}\geq \delta_{1}(\enj+\lmj)-\Gamma_{b_{j}}^{1-C\eta},
\end{equation}
where  $\delta_{1}$ is a universal constant.
\end{lem}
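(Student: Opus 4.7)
The plan is to adapt the derivation of the Merle--Rapha\"el local virial estimate (Proposition~2 in \cite{merle2006sharp}, recalled as inequality \eqref{lv1} in the introduction) to each soliton $j=1,\ldots,m$ in the present multi-soliton setting. The key structural observation is that, because of the cutoff $\chi_{1,loc}(x-x_j)$ in the definition of $\eps^j$ and because $\qbj$ is supported in $|y|\lesssim 1/|b_j|$, the localized error $\eps^j$ satisfies, in the rescaled variable $y=(x-x_j)/\lambda_j$ and time $s_j$, essentially the same evolution system as \eqref{sys1}--\eqref{sys2}, up to commutator contributions of the cutoff which live on $|y|\gtrsim 1/\lambda_j$ and are therefore exponentially small (in $1/|b_j|$) compared to every admissible error. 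Consequently the entire Merle--Rapha\"el machinery transplants soliton by soliton, with the single meaningful change being that the global energy/momentum $\lambda^2 E_0$, $\lambda P_0$ get replaced by the local quantities $E(\qbj+\epj)$, $P(\qbj+\epj)$ that already appear in our modulation estimates \eqref{bootmodu1re}--\eqref{momentumestimate1re}.

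First I would follow Merle and Rapha\"el's proof of \eqref{lv1}: using the orthogonality conditions \eqref{oth1}--\eqref{oth4}, one computes $\bs$ (via \eqref{bootmodu1re}) and re-expresses it, modulo modulation-size errors, in terms of the quadratic form of the linearized operator $L=(L_+,L_-)$ around $\qbj$ applied to $(\epjo,\epjt)$. The central spectral input, proved in \cite{merle2006sharp}, is the coercivity
\begin{equation*}
(L_+\epjo,\epjo)+(L_-\epjt,\epjt)\;\geq\;\delta_1\bigl(\enj+\lmj\bigr),
\end{equation*}
valid for $b$ small under the four orthogonality conditions. Combining this coercivity with the modulation estimates \eqref{bootmodu1re}, \eqref{bootmodu2re} and the conservation-law estimates \eqref{energyestimate1re}, \eqref{momentumestimate1re} yields the intermediate inequality
\begin{equation*}
\bs\;\geq\;\delta_1\bigl(\enj+\lmj\bigr)-C|E(\qbj+\epj)|-C|P(\qbj+\epj)|-\Gamma_{b_j}^{1-C\eta},
\end{equation*}
which is the exact analogue of \eqref{lv1} with the global energy/momentum replaced by local ones.

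The second step, which is where the present multi-soliton setting actually benefits from the bootstrap regime, is to dispose of the two local conservation-law terms. By Lemma~\ref{control of local quantity} we have $|E(\qbj+\epj)|\lesssim \lambda_j^{2-}$ and $|P(\qbj+\epj)|\lesssim \lambda_j^{1-}$. The bootstrap hypothesis \eqref{assumeloglog1} forces $\lambda_j\leq e^{-e^{\pi/(10 b_j)}}$, hence by \eqref{feelingofscale} and \eqref{gamma} we obtain $\lambda_j^{1-}\ll \Gamma_{b_j}^{N}$ for every fixed $N$; in particular the two local conservation-law contributions are absorbed into $\Gamma_{b_j}^{1-C\eta}$. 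This is precisely the mechanism behind the simplification \eqref{simple1}, now applied locally to each bubble rather than to the single global solution.

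The main obstacle I expect is the bookkeeping of the cutoff-commutator terms that distinguish $\epj$ from the true rescaled error $\epsilon^j$ and that distinguish the localized system for $\epj$ from \eqref{sys1}--\eqref{sys2}. These extra terms carry factors of $\nabla\chi_{1,loc}(x-x_j)$ or $(\chi_{1,loc}^{q}-\chi_{1,loc})(x-x_j)$ and are therefore supported on the annulus $|x-x_j|\in[2/3,1]$, i.e.\ on $|y|\gtrsim 1/\lambda_j$. On that set $\qbj\equiv 0$, and by the bootstrap regularity bound \eqref{assumeoutsidesmoothness} together with the separation hypothesis \eqref{assumesmalltranslation} the piece of $\Xi$ living there, and hence its contribution to any inner product weighted by $\qbj$, $\Lambda\qbj$, $\tilde\zeta_{b_j}$, etc., is smaller than $\lambda_j^{N}$ for any $N$ we like, hence again absorbable into $\Gamma_{b_j}^{1-C\eta}$. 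Once these commutator estimates are in place, the construction of the Lyapunov functional and the soliton-by-soliton verification of \eqref{tslocalvirial} run in parallel to \cite{merle2006sharp}, completing the proof.
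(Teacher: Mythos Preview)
Your proposal is correct and follows essentially the same route as the paper: reproduce the Merle--Rapha\"el local virial computation for each bubble to obtain an intermediate inequality with $-C|E(\qbj+\epj)|$, then absorb this via Lemma~\ref{control of local quantity} and \eqref{assumeloglog1}. Two minor remarks: the paper's intermediate inequality \eqref{templocalvirial} involves only the local energy (not the momentum, which does not appear in the original \eqref{lv1}), and the cutoff-commutator terms in fact vanish identically when paired against $\qbj$, $\Lambda\qbj$, etc.\ by disjoint support (as you already observed with ``$\qbj\equiv 0$ on that set''), so no appeal to the regularity bound \eqref{assumeoutsidesmoothness} is needed at this step.
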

\begin{proof}
This  lemma is highly nontrivial, and it  is actually one of the key elements in the work of  Merle and Rapha\"el. However, by applying  exactly the same argument, which they used to derive Lemma 7 in \cite{merle2006sharp}, one can derive
\begin{equation}\label{templocalvirial}
\frac{db_{j}}{ds_{j}}\geq \delta_{1}(\enj+\lmj)-\Gamma_{b_{j}}^{1-C\eta}-C|E(\qbj+\epj)|, j=1,\dots, m.
\end{equation}
Now using estimate \eqref{energycontrol} in Lemma \ref{control of local quantity}, one obtains $|E(\qbj+\epj)|\lesssim \lambda_{j}^{2-}$, which  is negligible  compared to $\Gamma_{b_{j}}^{1-C\eta}$, since we have \eqref{assumeloglog1}. Thus ,\eqref{tslocalvirial} clearly follows from \eqref{templocalvirial}. 
\end{proof}
Next, we recover the Lyaponouv functional, which is essential to establish the sharp log-log regime. This is the analogue of Proposition 8 in 
\cite{merle2006sharp}. 
\begin{lem}\label{lyplem}
\setting, and for j=1,\dots, m
\begin{equation}\label{bootlyp}
\frac{d\JJJ_{j}}{ds_{j}}\leq -Cb_{j}\{\Gamma_{b_{j}}+\tailj +\int_{A_{j}}^{2A_{j}}|\epj|^{2}\}, 
\end{equation}
with
\begin{equation}
\begin{aligned}
\JJJ_{j}:=\left(\int |\qbj|^{2}-\int Q^{2}\right)+2(\epjo,\Sigma_{b_{j}})+2(\epjt,\Theta_{b_{j}})+\int(1-\phi_{A_{j}})|\eps^{j}|^{2}\\
-\frac{\delta_{1}}{800}\left(b\tilde{f}_{1}(b_{j})-\int_{0}^{b_{j}}\tilde{f}_{1}(v)dv+b\{(\epjt,\Lambda\Re\tzj)-(\epjo,\Lambda\Im\tzj)\}\right),
\end{aligned}
\end{equation}
where
\begin{equation}
\tilde{f}_{1}(b):=\frac{b}{4}|y\qbb|_{2}^{2}+\frac{1}{2}\Im (\int y\nabla \tilde{\zeta}_{b}\bar{\tilde{\zeta}}_{b}),
\end{equation}
\begin{equation}
\tilde{\eps}^{j}=\eps^{j}-\tilde{\zeta}_{b_{j}},
\end{equation}
and $\phi_{A_{j}}$ is a non-negative  smooth cut-off function, $ j=1,\dots, m$
\begin{equation}
\begin{cases}
\phi_{A_{j}}(x)=0,  |x|\leq \frac{A_{j}}{2},\\
\frac{1}{4_{A_{j}}}\leq |\nabla \phi_{A_{j}}|\leq |\frac{1}{2A_{j}}|, A_{j}\leq |x|\leq 2A_{j},\\
\phi_{A_{j}}(x)=1, |x|\geq  3A_{j},
\end{cases}
\end{equation}
where $A_{j}=A(b_{j})=\Gamma_{b_{j}}^{-a}.$
\end{lem}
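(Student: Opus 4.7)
The plan is to follow closely the proof of Proposition 8 in \cite{merle2006sharp} (which establishes the corresponding monotonicity for the single-soliton case), adapting it to the multi-soliton setting by replacing global conservation-law inputs with the local control provided by Lemma \ref{control of local quantity}. Since the scalar products and pointwise estimates defining $\JJJ_j$ involve only $\epj$, which by construction is supported in a $\lambda_j$-scale neighborhood of $x_j$, while the other bubbles are located at distance $\geq 5$ by the bootstrap hypothesis \eqref{assumesmalltranslation} and \eqref{initalseparate}, the entire computation for $\JJJ_j$ decouples: the argument takes place as if only the $j$-th bubble were present, and cross-terms from other solitons drop out of every inner product up to exponentially small corrections in $\lambda_j$.

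First, I would compute $\frac{d\JJJ_j}{ds_j}$ term by term, using the rescaled equations \eqref{sys1}--\eqref{sys2} written for $(\eps^j,b_j,\lambda_j,x_j,\tilde{\gamma}_j)$. Differentiating the block $\bigl(\int|\qbj|^2-\int Q^2\bigr)+2(\epjo,\Sigma_{b_j})+2(\epjt,\Theta_{b_j})$ mimics the standard conservation-of-mass computation: the algebra is identical to \cite{merle2006sharp}, and the outcome is controlled by the modulation parameters via Lemma \ref{mees}, with a leading contribution of the form $\tailj$. Differentiating $\int(1-\phi_{A_j})|\epj|^2$ produces the crucial boundary term $\int_{A_j\leq|y|\leq 2A_j}|\epj|^2$ with the favorable sign, together with dispersive errors absorbed via the derivative bound on $\phi_{A_j}$. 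Finally, differentiating the tail-correction block involving $\tilde{f}_1$, $\tilde{\zeta}_{b_j}$ is the algebraic core: using the defining ODE $\Delta\zeta_b-\zeta_b+ib\Lambda\zeta_b=\Psi_b$ and the sharp estimate \eqref{gamma}, this block contributes precisely $-Cb_j\Gamma_{b_j}$, which is the sharp improvement over the local virial inequality \eqref{tslocalvirial}. This is exactly the $f_1$-monotonicity computation of Section 5 of \cite{merle2006sharp}.

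The main obstacle is the heavy algebraic bookkeeping inherited from \cite{merle2006sharp}; however the entire manipulation transfers verbatim once the global conservation inputs are replaced by the local ones. Concretely, wherever the Merle--Rapha\"el argument produces a term of the form $\lambda^2|E_0|$ or $\lambda|P_0|$, the multi-soliton version produces $|E(\qbj+\epj)|$ and $|P(\qbj+\epj)|$, and Lemma \ref{control of local quantity} gives the bounds $|E(\qbj+\epj)|\lesssim \lambda_j^{2-}$ and $|P(\qbj+\epj)|\lesssim \lambda_j^{1-}$. In the log-log regime \eqref{assumeloglog1}, both quantities are far smaller than $\Gamma_{b_j}$, so they can be absorbed into the right-hand side of \eqref{bootlyp} without altering the sign. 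Combining the four contributions above, then invoking the local virial estimate \eqref{tslocalvirial} to upgrade the $\int|\nabla\epj|^2+\int|\epj|^2e^{-|y|}$ contribution to the $\tepj$-version $\tailj$ appearing in the statement (via the pointwise bounds on $\tilde{\zeta}_{b_j}$ from \cite{merle2006sharp}), one obtains \eqref{bootlyp} with a universal constant, completing the proof.
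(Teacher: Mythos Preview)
Your overall plan is correct and matches the paper's approach: follow Proposition 8 in \cite{merle2006sharp}, replace every occurrence of $\lambda^2|E_0|$ and $\lambda|P_0|$ by the local quantities $|E(\qbj+\epj)|$, $|P(\qbj+\epj)|$, and then absorb these via Lemma \ref{control of local quantity} and the bootstrap hypothesis \eqref{assumeloglog1}. Your handling of the tail block (the $\tilde{f}_1$ and $\tilde{\zeta}_{b_j}$ terms) and of $\int(1-\phi_{A_j})|\epj|^2$ is essentially what the paper does, packaged there as the analogues of Lemmas 6 and 7 of \cite{merle2006sharp}.

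There is, however, one genuine gap. You write that differentiating the block
\[
\Bigl(\int|\qbj|^2-\int Q^2\Bigr)+2(\epjo,\Sigma_{b_j})+2(\epjt,\Theta_{b_j})
\]
``mimics the standard conservation-of-mass computation: the algebra is identical to \cite{merle2006sharp}, and the outcome is controlled by the modulation parameters via Lemma \ref{mees}, with a leading contribution of the form $\tailj$.'' This is not what happens. In \cite{merle2006sharp} that block, together with $\int|\epsilon|^2$, is \emph{exactly constant} by global $L^2$ conservation, so its $s$-derivative is zero; there is no leading contribution at all. In the multi-soliton setting the corresponding quantity is the \emph{local} mass $\|\qbj+\epj\|_2^2=\|\chi_{1,loc}(\cdot-x_j)u\|_2^2$, which is no longer conserved. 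The paper singles this out as one of the two genuinely new points and proves a separate ``slow variation of local mass'' lemma,
\[
\Bigl|\frac{d}{ds_j}\|\qbj+\epj\|_2^2\Bigr|\leq \Gamma_{b_j}^{10},
\]
by computing the boundary flux through $\{\frac{2}{3}\leq|x-x_j|\leq\frac{3}{4}\}$ and using \eqref{assumeoutsidesmoothness} together with the modulation bound on $|dx_j/ds_j|$. This estimate is what allows you to treat the mass block as effectively constant; it does not come from Lemma \ref{mees} and is not captured by your description. Without it, the derivative of $\JJJ_j$ has an uncontrolled term and \eqref{bootlyp} does not close.
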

Most parts of the  proof  follows directly the proof of Proposition 8 in \cite{merle2006sharp}, however, some extra technical elements  need to be treated hence for completeness  we will explain the proof in Subsection \ref{lyapounouv}.

We have the following control for the scale of the Lyapounouv functional:
\begin{lem}\label{b2}
\setting, and for $j=1,\dots, m$,
\begin{equation}\label{bb2}
\frac{\JJJ_{j}}{b_{j}^{2}}=C^{*}(1+O(\delta(\alpha)),
\end{equation}
where $C^{*}$ is some fixed constant.
\end{lem}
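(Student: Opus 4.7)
\textbf{Proof proposal for Lemma \ref{b2}.} The plan is to analyze $\JJJ_j$ term by term, isolate the pieces that contribute at order $b_j^2$, and show that the remaining pieces are $o(b_j^2)$ or $O(\delta(\alpha) b_j^2)$ using the bootstrap hypotheses. This is the analogue of the scaling estimate that appears immediately after the construction of the Lyapounov functional in \cite{merle2006sharp}, so the main work is checking that our local setup does not alter the outcome.

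First I would treat the ``main'' pieces. From property 3 of the profile $\qbb$, one has $\|Q_{b_j}\|_2^2 - \|Q\|_2^2 = c_0 b_j^2 (1 + o(1))$ as $b_j \to 0$ for an explicit constant $c_0 > 0$; the tail correction introduced by the cut-off $\phi_{b_j}$ contributes only $\Gamma_{b_j}^{1-C\eta}$, which by the bootstrap regime \eqref{assumeloglog1}--\eqref{assumeloglog2} is negligible compared to any power of $b_j$. Next, using $\tilde{f}_1(b) = \tfrac{b}{4}\|y\qbb\|_2^2 + \tfrac{1}{2}\Im\int y\nabla\tilde{\zeta}_b\,\bar{\tilde{\zeta}}_b$ together with \eqref{convergence} and the construction of $\tilde{\zeta}_b$ in \cite{merle2006sharp} (where one has $|\Im\int y\nabla\tzj\bar{\tzj}| \lesssim b_j$ up to lower order), one writes $\tilde{f}_1(b) = \mu_0 \, b + O(b^{1+})$ with an explicit $\mu_0$; hence
\begin{equation*}
b_j \tilde{f}_1(b_j) - \int_0^{b_j} \tilde{f}_1(v)\,dv = \tfrac{\mu_0}{2} b_j^2 (1 + o(1)).
\end{equation*}
Adding the two, one obtains a nonzero universal constant $C^*$ (depending on $c_0$, $\mu_0$, $\delta_1$) as the coefficient of $b_j^2$.

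Next I would dispose of the error terms by bounding each by $\delta(\alpha) b_j^2$ (or by $\Gamma_{b_j}^{1-C\eta}$, which is much smaller). The term $2(\epjo,\Sigma_{b_j}) + 2(\epjt,\Theta_{b_j})$ is directly handled by \eqref{energyestimate1re} combined with the bootstrap smallness $\int|\nabla\epj|^2 + \int|\epj|^2 e^{-|y|} \leq \Gamma_{b_j}^{3/4}$ from \eqref{assumesmallnessoflocalerror}, giving a bound by $\Gamma_{b_j}^{3/4} + \Gamma_{b_j}^{1-C\eta} \ll b_j^2$. The cross terms $b_j\{(\epjt,\Lambda\Re\tzj) - (\epjo,\Lambda\Im\tzj)\}$ are $O(b_j \|\epj\|_{H^1}\|\tzj\|_{H^1})$, and since $\|\tzj\|_{H^1} \lesssim \Gamma_{b_j}^{1/2-C\eta}$ by the construction of the cut-off radiation, these are $\ll b_j^2$. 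For $\int(1-\phi_{A_j})|\epj|^2$, which records the mass of $\epj$ outside the region $|y| \leq A_j/2$, the bootstrap hypothesis $\|\epj\|_{L^2} \leq \delta(\alpha)$ and the smallness of $\|\epj\|_{H^1}$ together with the large scale $A_j = \Gamma_{b_j}^{-a}$ bound this by $\delta(\alpha)^2 = \delta(\alpha) b_j^2 \cdot \delta(\alpha)/b_j^2$; to convert this into an actual $\delta(\alpha) b_j^2$ bound one uses the improved control coming from the local mass conservation (i.e.\ the fact that $\|\qbj\|_2^2 + \|\epj\|_2^2 - 2\Re(\epj,\qbj)$ equals a local $L^2$ mass which stays close to $\|Q\|_2^2 + O(b_j^2)$ by \eqref{assumelocalcontrolofconserveredquantity} and the analogous local mass control from the initial data description).

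The step I expect to be most delicate is the last one, namely obtaining a sharp bound on $\int(1-\phi_{A_j})|\epj|^2$ of the order $\delta(\alpha) b_j^2$ rather than just $\delta(\alpha)^2$. In the single-soliton setting \cite{merle2006sharp} one exploits global mass conservation to close this, whereas here the correct substitute is the local mass $\int \chi_{1,loc}(x - x_j)|u|^2$, whose variation must be tracked using equation \eqref{tsnls2} and the outside-smoothness bound \eqref{assumeoutsidesmoothness}. Once this tail bound is in place, combining all error estimates with the two leading $O(b_j^2)$ terms yields $\JJJ_j = C^* b_j^2 (1 + O(\delta(\alpha)))$, which is the claim.
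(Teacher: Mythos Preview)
Your proposal follows the same route as the paper's (brief) proof, which simply points to the computation of (5.15) and the formula between (5.24)--(5.25) in \cite{merle2006sharp}: isolate the leading $b_j^2$ contribution from $\|\qbj\|_2^2-\|Q\|_2^2$ and from the $\tilde f_1$ part, then absorb the remaining pieces using the bootstrap bounds and (local) mass control.

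One correction worth flagging: you describe $\int(1-\phi_{A_j})|\epj|^2$ as recording the mass of $\epj$ ``outside the region $|y|\le A_j/2$'', but the cut-off goes the other way. By definition $\phi_{A_j}$ vanishes on $|y|\le A_j/2$ and equals $1$ on $|y|\ge 3A_j$, so $(1-\phi_{A_j})$ is a \emph{near-field} weight and this integral is the mass of $\epj$ inside $|y|\lesssim A_j$. In particular it is not small on its own (it can be as large as $\|\epj\|_2^2$), and no appeal to the largeness of $A_j$ helps. The correct mechanism --- which your final paragraph does anticipate --- is that this term is not estimated in isolation but grouped with the first three terms of $\JJJ_j$ via the identity
\[
\bigl(\|\qbj\|_2^2-\|Q\|_2^2\bigr)+2(\epjo,\Sigma_{b_j})+2(\epjt,\Theta_{b_j})+\int(1-\phi_{A_j})|\epj|^2
=\bigl(\|\qbj+\epj\|_2^2-\|Q\|_2^2\bigr)-\int\phi_{A_j}|\epj|^2,
\]
and $\|\qbj+\epj\|_2^2=\|\chi_{1,loc}(x-x_j)u\|_2^2$ is the local mass whose slow variation is exactly Lemma~\ref{localmassslowvarying}. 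With the orientation of the cut-off corrected and this regrouping made explicit, your outline matches the paper's argument.
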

\begin{proof}
Follow the proof of (5.15) in \cite {merle2006sharp}, one can derive $\JJJ_{j}\sim b_{j}^{2}$. Further refined analysis, will give \eqref{bb2}, see  the formula between (5.24) and (5.25) in \cite{merle2006sharp}.
\end{proof}
\subsubsection{Bootstrap estimates except \eqref{estimateoutsidesmoothness}}
So far, we already have all the ingredients to prove most of the the bootstrap estimates. In fact for  \eqref{estimateperturbation}, \eqref{estimateloglog1}, \eqref{estimateloglog2},\eqref{estimateofsmalllocalerror}, \eqref{estimatealmostmono}, \eqref{estimatesmalltranslation} one can follow the  arguments  of Planchon and Rapha\"el  in \cite{planchon2007existence}, which we will  review  for completeness  in Subsection \ref{boot}. Here we  prove instead \eqref{estimatelocalcontrolofconservedquantity}, \eqref{estimatelocalcontrolofmomentum}. Actually we  only show the details for  \eqref{estimatelocalcontrolofconservedquantity}, since  \eqref{estimatelocalcontrolofmomentum} is similar.\\
\begin{proof}[Proof of \eqref{estimatelocalcontrolofconservedquantity}]
A direct computation (using $u$ that solves \eqref{nls}) shows that for $j=1,\dots, m$
\begin{equation}
\begin{aligned}
&\lvert E_{loc}(x_{j}(t),u(t))-E(x_{j,0})\rvert\\
\leq &\int_{0}^{t}\lvert\frac{d}{d\tau}E_{loc}(x_{j}(\tau), u(t))\rvert d\tau\\
\leq & \int_{0}^{T}\lvert\frac{d}{dt}E_{loc}(x_{j}(t),u(t))\rvert dt\\
\leq &E_{1}+E_{2},
\end{aligned}
\end{equation}
where 
\begin{eqnarray}
&&E_{1}=\int_{0}^{T}\int_{\RRR^{2}}\rvert\left(\frac{1}{2}|\nabla u|^{2}-\frac{1}{4}|u|^{4}\right)\nabla \chi_{0,loc}(x-x_{j}(t))\frac{dx_{j}}{dt}\lvert,\\
&&E_{2}=\int_{0}^{T}\int \frac{1}{2}\lvert\nabla \chi_{0,loc}(x-x_{j}(t)) \left(2i\Im\Delta u\nabla \bar{u}+2i\Im |u|^{2}u\nabla \bar{u}\right).\rvert.
\end{eqnarray}
Recall  that $\chi_{0, loc}(x-x_{j})$ vanishes for $\frac{3}{4}\leq |x-x_{j}|\leq 1$. Using the  bootstrap hypothesis \eqref{assumesmalltranslation}, and  Sobolev embedding, we obtain
\begin{equation}
\begin{aligned}
&\int\rvert\left(\frac{1}{2}|\nabla u|^{2}-\frac{1}{4}|u|^{4}\right)\nabla \chi_{0}(x-x_{j}(t))\frac{dx_{j}}{dt}\lvert\lesssim \|u\|^{4}_{H^{N}(\min_{j}\{|x-x_{j,0}|\}\geq \frac{1}{2})},\\
&\int \frac{1}{2}\lvert\nabla \chi_{0}(x-x_{j}) \left(\Delta u\nabla \bar{u}-\nabla u\Delta\bar u+|u|^{2}u\nabla u-|u|^{2}\bar u\nabla u\right)\rvert \lesssim \|u\|^{4}_{H^{N}(\min_{j}\{|x-x_{j,0}\}\geq \frac{1}{2})}.
\end{aligned}
\end{equation}
Thus,
\begin{equation}
E_{1}+E_{2}\lesssim \int_{0}^{T} (1+\frac{d}{dt}x_{j})\|u\|^{4}_{H^{N}(\min_{j}\{|x-x_{j,0}|\}\geq \frac{1}{2})}.
\end{equation}
By the bootstrap hypothesis \eqref{assumeoutsidesmoothness}, we have that 
$$\|u\|^{4}_{H^{N}(\min_{j}{|x-x_{j,0}|}\geq \frac{1}{2})}\leq \frac{1}{\max_{j}\{\lambda_{j,0}\}}.$$ Note also that by the bootstrap hypothesis \eqref{assumealmostmono}, also $ \frac{1}{\max_{j}\{\lambda_{j,0}\}}\lesssim \frac{1}{\lambda_{j}(t)}^{\frac{1}{2}}$ for all $j$.\\
By the modulation estimate \eqref{bootmodu2re}, $|\frac{1}{\lambda_{j}}\frac{dx_{j}}{ds_{j}}|\lesssim 1$. Thus, $|\frac{dx_{j}}{dt}|=|\frac{1}{\lambda_{j}^{2}}\frac{dx_{j}}{ds_{j}}|\lesssim \frac{1}{\lambda_{j}}$,
hence
\begin{equation}\label{localenergytech1}
E_{1}+E_{2}\lesssim \int_{0}^{T}\frac{1}{\lambda_{j}(t)^{1.5}}dt\lesssim \lambda_{j,0}^{0.1}\int_{0}^{T}\lambda_{j}(t)^{1.6}.
\end{equation}
Here  we use $\lambda_{j}(t)\lesssim \lambda_{j,0}$, i.e. \eqref{assumealmostmono}.

Finally we have for $j=1,\dots, m$ that 
\begin{equation}\label{localenergytech2}
\int_{0}^{T}\frac{1}{\lambda_{j}^{\mu}}\lesssim C_{\mu}, \forall \mu<2.
\end{equation}
This is the analogue of \eqref{scalefromblowupspeed}, and it  follows exactly the same proof that only relies on the bootstrap hypothesis \eqref{assumeloglog1}, \eqref{assumeloglog2}.
Clearly \eqref{localenergytech1} and  \eqref{localenergytech2} end the proof.
\end{proof}

\subsubsection{Propagation of regularity and end of bootstrap estimate}
To end the bootstrap estimate, we still have to show \eqref{estimateoutsidesmoothness}, and it  will be done in Subsection \ref{propagation}.

\subsection{Recovering  the Lyapounouv functional under bootstrap hypothesis}\label{lyapounouv}
This subsection is devoted to the proof of Lemma \ref{lyplem}. 
We emphasize again that  this proof follows the computation in  \cite{merle2006sharp}  except for two points:
\begin{itemize}
\item We do not use the global energy, and all those $\lambda^{2}E$ terms in \cite{merle2006sharp} are  replaced by $E(\epj+\qbj)$.
\item  In the definition of the original Lyapounov functional $\JJJ$ in \eqref{lyp}, there is a term $\|\qbb\|_{2}^{2}-\|Q\|_{2}^{2}+(\epsilon_{1},\Sigma)+(\epsilon_{2},\Theta)+\|\epsilon\|_{2}^{2}$, that is actually a constant  in \cite{merle2006sharp}, thanks to the conservation of $L^{2}$ mass. In our case, since we  analyze the dynamics locally, the natural substitution is the  local mass, which is no longer a constant. So we need to show that the  local mass is slowly varying.
\end{itemize}

We need the following two lemmas.
\begin{lem}[analogue of Lemma 6 in \cite{merle2006sharp}]
\setting \, for some universal constant $\delta_{1}$ and $j=1,\dots, m$
\begin{equation}\label{bootvirial}
\frac{d}{ds_{j}}f_{1}^{j}\geq \delta_{1}\left(\tailj\right)+c\Gamma_{b_{j}}-\frac{1}{\delta_{1}}\int_{A_{j}}^{2A_{j}}|\epj|^{2},
\end{equation}
with
\begin{equation}
f^{j}_{1}(s)=\frac{b_{j}}{4}\|y\qbj\|_{2}^{2}+\frac{1}{2}\Im \left(y\nabla \tzj\bar{\tilde{\zeta}}_{b_{j}}\right)+(\epj_{2},\Lambda\Re\tzj)-(\epj_{1},\Lambda\Im\tzj).
\end{equation}
\end{lem}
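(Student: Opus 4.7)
The plan is to follow closely the argument of Lemma~6 in \cite{merle2006sharp}, replacing the two ingredients that are no longer available in our multi-solitons setting: the conservation of $L^2$ mass and the use of the (global) conserved energy $E$. First I would differentiate each piece of $f_1^j$ in the rescaled time $s_j$. The contribution $\frac{d}{ds_j}\bigl(\tfrac{b_j}{4}\|y\tilde Q_{b_j}\|_2^2 + \tfrac12 \Im\int y\nabla\tilde\zeta_{b_j}\bar{\tilde\zeta}_{b_j}\bigr)$ reduces to $\frac{db_j}{ds_j}\cdot \tilde f_1'(b_j)$, a quantity which is explicit in $b_j$ and identical to the one in \cite{merle2006sharp} since $\tilde Q_{b_j}$ and $\tilde\zeta_{b_j}$ are built from the same profiles.

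Next I would differentiate $(\eps_2^j,\Lambda\Re\tilde\zeta_{b_j})-(\eps_1^j,\Lambda\Im\tilde\zeta_{b_j})$ using the $\eps^j$-equations \eqref{sys1}, \eqref{sys2} and the chain rule. After integration by parts and using the defining equation $\Delta\tilde\zeta_{b_j}-\tilde\zeta_{b_j}+ib_j\Lambda\tilde\zeta_{b_j}=\Psi_{b_j}+\mathcal R_j$, where $\mathcal R_j$ is the cut-off error supported in $\{A_j\le |y|\le 2A_j\}$, the key interaction of $\Psi_{b_j}$ with itself produces a radiation term which is exactly the source of the improvement from $-\Gamma_{b_j}^{1-C\eta}$ to $+c\Gamma_{b_j}$. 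The cut-off piece $\mathcal R_j$ yields precisely the defect $-\frac{1}{\delta_1}\int_{A_j}^{2A_j}|\eps^j|^2$ by Cauchy--Schwarz.

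I would then collect the remaining error terms. The ones involving the modulation parameters $\frac{db_j}{ds_j}$, $\frac{\lambda_{j,s_j}}{\lambda_j}+b_j$, $\frac{\tilde\gamma_{j,s_j}}{\,}$ and $\frac{x_{j,s_j}}{\lambda_j}$ are bounded by Lemma~\ref{mees}, and the resulting scalar products with $\Lambda\tilde\zeta_{b_j}$ (and its derivatives) are controlled thanks to the exponential localization of $\tilde Q_{b_j}$ appearing in the exchange formulas of \cite{merle2006sharp}. The part that is \emph{not} a full time-derivative and not of lower order is the quadratic form in $\tilde\eps^j=\eps^j-\tilde\zeta_{b_j}$ coming from the linearization around $\tilde Q_{b_j}$; the orthogonality conditions \eqref{oth1}--\eqref{oth4} place $\tilde\eps^j$ in the coercivity cone of this form, producing the leading positive contribution $\delta_1(\int|\nabla\tilde\eps^j|^2+\int|\tilde\eps^j|^2 e^{-|y|})$.

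The only new step compared to \cite{merle2006sharp} is that several remainder terms in their computation carry a factor $\lambda^2 E_0$; in our setting these must be replaced by $E(\tilde Q_{b_j}+\eps^j)$, since we do not have a global conserved energy at our disposal. This is where Lemma~\ref{control of local quantity} enters: it gives $|E(\tilde Q_{b_j}+\eps^j)|\lesssim \lambda_j^{2-}$, and by the log-log bootstrap hypothesis \eqref{assumeloglog1} this is exponentially smaller than $\Gamma_{b_j}$, hence fully absorbable into the $c\Gamma_{b_j}$ term. The main obstacle is not conceptual but bookkeeping: one must carefully carry through the long algebraic identities of \cite{merle2006sharp}, verifying at each occurrence of $\lambda^2 E_0$ that Lemma~\ref{control of local quantity} provides a substitute of at least the same exponential smallness, and that the orthogonality conditions \eqref{oth1}--\eqref{oth4} indeed eliminate the same non-positive directions as in the one-soliton case.
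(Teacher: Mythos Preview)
Your outline is largely on target and matches the paper's approach: differentiate $f_1^j$ along the $\eps^j$-equations, invoke the algebraic computation of Appendix~B in \cite{merle2006sharp}, and replace every occurrence of $\lambda^2 E_0$ by $E(\tilde Q_{b_j}+\eps^j)$, which Lemma~\ref{control of local quantity} together with \eqref{assumeloglog1} shows is $\lesssim \lambda_j^{2-}\ll \Gamma_{b_j}^{10}$ and hence absorbable. That part is exactly what the paper does.

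There is, however, one genuine gap in your plan. You write that you would ``differentiate $(\eps_2^j,\Lambda\Re\tilde\zeta_{b_j})-(\eps_1^j,\Lambda\Im\tilde\zeta_{b_j})$ using the $\eps^j$-equations \eqref{sys1}, \eqref{sys2}''. But $\eps^j$ does \emph{not} satisfy \eqref{sys1}, \eqref{sys2}: those equations hold for $\epsilon$ when $\frac{1}{\lambda}(\tilde Q_b+\epsilon)(\frac{x-x}{\lambda})e^{-i\gamma}$ is an exact solution of \eqref{nls}, whereas here $\frac{1}{\lambda_j}(\tilde Q_{b_j}+\eps^j)(\frac{x-x_j}{\lambda_j})e^{-i\gamma_j}=\chi_{1,loc}(x-x_j)u$ only \emph{almost} solves \eqref{nls}. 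The $\eps^j$-equations therefore carry extra source terms coming from the commutator with $\chi_{1,loc}$. The paper's key observation (which you do not mention) is that these extra terms are supported in $|y|\gtrsim \frac{1}{\lambda_j}$, while the test functions $\Lambda(\Sigma_{b_j}+\Re\tilde\zeta_{b_j})$ and $\Lambda(\Theta_{b_j}+\Im\tilde\zeta_{b_j})$ are supported in $|y|\lesssim \Gamma_{b_j}^{-a}\ll \frac{1}{\lambda_j}$ by \eqref{assumeloglog1}; hence the pairings with the extra terms vanish identically. Without this support argument, the cut-off errors are not controlled and the proof does not close.

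A minor point: the conservation of $L^2$ mass is not used in this lemma at all (you may be thinking of Lemma~\ref{localmassslowvarying}, which feeds into the Lyapounov functional, not into the refined virial estimate here). The only substitution needed relative to \cite{merle2006sharp} is the energy one you correctly identified.
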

\begin{proof}
This lemma is  one of the most fundamental  points in \cite{merle2006sharp}.
We quickly recall its proof. If $\frac{1}{\lambda(t)}(Q_{b}+\epsilon)(\frac{x-x(t)}{\lambda(t)})e^{-i\gamma}$ solves \eqref{nls}, then one is able  to derive two equations for  $\epsilon:=\epsilon_{1}+i\epsilon_{2}$, i.e. \eqref{sys1}, \eqref{sys2}.

Then one takes  the inner product of \eqref{sys2} with $\Lambda(\Sigma_{b}+\Lambda\Re \tilde{\zeta})$ and of \eqref{sys2} with $-\Lambda(-\Theta_{b}+\Lambda\Im\tilde{\zeta})$ and sums. The detailed computation is displayed in Appendix B of \cite{merle2006sharp}.\\
Here we follow exactly  the same procedure. We pick $j=1,\dots, m$ and we recall that by definition, 
$$\frac{1}{\lambda_{j}}(\qbj+\epj)(\frac{x-x_{j}}{\lambda_{j}})e^{-i\gamma_{j}}=\chi_{1,loc}(x-x_{j})u,$$
 and $\chi_{1,loc}(x-x_{j})u$ almost solves \eqref{nls}. One may derive  similar equations for $\epj(y)=\eps^{j}_{1}(y)+i\eps_{j}(y)$ as in \eqref{sys1}, \eqref{sys2} with some extra terms in right hand side.  Since $\chi_{1,loc}(x-x_{j})\equiv 1$ in $|x-x_{j}|\leq \frac{2}{3}$, these extra terms are  supported in $|y|\geq \frac{1}{2\lambda_{j}}$ (i.e. $|x-x_{j}|\geq \frac{1}{2}$). Since  $\qbj,\tzj$ is supported in $|y|\leq \Gamma_{b_{j}}^{-a}$ and by the  bootstrap hypothesis \eqref{assumeloglog1}, $\Gamma_{b_{j}}^{-a}\ll \frac{1}{\lambda_{j}}$, then when one pairs these equations with $\Lambda(\Sigma_{b_{j}}+\Lambda\Re \tzj)$ or $-\Lambda(-\Theta_{b_{j}}+\Lambda\Im{\tzj})$, these extra  terms  automatically cancel.   \\
 As a consequence all the algebraic computation in the appendix B of \cite{merle2006sharp}  follow.
There is one more difference compared to  Appendix B of \cite{merle2006sharp}. There the authors use the energy  conservation, (formula(4.17) in \cite{merle2006sharp}), here instead we need to replace  term  $\lambda^{2}E_{0}$  in that formula by the term $E(\epj+\qbj)$. Once this is done we follow the argument in \cite{merle2006sharp}  to recover the virial type estimate
\begin{equation}
\frac{d}{ds_{j}}f_{1}^{j}\geq \delta_{1}\left(\tailj\right)+c\Gamma_{b_{j}}-\frac{1}{\delta_{1}}\int_{A_{j}}^{2A_{j}}|\epj|^{2}-CE(\eps^{j}+\qbj), j=1,\dots, m.
\end{equation}  
By Lemma \ref{control of local quantity} and the bootstrap hypothesis  \eqref{assumeloglog1}, we obtain
\begin{equation}
|E(\epj+\qbj)|\lesssim \lambda_{j}^{2-}\ll \Gamma_{b_{j}}^{10},
\end{equation}
and as a consequence  formula \eqref{bootvirial}.
\end{proof}
We have the following Lemma.
\begin{lem}[analogue of Lemma 7 in \cite{merle2006sharp}]\label{bootradiation}
\setting, and for $j=1,\dots, m$
\begin{equation}
\frac{d}{ds_{j}}\{\int |\phi_{A_{j}}|\epj|^{2}\}\geq \frac{b_{j}}{400}\int_{A_{j}}^{2A_{j}}|\epsilon|^{2}-\Gamma_{b_{j}}^{1+z_{0}}-\Gamma_{b_{j}}^{\frac{a}{2}}\int |\nabla \epj|^{2}.
\end{equation}
\end{lem}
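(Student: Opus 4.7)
The plan is to compute $\frac{d}{ds_j}\int \phi_{A_j}|\epsilon^j|^2$ directly from the equation for $\epsilon^j$ and isolate the only contribution of size $b_j \int_{A_j}^{2A_j}|\epsilon^j|^2$, namely the one coming from integration by parts against the $-b_j \Lambda \epsilon^j$ term in the evolution. Writing
$$\tfrac{1}{2}\tfrac{d}{ds_j}\int \phi_{A_j}|\epsilon^j|^2 = \Re\int \phi_{A_j}\bar\epsilon^j \, \partial_{s_j}\epsilon^j \;+\; \tfrac{1}{2}\int (\partial_{s_j}\phi_{A_j})|\epsilon^j|^2,$$
the second piece is controlled using $A_j = \Gamma_{b_j}^{-a}$, $|\partial_{b_j} A_j|/A_j \lesssim 1/b_j^2$ and $|b_{j,s_j}|\lesssim \Gamma_{b_j}^{1-C\eta}$ from Lemma \ref{mees}, which makes it absorbable into $\Gamma_{b_j}^{1+z_0}$. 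For the first piece I would use that $\epsilon^j$ satisfies a system of the form \eqref{sys1}--\eqref{sys2} plus an extra forcing $\tilde F_j$ generated by the commutator $[\Delta,\chi_{1,loc}(\cdot - x_j)]u$, the time derivative of $\chi_{1,loc}(\cdot - x_j)$, and the defect $(\chi_{1,loc}-\chi_{1,loc}^3)|u|^2u$.

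The key support observation is that on $\{|y|\geq A_j/2\}$, where $\phi_{A_j}$ lives, all of $\qbj$, $\partial_{b_j}\qbj$, $\Psi_{b_j}$, and every modulation source term on the right-hand sides of \eqref{sys1}--\eqref{sys2} vanish, since these are supported in $\{|y|\lesssim 1/b_j\}$ and $1/b_j \ll A_j$. On this set the equation reduces (morally) to $\partial_{s_j}\epsilon^j = i\Delta\epsilon^j - i\epsilon^j - b_j\Lambda\epsilon^j + i|\epsilon^j|^2\epsilon^j + (\text{phase/translation})+ \tilde F_j$. The phase-rotation and the cubic terms are purely imaginary multiples of $\epsilon^j$ and $|\epsilon^j|^2\epsilon^j$, so they contribute zero to $\Re\int \phi_{A_j}\bar\epsilon^j(\cdot)$; the translation term $(x_{s_j}/\lambda_j)\nabla\epsilon^j$ yields only $O(|x_{s_j}/\lambda_j|/A_j)\|\epsilon^j\|_{L^2}^2$ after integration by parts, which is negligible. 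The main contribution is
$$\Re\int \phi_{A_j}\bar\epsilon^j (-b_j\Lambda\epsilon^j) \;=\; \tfrac{b_j}{2}\int (y\cdot\nabla\phi_{A_j})|\epsilon^j|^2 \;\geq\; \tfrac{b_j}{8}\int_{A_j\leq |y|\leq 2A_j}|\epsilon^j|^2,$$
using $d=2$, $\Lambda = 1 + y\cdot\nabla$, and the geometric lower bound $y\cdot\nabla\phi_{A_j}\geq |y|/(4A_j)\geq 1/4$ on $\{A_j\leq|y|\leq 2A_j\}$. The Laplacian part produces a boundary term $\Im\int(\nabla\phi_{A_j})\bar\epsilon^j\nabla\epsilon^j$, which Cauchy--Schwarz and AM-GM split into $\frac{b_j}{800}\int_{A_j}^{2A_j}|\epsilon^j|^2 + \frac{C}{b_j A_j^2}\int|\nabla\epsilon^j|^2$; the log-log bound $b_j^{-1}\lesssim |\ln\Gamma_{b_j}|$ together with $A_j^{-2} = \Gamma_{b_j}^{2a}$ shows $\frac{1}{b_j A_j^2}\leq \Gamma_{b_j}^{a/2}$ for $a$ small, which is exactly the tolerated error $\Gamma_{b_j}^{a/2}\int|\nabla\epsilon^j|^2$.

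The remaining ingredient, and the main obstacle, is the control of the cut-off forcing $\tilde F_j$: this is genuinely new compared with \cite{merle2006sharp}. Since each term defining $F_j$ is supported in $\{2/3\leq |x-x_j|\leq 3/4\}$, and on that set $u$ is bounded in $H^{N_1}$ by the bootstrap hypothesis \eqref{assumeoutsidesmoothness} (combined with the translation control \eqref{assumesmalltranslation}), one has $\|F_j\|_{L^2(x)}\lesssim 1 + |\dot x_j|$. Rescaling to the $y$-variable produces $\|\tilde F_j\|_{L^2(y)}\lesssim \lambda_j^2$ after absorbing $|\dot x_j|\lesssim 1/\lambda_j$ via \eqref{bootmodu2re}, and hence $|\Re\int \phi_{A_j}\bar\epsilon^j\tilde F_j|\lesssim \lambda_j^2 \|\epsilon^j\|_{L^2} \ll \Gamma_{b_j}^{1+z_0}$ thanks to the scale separation $\lambda_j\ll \Gamma_{b_j}^{100}$ in the log-log regime \eqref{assumeloglog1}. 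Summing the main term, the two error bounds ($\Gamma_{b_j}^{1+z_0}$ from $\partial_{s_j}\phi_{A_j}$ and from $\tilde F_j$; $\Gamma_{b_j}^{a/2}\int|\nabla\epsilon^j|^2$ from the Laplacian commutator), and collecting the constants yields the claimed inequality.
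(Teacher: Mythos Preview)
Your argument is correct and follows the same route as the paper, which simply defers to the proof of Lemma 7 in \cite{merle2006sharp} with the global energy $\lambda^{2}E$ replaced by $E(\tilde Q_{b_{j}}+\epj)\lesssim \lambda_{j}^{2-}\ll \Gamma_{b_{j}}^{100}$. You go further and spell out the computation, and in particular you correctly isolate and estimate the extra forcing $\tilde F_{j}$ coming from the cut-off $\chi_{1,loc}(\cdot-x_{j})$; this is a point the paper's one-line proof does not address (the support cancellation used for the preceding lemma fails here because $\phi_{A_{j}}\equiv 1$ on $\{|y|\gtrsim 1/\lambda_{j}\}$), and your treatment via \eqref{assumeoutsidesmoothness} and the scale separation \eqref{assumeloglog1} is the right fix.

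One small correction: the term $\int(\partial_{s_{j}}\phi_{A_{j}})|\epj|^{2}$ is not $O(\Gamma_{b_{j}}^{1+z_{0}})$ as you claim, since the only available bound on the annulus is $\|\epj\|_{L^{2}}^{2}\lesssim\alpha$, giving at best $O(b_{j}^{-2}\Gamma_{b_{j}}^{3/4})$. The correct handling is to observe that for $\phi_{A_{j}}(y)=\phi(|y|/A_{j})$ one has $\partial_{b_{j}}\phi_{A_{j}}=\tfrac{a\pi}{b_{j}^{2}}\,y\cdot\nabla\phi_{A_{j}}$, so this contribution equals $\tfrac{a\pi}{b_{j}^{2}}\tfrac{db_{j}}{ds_{j}}\int(y\cdot\nabla\phi_{A_{j}})|\epj|^{2}$, i.e.\ it is proportional to the main $-b_{j}\Lambda\epj$ term with coefficient $O(\Gamma_{b_{j}}^{3/4}/b_{j}^{3})\ll 1$, and hence is absorbed into the main term rather than into $\Gamma_{b_{j}}^{1+z_{0}}$. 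With this adjustment (and similarly absorbing the omitted $(\tfrac{\lambda_{j,s_{j}}}{\lambda_{j}}+b_{j})\Lambda\epj$ correction) your argument is complete.
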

\begin{proof}
The proof follows as the proof of Lemma 7 in \cite{merle2006sharp}  except, as above, for replacing  $\lambda^{2}E$ with  $E(\epj+\qbj)$, which is much smaller than $\Gamma_{b_{j}}^{100}$ by the bootstrap assumption, and thus negligible.
\end{proof}
The following lemma illustrates the fact that the local mass is slowly varying with respect to the rescaled time variable $s_{j}$.

\begin{lem}[Slow varying of local mass]\label{localmassslowvarying}
With the same assumptions as in  Lemma \ref{bootstrap}, the following estimate hold
\begin{equation}
|\frac{d}{ds_{j}}\{\int |\qbj|^{2}+2(\epj_{1},\Sigma_{b_{j}})+2(\epjt,\Theta_{b_{j}})+\int |\epj|^{2} \}|\leq \Gamma_{b_{j}}^{10}.
\end{equation}
\end{lem}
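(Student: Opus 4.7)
The plan is to recognize the quantity in the lemma as the \emph{local mass} $\int|\chi_{1,loc}(x-x_j(t))u(t,x)|^2\,dx$ and then exploit mass conservation, so that only small boundary-in-space contributions survive. For the identification: by the geometric decomposition \eqref{basicsettinggeo}, the bootstrap separation $\min_{j'\neq j}|x_j-x_{j'}|\ge 5$, and the fact that $\tilde Q_{b_{j'}}$ is supported in $|y|\le 2/b_{j'}$ (so its rescaled version sits in $|x-x_{j'}|\lesssim \lambda_{j'}/b_{j'}\ll 1/3$), the cut-off $\chi_{1,loc}(x-x_j)$ annihilates every soliton except the $j$-th, while equalling $1$ on the support of the $j$-th soliton profile. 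Hence $\chi_{1,loc}(x-x_j(t))u(t,x) = \lambda_j^{-1}(\tilde Q_{b_j}+\eps^j)\bigl(\tfrac{x-x_j}{\lambda_j}\bigr)e^{-i\gamma_j}$, and the change of variable $y=(x-x_j)/\lambda_j$ gives that the expression inside $\frac{d}{ds_j}$ in the lemma equals $\int|\chi_{1,loc}(x-x_j(t))u(t,x)|^2\,dx$.

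Next I will differentiate in $t$. The mass-current identity $\partial_t|u|^2 = -2\nabla\cdot\Im(\nabla u\,\bar u)$, coming from \eqref{nls}, together with integration by parts, gives
\begin{equation*}
\tfrac{d}{dt}\!\int\!\chi_{1,loc}^2(x-x_j)|u|^2\,dx = -2\!\int\!\chi_{1,loc}\nabla\chi_{1,loc}\cdot\dot x_j\,|u|^2\,dx + 4\!\int\!\chi_{1,loc}\nabla\chi_{1,loc}\cdot\Im(\nabla u\,\bar u)\,dx.
\end{equation*}
Both integrands are supported in the annulus $\mathcal{A}_j:=\{2/3\le|x-x_j|\le 3/4\}$, which by the translation bootstrap \eqref{assumesmalltranslation} lies inside $\{\min_{j'}|x-x_{j',0}|\ge 1/2\}$; there the outside-smoothness hypothesis \eqref{assumeoutsidesmoothness} controls $u$ in $H^{N_1}$ by $1/\max_{j'}\lambda_{j',0}$. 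I will bound the first term by $|\dot x_j|\cdot\|u\|_{L^2}^2 \lesssim 1/\lambda_j$, using global mass conservation $\|u(t)\|_2=\|u_0\|_2\le C$ and $|\dot x_j|\lesssim 1/\lambda_j$ coming from the modulation estimate \eqref{bootmodu2re}. The second term is bounded via Cauchy--Schwarz by $\|u\|_{L^2}\|\nabla u\|_{L^2(\mathcal{A}_j)}\lesssim 1/\max_{j'}\lambda_{j',0}\lesssim 1/\lambda_j$, using also almost monotonicity \eqref{assumealmostmono}.

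Passing to the rescaled time via $\frac{d}{ds_j}=\lambda_j^2\frac{d}{dt}$ yields $|\frac{d}{ds_j}(\textrm{local mass})|\lesssim \lambda_j$. Finally, the log-log regime \eqref{assumeloglog1} gives $\lambda_j\le e^{-e^{\pi/(10b_j)}}$, while \eqref{gamma} gives $\Gamma_{b_j}^{10}\gtrsim e^{-10(1+C\eta)\pi/b_j}$; since $e^{\pi/(10b_j)}\gg 10(1+C\eta)\pi/b_j$ for the small $b_j$ ensured by \eqref{assumuepertbation}, one has $\lambda_j\ll\Gamma_{b_j}^{10}$, and the claimed bound follows. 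There is no real obstacle: the one observation requiring care is that the $t$-derivative localizes to $\mathcal{A}_j$, where outside-smoothness together with mass conservation produce a pointwise-in-$t$ bound of order $1/\lambda_j$ that, after multiplication by the $\lambda_j^2$ Jacobian to pass to $s_j$, becomes $\lambda_j$ and is absorbed by the vastly smaller $\Gamma_{b_j}^{10}$ coming from the log-log regime.
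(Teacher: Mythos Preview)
Your proof is correct and follows essentially the same route as the paper: identify the quantity as $\|\chi_{1,loc}(x-x_j)u\|_2^2$, differentiate using the mass-current identity so that all terms are supported on the annulus $\{2/3\le |x-x_j|\le 3/4\}$, bound the translation term via the modulation estimate \eqref{bootmodu2re} and mass conservation, bound the flux term via the outside-smoothness hypothesis \eqref{assumeoutsidesmoothness}, convert to $s_j$-time by multiplying by $\lambda_j^2$, and absorb the resulting $O(\lambda_j)$ into $\Gamma_{b_j}^{10}$ using the log-log regime \eqref{assumeloglog1}. The only cosmetic difference is that the paper writes the differentiation directly in $s_j$ rather than in $t$ first.
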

\begin{proof}
First we observe that 
\begin{equation}
\{\int |\qbb|^{2}+2(\epsilon_{1},\Sigma)+2(\epsilon_{2},\Theta)+\int |\epsilon|^{2}\}\equiv \|\qbj+\epj\|_{2}^{2}.
\end{equation}
 By \eqref{geodef}, one has the geometric decomposition
\begin{equation}
u(t,x)=\sum_{j=1}^{m}\frac{1}{\lambda_{j}}\qbj(\frac{x-x_{j}}{\lambda_{j}})e^{-i\gamma_{j}}+\Xi.
\end{equation}
Also recall that $\epj(y)=\chi_{1,loc}(\lambda_{j} y)\epsilon^{j}(y)$ and that 
 $\qbj$ is supported in $|x|\lesssim \frac{2}{b_{j}}$. Note that by the bootstrap assumption \eqref{assumeloglog1}, $\frac{1}{\lambda_{j}}\gg\frac{1}{b_{j}}$. 
We then  obtain
\begin{equation}
\begin{aligned}
 &\|\qbj+\epj\|_{2}^{2}\\
=&\|\qbj\chi_{1,loc}(\lambda y)+\epsilon^{j}\chi_{1,loc}(\lambda y)\|_{2}^{2}
=&\|u(\chi_{1, oc}(x-x_{j}))\|_{2}^{2}.
\end{aligned}
\end{equation}
Thus,
\begin{equation}
\begin{aligned}
&\lvert\frac{d}{ds_{j}}\{\int |\qbj|^{2}+2(\epsilon^{j}_{1},\Sigma_{b_{j}})+2(\epsilon^{j}_{2},\Theta_{b_{j}})+\int |\epsilon|^{2}\}\rvert\\
\lesssim &|\frac{dx_{j}}{ds_{j}}|\|u\|_{2}^{2}
+2\lambda_{j}^{2}\int_{\frac{2}{3}\leq |x-x_{j}|\leq \frac{3}{4}}|\nabla \chi_{1,loc}||u||\nabla u|.
\end{aligned}
\end{equation}
 Recall  now that $\frac{d}{ds_{j}}=\lambda_{j}^{2}\frac{d}{dt}$. The first term on left hand side is controlled by the modulation estimate \eqref{bootmodu2re}, and the conservation of mass $\|u(t)\|_{2}=\|u_{0}\|_{2}$,
\begin{equation}
|\frac{dx_{j}}{ds_{j}}|\|u\|_{2}^{2}\lesssim |\frac{1}{\lambda_{j}}\xs|\lambda_{j}.
\end{equation}
The second term on the left hand side is controlled by the bootstrap assumption \eqref{assumeoutsidesmoothness}, 
\begin{equation}
2\lambda_{j}^{2}\int_{\frac{2}{3}\leq |x-x_{j}|\leq \frac{3}{4}}|\nabla \chi_{1,loc}||u||\nabla u|\lesssim \lambda_{j}^{2}.
\end{equation}
We finally use the bootstrap assumption \eqref{assumeloglog1} to control $\lambda_{j}$ and the desired estimate easily follows.
\end{proof}
Now we are in good position to finish the proof of Lemma \ref{bootlyp}.
\begin{proof}[proof of Lemma \ref{bootlyp}]
With lemma \ref{bootvirial} and lemma \ref{bootradiation}, we follow the proof of Proposition 4 in \cite{merle2006sharp} and we  obtain
\begin{equation}
\begin{aligned}
\frac{d}{ds_{j}}\left(\int-\phi_{A}|\eps^{j}|^{2}
-\frac{\delta_{1}}{800}\left(b\tilde{f}_{1}(b_{j})-\int_{0}^{b_{j}}\tilde{f}_{1}(v)dv+b\{(\epjt,\Lambda\Re\tzj)-(\epjo,\Lambda\Im\tzj)\}\right)\right)\\
\leq -C\left(\int |\nabla\eps^{j}|^{2}+|\eps^{j}|^{2}e^{-|y|}\right)+\Gamma_{b_{j}}^{1-C\eta}.
\end{aligned}
\end{equation}
Now plug in  the estimate in Lemma \ref{localmassslowvarying} above and the  desired estimate follows.\\
\end{proof}

\subsection{Propagation of regularity under bootstrap hypothesis}\label{propagation}
This subsection mostly follows from the arguments in Section \ref{proofofos}, indeed, the reason why we write Section \ref{proofofos} is to make this subsection more accessible.\\
First, the analogue of \eqref{scalefromblowupspeed} holds,
\begin{lem}
\setting 
\begin{equation}\label{tsscalefromblowuprate}
\int_{0}^{t}\left(\sum_{j=1}^{m}\frac{1}{{\lambda_{j}^{\mu}(\tau)}}\right)d\tau
\lesssim
\begin{cases}
C_{\mu}, \mu<2,\\
\sum_{j=1}^{m}\frac{|\ln \lambda_{j}(t)|^{101}}{\lambda_{j}(t)^{\mu-2}}, \mu \geq 2.
\end{cases}
\end{equation}
\end{lem}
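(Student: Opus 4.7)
The estimate splits as
\[
\int_{0}^{t}\sum_{j=1}^{m}\frac{1}{\lambda_{j}^{\mu}(\tau)}\,d\tau
\;=\;\sum_{j=1}^{m}\int_{0}^{t}\frac{1}{\lambda_{j}^{\mu}(\tau)}\,d\tau,
\]
so the plan is to bound each $j$-th summand separately by the same one-soliton argument that gave \eqref{scalefromblowupspeed}, and then sum. Crucially, every ingredient the one-soliton argument uses is guaranteed by the bootstrap hypothesis in Lemma \ref{bootstrap}: the log-log regime \eqref{assumeloglog1}--\eqref{assumeloglog2}, the almost-monotonicity \eqref{assumealmostmono}, and (via the modulation estimate \eqref{bootmodu1re} combined with \eqref{assumesmallnessoflocalerror}) the relation $\frac{1}{\lambda_j}\frac{d\lambda_j}{ds_j}=-b_j+O(\Gamma_{b_j}^{1/10})$, which in the bootstrap regime yields $-\frac{1}{\lambda_j}\frac{d\lambda_j}{ds_j}\in[\frac{b_j}{2},2b_j]$ (in particular $\lambda_j$ is strictly decreasing in $t$, paralleling the discussion preceding \eqref{scalefromblowupspeed}).

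Fix $j$ and perform the change of variables $d\tau=\lambda_{j}^{2}\,ds_{j}$:
\[
\int_{0}^{t}\frac{d\tau}{\lambda_{j}^{\mu}(\tau)}
\;=\;\int_{s_{j,0}}^{s_{j}(t)}\frac{ds_{j}}{\lambda_{j}^{\mu-2}(s_{j})}.
\]
For $\mu<2$ I would use $-\frac{1}{\lambda_j}\frac{d\lambda_j}{ds_j}\sim b_j$ to write
\[
\frac{1}{\lambda_j^{\mu-2}}\;=\;\lambda_{j}^{2-\mu}\;\sim\;\frac{1}{(2-\mu)\,b_{j}}\cdot\Bigl(-\frac{d}{ds_{j}}\lambda_{j}^{2-\mu}\Bigr),
\]
integrate by parts in $s_j$, and observe that $1/b_j(s_j)$ varies only like $\ln s_j$ (by \eqref{assumeloglog2}) while $\lambda_j^{2-\mu}$ collapses super-exponentially (by \eqref{assumeloglog1}); hence the boundary terms are bounded by $\lambda_{j,0}^{2-\mu}/b_{j,0}\le C_\mu$ and the remaining term is an integral of $b_{j,s_j}/b_j^2\cdot \lambda_j^{2-\mu}$ which is summable using \eqref{bootmodu1re} and the super-exponential smallness of $\lambda_j^{2-\mu}$ compared to any power of $b_j$, exactly as in \eqref{feelingofscale}.

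For $\mu\ge 2$, where $1/\lambda_{j}^{\mu-2}$ is blowing up, I would localize: split the $s_j$-integral at the last time $s_j^\ast$ where $\lambda_j(s_j^\ast)=2\lambda_j(s_j(t))$. On $[s_{j,0},s_j^\ast]$ almost monotonicity \eqref{assumealmostmono} and $\lambda_j(s_j^\ast)\sim \lambda_j(t)$ give a contribution of size at most $\frac{s_j(t)}{\lambda_j(t)^{\mu-2}}$; since the bootstrap regime forces $s_j(t)\lesssim |\ln\lambda_j(t)|^{100}$ (inverting \eqref{assumeloglog1}--\eqref{assumeloglog2}), this is absorbed in $\frac{|\ln\lambda_j(t)|^{101}}{\lambda_j(t)^{\mu-2}}$. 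On $[s_j^\ast,s_j(t)]$ one uses $\lambda_j\sim \lambda_j(t)$ and a crude length bound of the same form.

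The main obstacle, and the only place where care is really needed, is the $\mu\ge 2$ endpoint: one has to verify that the factor $|\ln\lambda_j(t)|^{101}$ truly dominates the total $s_j$-length accumulated while $\lambda_j$ descends from $\lambda_{j,0}$ to $\lambda_j(t)$. This is precisely quantified by the bootstrap log-log window \eqref{assumeloglog1}, which trades $\ln\lambda_j$ against $e^{\pi/(5b_j)}$ and thus against a polynomial in $\ln s_j$; this is the exact place where the proof of \eqref{scalefromblowupspeed} in \cite{raphael2009standing} is invoked, and no modification is needed here since the estimates are performed one bubble at a time. Summing over $j=1,\dots,m$ yields \eqref{tsscalefromblowuprate}.
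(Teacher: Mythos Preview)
Your proposal is correct and matches the paper's approach: reduce to the one-soliton estimate \eqref{scalefromblowupspeed} for each $j$ separately using the bootstrap hypotheses, then sum over $j$. The paper's own proof is a one-line referral to \eqref{scalefromblowupspeed} via \eqref{assumeloglog1}--\eqref{assumeloglog2}; your more detailed execution is fine, though both the integration-by-parts for $\mu<2$ and the splitting at $s_j^\ast$ for $\mu\ge2$ are unnecessary (the bound $\lambda_j(s_j)\le e^{-s_j^{1/100}}$ from \eqref{assumeloglog1}--\eqref{assumeloglog2}, respectively almost monotonicity $\lambda_j(\tau)\ge\tfrac13\lambda_j(t)$ on all of $[0,t]$, already give the result directly).
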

\begin{proof}
The proof of \eqref{tsscalefromblowuprate} only relies on the bootstrap hypothesis \eqref{assumeloglog1}, \eqref{assumeloglog2}, and it is similar to the proof of \eqref{scalefromblowupspeed}.
\end{proof}
We again introduce a sequence of cut-off functions\footnote{We still use the notation of $\chi$, but the definition of $\chi$ is different from Section \ref{proofofos}.}   $\{\chi_{l}\}_{l=0}^{L}$
\begin{equation}
\chi_{l}=
\begin{cases}
0, |x-x_{j,0}|\leq a_{l}, j=1 \text{ or } 2,\\
1, |x-x_{j,0}|\geq b_{l}, j=1 \text{ and } 2,
\end{cases}
\end{equation}
such that  $a_{l}<b_{l}<a_{l-1}, a_{0}=\frac{1}{250}\ll 1, b_{L}=\frac{2}{5}<\frac{1}{2}.$\\
The idea is ( again ) that we want  to retreat from $\chi_{l-1}u$ to $\chi_{l}u$ for each $l$, showing that $\chi_{l}u$ has higher regularity than $\chi_{l-1}u$. We still  use the notation $v_{i}=\chi_{i}u$. \\
The key to gain regularity in Section \ref{proofofos} is formula \eqref{origingain}. Similar estimates also holds here:
\begin{lem}
\setting,
\begin{equation}\label{tsinitialgain}
\int_{0}^{T} \|\nabla (\chi_{0}u(t))\|^{2}dt\lesssim 1.\\
\end{equation}
\end{lem}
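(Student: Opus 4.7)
The plan is to reduce \eqref{tsinitialgain} to the one-soliton bound \eqref{origingain} (proved via formula (63) of \cite{merle2005profiles}) by localizing around each blow-up point. First, write $\|\nabla(\chi_0 u)\|_{L^2}^2 \lesssim \|u\|_{L^2}^2 + \int \chi_0^2 |\nabla u|^2$; the $\|u\|_{L^2}^2$ time integral is harmless because mass is conserved and $T \lesssim \delta_1(\alpha)$ by Remark \ref{finite}. Next, decompose $\chi_0 = \sum_{j=1}^m \rho_j$ where each $\rho_j$ is a smooth nonnegative cutoff supported in the ball $B_j := B(x_{j,0}, 3)$ and vanishing on $\{|x-x_{j,0}| \leq a_0\}$. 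By \eqref{initalseparate} and the bootstrap control \eqref{assumesmalltranslation}, the balls $B_j$ are pairwise disjoint, each $B_j$ contains only the trajectory $\{x_j(t)\}_{t\in[0,T]}$, and the profiles $\tilde Q_{b_{j'}}(\tfrac{\cdot-x_{j'}}{\lambda_{j'}})$ and tails $\tilde\zeta_{b_{j'}}$ of the other bubbles (supported in $|x-x_{j'}(t)| \lesssim \lambda_{j'}\Gamma_{b_{j'}}^{-a}$, which is $\ll 1$ by \eqref{assumeloglog1}) stay at distance $\gtrsim 5$ from $B_j$. It therefore suffices to prove $\int_0^T \int \rho_j |\nabla u|^2 \, dx\, dt \lesssim 1$ for each $j$.

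For fixed $j$, I would carry out the localized Morawetz/virial argument of \cite{merle2005profiles}. Choose a radial weight $\varphi_j(x) = \psi_j(|x-x_{j,0}|)$ with $\psi_j$ bounded, $\psi_j$ convex on $[a_0/2, 4]$, $\psi_j'$ supported on $[a_0/2, 4]$, and $D^2 \varphi_j \gtrsim \rho_j\,\mathrm{Id}$ pointwise. Setting $M_j(t) := 2\,\Im \int \nabla \varphi_j \cdot \nabla u\, \bar u \, dx$, the usual computation from \eqref{nls} gives the identity
\[
\frac{d}{dt} M_j(t) = 4\int D^2\varphi_j(\nabla u, \nabla \bar u)\, dx - \int \Delta^2 \varphi_j\, |u|^2\, dx - \int \Delta \varphi_j\, |u|^4\, dx.
\]
Integrating on $[0,T]$ and using the Hessian lower bound,
\[
\int_0^T \int \rho_j |\nabla u|^2\, dx\, dt \;\lesssim\; |M_j(T)| + |M_j(0)| + \int_0^T \|u\|_{L^2}^2\, dt + \int_0^T \int_{B_j} |u|^4\, dx\, dt.
\]
The boundary terms $|M_j|$ are $O(1)$ by Cauchy--Schwarz, mass conservation, and the bootstrap outside-smoothness \eqref{assumeoutsidesmoothness} (since $\nabla\varphi_j$ is supported where $\chi_0 \equiv 1$ and hence where $u \in H^{N_1}$). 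The $\|u\|_{L^2}^2$ integral is controlled as in the first paragraph. The $|u|^4$ integral is controlled by Sobolev embedding $H^{N_1}\hookrightarrow L^4$ on the support of $\Delta\varphi_j$, together with \eqref{assumeoutsidesmoothness}, since $\Delta\varphi_j$ vanishes near $x_{j,0}$.

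The main obstacle is constructing $\varphi_j$ so that $D^2\varphi_j$ dominates $\rho_j$ pointwise while $\varphi_j$ itself remains uniformly bounded---this is essential so that the boundary terms $M_j$ stay $O(1)$ rather than scaling with $\|\nabla u\|_{L^2}$, which blows up in time. This is handled exactly as in the Merle--Rapha\"el construction for \eqref{origingain}: take $\psi_j$ to be an antiderivative of a mollified version of $r\mapsto r\,\chi_0(r)$, truncated at large $r$, so that $\varphi_j$ is bounded, convex on the annulus of interest, and such that $\Delta\varphi_j$, $\Delta^2\varphi_j$ are supported away from the blow-up trajectory $x_j(t)$ where \eqref{assumeoutsidesmoothness} provides smooth control on $u$. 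Summing the resulting estimate over $j=1,\dots,m$ yields \eqref{tsinitialgain}.
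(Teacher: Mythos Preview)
Your Morawetz argument has a genuine gap at the boundary term $M_j(T)$. You claim that $\nabla\varphi_j$ is supported ``where $\chi_0\equiv 1$ and hence where $u\in H^{N_1}$'', but these are different regions: the outside-smoothness hypothesis \eqref{assumeoutsidesmoothness} only gives $H^{N_1}$ control on $\{\min_j|x-x_{j,0}|\geq \tfrac{1}{2}\}$, whereas $\chi_0\equiv 1$ already on $\{\min_j|x-x_{j,0}|\geq b_0\}$ with $b_0\ll \tfrac{1}{2}$. If $D^2\varphi_j\gtrsim \rho_j\,\mathrm{Id}$ on the annulus $\{a_0\leq |x-x_{j,0}|\leq 3\}$, then by convexity $\nabla\varphi_j$ cannot vanish on the inner portion $\{a_0\leq |x-x_{j,0}|\leq \tfrac{1}{2}\}$, and there the only available bound is $\|\nabla u\|_{L^2(\text{annulus})}\lesssim \Gamma_{b_j}^{3/8}/\lambda_j$ from \eqref{assumesmallnessoflocalerror}. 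Since $\lambda_j\leq e^{-e^{\pi/(10b_j)}}$, this quantity is \emph{not} uniformly bounded as $t\to T^{+}$, so $|M_j(T)|$ blows up. The same defect afflicts your treatment of the $\int\Delta\varphi_j\,|u|^4$ term. (There is also a minor slip: $\chi_0\equiv 1$ at infinity, so it cannot equal $\sum_j\rho_j$ with each $\rho_j$ compactly supported; but that is easily patched by a far-region term.)

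The paper's proof is not a Morawetz identity on $u$; formula (63) in \cite{merle2005profiles} is a consequence of the Lyapounov monotonicity. Concretely, one splits the spatial integral into the far region $\{\min_j|x-x_j|\geq \tfrac{3}{5}\}$, controlled directly by \eqref{assumeoutsidesmoothness} and \eqref{tsscalefromblowuprate}, and the intermediate annuli $\{\tfrac{1}{500}\leq |x-x_j|\leq \tfrac{3}{5}\}$. On each annulus the profile $\tilde Q_{b_j}$ and the tail $\tilde\zeta_{b_j}$ vanish, so after the change of variables $dt=\lambda_j^2\,ds_j$ the contribution becomes $\int_{s_j(0)}^{s_j(T)}\int|\nabla\tilde\eps^{j}|^2\,dy\,ds_j$. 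This is then bounded by integrating \eqref{bootlyp}: since $\tfrac{d}{ds_j}\JJJ_j\leq -Cb_j\int|\nabla\tilde\eps^{j}|^2$ and $\JJJ_j\sim b_j^2$ by Lemma~\ref{b2}, one gets $\int_{s_j(0)}^{\infty}\int|\nabla\tilde\eps^{j}|^2\,ds_j\lesssim \sqrt{\JJJ_j(0)}\lesssim 1$. The point is that the Lyapounov functional provides an \emph{integrated} dispersive bound on $\nabla\tilde\eps^{j}$ that the pointwise bootstrap bound \eqref{assumesmallnessoflocalerror} alone (which, when integrated in $s_j$, would give the divergent $\int s_j^{-3/4}ds_j$) cannot.
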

\begin{proof}
First recall that by the bootstrap hypothesis \eqref{assumesmalltranslation}, if $x\in supp\chi_{0}$, then $\min_{j}\{|x-x_{j}|\}\geq \frac{1}{500}$. Thus we need only to control 
$$\int_{0}^{T}\int_{\min_{j}\{|x-x_{j}|\}\geq \frac{1}{500}} \|\nabla (\chi_{0}u)\|_{2}^{2}.$$
Secondly, since $u$ is bounded in $L^{2}$ and $T$ is bounded (indeed $T<\delta(\alpha)$, see Remark \ref{finite}, we need only  to control 
$$\int_{0}^{T}\int_{\min_{j}\{|x-x_{j}|\}\geq \frac{1}{500}} \|\nabla u\|_{2}^{2}.$$
Indeed
\begin{equation}\label{initialganstep1}
\begin{aligned}
 &\int_{0}^{T}\int_{\min_{j}\{|x-x_{j}|\}\geq \frac{1}{500}} |\nabla u|^{2}\\
\leq&\int_{0}^{T}\int_{\min_{j}\{|x-x_{j}|\}\geq \frac{3}{5}}|\nabla u|^{2}+\int_{0}^{T}\sum_{j=1}^{m}\int_{ \frac{1}{500}\leq |x-x_{j}|\leq \frac{3}{5}} |\nabla u|^{2}\\
:=&E_{1}+E_{2}.
\end{aligned}
\end{equation}
The first term is controlled  by the bootstrap hypothesis \eqref{assumeoutsidesmoothness} and \eqref{assumealmostmono}. In fact  we easily have:
\begin{equation}\label{firsterror}
\begin{aligned}
E_{1}\lesssim &\int_{0}^{T}\min_{j}\{|\ln \lambda_{j,0}|\}\\
        \lesssim &\int_{0}^{T} \sum_{j} |\ln \lambda_{j}(t)|\\
        \lesssim &1
\end{aligned}
\end{equation}
In last step we have applies \eqref{tsscalefromblowuprate}.
The second term $E_{2}$ is  estimated by using the Lyapounov functional $\JJJ_{j}$ in  \eqref{bootlyp}. This is actually one of the key estimates in \cite{merle2005profiles}. We recall it here.
Note that 
$$\chi_{1,loc}(x)\equiv 1 \text{ for  } |x|\leq \frac{2}{3} \quad \mbox{ and  } \quad  \chi_{1,loc}(x-x_{j})=\frac{1}{\lambda_{j}}(\qbj+\epj)(\frac{x-x_{j}}{\lambda_{j}})e^{-i\gamma_{j}}.$$
 We make the observation that since $\lambda_{j}\lesssim  e^{-e^{\frac{\pi}{10b_{j}}}}$ by the bootstrap hypothesis \eqref{assumeloglog1},  when $|x-x_{j}|\geq \frac{1}{500}$ we have  $\qbj(\frac{|x-x_{j}|}{\lambda_{j}})\equiv 0,$ since $supp \qbj \subset\{|y|\lesssim \frac{1}{b_{j}}\}$. Similarly, in this region the radiation term $\tilde{\zeta}_{b_{j}}(\frac{x-x_{j}}{\lambda_{j}}) $ also vanishes. Thus, for $j=1,\dots, m$
\begin{equation}
\begin{aligned}
 E_{2}= &\int_{\frac{1}{500}\leq |x-x_{j}|\leq \frac{3}{5}}|\nabla u|^{2}
=\int_{\frac{1}{500}\leq |x-x_{j}|\leq \frac{3}{5}}|\nabla  \frac{1}{\lambda_{j}}(\qbj+\epj)(\frac{x-x_{j}}{\lambda_{j}})e^{-i\gamma_{j}} |^{2}\\
=&\lambda_{j}^{-2}\int_{\frac{1}{500\lambda_{j}}\leq |x|\leq \frac{3}{5\lambda_{j}}}|\nabla \epj|^{2}
=\lambda_{j}^{-2}\int_{\frac{1}{500\lambda_{j}}\leq |x|\leq \frac{3}{5\lambda_{j}}}|\nabla \tilde{\eps}^{j}|^{2}.
\end{aligned}
\end{equation}
As a consequence
\begin{equation}
\begin{aligned}
  E_{2}
\leq &\int_{s_{j}(t_{1})}^{s_{j}(t_{2})}|\nabla \tilde{\eps}^{j}|^{2}
\lesssim sup_{t\in [0,T)} \sqrt{\JJJ_{j}}\sim b_{j}\lesssim 1.
\end{aligned}
\end{equation}
In the last step, we use \eqref{bootlyp} and the fact that the Lyapounov functional $\JJJ_{j}\sim b_{j}^{2}$, see Lemma \ref{b2}.
This is enough to end the proof.
\end{proof}
We now again  use I-method to recover a rough control:
\begin{lem}
\setting,
\begin{equation}\label{roughcontrolts}
\|u(t)\|_{H^{N_{2}}}\lesssim_{\sigma} \left(\sum_{j}\frac{1}{\lambda_{j}(t)}\right)^{N_{2}+\sigma}.
\end{equation}
\end{lem}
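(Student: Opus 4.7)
The plan is to mimic the one-soliton rough control, Lemma \ref{lemosroughcontrol}, using the upside-down I-method, and exploit Remark \ref{remrough} which tells us that the one-soliton argument only requires: (i) a decomposition of $[0,T]$ into LWP intervals $\{I_{k,j}\}$ with $\|u(t)\|_{H^{1}}\sim 2^{-k}$ on $I_{k,j}$; (ii) $|I_{k,j}|\leq \delta\,2^{-2k}$ for an arbitrarily small fixed $\delta$; (iii) total cardinality bounded by $|\ln\lambda(T)|^{3}$; (iv) $\lambda(T)\lesssim \lambda(t)$ on $[0,T]$; together with $u(0)\in H^{N_{2}}$.

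First I would introduce the scale $\lambda_{\min}(t):=\min_{j}\lambda_{j}(t)$, which plays the role of the single $\lambda(t)$ of Section \ref{proofofos}. From the geometric decomposition \eqref{basicsettinggeo}, the separation bound $\min_{j\neq j'}|x_{j}-x_{j'}|\geq 5$, the bootstrap smallness \eqref{estimateperturbation} and the scaling $\|\lambda_{j}^{-1}\tilde{Q}_{b_{j}}(\lambda_{j}^{-1}\cdot)\|_{H^{1}}\sim \lambda_{j}^{-1}$, one gets $\|u(t)\|_{H^{1}}\sim \sum_{j}\lambda_{j}^{-1}(t)$ and in particular $\|u(t)\|_{H^{1}}\sim \lambda_{\min}^{-1}(t)$ up to a multiplicative factor $\leq m$. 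The almost monotonicity \eqref{estimatealmostmono} applied to each $\lambda_{j}$ shows that $\lambda_{\min}(t)$ is (up to a universal factor) monotonically decreasing, which lets me define times $t_{k}$ by $\lambda_{\min}(t_{k})\sim 2^{-k}$, and then run the exact same bootstrap as Lemma \ref{intevallength} to get $t_{k+1}-t_{k}\lesssim \sqrt{k}\,\lambda_{\min}^{2}(t_{k})$. This yields ingredients (i)--(iv) with $\lambda$ replaced by $\lambda_{\min}$, at the cost of the absolute constants depending on $m$.

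Next, as in Section \ref{proofofos}, I would subdivide each $[t_{k},t_{k+1}]$ into LWP subintervals $[\tau_{k}^{j},\tau_{k}^{j+1}]$ of length $\tfrac{\delta_{1}}{4}\lambda_{\min}^{2}(t_{k+1})$ and apply the upside-down I-method of Lemma \ref{upsidedown} to the rescaled solution $u_{j,k}(t,x):=\lambda_{\min}(t_{k+1})u(\lambda_{\min}^{2}(t_{k+1})(t+\tau_{k}^{j}),\lambda_{\min}(t_{k+1})x)$. The key observation needed to invoke Lemma \ref{upsidedown} is that this rescaled function is uniformly bounded in $H^{1}$: its $H^{1}$ norm equals $\|u(\tau_{k}^{j})\|_{H^{1}}\cdot\lambda_{\min}(t_{k+1})\lesssim \sum_{j}\lambda_{\min}(t_{k+1})/\lambda_{j}(\tau_{k}^{j})\leq m$ by the almost monotonicity and definition of $\lambda_{\min}$. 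Choosing $N=\lambda_{\min}^{-(1+\sigma_{1})}(T)$ and iterating the slow-variation estimate \eqref{iii} over the $\lesssim|\ln\lambda_{\min}(T)|^{3}$ LWP subintervals yields $\|\mathcal{D}_{N}u(T)\|_{2}\lesssim\|u_{0}\|_{H^{N_{2}}}\lesssim 1$, hence by \eqref{basicrelation},
\begin{equation*}
\|u(T)\|_{H^{N_{2}}}\lesssim N^{N_{2}}\lesssim \lambda_{\min}^{-N_{2}(1+\sigma_{1})}(T)\lesssim_{\sigma}\Bigl(\sum_{j}\lambda_{j}^{-1}(T)\Bigr)^{N_{2}+\sigma}
\end{equation*}
on choosing $\sigma_{1}\leq \sigma/N_{2}$ and using $\lambda_{\min}^{-1}\leq \sum_{j}\lambda_{j}^{-1}$.

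The main obstacle, and essentially the only genuinely new point relative to Section \ref{proofofos}, is verifying the analogue of Lemma \ref{intevallength} for $\lambda_{\min}$. In the one-soliton case this is a direct consequence of the local virial simplification $b_{s}\geq -\Gamma_{b}^{1-C\eta}$ together with $b\sim -\lambda_{s}/\lambda$ from Merle--Rapha\"el. In our multi-soliton setting we must argue bubble by bubble: for each fixed $j$, the modulation estimate \eqref{bootmodu1re} and local virial \eqref{tslocalvirial} yield exactly the same ODE for $b_{j}$, $\lambda_{j}$ as in the one-soliton case, so the same argument gives $t_{k+1}^{(j)}-t_{k}^{(j)}\lesssim \sqrt{k}\lambda_{j}^{2}(t_{k}^{(j)})$ in the rescaled time $s_{j}$; taking the minimum over $j$ transfers to control of $\lambda_{\min}$. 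The rest of the argument is then an essentially mechanical translation of Section \ref{roughcontrol}, relying on the fact, guaranteed by the separation of the $x_{j,0}$ and the bootstrap hypothesis, that the different bubbles do not interact in a way that could spoil the rescaling argument.
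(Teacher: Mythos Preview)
Your proposal is correct and follows essentially the same approach as the paper: introduce $\lambda_{\min}=\min_{j}\lambda_{j}$ (the paper calls it $\tilde{\lambda}$), note $\|u\|_{H^{1}}\sim\lambda_{\min}^{-1}$, reduce via Remark \ref{remrough} to constructing the LWP-interval decomposition, obtain that decomposition bubble by bubble using the per-$j$ analogue of Lemma \ref{intevallength} (which the paper also does, invoking \eqref{assumealmostmono}, \eqref{assumeloglog1}, \eqref{assumeloglog2}), and then run the upside-down I-method exactly as in Section \ref{roughcontrol}. The only cosmetic difference is that the paper phrases the combination step as ``the facts below clearly imply'' the $\tilde{\lambda}$-decomposition, while you phrase it as ``taking the minimum over $j$ transfers to control of $\lambda_{\min}$''; both amount to taking the common refinement of the per-bubble decompositions.
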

\begin{proof}
Without loss of generality, we only show \eqref{roughcontrolts} for $t=T $.
We remark that if one defines $\tilde{\lambda}=\min_{j} \{\lambda_{j}\}$, then $\|u(t)\|_{H^{1}}\sim \frac{1}{\tilde\lambda(T)}$, and \eqref{roughcontrolts} is equivalent to $\|u(t)\sim \left(\frac{1}{\tilde{\lambda}(t)}\right)^{N_{2}+\sigma}$.\\
The proof is almost the same as the proof  of \eqref{goal} in Lemma \ref{lemosroughcontrol}. Following Remark \ref{remrough} we only need  to show that we can divide $[0,T]$ into disjoint intervals $\cup {I_{k,h}}$, such that
\begin{itemize}
\item $\|u(t)\|_{H^{1}}\sim 2^{k}$ for $t\in I_{k,j}$
\item $|I_{k,h}|\sim \frac{1}{2^{2k}}$, $\forall k,h$.
\item $\sharp \{I_{k,h}\}\lesssim (|\ln \tl(T)|)^{3}.$
\end{itemize}
These  clearly follow from the  facts below.  Fix $j$, we can divide $[0,T]$ into disjoint intervals $\cup {I^{j}_{k,h}}$, such that 
\begin{itemize}
\item $\lambda_{j}(t) \sim 2^{-k}$ for $t\in I_{k,j}$
\item $|I^{j}_{k,h}|\sim \frac{1}{2^{2k}}$
\item $\sharp \{I^{j}_{k,h}\}\lesssim (|\ln \lambda_{j}(T)|)^{3}$
\end{itemize}
Now we prove these facts. Then by the bootstrap hypothesis \eqref{assumealmostmono}, we can divide $[0,T]$ into $0=t_{k_{0}}<...t_{k}<...t_{K(T)}=T$ such that $\lambda_{1}(t)\sim 2^{-k} $for $t\in[t_{k},t_{k+1}]$, then relying on the  bootstrap hypothesis \eqref{assumeloglog1}, \eqref{assumeloglog2}, similarly as in  the proof of Lemma \ref{intevallength}, one can  show that  $t_{k+1}-t_{k}\leq \sqrt{k}\lambda_{1}^{2(t_{k})}\sim \sqrt{k}2^{-2k}$. This estimate is enough for us to further divide $[t_{k},t_{k+1}]$ into disjoint intervals $\cup_{h=1}^{J_{h}}[\tau_{k}^{h},\tau_{k}^{h+1}]$ such that our desired $I_{k,h}$ can be chosen as $[\tau_{k}^{h},\tau_{k}^{h+1}]$, see \eqref{intevaldecompose},\eqref{numberofiteration}.
\end{proof}
We now remark that the bootstrap estimate \eqref{estimateoutsidesmoothness} follows from the lemma below.
\begin{lem}
\setting, 
\begin{enumerate}
\item $\forall \nu<1$, $\| v_{1}\|_{H^{\nu}}\lesssim 1$.
\item If there holds estimate for some $r>0$, and some $1\leq i\leq L-1$
\begin{equation}
\|v_{i}\|_{H^{r}}\lesssim 1,
\end{equation}
then we will have a gain of regularity on $v_{i+1}$,
\begin{equation}
\forall \tilde{r}<\frac{2(K_{2}-r)}{K_{2}}-1+r, \|v_{i}\|_{H^{\tilde{r}}}\lesssim 1.\\
\end{equation}
\end{enumerate}
\end{lem}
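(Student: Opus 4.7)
The plan is to adapt the one-soliton arguments (Lemmas \ref{lemosfirstgain}--\ref{lemositeration}) to the multi-soliton setting, replacing every occurrence of $\lambda$ by $\tl := \min_j \lambda_j$ and using the multi-soliton analogues \eqref{tsinitialgain}, \eqref{tsscalefromblowuprate} and \eqref{roughcontrolts} in place of \eqref{origingain}, \eqref{scalefromblowupspeed} and \eqref{ineqroughcontrol}. Note that the bootstrap hypothesis \eqref{assumesmalltranslation} guarantees that the supports of the cut-offs $\chi_l$ stay comfortably outside every $|x-x_j(t)|\lesssim \lambda_j^{-a}$, so the soliton tails play no role in what follows.

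For part (1), I follow the three-step scheme of Section \ref{proofofos}. Starting from the gain \eqref{tsinitialgain}, i.e.\ $\int_0^T \|\nabla v_0\|_2^2 \lesssim 1$, together with the trivial $\|u(t)\|_{H^{1/2}} \lesssim \tl(t)^{-1/2}$ (which via \eqref{tsscalefromblowuprate} yields $\|v_0\|_{L^4_t L^4_x}^4 \lesssim |\ln \tl(T)|^{101}$), I interpolate along admissible Strichartz pairs exactly as in Lemma \ref{tech1}--Lemma \ref{tech2} to obtain on any LWP-like subinterval the smallness of $\|v_0\|_{L^{2q_0'}L^{2p_0}} + \|v_0\|_{L^{w_1}L^{p_1}}$. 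Applying Duhamel and Strichartz to the equation $i\partial_t v_1 + \Delta v_1 = \Delta\chi_1\, v_0 + 2\nabla\chi_1\nabla v_0 - v_1 |v_0|^2$ on $\sim |\ln\tl(T)|^{1/2}$ such subintervals, and absorbing the linear source by the interpolation $\int_0^T\|u\|_{H^{1+\nu}}\lesssim 1$ (which uses \eqref{roughcontrolts} and \eqref{tsscalefromblowuprate}), produces the weak gain $\|\nabla^\nu v_1(t)\|_2 \lesssim_\sigma \tl(t)^{-\sigma}$ for any $\nu<1$. Next, combining this weak gain with \eqref{roughcontrolts} via Sobolev embedding and the interpolation $\|v_1\|_{H^{1+\sigma}} \lesssim \|v_1\|_{H^\nu}^{(N_2-1-\sigma)/(N_2-\nu)} \|v_1\|_{H^{N_2}}^{(1+\sigma-\nu)/(N_2-\nu)}$ yields, by choosing $\nu$ close to $1$, the improved pointwise bound $\|v_1(t)\|_\infty \lesssim_\sigma \tl(t)^{-\sigma}$. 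Feeding this back into Duhamel for a slightly more retreated cut-off (the relabelling of $v_1,v_2,v_3$ of Section \ref{proofofos} is absorbed here since we have $L$ nested cut-offs at our disposal) makes $v_1|v_0|^2$ integrable in time thanks to \eqref{tsscalefromblowuprate} with exponent $<2$, removing the $\tl^{-\sigma}$ factor and giving the uniform bound $\|v_1(t)\|_{H^\nu} \lesssim 1$.

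For part (2), I follow Lemma \ref{lemositeration}. From the Duhamel formula for
\[
i\partial_t v_{i+1} + \Delta v_{i+1} = \Delta \chi_{i+1} v_i + 2\nabla\chi_{i+1}\nabla v_i - v_{i+1}|v_i|^2,
\]
the only terms that need work are $\int_0^T \|v_i\|_{H^{\tilde r+1}}$ and $\int_0^T \|v_{i+1}|v_i|^2\|_{H^{\tilde r}}$. Interpolating between $\|v_i\|_{H^r}\lesssim 1$ and $\|u\|_{H^{N_2}} \lesssim_\sigma (\sum_j \lambda_j^{-1})^{N_2+\sigma}$ gives
\[
\|v_i(t)\|_{H^{\tilde r+1}} \lesssim_\sigma \Big(\sum_j \lambda_j(t)^{-1}\Big)^{\frac{N_2(\tilde r+1-r)}{N_2-r}+C\sigma},
\]
and the exponent is strictly $<2$ precisely when $\tilde r < \frac{2(N_2-r)}{N_2}-1+r$, hence the time integral is bounded via \eqref{tsscalefromblowuprate}. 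The nonlinear term is absorbed by the improved $L^\infty$ bound on $v_i$ and $v_{i+1}$ obtained during part (1), which makes $\|v_{i+1}|v_i|^2\|_{H^{\tilde r}}\lesssim_\sigma \tl^{-1-C\sigma}$ and hence integrable by \eqref{tsscalefromblowuprate}.

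The main obstacle is bookkeeping: ensuring the LWP-type subinterval decomposition of Remark \ref{remrough} still gives only $|\ln \tl(T)|^3$ intervals in the multi-soliton regime (which follows from the bootstrap hypotheses \eqref{assumeloglog1}, \eqref{assumeloglog2} and \eqref{assumealmostmono} applied simultaneously to all $\lambda_j$), and verifying that all cut-off commutators $\Delta\chi_l$, $\nabla\chi_l$ produce source terms whose support is contained in the region where the \emph{previous} cut-off $v_{l-1}$ controls the relevant norms. No new analytical ingredient beyond those of Section \ref{proofofos} enters; all estimates are uniform in $T<T^+(u)$ as required by the outer bootstrap.
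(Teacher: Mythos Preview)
Your proposal is correct and follows essentially the same approach as the paper, which simply refers back to the proofs of Lemmas \ref{lemosfirstgain}, \ref{lemosimprovelinfty}, \ref{lemosfirstgainimpr}, \ref{lemositeration} and the derivation of \eqref{K1smooth}. You have in fact supplied more detail than the paper does, correctly identifying the substitutions $\lambda \leadsto \tl = \min_j \lambda_j$, the use of \eqref{tsinitialgain}, \eqref{tsscalefromblowuprate}, \eqref{roughcontrolts} in place of their one-soliton counterparts, and the index relabelling needed to state the uniform $H^\nu$ bound at $v_1$ rather than $v_3$.
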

The proof  follows as the proof of Lemma \ref{lemosfirstgain}, Lemma \ref{lemosimprovelinfty}, Lemma \ref{lemosfirstgainimpr}, Lemma \ref{lemositeration}, see also Proof of \eqref{K1smooth} in Lemma \ref{osboot}.

\subsection{Proof of bootstrap estimate except \eqref{estimateoutsidesmoothness}, \eqref{estimatelocalcontrolofconservedquantity},\eqref{estimatelocalcontrolofmomentum}}\label{boot}
The proof of these estimates can be found (up to a small modification) in \cite{planchon2007existence}, \cite{raphael2009standing} and the references therein. See in particular Proposition 1 in \cite{raphael2009standing}. We quickly review those estimates for the convenience of the readers.\\
\subsubsection{Proof of \eqref{estimateperturbation}}
We bound the left hand  side of \eqref{estimateperturbation} by $C\alpha$.
The control of $b_{j}$ directly follows from the bootstrap hypothesis \eqref{assumeloglog2}, which implies $0\leq b_{j}\leq \frac{10\pi}{\ln s_{j,0}}\lesssim b_{j,0}$ and we note that by the initial condition \eqref{initailsigh}, we have $b_{j,0}\leq \alpha$. Thus $|b_{j}|\lesssim \alpha\ll \alpha$.
The control of $\|\nabla \epj\|_{2}$ follows from the bootstrap hypothesis \eqref{assumesmallnessoflocalerror}, therefore we have $\|\nabla \epj\|_{2}\lesssim \gamma_{b_{j}}^{\frac{1}{100}}\ll \alpha$.
The control of $\|\epj\|_{2}$ comes from the $L^{2}$ conservation law. Indeed $$\sum_{j=1}^{m}\|\eps^{j}\|_{2}^{2}+\sum_{j=1}^{m}\|\qbj\|_{2}^{2}\leq \|u\|_{2}^{2}+O\left(\sum_{j=1}^{m}(|\eps^{j}|,|\qbj|)\right).$$
Note  that 
\begin{eqnarray}
(|\eps^{j}|,|\qbj|)\leq  \enj+ \lmj,\\
\|\qbj\|_{2}^{2}=\|Q\|_{2}^{2}+O(|b_{j}|^{2})\\
\|u\|_{2}^{2}=\|u_{0}\|_{2}^{2}
\end{eqnarray}
Thus the control of $\|\nabla \eps^{j}\|_{2}$ follows easily from the bootstrap hypothesis \eqref{assumesmallnessoflocalerror} and our choice of initial data.\\
\subsubsection{Proof of \eqref{estimateloglog1},\eqref{estimateloglog2}}
We first show \eqref{estimateloglog2}. It follows from the local virial estimate \eqref{tslocalvirial} and  the control  of the Lyapounouv functional  \eqref{bootlyp}.
We first show the  lower bound of \eqref{estimateloglog2}.
Note that by \eqref{tslocalvirial}, one obtains
$
\frac{db_{j}}{ds_{j}}\geq -\Gamma_{b_{j}}^{1-C\eta},
$
which implies
$$
\left(\frac{d e^{\frac{\pi(1-C\eta)}{b_{j}}}}{ds_{j}}\right)\leq 1.
$$
Recall that $s_{j}\geq s_{j,0}\equiv  e^{\frac{3\pi}{4b_{j,0}}}$, thus we have:
\begin{equation}\label{finaltech1}
\begin{aligned}
e^{\frac{\pi(1-C\eta)}{b_{j}}}
&\leq s_{j}-s_{j,0}+e^{\frac{\pi(1-C\eta)}{b_{j,0}}}\leq s_{j}+e^{\frac{\pi(1-C\eta)}{b_{j,0}}}\leq s_{j}^{\frac{4}{3}}.
\end{aligned}
\end{equation}
Therefore one obtains
$b_{j,0}\geq \frac{\pi}{5\ln s_{j}}$, which is the lower bound of \eqref{estimateloglog2}.

Now we turn to the upper bound of \eqref{estimateloglog2}. The control \eqref{bootlyp} for the 
Lypounov functional  directly implies 
\begin{equation}\label{quicklyp}
\frac{d\JJJ_{j}}{ds_{j}}\leq -Cb_{j}\Gamma_{b_{j}}.
\end{equation}
From Lemma \ref{b2}, we know that there exists  a $C_{*}$ such that
\begin{equation}\label{techlower}
\frac{\JJJ_{j}}{b_{j}^{2}}=C^{*}(1+O(\delta(\alpha)).
\end{equation}
Let $g_{j}=\sqrt{\JJJ_{j}}$, then by  combining \eqref{quicklyp} and \eqref{techlower}, one obtains:
\begin{equation}
\begin{aligned}
\frac{dg_{j}}{ds_{j}}=\frac{1}{2}\frac{d\JJJ_{j}}{s_{j}}\frac{1}{g_{j}}\leq -Cg_{j}\times \frac{1}{2}C^{*}e^{\frac{\pi(1+C\eta)\sqrt{C^{*}(1-O(\delta(\alpha))}}{g_{j}}}.
\end{aligned}
\end{equation}
 One obtains
\begin{equation}
\frac{d}{d_{s_{j}}}\left( e^{\frac{\pi\sqrt{C^{*}}(1+O(\delta(\alpha)))}{g_{j}}}  \right)\geq 1.
\end{equation}
Thus 
\begin{equation}\label{finaltech2}
e^{\frac{\pi(1+O(\delta(\alpha)))}{b_{j}}}\geq s_{j}-s_{j,0}+e^{\frac{\pi(1+O(\delta(\alpha)))}{b_{j,0}}}\geq s_{j},
\end{equation}
where we recall that  $s_{j,0}=e^{\frac{3\pi}{4b_{j,0}}}$.
This implies the lower bound of \eqref{estimateloglog2}.

Now we turn to the proof of \eqref{estimateloglog1}. By \eqref{finaltech1}, \eqref{finaltech2}, we have
\begin{equation}
\frac{3\pi}{4\ln s_{j}}\leq b_{j}\leq \frac{4\pi}{3 \ln s_{j}}.
\end{equation}
We only show the upper bound of \eqref{estimateloglog1}, the lower bound will be similar.\\
The computation follows from the proof of Lemma 6 in \cite{raphael2006existence},  in fact  formula (2.68) mentioned in that lemma is exactly the upper bound we want.\\
 The modulation estimate \eqref{bootmodu1} plus the bootstrap hypothesis \eqref{assumesmallnessoflocalerror} imply that 
\begin{equation}
|\frac{1}{\lambda_{j}}\frac{d\lambda_{j}}{ds_{j}}+b_{j}|\leq C\Gamma_{b_{j}}^{\frac{1}{2}}.
\end{equation}
Thus
$$\frac{d\lambda_{j}}{ds_{j}}=-(1+O(\delta(\alpha)))b_{j}.$$
and from here 
$$\lambda_{j}((t))\leq \lambda_{j}(s_{j}(t))\equiv \lambda_{j}(s_{j})\leq \lambda_{j,0}+\int_{s_{j,0}}^{s}\frac{1}{2\ln s},$$
(note that according to our notation $\lambda_{j,0}=\lambda_{j}(s_{j,0})$.)\\
A direct calculation  implies that 
$$-\ln \lambda_{j}(s_{j})\geq -\frac{1}{2}\ln \lambda_{j}(s_{j,0})+\frac{\pi}{3}\frac{s}{ln s},$$ which further implies that $s_{j}\lambda(s_{j})\leq e^{-e^{\frac{3\pi}{8b_{j}}}}$. (See (3.106), (3.107) in \cite{raphael2006existence} for more details).\\
This already implies our desired upper bound.\\
\subsubsection{Proof of \eqref{estimateofsmalllocalerror}}
This is exactly the step 3 of the proof of Proposition 5 in \cite{merle2006sharp}, called pointwise control of  $\epsilon$ by $b$.\\

\subsubsection{Proof of \eqref{estimatealmostmono}} The modulation estimate \eqref{bootmodu1} plus the bootstrap hypothesis \eqref{assumesmallnessoflocalerror} imply
\begin{equation}
|\frac{1}{\lambda_{j}}\frac{d\lambda_{j}}{ds_{j}}+b_{j}|\leq C\Gamma_{b_{j}}^{\frac{1}{2}}.
\end{equation}
Thus
$$\frac{d\lambda_{j}}{ds_{j}}=-(1+O(\delta(\alpha)))b_{j}\lambda_{j}<0,$$
(since $b_{j}>0$ due to hypothesis \eqref{assumeloglog2}). 
This clearly implies \eqref{estimatealmostmono}.\\
\subsubsection{Proof of \eqref{estimatesmalltranslation}}\label{subsubsectiontanslation}
This easily follows from the modulation estimate \eqref{bootmodu2} and the bootstrap hypothesis \eqref{assumeloglog1} and \eqref{assumeloglog2}, as in \cite{planchon2007existence}. We quickly review it for completeness.\par
For all $t\in [0,T]$,
\begin{equation}
\begin{aligned}
|x_{j}-x_{j,0}|
&\leq \int_{0}^{t}|\frac{dx_{j}}{dt}|dt\\
&= \int_{0}^{t}\frac{dx_{j}}{ds_{j}}\frac{1}{\lambda_{j}^{2}}dt
&=\int_{s_{j,0}}^{s_{j}(t)}|\frac{1}{\lambda_{j}} \xs|\lambda_{j}ds.
\end{aligned}
\end{equation}
Note that  by \eqref{bootmodu2}, $|\frac{1}{\lambda_{j}}\xs|\leq \delta(\alpha)$, and by \eqref{assumeloglog1} and  \eqref{assumeloglog2}, one has  $\lambda_{j}(s)\leq s_{j}^{-100}$.
Thus one clearly has $|x_{j}-x_{j,0}|\lesssim \delta(\alpha)$, which easily implies  \eqref{estimatesmalltranslation}.
\begin{rem}\label{positionlimit}
The above computation indeed shows
\begin{equation}
\int_{0}^{T^{+}(u)}|\frac{dx_{j}}{dt}|dt<\infty,
\end{equation}
which of course implies $\lim_{t\rightarrow T^{+}}x_{j}(t)$ exists.
\end{rem}

\section{Proof of Main Theorem}\label{proofofmain}
We will need several parameters throughout the whole section.
$$1\gg a_{0}\gg a_{1}\gg a_{2}>0.$$

Recall our goal is to construct log-log blow up solution $u$, which blows up at $m$ prescribed points $x_{1,\infty},...x_{m,\infty}$, and has asymptotic near blow up time as \eqref{loglogblowup}. We will still focus on dimension $d=2$. And since we have scaling symmetry,  we assume  without loss of generality
\begin{equation}\label{noloss}
|x_{j,\infty}-x_{j',\infty}|\geq 20, j\neq j'.
\end{equation}
It is clear that by Lemma \ref{bootstrap}, all initial data $u_{0}$ describe in Subsection \ref{datats}, the associated solution $u$ have the following geometric decomposition for $t< T^{+}(u)$. \begin{equation}
u(t,x)=\sum_{j}\frac{1}{\lambda_{j}(t)}\qbj(\frac{x-x_{j}(t)}{\lambda_{j}})e^{-i\gamma_{j}(t)}+\Xi(t,x),
\end{equation}
 such that for each $j$, $$\chi_{1,loc}(x-x_{j})u(t,x)\equiv \frac{1}{\lambda_{j}}(\qbj+\epj)(\frac{x-x_{j}}{\lambda_{j}})e^{-i\gamma_{j}},$$
and $\Xi \equiv (1-\sum_{j}\chi_{1,loc})u$ in the region $\{x\vert |x-x_{j,{0}}|\geq \frac{1}{2}, j=1,\dots, m\}$ and  bounded in $H^{N_{1}}$ in this region.

To finish the construction in Theorem \ref{thmmain}, we need to construct initial data $u_{0}$ such that
\begin{itemize}
\item (a) The associated solution $u$ blows up in finite time according to the log-log law.
\item (b) The $m$ points blows up simultaneously, i.e. 
\begin{equation}
\lambda_{j}(t)\xrightarrow{t\rightarrow T^{+}} 0, j=1,...,m.
\end{equation}
\item (c) The blow up points are as prescribed, i.e.
\begin{equation}
x_{j}(t)\xrightarrow{t\rightarrow T^{+}} x_{j,\infty}.
\end{equation}
\end{itemize}
It is easy to check once we get (a), (b), (c), then other requirements in the Theorem \ref{thmmain} will be automatically satisfied. To see this , just observe 
\begin{enumerate}
\item $\lambda_{j}\rightarrow 0 \Rightarrow b_{j}\rightarrow 0$ since we have \eqref{estimateloglog1},
\item  Thus, $\qbj$ converges strongly to $Q$ in the sense of \eqref{convergence},
\item $\epj$ is bounded in $L^{2}$ by \eqref{estimateperturbation} and converges to $0$ in $\dot{H}^{1}$ as $b_{j} \rightarrow 0$ by \eqref{estimateofsmalllocalerror}.
\end{enumerate} 
Now let us turn to condition (a), (b), (c). Indeed, Lemma \ref{bootstrap} already implies for all data $u_{0}$, the associated solution $u$ will blow up in finite time with log-log blow up rate
\begin{equation}
\|\nabla u(t)\|_{2}\sim \sqrt{\frac{\ln|\ln T-t|}{T-t}}.
\end{equation}
Not all data described as in Subsection \ref{datats} will give a solution satisfy condition (b), (c). Morally speaking, the initial data in Subsection \ref{datats} has two types of  parameters, $\lambda_{j,0}$ and $x_{j,0}$. $\lambda_{j,0}$ describes how concentrated  the j-th bubble is and the $x_{j,0}$ describes the initial position of $j$th bubble. Lemma \ref{bootstrap} says the m bubbles evolves with very weak interaction with each other. Thus, we need to choose $\lambda_{j}$ carefully to make all the bubbles to blow up at the same time, and choose initial position $x_{j,0}$ to make the bubbles blows up at the prescribed points $x_{j,\infty}$. This is achieved by certain topological argument.  If one does not want to prescribed the blow up points and only wants to get condition (b), an  argument similar to the topological argument in \cite{cote2011construction} will suffice. However, if one wants to further  prescribe the blow up points, then the problem is actually more tricky, since the choice of $\lambda_{j,0}$ and $x_{j,0}$ will be coupled. 

Before we continue, we point out it is not hard to show $\lim_{t\rightarrow T^{+}} x_{j}(t)$ exists. Indeed, as remarked, we actually have
\begin{equation}
\int_{0}^{T^{+}}|\frac{dx_{j}}{dt}|dt<\infty.
\end{equation}
The hard part here is to prescribe the limit, i.e. for given $x_{1,\infty},...x_{m,\infty}$, we want to construct solution such that
\begin{equation}
\lim_{t\rightarrow T^{+}}x_{j}(t)=x_{j,\infty}.
\end{equation}
\subsection{Preparation of data}
Let us first fix a smooth-cut off function
\begin{equation}
\chi(x)=
\begin{cases}
1, |x|\leq 1,\\
0, |x|\geq 2.
\end{cases}
\end{equation}
Let $u_{0}=\frac{1}{\lambda_{0}}\tilde{Q_{b_{0}}}(\frac{x}{\lambda_{0}})+\epsilon_{0}$ as described  in Subsection \ref{osdata}. Note Lemma \ref{originalboot} and Lemma \ref{osboot} hold for those data.
We need to choose $K_{2}$ large enough  for later use. We also require $u_{0}$ be radial.  The associated solution will have the geometric decomposition

\begin{equation}\label{datafamily}
u(t,x)=\frac{1}{\lambda(t)}(\tilde{Q}_{b}(t)+\epsilon(t))(\frac{x}{\lambda(t)})e^{-i\gamma(t)}
\end{equation}
such that the conclusion of Lemma \ref{originalboot} and Lemma \ref{osboot}.
 And in the spirit of  Remark \ref{sharpen}, we may further sharpen the initial condition such that $u$ further satisfies 
 $$\int |\nabla \epsilon|^{2}+|\epsilon|^{2}e^{-|y|}\leq \Gamma_{b}^{1-a_{2}}, \quad e^{-e^{\frac{(1+a_{2})\pi}{b}}}\leq \lambda(t)\leq e^{-e^{\frac{(1-a_{2})\pi}{b}}}.$$
 Recall $a_{2}$ is the parameter we have fixed at the beginning of this Section.
 
Note $\lambda, b, \gamma, \epsilon(t)$ depends on $t$ in an continuous way. $u(t,x)$ can be understood as a family of data continuously depending on $t$.   By  \eqref{simple1} and \eqref{feelingofscale}, we obtain $-\frac{\lambda_{s}}{\lambda}\geq \frac{1}{2}b>0$, since $\lambda_{t}=\lambda^{2}\lambda_{s}<0$. Thus, the map from $t\rightarrow \lambda(t)$ is a homeomorphism. Thus $u(t,x)$ in \eqref{datafamily} can also be understood as a family of data indexed by $\lambda$.
To summarize, we have a family of data index by $\lambda$ small enough,
\begin{equation}\label{oneoneone}
u_{\lambda}(x)=\frac{1}{\lambda}(\tilde{Q}_{b(\lambda)}+\epsilon(\lambda))(\frac{x}{\lambda})e^{-i\gamma(\lambda)},
\end{equation}
with
\begin{equation}\label{oneloglog}
\begin{aligned}
&\lambda+b<\alpha/10,  \quad b>0, \quad e^{e^{-\frac{(1+a_{2})\pi}{b}}}<\lambda<e^{e^{-\frac{(1-a_{2})\pi}{b}}},\\
&\int_{|x|\leq \frac{10}{\lambda(t)}}\|\nabla\epsilon(t)|^{2}+|\epsilon(t)|^{2}\leq \Gamma_{b(t)}^{1-a_{2}},\\
&\lambda^{2}\lvert\int \chi_{0,loc}(x-x(t))\left(\frac{1}{2}|\nabla u(t)|^{2}-\frac{1}{4}|u|^{4}\right)\rvert\leq \Gamma_{b(t)}^{10000}, \\
&\lambda(t)\lvert \Im\int \chi_{0,loc}(x-x(t))\left(\nabla u(t)\bar{u}(t)\right)\rvert\leq \Gamma_{b(t)}^{10000},
\|u(t)\|_{H^{N_{2}}(|x-x_{1,0}|\geq \frac{1}{10000})}\leq \alpha/10,
\end{aligned}
\end{equation}
and $b,\gamma, \epsilon$ depending on $\lambda$ continuously.  Recall $\chi_{0,loc}$ is defined in \eqref{chi0}.

We will consider a family of data
\begin{equation}\label{initialdatafamily}
\begin{aligned}
u_{0,\boldsymbol{\lambda},\boldsymbol{x}}
&\equiv u_{\lambda_{1,0},\lambda_{2,0},..\lambda_{m,0}, x_{1,0},x_{2,0},..x_{m,0}}\\
&\equiv \sum_{j=1}^{m}\chi(x-x_{j,0})u_{\lambda_{j,0}}(x-x_{j,0}).
\end{aligned}
\end{equation}

Here $u_{\lambda}$ is defined as in \eqref{oneoneone}. And we also require $|x_{j,0}-x_{j,\infty}|\leq 1$.  Note this implies $|x_{j,0}-x_{j',0}|\geq 10, j \neq j'$, since we have \eqref{noloss}.

Now let us consider \eqref{nls} with initial data $\fd$.
 Lemma \ref{bootstrap} will work for  $\fd$.  Thus the associated solution $u_{\boldsymbol{\lambda}.\boldsymbol{x}}$ will satisfy the geometric decomposition in its lifespan
\begin{equation}\label{evolvefamiliy}
u_{\boldsymbol{\lambda},\boldsymbol{x}}(t,x)=\frac{1}{\lambda_{j,\llll,\xxxx}}\tilde{Q}_{b_{j,\llll,\xxxx}}(\frac{x-x_{j,\llll,\xxxx}}{\lambda_{j,\llll,\xxxx}})e^{-i\gamma_{j,\llll,\xxxx}}+\Xi_{\llll,\xxxx}
\end{equation}
and all the bootstrap estimates in Lemma \ref{bootstrap} holds.
For notation convenience, we will write $u_{\llll,\xxxx}$ as $u$, write $\lambda_{j,\llll,\xxxx}$ as $\lambda_{j}$, write $x_{j,\llll,\xxxx}$ as $x_{j}$, write $\gamma_{j,\llll,\xxxx}$ as $\gamma_{j}$.

Again, in the spirit  of Remark \ref{sharpen}, we can further sharpen the condition on \eqref{oneloglog}, i.e. make $a_{2}$ small enough, such that
\begin{equation}\label{shapen222}
e^{-e^{\frac{(1+a_{1})\pi}{b_{j}}}}\leq \lambda_{j} \leq e^{-e^{\frac{(1-a_{1})\pi}{b_{j}}}}.
\end{equation}
\subsection{log-log blow up and almost sharp blow up dynamic}\label{sectiones}
We first show the solution $u$ with initial data $u_{\llll,\xxxx}$ blows up according to the log-log law, indeed, we show every bubble itself evolves according to the log-log law, and for later use, we need the almost sharp dynamic, and keep in mind condition our data has already been sharpened so that \eqref{shapen222} holds for the associated solution.
\begin{lem}\label{sharpdynamic}
Let $u$ be the solution with initial data $u_{\llll,\xxxx}$ as in \eqref{initialdatafamily}, then for each $j=1,...m$, there is a $T_{j}$ such that
\begin{eqnarray}\label{blowupestimate}
&&\lambda_{j,0}^{2}\ln |\ln \lambda_{j,0}|= 2\pi \left(1+O(a_{1})\right)T_{j},\\ \label{almostloglog}
&&\lambda_{j}(t)^{2}\ln |\ln \lambda_{j}(t)|= 2\pi\left(1+O(a_{1})\right)(T_{j}-t).
\end{eqnarray} 
In particular, since the blow up rate is modeled by $\min_{j}\lambda_{j}$, we have that the solution blow up in finite time $T^{+}$ and 
\begin{equation}
\|\nabla u(t)\|_{2}\sim \sqrt{\frac{\ln|\ln (T^{+}-t)|}{T^{+}-t}}.
\end{equation}
\end{lem}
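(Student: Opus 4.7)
My plan is to derive the ODE governing $\lambda_j(t)$ in two steps: first extract a sharp relation between $\lambda_j$ and $b_j$ using the modulation estimate together with the smallness of the local error, then convert the sharp log-log regime bounds (which have been tightened via Remark \ref{sharpen} to exponent $1\pm a_1$) into a sharp relation between $b_j$ and $|\ln\lambda_j|$, and finally integrate a single scalar ODE.

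For the first step, I will combine \eqref{bootmodu1re} with the bootstrap bound \eqref{estimateofsmalllocalerror} (in its sharpened form $\int|\nabla\epsilon^j|^2+|\epsilon^j|^2 e^{-|y|}\leq \Gamma_{b_j}^{1-a_1}$) to obtain
\[
\Big|\frac{1}{\lambda_j}\frac{d\lambda_j}{ds_j}+b_j\Big|\leq \Gamma_{b_j}^{1-Ca_1}\ll b_j,
\]
and hence, after rewriting $\frac{d}{dt}=\lambda_j^{-2}\frac{d}{ds_j}$,
\[
\lambda_j\frac{d\lambda_j}{dt}=-b_j\bigl(1+O(\Gamma_{b_j}^{1/2})\bigr),\qquad \frac{d(\lambda_j^2)}{dt}=-2b_j\bigl(1+O(a_1)\bigr).
\]
This same computation, already carried out in the one-soliton context in \cite{planchon2007existence,raphael2006existence} and reproduced in Section \ref{boot}, justifies that $\lambda_j$ is monotone decreasing and the map $t\mapsto\lambda_j(t)$ is a homeomorphism onto a (left-open) interval reaching $0$ at a time $T_j\leq T^+(u)$ or at the extrapolated value $T_j$ beyond $T^+(u)$, obtained by integrating the ODE above.

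For the second step I invert the sharpened log-log bounds \eqref{shapen222}, which give
\[
\frac{(1-a_1)\pi}{|\ln\lambda_j|}\leq e^{-\pi/b_j}\cdot\text{\{constant\}},\qquad \text{hence}\qquad b_j=\frac{\pi}{\ln|\ln\lambda_j|}\bigl(1+O(a_1)\bigr).
\]
(The algebra is standard: take $\ln\circ\,|\ln\cdot|$ of both sides of $e^{-e^{(1\pm a_1)\pi/b_j}}\lesssim\lambda_j\lesssim e^{-e^{(1\mp a_1)\pi/b_j}}$ and invert.) Substituting into the ODE for $\lambda_j^2$ and then computing the derivative of $F(\lambda_j):=\lambda_j^2\ln|\ln\lambda_j|$ gives
\[
\frac{d}{dt}\bigl(\lambda_j^2\ln|\ln\lambda_j|\bigr)=-2\pi\bigl(1+O(a_1)\bigr)+O\!\left(\frac{b_j}{|\ln\lambda_j|}\right)=-2\pi\bigl(1+O(a_1)\bigr),
\]
where the lower-order term from differentiating $\ln|\ln\lambda_j|$ is absorbed because $b_j/|\ln\lambda_j|\ll 1$ in the log-log regime.

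Finally, integrating this scalar ODE from $t$ to $T_j$ and using $F(\lambda_j(T_j))=0$ (by continuity and the definition of $T_j$) yields \eqref{almostloglog}, and specializing to $t=0$ yields \eqref{blowupestimate}. The blow-up rate statement $\|\nabla u(t)\|_2\sim \sqrt{\ln|\ln(T^+-t)|/(T^+-t)}$ then follows because $\|\nabla u\|_2\sim \sum_j 1/\lambda_j(t)$ with the dominant contribution coming from $\min_j\lambda_j$, and \eqref{almostloglog} applied to that index produces exactly the stated asymptotic with $T^+=\min_j T_j$. The main obstacle, and the only substantive bookkeeping, is tracking the $O(a_1)$ error uniformly through the modulation estimate, the inversion of the log-log bounds, and the differentiation of $F$; this is precisely what the sharpening in Remark \ref{sharpen} was set up for, and why the preparation of the family \eqref{oneoneone}–\eqref{shapen222} was arranged with $a_2\ll a_1$.
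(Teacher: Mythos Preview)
Your proposal is correct and follows essentially the same approach as the paper: both combine the modulation estimate \eqref{bootmodu1re} with the smallness of the local error to get $-\frac{1}{\lambda_j}\frac{d\lambda_j}{ds_j}=(1+o(1))b_j$, invert the sharpened log-log bounds \eqref{shapen222} to get $b_j=\frac{\pi(1+O(a_1))}{\ln|\ln\lambda_j|}$, and then differentiate $\lambda_j^2\ln|\ln\lambda_j|$ in $t$ to obtain the constant-speed ODE $-2\pi(1+O(a_1))$. Your write-up is in fact slightly more careful than the paper's about the definition of $T_j$ (as an extrapolated value when bubble $j$ is not the first to blow up) and about the absorption of the lower-order term $b_j/|\ln\lambda_j|$ when differentiating $\ln|\ln\lambda_j|$.
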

\begin{rem}
We implicitly require $\alpha$ to be small enough, as the whole paper.
\end{rem}
\begin{rem}
Note Lemma \ref{bootstrap} says the $m$ bubbles evolves according to the log-log law without really seeing each other. $T_{j}$ is the time that the $j$th bubble which is supposed to blow up. The solution will blow up at $T^{+}=\min_{j}T_{j}$, and the dynamic will be stopped at $T^{+}$. In particular, if $T_{j}>T_{j'}$, then it means $j'$th bubble 'blows' up faster than $j$th bubble, though they may both not blow up.
 \end{rem}
\begin{rem}\label{finite222}
The proof of Lemma \ref{sharpdynamic} needs to use the bootstrap estimate rather than bootstrap hypothesis in Lemma \ref{bootstrap} since we need to get control in term of $O(a_{1})$. It is not hard to see one can argue as the proof below, and with bootstrap hypothesis rather than bootstrap estimate to show 
\begin{eqnarray}\label{blowupestimate1}
&&\lambda_{j,0}^{2}\ln |\ln \lambda_{j,0}|\sim T_{j},\\ \label{almostloglog1}
&&\lambda_{j}(t)^{2}\ln |\ln \lambda_{j}(t)|\sim (T_{j}-t)
\end{eqnarray} 
which in particular shows  the associated solution $u$ blows up in finite time $T^{+}<\delta (\alpha)$.
\end{rem}
\begin{proof}[Proof of Lemma \ref{sharpdynamic}]
We will follow the computation in \cite{merle2006sharp}, which is used to show the exact log-log law. 
\begin{equation}
|\frac{1}{\lambda_{j}}\ls+b_{j}|\leq \Gamma_{b_{j}}^{\frac{1}{2}-C\eta},\\
\end{equation}
which immediately implies
\begin{equation}\label{bcontrolvariation}
(1-\delta(\alpha))b_{j}\leq -\frac{1}{\lambda_{j}}\ls \leq (1+\delta(\alpha))b_{j}.
\end{equation}
Note also \eqref{shapen222} implies
\begin{equation}\label{bcontrol2}
b_{j}=\frac{(1+O(a_{1}))\pi}{\ln |\ln \lambda_{j}|} .
\end{equation}
Thus 
\begin{equation}
\frac{d}{dt}\lambda_{j}^{2}\ln |\ln\lambda_{j}|=2\frac{d\lambda_{j}}{\lambda_{j}^{2}ds}\lambda_{j}\ln| \ln \lambda_{j}| (1+\delta(\alpha))=2\left(1+\delta(\alpha)\right)\frac{d\lambda_{j}}{\lambda_{j}ds_{j}}\ln |\ln \lambda_{j}|.
\end{equation}
Now, plug in \eqref{bcontrolvariation} and \eqref{bcontrol2}, and choosing $\alpha$ small enough, we get
\begin{equation}
\frac{d}{dt}\lambda_{j}^{2}\ln |\ln\lambda_{j}|=-2\pi\left(1+O(a_{1})\right).
\end{equation}
which immediately implies \eqref{blowupestimate}, \eqref{almostloglog}.
\end{proof}
\subsection{A quick discussion of blow up at same time}\label{sametime}
Now, we have a family of data $u_{\llll,\xxxx}$, and the strategy is to adjust parameters to make the $m$ bubbles to blow up at the same time at the prescribed position. If one only wants to make the $m$ bubble to blow up at same time and does not track the final blow up points, then a topological argument similar to the one in \cite{cote2011construction} will be enough. We quickly  illustrate this. We will fix $x_{1,0},...x_{m,0}$ and $\lambda_{1,0}$ . And we will  adjust $\lambda_{2,0},...\lambda_{m,0}$ to make the $m$ bubbles blow up simultaneously.
\begin{lem}\label{lemsim}
Fixed $x_{1,0},...x_{m,0}$ and $\lambda_{1,0}$, ($|x_{j,0}-x_{j',0}|\geq 10, j\neq j'$), there exist $(\beta_{2,0},...\beta_{m,0})\in [(1-a_{0})\lambda_{1,0}, (1+a_{0})\lambda_{1,0}]^{m-1}$ such that,
 the associated solution $u$ to \eqref{nls} with initial data $u_{\lambda_{1,0},\beta_{2,0},..\beta_{m,0}, x_{1,0},...x_{m,0}}$ as in\eqref{datafamily}, will blow up simultaneously at $m$ points, i.e.
 \begin{equation}\label{simul}
 \lambda_{j}(t)\sim \lambda_{j'}(t), t\in [0,T^{+}).
 \end{equation}
\end{lem}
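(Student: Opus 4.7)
The plan is to use a topological (Poincaré--Miranda type) argument on the parameter space $[(1-a_{0})\lambda_{1,0},(1+a_{0})\lambda_{1,0}]^{m-1}$. The point is that by the almost sharp blow--up dynamics in Lemma~\ref{sharpdynamic}, the ``individual'' blow--up time $T_{j}$ of the $j$-th bubble is, to leading order, determined by the corresponding initial scale $\beta_{j,0}$ alone, with corrections that are negligible on the $a_{0}$-scale since $a_{0}\gg a_{1}\gg a_{2}$.

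To set this up, first normalize the parameters by writing $\beta_{j,0}=\sigma_{j}\lambda_{1,0}$ with $\sigma_{j}\in[1-a_{0},1+a_{0}]$, and define
\begin{equation*}
\Phi:[1-a_{0},1+a_{0}]^{m-1}\longrightarrow\mathbb{R}^{m-1},\qquad
\Phi_{j}(\boldsymbol{\sigma}):=T_{j}(\boldsymbol{\sigma})-T_{1}(\boldsymbol{\sigma}),\quad j=2,\dots,m,
\end{equation*}
where $T_{j}(\boldsymbol{\sigma})$ is the blow--up time associated with the $j$-th bubble of the solution arising from the initial data $u_{0,\boldsymbol{\lambda},\boldsymbol{x}}$ in \eqref{initialdatafamily}, given by Lemma~\ref{sharpdynamic}. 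First I would verify that $\Phi$ is continuous: since the data depend continuously on $\boldsymbol{\sigma}$ in $H^{N_{2}}$, and the bootstrap estimates of Lemma~\ref{bootstrap} hold uniformly on the parameter cube, the modulation parameters $\lambda_{j}(t),b_{j}(t)$ depend continuously on $\boldsymbol{\sigma}$; combined with the identity \eqref{almostloglog}, which determines $T_{j}$ up to $O(a_{1})$ error from the value of $\lambda_{j}(t)$ at any fixed time, this yields continuity of $\Phi$ on the closed cube.

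Next, I would check the boundary (Poincaré--Miranda) conditions. By \eqref{blowupestimate} we have, writing $f(\lambda):=\tfrac{\lambda^{2}\ln|\ln\lambda|}{2\pi}$,
\begin{equation*}
T_{j}(\boldsymbol{\sigma})=f(\sigma_{j}\lambda_{1,0})\bigl(1+O(a_{1})\bigr),\qquad T_{1}(\boldsymbol{\sigma})=f(\lambda_{1,0})\bigl(1+O(a_{1})\bigr),
\end{equation*}
so
\begin{equation*}
\Phi_{j}(\boldsymbol{\sigma})=\bigl(f(\sigma_{j}\lambda_{1,0})-f(\lambda_{1,0})\bigr)+O\bigl(a_{1}f(\lambda_{1,0})\bigr).
\end{equation*}
On the face $\sigma_{j}=1-a_{0}$ one has $f(\sigma_{j}\lambda_{1,0})-f(\lambda_{1,0})\le -c\,a_{0}f(\lambda_{1,0})$ for some universal $c>0$; since $a_{0}\gg a_{1}$, this dominates the error and $\Phi_{j}<0$ there. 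The same computation on $\sigma_{j}=1+a_{0}$ yields $\Phi_{j}>0$. The key point is that these sign conditions hold uniformly in the remaining coordinates because the interaction between bubbles contributes only to the $O(a_{1})$ correction, while the $\sigma_{j}$-monotonicity sits at the $a_{0}$ scale.

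With continuity and the boundary sign conditions in hand, the Poincaré--Miranda theorem produces a $\boldsymbol{\sigma}^{*}\in[1-a_{0},1+a_{0}]^{m-1}$ with $\Phi(\boldsymbol{\sigma}^{*})=0$, i.e.\ $T_{1}=T_{2}=\cdots=T_{m}=:T^{+}$. Then \eqref{almostloglog} applied to each $j$ gives
\begin{equation*}
\lambda_{j}(t)^{2}\ln|\ln\lambda_{j}(t)|=2\pi(1+O(a_{1}))(T^{+}-t),\qquad j=1,\dots,m,
\end{equation*}
which, by monotonicity of $\lambda\mapsto\lambda^{2}\ln|\ln\lambda|$ near $0$, immediately implies $\lambda_{j}(t)\sim\lambda_{j'}(t)$ on $[0,T^{+})$, establishing \eqref{simul}. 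The main obstacle I anticipate is the rigorous continuity statement for $\boldsymbol{\sigma}\mapsto T_{j}(\boldsymbol{\sigma})$ near a possibly-nonsimultaneous blow--up configuration (where dynamics are truncated at $\min_{j}T_{j}$); this is handled by choosing a fixed time $t_{*}<\min_{j}T_{j}$ uniformly on the cube, and recovering $T_{j}$ from $\lambda_{j}(t_{*})$ via \eqref{almostloglog}.
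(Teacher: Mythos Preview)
Your topological instinct is right, but there is a genuine gap in how you handle the quantities $T_j$. Lemma~\ref{sharpdynamic} does not produce a single well-defined number $T_j$: it only asserts the existence of some $T_j$ satisfying \eqref{blowupestimate}--\eqref{almostloglog}, and any such $T_j$ is pinned down only up to a multiplicative $(1+O(a_1))$ factor. Your fix---defining $T_j$ via $\lambda_j(t_*)$ at a fixed time $t_*$---does yield a continuous function of $\boldsymbol{\sigma}$, but then $\Phi(\boldsymbol{\sigma}^*)=0$ only says $\lambda_j(t_*)=\lambda_1(t_*)$. From $t_*$ onward each $g_j(t):=\lambda_j(t)^2\ln|\ln\lambda_j(t)|$ decreases at rate $2\pi(1+O(a_1))$, so the extrapolated zeros of $g_j$ and $g_1$ can still differ by $O(a_1)(T^+-t_*)$; as $t\to T^+$ the ratio $g_j(t)/g_1(t)$ is then unbounded, and you do not get $\lambda_j\sim\lambda_{j'}$ uniformly on $[0,T^+)$. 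Put differently, your specific continuous choice of $T_j$ need not satisfy \eqref{almostloglog} with a uniform $O(a_1)$ error all the way to blow-up, so the last display in your argument is unjustified.

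The paper avoids this by never invoking the virtual times $T_j$. It works directly with the ratios: set $F(t,\boldsymbol{\beta})=(\lambda_{2}/\lambda_1,\dots,\lambda_m/\lambda_1)$, let $T_{\boldsymbol{\beta}}$ be the first time $F$ exits $[(1-a_0),(1+a_0)]^{m-1}$, and argue by contradiction that if $T_{\boldsymbol{\beta}}<\infty$ for every $\boldsymbol{\beta}$ then $G(\boldsymbol{\beta}):=F(T_{\boldsymbol{\beta}},\boldsymbol{\beta})$ is a continuous map from the cube to $\partial[(1-a_0),(1+a_0)]^{m-1}$ restricting to a homeomorphism on the boundary, contradicting the no-retraction theorem. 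The only analytic input is continuity of $T_{\boldsymbol{\beta}}$, which holds because $\partial_tF$ points strictly outward at the exit point: by \eqref{bcontrolvariation}--\eqref{bcontrol2} one has $-\lambda_j^{-3}\,d\lambda_j/ds_j=(1+O(a_1))\,\lambda_j^{-2}/\ln|\ln\lambda_j|$, and on a face where $\lambda_j/\lambda_1=1\pm a_0$ the $a_0$-gap dominates the $O(a_1)$ error. This transversality computation is exactly the estimate you carried out for your Poincar\'e--Miranda boundary conditions, but applied to the ratio flow in real time rather than to the ill-defined $T_j$; that is what closes the argument uniformly up to $T^+$.
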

For notation convenience, we write  $u_{\lambda_{1,0},\beta_{2,0},..\beta_{m,0}, x_{1,0},...x_{m,0}}$ as $u_{\beta_{2},...\beta_{m}}$, and we further write  $u_{\beta_{2,0},..\beta_{m,0}}$ as $u_{\boldsymbol{\beta}}.$ We rewrite \eqref{evolvefamiliy} as
\begin{equation}
u_{\bbb}(t,x)=\sum_{j=1}^{m}\frac{1}{\lambda_{j,\bbb}(t)}\tilde{Q}_{b_{j,\boldsymbol{\beta}}(t)}(\frac{x-x_{j,\bbb}(t)}{\lambda_{j,\bbb}})e^{-i\gamma_{j,\bbb}(t)}+\Xi_{\bbb}(t,x).
\end{equation}
Now we prove Lemma \ref{lemsim} by contradiction. Assume \eqref{simul} is not true for any $\bbb\in [(1-a_{0})\lambda_{1,0},(1+a_{0})\lambda_{1,0}]^{m-1}.$
We consider several maps as following:
\begin{itemize}
\item Let $F(t,\bbb)\equiv F(t,\beta_{2},..., \beta_{m})\equiv (\frac{\lambda_{2,\bbb}(t)}{\lambda_{1,\bbb}(t)},\frac{\lambda_{3,\bbb}(t)}{\lambda_{1,\bbb}(t)},...,\frac{\lambda_{m,\bbb}(t)}{\lambda_{1,\bbb}}).$
\item Let $T_{\bbb}$ be the first time $F(t,\bbb)$ hits $\partial [(1-a_{0}), (1+a_{0})]^{m-1}$.
\item Let $G(\bbb)$ be $F(T_{\bbb},\bbb)$.
\end{itemize}
Here $\bbb\in [(1-a_{0})\lambda_{1,0},(1+a_{0})\lambda_{1,0}]^{m-1}$.

Since we assume \eqref{simul} is not true for any $\bbb \in [(1-a_{0})\lambda_{1,0},(1+a_{0})\lambda_{1,0}]^{m-1}$, then $T_{\bbb}$ is always well defined,  i.e. $T_{\beta}<\infty.$  The key point here is that $T_{\bbb}$ depends  on $\bbb$ continuously. Assume this for the  moment and let us  finish the proof by   deriving  a contradiction.
Note that $F$ is clearly continuous (by standard well posedness theory of NLS) . Since we assume that $T_{\bbb}$ depends  on $\bbb$ continuously, $G$ is also continuous. Make the observation $T_{\bbb}=0$ for $\bbb \in \partial [(1-a_{0})\lambda_{1,0},(1+a_{0})\lambda_{1,0}]^{m-1} $, and then it is easy to see $G|_{\partial [(1-a_{0})\lambda_{1,0},(1+a_{0})\lambda_{1,0}]^{m-1}}$ is an heomorphsim from $ \partial [(1-a_{0})\lambda_{1,0},(1+a_{0})\lambda_{1,0}]^{m-1}$ to $\partial [(1-a_{0}), (1+a_{0})]^{m-1}$. 
Then we have constructed a continuous  map from $[(1-a_{0})\lambda_{1,0},(1+a_{0})\lambda_{1,0}]^{m-1}$ to $\partial [(1-a_{0}), (1+a_{0})]^{m-1}$ such that its restriction on $[(1-a_{0})\lambda_{1,0},(1+a_{0})\lambda_{1,0}]^{m-1}$ is a homeomophism, which is clearly false by classical Homology theory. A contradiction!

We need to check $T_{\bbb}$ does depend on $\bbb$ continously. Note  that by LWP of NLS,  the map $F(t,\bbb)$ is continuous and differentiable. Thus, to show that $T_{\bbb}$ depends  on $\bbb$ continuously, we need only to  show that $\partial_{t}F(T_{\bbb},\bbb)$ points outward $\partial [1-a_{0},1+a_{0}]^{m-1}$. In order to show this, without loss of generality, we assume $\frac{\lambda_{2,\bbb}}{\lambda_{1,\bbb}}(T_{\bbb})=1+a_{0}$ and check $$\frac{d}{dt}(\frac{\lambda_{2,\bbb}}{\lambda_{1,\bbb}})(T_{\bbb})>0.$$
This clearly follows from the fact that 
$$\frac{1}{\lambda_{2,\bbb}}\frac{d}{dt}\lambda_{2,\bbb}-\frac{1}{\lambda_{1,\bbb}}\frac{d}{dt}\lambda_{1,\bbb}>0,$$ which is equivalent to 
\begin{equation}\label{revise1131}
-\frac{1}{(\lambda_{1,\bbb})^{3}}\frac{d}{ds_{1}}\lambda_{1,\bbb}>-\frac{1}{(\lambda_{2,\bbb})^{3}}\frac{d}{ds_{2}}\lambda_{2,\bbb},
\end{equation}
 when $t=T_{\bbb}$.
Note that by \eqref{bcontrolvariation} and \eqref{bcontrol2} , (again, we will choose $\alpha $ small enough),
\begin{equation}\label{revise1132}
-\frac{1}{(\lambda_{j,\bbb})^{3}}\frac{d}{ds_{j}}\lambda_{j,\bbb}=(1+O(a_{1})) \frac{1}{(\lambda_{j,\bbb})^{2}}\frac{1}{\ln |\ln \lambda_{j,\bbb}|}.
\end{equation}
 Note $\frac{\lambda_{2,\bbb}}{\lambda_{1,\bbb}}(T_{\bbb})=1+a_{0}$, and $a_{0}\gg a_{1}$,  \eqref{revise1131} follows once one plugs in \eqref{revise1132}.

 This concludes the proof of Lemma \ref{lemsim}.

\subsection{Prescription of blow up points}\label{positionprescription}
To prescribe the blow up points, i.e., to make the solution blows up exactly at the given points $x_{1,\infty},... x_{m,\infty}$ is more tricky.
We will need a topological argument inspired by \cite{merle1992solution}, see also \cite{planchon2007existence}. Morally speaking, we are dealing with a family of initial data with parameters $\lambda_{1,0}, \lambda_{2,0}, ...\lambda_{m,0},x_{1,0},...x_{m,0}$, and the goal is to adjust those parameters to make the solution blow up at $m$ points and the $m$ points  should be the given $x_{1,\infty}$,... $x_{m,\infty}$. The analysis in Subsection \ref{sametime} basically says for any given initial parameter $x_{1,0},...x_{m,0}$, one will be able to find $\lambda_{1,0},...\lambda_{m,0}$ to make the solution blow up at $m$ points. If one can choose $\lambda_{1,0},...\lambda_{m,0}$ according to $x_{1,0},...x_{m,0}$ in a continuous way, then the argument in \cite{merle1992solution} will be able to help us adjust $x_{1,0}, ...x_{m,0}$ to make the $m$ blow up points be exactly the prescribed $x_{1,\infty}$,... $x_{m,\infty}$. However, with our arguments in Subsection \ref{sametime}, the choice of  $\lambda_{1,0},...\lambda_{m,0}$ is not even uniquely determined by the $x_{1,0}, ...x_{m,0}$. Though this can be somehow fixed, it is very unclear whether one can choose $\lambda_{1,0},...\lambda_{m,0}$ in a continuous way 
\footnote{If one wants to direct borrow the arguments in \cite{merle1992solution}, one will need an maximal principle type argument, which says the following: Fixed $x_{1,0},...x_{m,0}$, let $\lambda_{1,0}$,...$\lambda_{m,0}$ be chosen such that the m bubbles blows up at the same time, then if one further adjusts $\lambda_{1,0}$ to be smaller and keep other parameters unchanged, then the first bubble will blow up first. This is not clear  in our setting, and we even think this argument may not hold for log-log blow up solutions.}
according to $x_{1,0},...x_{m,0}.$
Before we continue, let us first make several important observations

\begin{itemize}
\item The sharp dynamic of log-log blow up is known, we should make full use of it.
\item The impact of parameters of $x_{1,0},...x_{m,0}$ is of lower order than $\lambda_{1,0},...\lambda_{m,0}$.
\end{itemize}
Our strategy is  to choose all the parameters $x_{1,0},...x_{m,0},\lambda_{1,0},..\lambda_{m,0}$ simultaneously to make the solutions blow up at exactly m points $x_{1,\infty}$,... $x_{m,\infty}$.
Finally, at the technique level, in \cite{merle1992solution} and \cite{planchon2007existence}, they rely on the following topological lemma (they call it index Theorem).
\begin{lem}\label{ttt}
Let $f$ be a continuous  map from $R^{n}$ to $R^{n}$, let $r>0$ and suppose
\begin{equation}
 |f(y)-y|<|y|, \forall y \in \partial B_{r}
\end{equation}
 Then there is $y_{0}\in B_{r}$ such  $f(y_{0})=0$.
\end{lem}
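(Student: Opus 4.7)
The plan is to invoke the homotopy invariance of the Brouwer degree. First I would note the boundary hypothesis $|f(y) - y| < |y|$ on $\partial B_r$ forbids $f$ from vanishing there: if $f(y)=0$ for some $y \in \partial B_r$ we would have $|f(y)-y|=|y|$, contradicting the strict inequality. So the degree $\deg(f, B_r, 0)$ is well defined.

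Next I would exhibit an explicit homotopy from the identity to $f$ that avoids $0$ on $\partial B_r$. The natural choice is the linear interpolation
\begin{equation*}
H(y,t) := (1-t)\,y + t f(y), \qquad y \in \overline{B_r},\ t \in [0,1].
\end{equation*}
For any $y \in \partial B_r$ and any $t \in [0,1]$, one computes
\begin{equation*}
|H(y,t)-y| = t\,|f(y)-y| \leq |f(y)-y| < |y|,
\end{equation*}
so $H(y,t)$ lies in the open ball of radius $|y|$ centered at $y$, which misses the origin. Thus $H(\cdot,t)$ gives an admissible homotopy in the sense required for degree theory.

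By homotopy invariance, $\deg(f, B_r, 0) = \deg(\mathrm{id}, B_r, 0) = 1 \neq 0$, and hence by the solvability property of the degree there exists $y_0 \in B_r$ with $f(y_0)=0$. There is no real obstacle here; the only thing to verify carefully is the non-vanishing of $H$ on the boundary, which is immediate from the hypothesis. I would also remark that an entirely equivalent formulation is to apply Brouwer's fixed-point theorem to a suitable radial retraction of $g(y) := y - f(y)$, but the degree-theoretic route given above is cleaner and is exactly the form invoked in \cite{merle1992solution} and \cite{planchon2007existence} for analogous prescription arguments.
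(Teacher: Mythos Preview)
Your proof is correct. The linear homotopy $H(y,t)=(1-t)y+tf(y)$ and the verification that it misses $0$ on $\partial B_r$ are exactly the ingredients the paper uses as well; the only difference is in the topological machinery invoked at the end. You appeal directly to the Brouwer degree and its homotopy invariance, whereas the paper instead proves the slightly more general Lemma~\ref{topology} (convex $\Omega$ with $\partial\Omega$ homeomorphic to a sphere, and the weaker hypothesis $0\notin\{(1-t)y+tf(y):t\in[0,1]\}$ on $\partial\Omega$) by a no-retraction/homology argument: assuming $0\notin f(\bar\Omega)$, one composes $f$ with a retraction $r:\mathbb{R}^n\setminus\{0\}\to\partial\Omega$ to get a map $\bar\Omega\to\partial\Omega$ whose boundary restriction is homotopic to the identity, contradicting the standard fact that $\partial\Omega$ is not a retract of $\bar\Omega$. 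The paper then remarks that this general lemma implies Lemma~\ref{ttt}. The two packagings are of course equivalent; your degree-theoretic version is the cleaner one for the special case of a ball, while the paper's formulation is tailored to the product domain $[-a_0\lambda_{1,0},a_0\lambda_{1,0}]^{m-1}\times(B_1)^m$ actually needed in Section~\ref{positionprescription}.
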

We will need a modified version
\begin{lem}\label{topology}
Let $f$ be a map from $\Omega\subset \RRR^{n}$ to $\RRR^{n}$. Let $\Omega$ be a convex domain and let $\partial \Omega$ be a closed surface which is homeomorhpic  to the sphere.   We assume the original point is in the $\Omega$. Let us further assume for each 
$y$ in $\partial \Omega$, we have
\begin{equation}\label{home}
\boldsymbol{0}\notin \{(1-t)y+ tf(y)| t\in [0,1]\}.
\end{equation}
Then $ \boldsymbol{0}\in f(\bar \Omega).$
\end{lem}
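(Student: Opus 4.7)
The plan is to prove Lemma \ref{topology} by contradiction via an elementary degree argument (essentially Brouwer--Poincar\'e's theorem that a map on the ball has boundary degree zero if it avoids a value in the interior). Suppose, for contradiction, that $\boldsymbol{0}\notin f(\bar\Omega)$. Then $f$ is a continuous map from $\bar\Omega$ into $\RRR^{n}\setminus\{\boldsymbol{0}\}$, so we may define the continuous map
\begin{equation*}
g:\bar\Omega\to S^{n-1},\qquad g(y):=\frac{f(y)}{|f(y)|}.
\end{equation*}

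First I would exploit the convexity of $\Omega$. Because $\Omega$ is convex, open (or at least contains $\boldsymbol{0}$ in its interior), and $\partial\Omega$ is homeomorphic to $S^{n-1}$, the radial projection
\begin{equation*}
\pi:\partial\Omega\longrightarrow S^{n-1},\qquad y\mapsto\frac{y}{|y|},
\end{equation*}
is a homeomorphism (each open ray from $\boldsymbol{0}$ meets $\partial\Omega$ in exactly one point), hence has topological degree $\pm 1$. The hypothesis \eqref{home} says that the straight-line homotopy $H(y,t)=(1-t)y+tf(y)$ stays in $\RRR^{n}\setminus\{\boldsymbol{0}\}$ for all $y\in\partial\Omega$ and $t\in[0,1]$; therefore $H/|H|$ is a continuous homotopy in $S^{n-1}$ from $\pi$ (at $t=0$) to $g|_{\partial\Omega}$ (at $t=1$). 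Composing with $\pi^{-1}$, the map $g|_{\partial\Omega}\circ \pi^{-1}:S^{n-1}\to S^{n-1}$ has degree $\pm 1$.

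On the other hand, the map $g|_{\partial\Omega}$ extends continuously to the whole of $\bar\Omega$ (namely by $g$ itself). Since $\bar\Omega$ is homeomorphic to the closed ball $\bar B^{n}$ (convex compact body with nonempty interior), this forces the degree of $g|_{\partial\Omega}\circ\pi^{-1}:S^{n-1}\to S^{n-1}$ to be $0$: any continuous self-map of $S^{n-1}$ that extends to $\bar B^{n}\to S^{n-1}$ is null-homotopic. This contradicts the degree $\pm 1$ computed in the previous step, so the assumption $\boldsymbol{0}\notin f(\bar\Omega)$ must be wrong, and $\boldsymbol{0}\in f(\bar\Omega)$ as claimed.

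The main conceptual obstacle is merely bookkeeping: $\partial\Omega$ is not literally the sphere but only homeomorphic to it, so one has to produce an explicit identification with $S^{n-1}$ before invoking degree theory. Convexity of $\Omega$ together with $\boldsymbol{0}\in\Omega$ resolves this cleanly through the radial projection $\pi$. Everything else is standard: the straight-line homotopy in \eqref{home} plays exactly the role of the homotopy in Lemma \ref{ttt}, and the argument reduces to the classical statement that a map $S^{n-1}\to S^{n-1}$ of nonzero degree cannot extend continuously to the ball. (If one prefers to avoid degree theory altogether, an equivalent route is to apply Lemma \ref{ttt} after transporting the problem to a ball via $\pi$: set $\tilde f(z):=f(\pi^{-1}(rz))$ on $\bar B_{r}$ and check that \eqref{home} implies $|\tilde f(z)-z|<|z|$ on the boundary sphere, which is the hypothesis of Lemma \ref{ttt}.)
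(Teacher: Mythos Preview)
Your main argument is correct and is essentially the same as the paper's: both argue by contradiction, use the straight-line homotopy supplied by \eqref{home} to show that the boundary restriction is homotopic to the identity (after identifying $\partial\Omega$ with a sphere), and then invoke the no-retraction theorem. The only cosmetic difference is that the paper retracts $\RRR^{n}\setminus\{\boldsymbol{0}\}$ onto $\partial\Omega$ itself and works with homology, whereas you project radially onto $S^{n-1}$ and phrase the contradiction in terms of degree; these are equivalent.

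One caveat: your closing parenthetical, where you propose to reduce to Lemma~\ref{ttt} by transporting to a ball and checking $|\tilde f(z)-z|<|z|$ on the boundary, does not work. Condition~\eqref{home} only says the segment $[y,f(y)]$ avoids $\boldsymbol{0}$, which is strictly weaker than $|f(y)-y|<|y|$ (for instance $f(y)=2y$ satisfies \eqref{home} but not the hypothesis of Lemma~\ref{ttt}). The paper in fact remarks that Lemma~\ref{topology} \emph{implies} Lemma~\ref{ttt}, not conversely, so this alternative route should be dropped.
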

We will prove Lemma \ref{topology} in Appendix \ref{toto}, we point out Lemma \ref{toto} actually implies Lemma \ref{ttt}.

Now, we turn to the prescription of blow up points.  As previous mentioned in \eqref{noloss}, we assume
\begin{equation}
|x_{j,\infty}-x_{j',\infty}|\geq 20, j\neq j'.
\end{equation}

We will still consider the data as in \eqref{datafamily} and we will fix $\lambda_{1,0}$ and adjust parameters  $\lambda_{2,0},...\lambda_{m,0}, x_{1,0}, ...x_{m,0}$ to make the $m$ bubbles blows up at the same time in $x_{1,\infty},...,x_{m,\infty}.$
\begin{lem}\label{lemutli}
Fix $\lambda_{1,0}$, there exists $(\beta_{2},...\beta_{m}, d_{1},...d_{m})\in [-a_{0}\lambda_{1,0},a_{0}\lambda_{1,0}]^{m-1}\times (B_{1})^{m}$, (here $B_{1}\subset \RRR^{2}$ is the unit ball) such that the associated solution $u$ to \eqref{nls} with initial data 
$$u_{\llll,\xxxx}:=u_{\lambda_{1,0},\lambda_{1,0}+\beta_{2},..\lambda_{1,0}+\beta_{m,0}, x_{1,0}+d_{1},...x_{m,0}+d_{m}}$$
will blow up at $m$ given prescribed points $x_{1,\infty}, ..x_{m,\infty}$, i.e.
\begin{equation}\label{ultigoal}
\begin{aligned}
&\lim_{t\rightarrow T^{+}(u)}\lambda_{j}(0)=0, &j=1,...,m.\\
&\lim_{t\rightarrow T^{+}(u)}x_{j}(t)=x_{j,0}, & j=1,...,m.
\end{aligned}
\end{equation}
Here, we use $\lambda_{j},x_{j}$ to denote $\lambda_{j,\llll,\xxxx}$, $x_{j,\llll,\xxxx}$ for notation convenience.
\end{lem}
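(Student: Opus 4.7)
The plan is to prove Lemma \ref{lemutli} by a topological argument based on Lemma \ref{topology}, applied to a continuous map from the $(3m-1)$-dimensional parameter body
$$\Omega := [-a_{0}\lambda_{1,0}, a_{0}\lambda_{1,0}]^{m-1} \times B_{1}^{m}$$
whose $3m-1$ coordinates match the $3m-1$ conditions we must enforce: $m-1$ scale conditions (all bubbles blow up at the common time $T^{+}$) and $2m$ position conditions ($x_{j}(t)\to x_{j,\infty}$). Without loss of generality we may assume, at the level of data preparation in Subsection \ref{datats}, that $|x_{j,0}-x_{j,\infty}|<1/1000$ for every $j$; this ensures that the prescribed targets sit well inside the disk of permissible perturbations. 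Note that $\Omega$ is convex, the origin lies in its interior, and $\partial\Omega$ is homeomorphic to $S^{3m-2}$, so Lemma \ref{topology} is applicable once the map and the boundary condition have been set up.

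For $(\bbb,\boldsymbol{d})\in\Omega$, denote by $u_{\llll,\xxxx}$ the solution of \eqref{nls} with initial data \eqref{initialdatafamily} (taking $\lambda_{j,0}=\lambda_{1,0}+\beta_{j}$ with the convention $\beta_{1}\equiv 0$, and initial positions $x_{j,0}+d_{j}$), and let $\lambda_{j}(t)$, $x_{j}(t)$ be its modulation parameters, which are well defined on $[0,T^{+}(u))$ thanks to Lemma \ref{bootstrap}. Define the stopping time
$$\tau(\bbb,\boldsymbol{d}) := \inf\Bigl\{t\in[0,T^{+}(u)] : \max_{2\le j\le m}\Bigl|\tfrac{\lambda_{j}(t)}{\lambda_{1}(t)}-1\Bigr|\ge a_{0}\Bigr\}\wedge T^{+}(u),$$
and the map $F:\Omega\to\RRR^{m-1}\times(\RRR^{2})^{m}\cong\RRR^{3m-1}$ by
$$F(\bbb,\boldsymbol{d}) := \Bigl(\tfrac{\lambda_{2}(\tau)}{\lambda_{1}(\tau)}-1,\dots,\tfrac{\lambda_{m}(\tau)}{\lambda_{1}(\tau)}-1,\ x_{1}(\tau)-x_{1,\infty},\dots,x_{m}(\tau)-x_{m,\infty}\Bigr),$$
with limits as $t\uparrow T^{+}$ understood when $\tau=T^{+}$ (the position limits exist by Remark \ref{positionlimit}, and the ratios remain in $[1-a_{0},1+a_{0}]$ in this case). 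Continuity of $F$ reduces to continuity of $\tau$: interior crossings are transversal because, combining \eqref{bootmodu1re} with the sharp dynamics \eqref{shapen222} of Subsection \ref{sectiones}, one obtains
$$\frac{d}{dt}\ln\!\Bigl(\tfrac{\lambda_{j}}{\lambda_{1}}\Bigr) = \tfrac{b_{1}}{\lambda_{1}^{2}}-\tfrac{b_{j}}{\lambda_{j}^{2}}+O(\Gamma^{1/2}_{b_{\min}}/\lambda_{\min}^{2}),$$
which at a crossing $\lambda_{j}/\lambda_{1}=1\pm a_{0}$ is of definite sign, exactly as in the transversality computation \eqref{revise1131}--\eqref{revise1132} of Subsection \ref{sametime}.

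We next check the hypothesis \eqref{home} of Lemma \ref{topology} on $\partial\Omega$. There are two types of boundary faces. On the face $\beta_{j_{0}}=\pm a_{0}\lambda_{1,0}$, the initial ratio is already $1\pm a_{0}$, so $\tau=0$ and the corresponding scale component of $F$ equals $\pm a_{0}$, i.e.\ has the same sign as $\beta_{j_{0}}$; in particular the $j_{0}$--scale coordinate of $(1-s)(\bbb,\boldsymbol{d})+sF(\bbb,\boldsymbol{d})$ never vanishes for $s\in[0,1]$. On the face $|d_{j_{0}}|=1$, the bootstrap estimate \eqref{estimatesmalltranslation} gives $|x_{j_{0}}(t)-(x_{j_{0},0}+d_{j_{0}})|\le 1/2000$ on $[0,T^{+})$, so
$$F_{j_{0}}^{(X)}(\bbb,\boldsymbol{d}) = d_{j_{0}} + (x_{j_{0},0}-x_{j_{0},\infty}) + O(1/2000),$$
which, together with $|x_{j_{0},0}-x_{j_{0},\infty}|<1/1000$, shows $F_{j_{0}}^{(X)}\cdot d_{j_{0}} \ge 1 - 3/2000>0$. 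Hence the $j_{0}$--position coordinate of $(1-s)(\bbb,\boldsymbol{d})+sF(\bbb,\boldsymbol{d})$ lies in the open half-plane $\{z\cdot d_{j_{0}}>0\}\subset\RRR^{2}$ for every $s\in[0,1]$, and thus never equals $0\in\RRR^{2}$. Combining the two cases (and noting that at a corner both arguments apply in their respective coordinates), \eqref{home} holds throughout $\partial\Omega$.

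Lemma \ref{topology} then yields $(\bbb^{\ast},\boldsymbol{d}^{\ast})\in\Omega$ with $F(\bbb^{\ast},\boldsymbol{d}^{\ast})=0$. Vanishing of the scale coordinates rules out $\tau<T^{+}$ (a stopping event would yield a value $\pm a_{0}\ne 0$), so $\tau=T^{+}$; in particular all $\lambda_{j}/\lambda_{1}$ remain in $[1-a_{0},1+a_{0}]$ throughout $[0,T^{+})$, which combined with Lemma \ref{sharpdynamic} forces $\lambda_{j}(t)\to 0$ for every $j$ as $t\uparrow T^{+}$. The vanishing of the position coordinates gives $\lim_{t\uparrow T^{+}}x_{j}(t)=x_{j,\infty}$, proving \eqref{ultigoal}. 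The main technical obstacle is the continuity of the stopping time $\tau$ uniformly on the compact parameter domain: one must extract from the sharp log--log dynamics a \emph{quantitative} transversality estimate for the crossing of $\lambda_{j}/\lambda_{1}=1\pm a_{0}$, independent of $(\bbb,\boldsymbol{d})$, so that a standard implicit-function argument delivers the needed continuity both for interior crossings and for the degenerate case $\tau=0$ occurring on the face $\beta_{j_{0}}=\pm a_{0}\lambda_{1,0}$.
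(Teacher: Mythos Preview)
Your overall strategy --- a $(3m-1)$-dimensional topological argument via Lemma \ref{topology} coupling $m-1$ scale conditions with $2m$ position conditions --- is the same as the paper's, and your boundary check on the position faces $|d_{j_0}|=1$ is essentially identical to the paper's Case~2. The difference is in the scale components: you use a \emph{stopping time} $\tau$ (first exit of the ratios $\lambda_j/\lambda_1$ from $[1-a_0,1+a_0]$), whereas the paper always evaluates at the blow-up time $T_{\AAA}$ and uses \emph{differences} $\lambda_j(T_{\AAA})-\lambda_1(T_{\AAA})$ rather than ratios.

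There is a genuine gap in your version precisely at parameters where $\tau=T^{+}$. You define the scale components of $F$ there as the limits of $\lambda_j/\lambda_1-1$, but these limits need not exist. Indeed, from \eqref{bcontrolvariation}--\eqref{bcontrol2} one gets
\[
\frac{d}{dt}\ln\!\Bigl(\frac{\lambda_j}{\lambda_1}\Bigr)=\Bigl(-2\epsilon+O(a_1)\Bigr)\frac{b_1}{\lambda_1^2},\qquad \epsilon:=\frac{\lambda_j}{\lambda_1}-1,
\]
and while the $-2\epsilon$ piece is stabilizing, the $O(a_1)$ error is a time-dependent quantity multiplied by $b_1/\lambda_1^2$, which is \emph{not} integrable on $[0,T^{+})$ (in the $s_1$ variable it becomes $\int^{\infty} b_1\,ds_1=\infty$). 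So once $\epsilon$ has relaxed into the band $|\epsilon|\lesssim a_1$, nothing prevents it from wandering there indefinitely, and the limit of $\lambda_j/\lambda_1$ as $t\uparrow T^{+}$ may fail to exist. Consequently your map $F$ is not well defined on all of $\overline\Omega$, and Lemma \ref{topology} cannot be applied. Your transversality discussion only covers crossings $\tau<T^{+}$ and does not address this regime at all.

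The paper circumvents this in two ways. First, it replaces ratios by differences $y_{j,\AAA}=\lambda_{j,\AAA}(T_{\AAA})-\lambda_{1,\AAA}(T_{\AAA})$; since each $\lambda_{j,\AAA}$ is strictly decreasing and bounded below, the one-sided limits $\lambda_{j,\AAA}(T_{\AAA})$ always exist, so the map $\FFF$ is well defined everywhere. Second, continuity of $\FFF$ is not reduced to transversality of a stopping time but is isolated as a separate statement (Lemma \ref{finaltechproof}), proved in the appendix by a soft argument exploiting the sharp log-log dynamics. The price is that the boundary check on the $\beta$-face is no longer trivial: instead of your $\tau=0$ trick, the paper must invoke Lemma \ref{sharpdynamic} to show that when $\beta_{j_0}=a_0\lambda_{1,0}$ one has $T_{j_0,\AAA}>T_{1,\AAA}\ge T_{\AAA}$ (using $a_0\gg a_1$), hence $y_{j_0,\AAA}>0$ at the blow-up time. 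So your approach buys a cheap $\beta$-face check but loses well-definedness of $F$; the paper's approach costs a nontrivial $\beta$-face check but keeps the map honest.
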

\begin{proof}[Proof of Lemma \ref{lemutli}]
As we previously did in the proof of Lemma \ref{lemsim}, we write 
$u_{\lambda_{1,0},\lambda_{1,0}+\beta_{2},..\lambda_{1,0}+\beta_{m,0}, x_{1,0}+d_{1},...x_{m,0}+d_{0}}$ as $u_{\beta_{2},...\beta_{m},d_{1},...d_{m}}$. And we further write $u_{\beta_{2},...\beta_{m},d_{1},...d_{m}}$ as $u_{\AAA}$, where $\AAA=(\beta_{2},...\beta_{m},d_{1},...d_{m})\in \RRR^{m-1}\times \RRR^{2m}$.We rewrite \eqref{evolvefamiliy} as
\begin{equation}
u_{\AAA}(t,x)=\sum_{j=1}^{m}\frac{1}{\lambda_{j,\AAA}(t)}\tilde{Q}_{b_{j,\AAA}(t)}(\frac{x-x_{j,\AAA}(t)}{\lambda_{j,\AAA}})e^{i\gamma_{j,\AAA}(t)}+\Xi_{\AAA}(t,x).
\end{equation}
Let $T_{\AAA}$ be the blow up time of $u_{\AAA}$.
 We now consider the following map:
 \begin{equation}\label{finalmap}
 \begin{aligned}
 &\FFF: [-a_{0}\lambda_{1,0},a_{0}\lambda_{1,0}]^{m-1}\times (B_{1})^{m} \rightarrow \RRR^{m-1}\times \RRR^{2m} \\
 &\FFF(\AAA):= (y_{2,\AAA},...y_{m,\AAA},z_{1,\AAA},...z_{m,\AAA})\\
 & y_{i,\AAA}=\lambda_{i,\AAA}(T_{\AAA})-\lambda_{1,\AAA}(T_{\AAA}), z_{j,\AAA}=x_{j,\AAA}(T_{\AAA})-x_{j,\infty}, i=2,...m, j=1,...,m.
 \end{aligned}
 \end{equation}
 Here $B_{1}\subset \RRR^{2}$ is the unit ball. And $\lambda_{j,\AAA}(T_{\AAA})$ and $x_{j,\AAA}(T_{\AAA})$ are defined as
 \begin{equation}\label{finaldefn}
 \begin{aligned}
 \lambda_{j,\AAA}(T_{\AAA})=\lim_{t\rightarrow T_{\AAA}} \lambda_{j,\AAA}(t), j=1,...,m,\\
 x_{j,\AAA}(T_{\AAA})=\lim_{t\rightarrow T_{\AAA}}x_{j,\AAA}(t), j=1,...,m.
 \end{aligned}
 \end{equation}
 $\lambda_{j,\AAA}(T_{\AAA})$ is well defined as we have \eqref{bcontrolvariation}, which implies $\lambda_{j,\AAA}$ is strictly decreasing. $x_{j,\AAA}(T_{\AAA})$ is well defined as mentioned in Subsection \ref{sametime}, see also Remark \ref{positionlimit}. The point is 
 \begin{lem}\label{finaltechproof}
 The map $\AAA \rightarrow \lambda_{j,\AAA}(T_{\AAA})$ and the map $\AAA \rightarrow x_{j,\AAA}(T_{\AAA})$ is continuous.
 \end{lem}
 We will prove Lemma \ref{finaltechproof} in Appendix \ref{finalapp}. 
 
 Note if $\FFF(\AAA)=0$ for some $\AAA$, then $u_{\AAA}$ is the desired solution which blows up according to log-log law at exactly $m$ prescribed points.
  
 Lemma \ref{finaltechproof} implies the map $\FFF$ is continuous and we will use Lemma \ref{topology} to show 
 \begin{equation}\label{criticalindex}
 \boldsymbol{0}\in \FFF([-a_{0}\lambda_{1,0},a_{0}\lambda_{1,0}]^{m-1}\times (B_{1})^{m}).
 \end{equation}

 To achieve this ,we need to show if $\AAA$ in $\partial\{ [-a_{0}\lambda_{1,0},a_{0}\lambda_{1,0}]^{m-1}\times (B_{1})^{m}\}$, then 
 \begin{equation}\label{keytest}
 \boldsymbol{0}\notin \{t\AAA+(1-t)\FFF(\AAA), t\in [0,1]\}.
 \end{equation}
Note if $\AAA\in \partial\{ [-a_{0}\lambda_{1,0},a_{0}\lambda_{1,0}]^{m-1}\times (B_{1})^{m}\}$, then at least one of the following holds (recall the notation $\AAA=(\beta_{2},...\beta_{m},)$)
\begin{itemize}
\item Case 1:$|\beta_{j}|=a_{0}\lambda_{1,0}$, for some $j=2,...,m$.
\item Case 2:$|d_{j}|=1$, for some $j=1,...m$. 
\end{itemize}
 We first show \eqref{keytest} holds in Case 2. Indeed, by bootstrap estimate \eqref{estimatesmalltranslation}  in Lemma \ref{bootstrap},
 we have $\sup_{t<T_{\AAA}}|x_{j,\AAA}(t)-x_{j,0}-d_{j}|\leq \frac{1}{2000}$, which implies  $|d_{j}-z_{j,\AAA}\leq \frac{1}{2000}|$. (Recall our notation $\FFF(\AAA):= (y_{2,\AAA},...y_{m,\AAA},z_{1,\AAA},...z_{m,\AAA})$). Since $|d_{j}|=1$, this implies 
 $$t d_{j}+(1-t)z_{j,\AAA}\neq \boldsymbol{0},\forall \in [0,1]$$ 
 In particular \eqref{keytest} holds in Case 2.
 
 Next we show \eqref{keytest} also holds in Case 1. Without loss of generality, we assume $\beta_{2}=(a_{0})\lambda_{1,0}$.  Using Lemma \ref{sharpdynamic}, we can find $T_{1,\AAA}$, $T_{2,\AAA}$ such that
 \begin{equation}
 \begin{aligned}
& \lambda_{j,\AAA}^{2}(t)\ln |\ln\lambda_{j,\AAA}(t)|=2\pi(1+O(a_{1}))(T_{j,\AAA}-t), t<T_{\AAA},\\
& T_{1,\AAA}=2\pi(1+O(a_{1}))(\lambda_{1,0}^{2}\ln|\ln \lambda_{1,0}|),\\
&T_{2, \AAA}=2\pi(1+O(a_{1}))\left((1+a_{0})^{2}\lambda_{1,0}^{2}\ln|\ln (1+a_{0})\lambda_{1,0})|\right)
 \end{aligned}
 \end{equation}
 (Note this also implies $T_{\AAA}\leq \min (T_{1,\AAA},T_{2,\AAA})$, and since $a_{1}\ll a_{0}$, $T_{1,\AAA}<T_{2,\AAA}$).
 
 Since $a_{0}\gg a_{1}$, it is easy to see
 \begin{equation}
 \inf_{t\leq T_{1,\AAA}} \left( \lambda_{1,\AAA}^{2}(t)\ln |\ln\lambda_{1,\AAA}(t)|-\lambda_{2,\AAA}^{2}(t)\ln |\ln\lambda_{2,\AAA}(t)|\right)<0
 \end{equation}
 (Note it is important here we have  $<$ rather than $\leq$).
 
 In particular, we have $\lambda_{2,\AAA}(T_{\AAA})>\lambda_{1,\AAA}(T_{\AAA})$  , ( since $T_{\AAA}\leq T_{1,\AAA}$).
 
 Thus, since $\beta_{2}>0$, and $y_{2,\AAA}=\lambda_{2,\AAA}(T_{\AAA})-\lambda_{1,\AAA}(T_{\AAA})>0$,  
$$ t\beta_{2}+(1-t)y_{2,\AAA}\neq 0, t\in [0,1].$$
which recovers \eqref{keytest}.
This concludes the proof.
\end{proof}

\section{Acknowledgment}
The author is very grateful to his adviser, Gigliola Staffilani for consistent support and encouragement and very careful reading of the material. The author thanks Pierre Rapha\"el for very helpful comments, and in particular for telling  the topological argument in \cite{cote2011construction} and the connection between $m$ points blow up solutions and multiple standing ring blow up solutions. The author thanks Yvan Martel and Pierre Rapha\"el for very helpful discussion about the prescription of blow up points. The author thanks  Vedran Sohinger for helpful discussion  about upside-down I-method. The author also thanks Svetlana Roudenko for helpful comments. And, the author wants to thank Boyu Zhang, for discussion about the classical topology.  Finally, this material is based upon work supported by the National Science Foundation under Grant No. 0932078000 while the author was in residence at the Mathematical Sciences Research Institute in Berkeley, California, during Fall 2015 semester.
\section*{Appendix}
\begin{appendices}
\section{The local wellposedness of the modified system}\label{odelwp}
We explain briefly why one can always locally solve \eqref{modifiedsystem}, \eqref{tsnls}.
We only explain the case about  \eqref{modifiedsystem} here, \eqref{tsnls} is similar.
Indeed, the system is NLS couples with 4 ODEs. Since NLS is locally well posed and ODE is always locally well posed, it is no surprise \eqref{modifiedsystem} is locally well posed. To construct a solution, one first solve NLS $iu_{t}=-\Delta u-|u|^{2}u$ in a time interval $[0,T_{1}]$, and plug this $u(t,x)$ (which is not unknown in $[0,T_{1}]$) into the last 4 equations,  we will obtain 4 ODEs about $\{\lambda(t),b(t),x(t),\gamma(t)\}$. We just do some computation to illustrate this, say, the equation $$\frac{d}{dt}\{(\epsilon_{1}(t),|y|^{2}\Sigma_{b(t)})+(\epsilon_{2}(t),|y|^{2}\Theta_{b(t)})\}=0$$ is now equivalent to 
$$\frac{d}{dt}\Re(\frac{1}{\lambda(t)}\epsilon(\frac{x-x(t)}{\lambda(t)})e^{-i\gamma(t)},\frac{1}{\lambda(t)}(|y|^{2}\qbb)(\frac{x-x(t)}{\lambda(t)}) e^{-i\gamma(t)})=0,$$ which is equivalent to 
$$\frac{d}{dt}\Re(u-\frac{1}{\lambda(t)}\qbb(\frac{x-x(t)}{\lambda(t)}), \frac{1}{\lambda(t)}(|y|^{2}\qbb)(\frac{x-x(t)}{\lambda(t)}) )e^{-i\gamma(t)})=0,$$which is equivalent to
\begin{equation}\label{odesimp}
\begin{aligned}
&\Re (i\Delta u+i|u|^{2}, \frac{1}{\lambda(t)}(|y|^{2}\qbb)(\frac{x-x(t)}{\lambda(t)}) e^{-i\gamma(t)})\\
+&\Re (u, \frac{d}{dt} \frac{1}{\lambda(t)}(|y|^{2}\qbb)(\frac{x-x(t)}{\lambda(t)})e^{-i\gamma(t)})\\
-&\frac{d}{dt}\Re(\frac{1}{\lambda(t)}\qbb(\frac{x-x(t)}{\lambda(t)}), \frac{1}{\lambda(t)}(|y|^{2}\qbb)(\frac{x-x(t)}{\lambda(t)}) e^{-i\gamma(t)})\\
=&0.
\end{aligned}
\end{equation}
Though \eqref{odesimp} is complicate, it is an ODE about $\{\lambda(t), b(t),x(t), \gamma(t)\}$.\\
Similarly, the last 3 equations in \eqref{modifiedsystem} can also be transformed into ODEs about  $\{\lambda(t), b(t),x(t), \gamma(t)\}$.
\\
Thus the local well posedness theory about \eqref{modifiedsystem} is equivalent to the local well posedness theory about NLS.\\
\section{Proof of Lemma \ref{topology}}\label{toto}
Let us turn to the proof of Lemma \ref{topology} now.   This is very standard in algebraic topology. Note $\bar{\Omega}-\{0\}$ is the retract of $\partial{\Omega}$, i.e.
there is a map  $$r: \RRR^{n}-\{\boldsymbol{0}\}\rightarrow \partial \Omega,$$ such that
\begin{equation}
r\circ\iota= id_{\partial\Omega},
\end{equation}
here $\iota:\partial \Omega \rightarrow \RRR^{n}-\{\boldsymbol{0}\}$ is the natural inclusion map.

Now we prove Lemma \ref{topology} by contradiction. Assume $\boldsymbol{0}\notin f(\bar{\Omega})$.
Then $g:=r\circ f$ is well defined and continuous. Note $g$ is map from $\bar{\Omega}$ to $\partial \Omega$.

On the other hand $g|_{\partial_{\Omega}}$ is homotopic to the $id|_{\partial \Omega}$. Indeed, we may write down the homotopy explicitly 
$$r\circ (tf+(1-t)id).$$

We emphasize here this homotopy is well defined since $tf(y)+(1-t)y\neq \boldsymbol{0}$ for $t\in [0,1]$ and $y\in\partial \Omega$.

Now we have constructed a map $g$ from $\bar{\Omega}$ to $\partial \Omega$, and $g|_{\partial \Omega}$ is homotopic to $id|_{\partial|_{\Omega}}$. Note $\Omega$ is convex domain and $\partial \Omega$ is homeomorphic to the sphere. This is a clear contradiction from standard homology theory.
\section{Proof of Lemma \ref{finaltechproof}}\label{finalapp}
Before we go to the proof, let us point out Lemma \ref{finaltechproof} basically say the blow up point (model by $x_{j,A}(t)$) and the blow up time (modeled by $\lambda_{j,\AAA}(t)$) depending on the initial data (modeled by $\AAA$) in a continuous way. We remark here, in general, the problem whether  blow up point and blow up time  depend on the initial data in a continuous way is not an easy problem.  Indeed, if one have a sequence initial data $u_{0,n}$, whose associated solution to \eqref{nls} blows up according to the log-log law, and one assumes $u_{0,n}$ converges to $u_{0}$ in $H^{1}$, it is not always right the associated solution $u$ to \eqref{nls} will blow up according to the log-log law. And for NLS, if we don't have some information about the dynamic near the blow up time, we cannot even define the blow up point. However, see \cite{merle2004universality} for this direction.

The proof of Lemma \ref{finaltechproof} is much easier, because we are only working on data with finite parameters $\AAA$. And if $\AAA_{n}\rightarrow \AAA$, clearly $u_{\AAA}$ still blows up according the log-log law with dynamic described by Lemma \ref{bootstrap}. Let us turn to the proof. We recall $$\AAA\in [-a_{0}\lambda_{1,0},a_{0}\lambda_{1,0}]^{m-1}\times (B_{1})^{m}.$$

\begin{proof}
Recall to understand the evolution of $x_{j,\AAA}(t), \lambda_{j,\AAA}(t)$, one need to consider system \eqref{tsnls} or equivalently \eqref{tsnls2}. As explained in Appendix \ref{odelwp}, the system is NLS coupled with 4 ODEs.  Use the standard stability arguments for NLS and stability argument of ODEs, we have that for any $T<T_{\AAA}$, (recall $T_{\AAA}$ is the blow up time of $u_{\AAA}$. ) the map $\AAA \rightarrow x_{j,\AAA}(T),\lambda_{j,\AAA}(T) $ is continuous. Now, Lemma \ref{finaltechproof} easily follows from the the following lemma
\begin{lem}\label{appfinallemma}
Given $A$, for any $\epsilon>0$,there is $T<T_{\AAA}$ and $\delta=\delta(\AAA)$ such that for any $A'$ with
\begin{equation}
|\lambda_{j,\AAA'}(T)-\lambda_{j,\AAA}(T)|<\delta,
\end{equation}
then 
\begin{eqnarray}\label{appfinal1}
&&\sup_{t\in[T,T_{\AAA}']}|\lambda_{j,\AAA'}(t)-\lambda_{j,\AAA'}(T)|<\epsilon, j=1,..,m.\\ \label{appfinal2}
&&\sup_{t\in [T,T_{\AAA'}]}|x_{j,\AAA'}(t)-x_{j,\AAA'}(T)|<\epsilon, j=1,...m.
\end{eqnarray}
\end{lem}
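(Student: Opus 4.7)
The plan is to exploit the sharp log-log dynamics of Lemma \ref{sharpdynamic} together with the bootstrap estimates of Lemma \ref{bootstrap}, applied uniformly to $u_{\AAA}$ and to $u_{\AAA'}$. First I would fix $T < T_{\AAA}$ so close to $T_{\AAA}$ that, for every $j=1,\dots,m$: (i) $\lambda_{j,\AAA}(T) < \epsilon/20$ whenever $T_{j,\AAA} = T_{\AAA}$; (ii) $\lambda_{j,\AAA}(T) - \lim_{t\to T_{\AAA}^{-}}\lambda_{j,\AAA}(t) < \epsilon/20$ whenever $T_{j,\AAA} > T_{\AAA}$ (both possible by continuity of $\lambda_{j,\AAA}$ on $[0, T_{\AAA}]$ coming from the explicit relation $\lambda_{j,\AAA}(t)^2\ln|\ln\lambda_{j,\AAA}(t)| = 2\pi(1+O(a_{1}))(T_{j,\AAA}-t)$); and (iii) the ``translation budget'' $\int_T^{T_{\AAA}} \lambda_{j,\AAA}(t)^{-1}\, dt < \epsilon/8$, which is finite since $\int \lambda_{j,\AAA}^{-\mu}\, dt$ is integrable for $\mu<2$ (see \eqref{tsscalefromblowuprate}) and can be made arbitrarily small by choosing $T$ close enough to $T_{\AAA}$, using the sharp asymptotics.

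Next, by the standard stability theory for the coupled system \eqref{tsnls2} on the compact interval $[0, T] \subset [0, T_{\AAA})$, the state $u_{\AAA'}(T)$ and all modulation parameters $\{b_{j,\AAA'}(T), \lambda_{j,\AAA'}(T), x_{j,\AAA'}(T), \gamma_{j,\AAA'}(T)\}$ depend continuously on $\AAA'$. Hence taking $\delta$ small (with $\AAA'$ correspondingly close to $\AAA$, in the continuity set-up of Lemma \ref{finaltechproof}) forces the full state of $u_{\AAA'}$ at time $T$ to lie in an $O(\delta)$-neighborhood of that of $u_{\AAA}$. In particular $u_{\AAA'}(T)$ remains well-prepared in the sense of Subsection \ref{datats}, so Lemma \ref{bootstrap} (with initial time shifted to $T$) and Lemma \ref{sharpdynamic} apply on $[T, T_{\AAA'})$, with constants depending only on the universal $\alpha$.

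With this in hand, I would use strict monotonicity of $\lambda_{j,\AAA'}$ (a consequence of \eqref{bootmodu1re} under the bootstrap regime) to write $|\lambda_{j,\AAA'}(t) - \lambda_{j,\AAA'}(T)| = \lambda_{j,\AAA'}(T) - \lambda_{j,\AAA'}(t) \leq \lambda_{j,\AAA'}(T)$. For the bubbles with $T_{j,\AAA} = T_{\AAA}$ this is $\leq \lambda_{j,\AAA}(T) + \delta < \epsilon$ by (i), and for the remaining bubbles (whose $T_{j,\AAA'}$ is close to $T_{j,\AAA} > T_{\AAA}$), the explicit log-log asymptotics for $u_{\AAA'}$ combined with (ii) give $|\lambda_{j,\AAA'}(t) - \lambda_{j,\AAA'}(T)| < \epsilon$. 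Finally, the modulation bound $|dx_{j,\AAA'}/dt| \lesssim 1/\lambda_{j,\AAA'}(t)$ from \eqref{bootmodu2re} and comparison with the $\AAA$-trajectory yield
\[
|x_{j,\AAA'}(t) - x_{j,\AAA'}(T)| \lesssim \int_T^{T_{\AAA'}} \frac{dt}{\lambda_{j,\AAA'}(t)} \lesssim \int_T^{T_{\AAA}} \frac{dt}{\lambda_{j,\AAA}(t)} + o_{\delta\to 0}(1) < \epsilon
\]
by (iii), establishing \eqref{appfinal1} and \eqref{appfinal2}.

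The hard part is precisely this uniform comparison of the two integrals: the $O(a_{1})$ constants in Lemma \ref{sharpdynamic} must be independent of $\AAA'$, and the shift $|T_{j,\AAA'}-T_{j,\AAA}|$ must be controlled by $\delta$ via the sharp relation at time $T$. Both features are built into the universal nature of the constants in Lemma \ref{bootstrap} (which depend only on the fixed $\alpha$), so the argument closes after choosing $\delta$ small enough in terms of $\epsilon$ and the already-fixed $T$.
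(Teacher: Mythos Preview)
Your overall plan matches the paper: split the bubbles into those that vanish at $T_{\AAA}$ and those that do not, use strict monotonicity of $\lambda_{j,\AAA'}$ for \eqref{appfinal1}, and bound $|x_{j,\AAA'}(t)-x_{j,\AAA'}(T)|$ via $\int_T^{T_{\AAA'}}\lambda_{j,\AAA'}^{-1}\,dt$ coming from \eqref{bootmodu2re}. The $\lambda$-part is fine.

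The gap is in your handling of $\int_T^{T_{\AAA'}}\lambda_{j,\AAA'}^{-1}\,dt$. You want to pass from the $\AAA'$-integral to the $\AAA$-integral plus $o_{\delta\to 0}(1)$, but this comparison is exactly the delicate point: past time $T$ you have no local well-posedness stability, and the $(1+O(a_1))$ in Lemma \ref{sharpdynamic} is a fixed universal fudge factor, not something that goes to zero with $\delta$. So uniform constants alone do not give $\int_T^{T_{\AAA'}}\lambda_{j,\AAA'}^{-1}\,dt \le \int_T^{T_{\AAA}}\lambda_{j,\AAA}^{-1}\,dt + o_{\delta\to 0}(1)$; you would need to compare the two trajectories on an interval reaching up to blow-up, which is precisely what is not available.

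The paper avoids any comparison with $\AAA$ and estimates $\int_T^{T_{\AAA'}}\lambda_{j,\AAA'}^{-1}\,dt$ directly, using only the bootstrap estimates for $u_{\AAA'}$. In Case~1 (the $j$th bubble of $u_{\AAA}$ does blow up, so $\lambda_{j,\AAA'}(T)$ can be made tiny), monotonicity gives
\[
\int_T^{T_{\AAA'}}\frac{dt}{\lambda_{j,\AAA'}(t)}\;\le\;\lambda_{j,\AAA'}(T)^{1/2}\int_T^{T_{\AAA'}}\frac{dt}{\lambda_{j,\AAA'}(t)^{3/2}}\;\lesssim\;\lambda_{j,\AAA'}(T)^{1/2},
\]
the last integral being \emph{universally} bounded by \eqref{tsscalefromblowuprate}. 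In Case~2 (the $j$th bubble of $u_{\AAA}$ does not blow up, so $\lambda_{j,\AAA'}(T)\ge\epsilon_0$), one picks a bubble $j_0$ that does blow up for $\AAA$, makes $\lambda_{j_0,\AAA'}(T)$ much smaller than any power of $\epsilon_0$, and then Lemma \ref{sharpdynamic} gives $T_{\AAA'}-T\lesssim \lambda_{j_0,\AAA'}(T)^2\ln|\ln\lambda_{j_0,\AAA'}(T)|$; multiplying by the crude bound $|d\lambda_{j,\AAA'}/dt|\lesssim b_{j,\AAA'}/\lambda_{j,\AAA'}$ (and similarly $|dx_{j,\AAA'}/dt|\lesssim 1/\lambda_{j,\AAA'}$), which is finite because $\lambda_{j,\AAA'}$ stays bounded below, closes both \eqref{appfinal1} and \eqref{appfinal2}. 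Either way no transfer from $\AAA$ to $\AAA'$ is needed after time $T$; your condition (iii) is unnecessary once you argue this way.
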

We now prove Lemma \ref{appfinallemma}. 

Note $\AAA$ is given, and we only need to prove \eqref{appfinal1} and \eqref{appfinal2} for every given $j$. We discuss the two cases.
\begin{itemize}
\item Case 1: the $j$th bubble of  $u_{\AAA}$ blows up, i.e. $\lambda_{j,\AAA}(T_{\AAA})=0$.
\item Case 2: the $j$th bubble of  $u_{\AAA}$ blows up, i.e. $\lambda_{j,\AAA}(T_{\AAA})\neq 0$.
\end{itemize}
We first discuss Case 1. In this case ,we can choose $T$ close to $T_{\AAA}$  enough, and $\delta$ small enough,  such that $\lambda_{j,\AAA'}(T)$ is small enough, 
\begin{equation}
\lambda_{j,\AAA'}(T)\leq \epsilon_{0}\ll \epsilon^{100}.
\end{equation}
Then \eqref{appfinal1} follows since $\frac{d}{dt}\lambda_{j,\AAA'}<0$ and $\lambda_{j,\AAA'}(t)>0, \forall t<T_{\AAA'}$.
To prove \eqref{appfinal2}, one needs modify a little bit the analysis in Subsubsection \ref{subsubsectiontanslation}.
 as argued in  in Subsubsection \ref{subsubsectiontanslation}.
 \begin{equation}
 \begin{aligned}
 |x_{j,\AAA'}(t)-x_{j,\AAA'}(T)|
&\leq \int_{T}^{t}\frac{dx_{j}}{ds_{j}}\frac{1}{\lambda_{j}^{2}}d\tau
&\leq  \int_{T}^{T_{\AAA'}}\frac{1}{\lambda_{j,\AAA}}(\tau)d\tau
\end{aligned}
 \end{equation}
 (We have use $|\frac{1}{\lambda_{j,\AAA'}}\frac{dx_{j,\AAA'}}{ds_{j}}|\lesssim 1$ by \eqref{bootmodu2}.)
 And recall \eqref{tsscalefromblowuprate}, we further have
 \begin{equation}
  |x_{j,\AAA'}(t)-x_{j,\AAA'}(T)|\lesssim \lambda_{j,\AAA'}^{1/2}(T)\int_{T}^{T_{\AAA'}}\frac{1}{\lambda_{j,\AAA}^{3/2}}\lesssim \lambda_{j,\AAA'}^{1/2}(T)\ll \epsilon^{50}.
 \end{equation}
 This gives \eqref{appfinal2}.
 
 Now we discuss Case 2. Without loss of generality, we assume $\lambda_{j,\AAA'}(T)\geq \epsilon_{0}\geq \epsilon^{100}$, otherwise we just argue as Case 1.  Note since $u_{\AAA}$ blows up at $T_{\AAA}$, at least one bubble blows up at $T_{\AAA}$, let us assume 
 $\lim_{t\rightarrow T_{\AAA}}\lambda_{j_{0},\AAA}=0.$
 Thus, we are able to choose $T$ close enough to $T_{\AAA}$ and $\delta$ small enough such that
 \begin{equation}
 \lambda_{j, \AAA'}(T)\ggg \lambda_{j_{0}}(T),
 \end{equation}
 and of course $\lambda_{j_{0},\AAA'}(T)\ll \epsilon^{100}$.
 
 We remark here we actually may need 
 \begin{equation}\label{reallytech}
 \lambda_{j_{0},\AAA'}\ll \exp(-\exp\exp\exp\exp \{\frac{1}{\lambda_{j,\AAA'}(T)}\}),
\end{equation} 
  and don't worry about the special form because our arguments are kinds of soft.
  
  The idea is $u_{\AAA}$ is going to blow up so fast that the $j$th bubble do not have much time to change.
 
 Note it is not clear whether the $j_{0}$th bubble will blow up when $u_{\AAA'}$ blows up, however,  we can still estimate blow up time by Lemma \ref{sharpdynamic}, i.e. the blow up time $T_{\AAA'}$ of $u_{\AAA'}$ is controlled by $T_{j_{0},\AAA}$ predicted by Lemma \ref{sharpdynamic}, we have \footnote{Lemma \ref{sharpdynamic} is stated for solutions started at $t=0$, clearly we can also set the start time as $t=T$.}
 \begin{equation}\label{daoshu3}
 T_{\AAA'}-T\lesssim \lambda_{j_{0},\AAA'}^{2}\ln|\ln \lambda_{j_{0},\AAA'}|.
 \end{equation}
 And, using Lemma \ref{sharpdynamic} again, we can ensure 
 \begin{equation}\label{daoshu2}
 \lambda_{j,\AAA'}(t)\sim \lambda_{j,\AAA'}(T), t\in [T,T_{\AAA'}].
 \end{equation}
 Now we use estimate, (we need \eqref{bcontrolvariation}, \eqref{bcontrol2})
 \begin{equation}\label{daoshu1}
0< -\frac{d}{dt}\lambda_{j,\AAA'}\sim -\frac{1}{\lambda_{j,\AAA'}^{3}}\frac{d\lambda_{j,\AAA'}}{ds_{j}}\sim \frac{1}{\lambda_{j,\AAA'}^{3}}b_{j,\AAA'}\sim  \frac{1}{\lambda_{j,\AAA'}^{3}}\ln|\ln \lambda_{j,\AAA}|
 \end{equation}
 Combine \eqref{daoshu1} and \eqref{daoshu2}, plug in \eqref{daoshu3}, we have
 \begin{equation}
 \sup_{t\in [T,T_{\AAA'})}|\lambda_{j,\AAA'}(T)-\lambda_{j,\AAA'}(t)|\leq \lambda_{j,\AAA'}^{-3}\ln|\ln \lambda_{j,\AAA}|\lambda_{j_{0},\AAA'}^{2}\ln|\ln \lambda_{j_{0},\AAA'}|
 \end{equation}
 The desired estimate \eqref{appfinal1} follows since we have \eqref{reallytech}.
 
 Similar arguments works for \eqref{appfinal2}.
\end{proof}
\end{appendices}
\bibliographystyle{abbrv}
\bibliography{BG}
\end{document}